\DeclareMathOperator{\Cov}{Cov}
\newcommand{\N}{\mathbb{N}}
\newcommand{\Z}{\mathbb{Z}}
\newcommand{\R}{\mathbb{R}}
\newcommand{\T}{\mathbb{T}}
\newcommand{\cF}{\mathscr{F}}
\newcommand{\<}{\langle}
\renewcommand{\>}{\rangle}
\newcommand{\lip}{\text{\rm Lip}}
\renewcommand{\P}{\mathrm{P}}
\newcommand{\E}{\mathrm{E}}
\renewcommand{\d}{{\rm d}}
\newcommand{\e}{{\rm e}}
\renewcommand{\geq}{\geqslant}
\renewcommand{\leq}{\leqslant}
\renewcommand{\ge}{\geqslant}
\renewcommand{\le}{\leqslant}
\author{Davar Khoshnevisan\\University of Utah
	\and Kunwoo Kim\\POSTECH
	\and Carl Mueller\\University of Rochester
	\and Shang-Yuan Shiu\\National Central University}
\title{\bf Phase Analysis for a family of Stochastic Reaction-Diffusion Equations%
        \thanks{
	Research supported in part by the NSF grant DMS-1855439, DMS-1608575, and
	DMS-1307470 [D.K.], 
	the NRF grants 2019R1A5A1028324 and 2020R1A2C4002077 [K.K],
        a Simons grant [C.M.], and MOST grant MOST109-2115-M-008-006-MY2
         [S.-Y. S.].
	Parts of this research were funded by the NSF grant DMS-1440140
	while three of the authors [D.K., K.K., and C.M.]
	were in residence at the Mathematical Sciences Research
	Institute at UC Berkeley in Fall of 2015. Three of us
 	[D.K., K.K., and C.M.] wish to also thank the Banff International 
	Research Center for financial support under the Research in Teams Program.  
}}
\date{December 21, 2020}
\newtheorem{stat}{Statement}[section]
\newtheorem{proposition}[stat]{Proposition}
\newtheorem{corollary}[stat]{Corollary}
\newtheorem{theorem}[stat]{Theorem}
\newtheorem{lemma}[stat]{Lemma}
\theoremstyle{definition} 
\newtheorem*{definition}{Definition}
\newtheorem*{Itheorem}{Informal Theorem}
\newtheorem{remark}[stat]{Remark}
\newtheorem*{OP}{Open Problem}
\newtheorem{example}[stat]{Example}
\numberwithin{equation}{section}
\begin{document}
\maketitle
\begin{abstract} 
We consider a reaction-diffusion equation of the type
\[
	\partial_t\psi = \partial^2_x\psi + V(\psi) + \lambda\sigma(\psi)\dot{W}
	\qquad\text{on $(0\,,\infty)\times\T$},
\]
subject to a ``nice'' initial value and periodic boundary, where $\T=[-1\,,1]$ and
$\dot{W}$ denotes space-time white noise. The reaction term $V:\R\to\R$
belongs to a large family of functions that includes Fisher--KPP nonlinearities
[$V(x)=x(1-x)$] as well as Allen-Cahn potentials [$V(x)=x(1-x)(1+x)$], the multiplicative 
nonlinearity $\sigma:\R\to\R$ is non random and Lipschitz continuous, and $\lambda>0$
is a non-random number that measures the strength of the effect of the noise $\dot{W}$.

The principal finding of this paper is that: (i) When $\lambda$ is sufficiently large,
the above equation has a unique invariant measure; and (ii) When $\lambda$ is sufficiently small,
the collection of all invariant measures is a non-trivial line segment, in particular
infinite. This proves an earlier prediction of
\cite{ZTPS}. Our methods also say a great deal about the structure of these invariant
measures.\\

\noindent{\it Keywords:} Stochastic partial differential equations;
	invariant measures; phase transition.\\

	\noindent{\it \noindent AMS 2010 subject classification:}
	Primary: 60H15,  Secondary: 35R60.\newpage
\end{abstract}
\tableofcontents
%%%%%%%%%%%%%%%%%%%%%%%%%%%%%%%%%%%%%%%%%%%%%%%%%%%%%%
%%%%%%%%%%%%%%%%%%%% To Do %%%%%%%%%%%%%%%%%%%%%%%%%%%
%%%%%%%%%%%%%%%%%%%%%%%%%%%%%%%%%%%%%%%%%%%%%%%%%%%%%%

%%%%%%%%%%%%%%%%%%%%%%%%%%%%%%%%%%%%%%%%%%%%%%%%%%%%%%
%%%%%%%%%%%%%%%%%%%% Introduction %%%%%%%%%%%%%%%%%%%%
%%%%%%%%%%%%%%%%%%%%%%%%%%%%%%%%%%%%%%%%%%%%%%%%%%%%%%

\section{Introduction}

Let $\dot{W}=\{\dot{W}(t\,,x)\}_{t\ge0,x\in[-1,1]}$ denote a space-time white noise
on $\R_+\times\T$ where
\[
	\T:=[-1\,,1].
\]
That is, $\dot{W}$ is a centered, generalized Gaussian random field with covariance
measure
\[
	\Cov[\dot{W}(t\,,x) \,, \dot{W}(s\,,y)] = \delta_0(t-s)\delta_0(x-y)
	\text{ for all $s,t\ge0$ and $x,y\in\T$}.
\]
The principal goal of this paper is to study the large-time behavior of the stochastic reaction-diffusion
equation,
\begin{equation} \label{eq:RD}
	\partial_t\psi(t\,,x) =  \partial_x^2\psi(t\,,x) + V(\psi(t\,,x))  + \lambda\sigma(\psi(t\,,x))
	\dot{W}(t\,,x), 
\end{equation}
for $(t\,,x)\in(0\,,\infty)\times\T$,
with  periodic boundary conditions 
$\psi(t\,,-1)=\psi(t\,,1)$ for all $t>0$,
and a nice initial profile $\psi_0:\T\to[0\,,\infty)$.  The functions 
$\sigma$ and $V$ are  non-random, and fairly regular, and  
$\lambda>0$ is a  non-random quantity that represents
the strength of the noise $\dot{W}$. 

We will discuss the technical details about $\sigma$, $\psi_0$, $V$, \dots in the next section.
For now, we mention only that permissible choices of $V$ include
Fisher--KPP type non linearities [$V(z) = z(1-z)$] as well as Allen-Cahn type 
potentials [$V(z)=z(1-z)(1+z)$].

The main findings of this paper can be summarized as follows.

\begin{Itheorem}{\it
	Under nice regularity conditions on $\sigma$, $V$, and $\psi_0$:
	\begin{compactenum}
		\item \eqref{eq:RD} is well posed;
		\item \eqref{eq:RD} has an invariant probability measure $\mu_0$;
		\item \protect{\rm [High-noise regime]}
			There exists a non-random number $\lambda_1>0$ such that
			the only invariant probability
			measure of \eqref{eq:RD} is $\mu_0$ when $\lambda>\lambda_1$; and
		\item \protect{\rm [Low-noise regime]}
			There exists a non-random number $\lambda_0>0$ such that
			if $\lambda<\lambda_0$, then \eqref{eq:RD} has infinitely-many invariant measures. 
			Moreover, there 
			exists a probability measure $\mu_1$ -- singular with respect to 
			$\mu_0$ -- such that the 
			line segment 
			\begin{equation}\label{cal:L}
				\mathscr{I\hskip-0.5em{M}} :=
				\left\{ (1- a) \mu_0 + a \mu_1:\, 0\le a\le1\right\}
			\end{equation}
			coincides with all invariant probability measures of \eqref{eq:RD}.
	\end{compactenum}}
\end{Itheorem}

In their rigorous forms -- see Theorem \ref{th:RD} below -- assertions 3 and 4 together verify a
corrected version of earlier 
predictions of \cite{ZTPS}, deduced originally via 
experiments and computer simulations. Equally significantly, our proof of the rigorous form of Item
4 clearly ``explains'' why there is phase transition in the low-noise regime.

We conclude the Introduction by setting forth some notation that will be
used throughout the paper.

We will often write $\psi(t)$ in place of the mapping
$x\mapsto \psi(t\,,x)$;  we do likewise for other functions that may depend on 
extra parameters.  

We always write $\bm{1}_G$ for the indicator function of any and every set $G$. That is,
$\bm{1}_G(z)$ is equal to one if $z\in G$ and $\bm{1}_G(z)=0$ otherwise. When
$G$ is an event in our underlying probability space $(\Omega\,,\mathcal{F},\P)$, we follow
common practice and omit the variable $z\in\Omega$ in the expression $\bm{1}_G(z)$.

For any Banach space $\mathbb{X}$, we let $C(\mathbb{X})$ denote the space of
all continuous functions $f:\mathbb{X}\to\R$. The space $C(\mathbb{X})$ is always
a Banach space endowed with the supremum norm,
\[
	\|f\|_{C(\mathbb{X})} :=\sup_{x\in\mathbb{X}}|f(x)|.
\]
We always write $C_+(\mathbb{X})$ for the cone of non-negative elements of 
$C(\mathbb{X})$ and $C_{>0}(\mathbb{X})$ for the cone of strictly positive elements of 
$C(\mathbb{X})$, i.e., $f(x)>0$ for all $x\in\mathbb{T}$;  $C_b(\mathbb{X})$ denotes the bounded
elements of $C(\mathbb{X})$. The space $C_b(\mathbb{X})$ is always
endowed with the topology of pointwise convergence. That is, we endow
$C_b(\mathbb{X})\subset\R^{\mathbb{X}}$ with relative topology, where $\R^{\mathbb{X}}$ is given 
the discrete topology. By $C(\mathbb{X}\,;\mathbb{Y})$ we mean the space of all continuous
function on the Banach space $\mathbb{X}$ that take value in the Banach space $\mathbb{Y}$. 
We always topologize $C(\mathbb{X}\,;\mathbb{Y})$ using the norm
defined by 
\[
	\|f\|_{C(\mathbb{X};\mathbb{Y})} := \sup_{x\in\mathbb{X}} \|f(x)\|_{\mathbb{Y}}.
\]
In particular, $C(\mathbb{X}\,;\R)= C(\mathbb{X})$.

We topologize $\T=[-1\,,1]$ so that it is the one-dimensional 
torus. That is, $\T$ is always endowed with addition mod $2$, and $\pm1$ are identified with one
another using the usual quotient topology on $\T$. 
Let us emphasize in particular that
\[
	\lim_{x\to\pm1}f(x)= f(1)=f(-1)
	\qquad\text{for every $f\in C(\T)$.}
\]
Haar measure on $\T$ is always normalized to have total mass 2,
and its infinitesimal is denoted by symbols such as $\d x, \d y, \ldots$ in the context of 
Lebesgue integration.

For every $\alpha\in(0\,,1)$,  let $C^\alpha(\T)$ denote the collection of
all $f\in C(\T)$ that satisfy $\|f\|_{C^\alpha(\T)}<\infty$, where\footnote{
	Some authors use instead the norm defined by 
	\[
		|f|_{C^\alpha(\T)} := |f(0)|+ \sup_{\substack{x,y\in\T\\x\neq y}}
		\frac{|f(y)-f(x)|}{|y-x|^\alpha}.
	\]
	The two norms are equivalent, since
	$|f|_{C^\alpha(\T)} \le \|f\|_{C^\alpha(\T)}
	\le 2^\alpha|f|_{C^\alpha(\T)}.$
}
\[
	\|f\|_{C^\alpha(\T)} := \| f\|_{C(\T)} + \sup_{\substack{x,y\in\T\\x\neq y}}
	\frac{|f(y)-f(x)|}{|y-x|^\alpha}.
\]
Thus, $f\in C^\alpha(\T)$ iff $f$ is H\"older continuous with index $\alpha$.
In keeping with previous notation, $C^\alpha_+(\T)$ denotes the cone of all non-negative
elements of $C^\alpha(\T)$.

The $L^k(\Omega)$-norm
of a random variable $Z\in L^k(\Omega)$ is denoted by
$\|Z\|_k := \{ \E(|Z|^k)\}^{1/k}$ for all $1\le k<\infty$.

We let $\cF:=\{\cF_t\}_{t\ge0}$ designate the ``Brownian filtration.'' That is,
for all $t\ge0$, $\cF_t$ denotes the sub $\sigma$-algebra of $\mathcal{F}$
that is generated by all 
Wiener integrals of the form
\[
	\int_{(0,t)\times\T}f(x)\, W(\d s\,\d x),
\]
as $f$ ranges over $L^2(\T)$. By augmenting $\cF$ if need be, we may -- and always will -- 
assume that the filtration $\cF$ satisfies the ``usual conditions''
of stochastic integration theory. That is,
\[
	\cF_t=\bigcap_{s>t}\cF_s,
\]
and $\cF_t$ is complete for all $t\ge0$. We will require 
the ``usual conditions''  because, as is well known (and not hard to prove),
they ensure that the first hitting time of a closed set by a continuous,
$\cF$-adapted stochastic process is measurable.\footnote{A much deeper theorem of
	\cite{Hunt} extends this to cover the first hitting time of any Borel, and even analytic, set.
	We will not need that extension in the sequel.}

Finally, let us introduce two special elements $\mathbb{0},\mathbb{1}\in C_+(\T)$
as follows:
\begin{equation}\label{01}
	\mathbb{0}(x) := 0\quad\text{and}\quad
	\mathbb{1}(x) := 1\qquad\text{for all $x\in\T$}.
\end{equation}
Thus, in particular, $\delta_{\mathbb{0}}$ and $\delta_{\mathbb{1}}$ denote 
the probability measures on $C_+(\T)$ that put respective point masses
on the constant functions $\mathbb{0}$ and $\mathbb{1}$. 
The  measures $\delta_0$
and $\delta_{\mathbb{0}}$ should not be mistaken for one another; the former
is a probability measure on $\R$ and the latter is a probability measure on $C_+(\T)$.
Similar remarks apply to $\delta_1$ and $\delta_{\mathbb{1}}$.

Throughout we assume that the underlying probability space is complete.
\section{The Main Results}

In this section we introduce two theorems that make rigorous the Informal Theorem of
\S1. First, let us observe that the SPDE \eqref{eq:RD} is stated in terms of three functions
$\sigma$ (the ``diffusion coefficient''), $V$ (the ``potential''), and
$\psi_0$ (the ``initial profile'') which have not yet been described. Thus, we begin with a precise description
of those functions.

\subsection{Hypotheses on the Diffusion Coefficient}
Throughout this paper, we choose and fix a globally Lipschitz continuous
function $\sigma:\R\to\R$ such that
\[
	\sigma(0)=0.
\]
Let us define  
\begin{equation}\label{LL}
	{\rm L}_\sigma := \inf_{a\in\R\setminus\{0\}}
	\left| \frac{\sigma(a)}{a}\right|,\qquad
	\lip_\sigma:=\sup_{\substack{a,b\in\R:\\a\neq b}}
	\left| \frac{\sigma(b)-\sigma(a)}{b-a}\right|.
\end{equation}
Evidently, $0\le{\rm L}_\sigma \le \lip_\sigma$,
and
\[
	{\rm L}_\sigma|a| \le|\sigma(a)|\le\lip_\sigma|a|\qquad
	\text{for all $a\in\R$.}
\]
Because $\sigma$ is Lipschitz continuous, it follows that $\lip_\sigma<\infty$
and hence the second inequality above has content. We frequently assume
that the first inequality does too. That is, we often suppose in addition that ${\rm L}_\sigma>0$.
We will make explicit mention whenever this assumption is in place.

%%%%%%%%%%%%%%%%%%%%%%%%%%%%%%%%%%%%%%%%
%%%%%%%%%%%%%%%%%%%%%%%%%%%%%%%%%%%%%%%%
%%%%% Hypotheses on the Potential %%%%%%
%%%%%%%%%%%%%%%%%%%%%%%%%%%%%%%%%%%%%%%%
%%%%%%%%%%%%%%%%%%%%%%%%%%%%%%%%%%%%%%%%

\subsection{Hypotheses on the Potential}\label{subsec:F}
Throughout the paper we are concerned with potentials $V$ of the form,
\begin{equation*}
	V(x) = x - F(x)\qquad\text{for all $x\in\R$},
\end{equation*}
where $F:\R_+\to\R_+$ is assumed to satisfy the following conditions:\\

\begin{compactenum}
\item[\bf (F1)] $F\in C^2(\R_+)$, $F(0)=0$, $F'\ge0$; 
\item[\bf (F2)] $\limsup_{x\downarrow0}F'(x)<1$ and $\lim_{x\to\infty}F'(x)=\infty$; and
\item[\bf (F3)] There exists a real number $m_0>1$ such that $F(x) = 
	O(x^{m_0})$ as $x\to\infty$.\\
\end{compactenum}

Examples are Allen-Cahn type potentials $[F(x)=x^3]$, 
as well as Fisher-KPP type nonlinearities $[F(x)=x^2]$. 

We make  references to $V(x)$ and $x-F(x)$ interchangeably throughout.
We also make references to the following elementary properties of the function $F$
without explicit mention.

\begin{lemma}\label{lem:F}
	$V$ and $F$ are locally Lipschitz and satisfy the following technical conditions:
	\begin{compactenum}
		\item $\limsup_{x\downarrow 0} (F(x)/x)<1$;
		\item $\lim_{x\uparrow\infty}(F(x)/x)=\infty$;
		\item $\sup_{x\ge0}V(x)<\infty$; and
		\item $\lim_{N\to \infty} \inf_{N\leq x< y \leq N+1} \{F(y)-F(x)\}/(y-x) =\infty$.
	\end{compactenum}
\end{lemma}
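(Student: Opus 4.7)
The plan is to derive all four items as short consequences of (F1)--(F3), with the mean value theorem doing most of the work. Local Lipschitz continuity of $F$ (and hence of $V(x)=x-F(x)$) is immediate from (F1), since $F\in C^2(\R_+)$ implies $F'$ is continuous and therefore bounded on every compact subset of $\R_+$.

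For item 1, I would apply the mean value theorem together with $F(0)=0$ to write $F(x)/x = F'(\xi_x)$ for some $\xi_x\in(0\,,x)$. Since $\xi_x\downarrow0$ as $x\downarrow0$, the first half of (F2) yields
\[
	\limsup_{x\downarrow0}\frac{F(x)}{x} \le \limsup_{y\downarrow0}F'(y)<1.
\]
For item 2, the second half of (F2) gives, for each $M>0$, some $x_M>0$ with $F'(t)\ge M$ for $t\ge x_M$. Integrating, $F(x)\ge F(x_M)+M(x-x_M)$ for $x\ge x_M$, so $\liminf_{x\to\infty}F(x)/x\ge M$. As $M$ was arbitrary, $F(x)/x\to\infty$.

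Item 3 is a consequence of item 2: write $V(x)=x(1-F(x)/x)$, choose $x_\star$ so that $F(x)/x\ge 2$ for $x\ge x_\star$, and note $V(x)\le -x\le 0$ on $[x_\star\,,\infty)$ while $V$ is continuous and hence bounded on the compact interval $[0\,,x_\star]$. For item 4, another application of the mean value theorem shows that for $N\le x<y\le N+1$,
\[
	\frac{F(y)-F(x)}{y-x} = F'(\xi)
	\quad\text{for some $\xi\in(x\,,y)\subset(N\,,N+1)$},
\]
so the difference quotient is bounded below by $\inf_{\xi\in[N,N+1]}F'(\xi)$, which tends to $\infty$ as $N\to\infty$ by the second half of (F2). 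No step here is delicate; the only thing to be careful with is that the $\limsup$ and $\liminf$ manipulations correctly exploit that $F'$ is continuous on $\R_+$ and approaches its stated limits monotonically in spirit (though not literally), so I would keep those arguments explicit rather than invoke L'H\^opital's rule.
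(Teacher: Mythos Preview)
Your proposal is correct and follows essentially the same approach as the paper: both use continuity of $F'$ for local Lipschitzness, the mean value theorem (equivalently, the fundamental theorem of calculus with $F(0)=0$) for items 1, 2, and 4, and then derive item 3 from item 2 together with local boundedness of $V$. The only cosmetic difference is that the paper writes item 2 via the integral representation $F(x)/x=\frac1x\int_0^x F'(a)\,\d a$ rather than your linear lower bound $F(x)\ge F(x_M)+M(x-x_M)$, but these are the same idea.
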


\begin{proof}
	$V$ and $F$ are locally Lipschitz because $F'$ is continuous.  
	Thanks to {\bf (F2)},  we can find $r_0>0$
	such that $\sup_{(0,r_0)}F'<1$.
	Apply the fundamental theorem of calculus to deduce 
	part 1 from {\bf (F1)}, and that 
	\[
		\frac{F(x)}{x} = \frac1x\int_0^x F'(a)\,\d a \ge \frac1x\int_{x/2}^x F'(a)\,\d a
		\ge \frac12 \inf_{a\ge x/2}F'(a)\qquad\text{for all $x>0$}.
	\]
	Let $x\to\infty$ and appeal to {\bf (F2)} to arrive at part 2.
	Part 3 follows immediately from part 2	and the continuity, hence local boundedness, of 
	the function $V$. Finally, we may observe that whenever $N\le x<y\le N+1$, 
	\[
		\frac{F(y)-F(x)}{y-x} \ge \inf_{x\le z\le y} F'(z) \ge \inf_{N \le z \le N+1} F'(z)\to\infty\qquad\text{as 
		$N\to\infty$},
	\]
	thanks to mean value theorem and {\bf (F2)}. This concludes our demonstration.
\end{proof}

\begin{remark}
	The astute reader might have noticed that
	Lemma \ref{lem:F} requires only that $F$ satisfies  
	{\bf (F1)} and {\bf (F2)}. Condition {\bf (F3)} will be used later on in Lemma
	\ref{lem:cont} in order to establish quantitative, global-in-time, 
	spatial continuity bounds for the 
	solution to our SPDE \eqref{eq:RD}.
\end{remark}

%%%%%%%%%%%%%%%%%%%%%%%%%%%%%%%%%%%%%%%%%%%%%%%%%%
%%%%%%%%%%%%%%%%%%%%%%%%%%%%%%%%%%%%%%%%%%%%%%%%%%
%%%%% Hypothesis on the initial profile %%%%%%%%%%
%%%%%%%%%%%%%%%%%%%%%%%%%%%%%%%%%%%%%%%%%%%%%%%%%%
%%%%%%%%%%%%%%%%%%%%%%%%%%%%%%%%%%%%%%%%%%%%%%%%%%

\subsection{Hypothesis on the initial profile}

Throughout, we assume that
\[
	\psi_0\in C_+(\T),
\]
and $\psi_0$ is non random. This assumption is used everywhere in the paper and so we assume it
here and throughout without explicit mention. 
We frequently will assume additionally that 
$\psi_0\neq\mathbb{0}$ ($\mathbb{0}$ was define in (\ref{01})); equivalently, that $\psi_0>0$
on an open ball in $\T$. This assumption will be made explicitly
every time it is needed.

%%%%%%%%%%%%%%%%%%%%%%%%%%%%%%%%%%%%%%%%%%%%%%%%%%
%%%%%%%%%%%%%%%%%%%%%%%%%%%%%%%%%%%%%%%%%%%%%%%%%%
%%%%% The Main Results %%%%%%%%%%%%%%%%%%%%%%%%%%%
%%%%%%%%%%%%%%%%%%%%%%%%%%%%%%%%%%%%%%%%%%%%%%%%%%
%%%%%%%%%%%%%%%%%%%%%%%%%%%%%%%%%%%%%%%%%%%%%%%%%%

\subsection{The Main Results}
We do not expect the solution $\psi$ to the SPDE \eqref{eq:RD} to
be differentiable in either 
variable. Therefore, \eqref{eq:RD} must be interpreted in the generalized sense;
see \cite{wal86}.  From now on,
we regard the SPDE \eqref{eq:RD} as shorthand
for its mild -- or integral -- formulation which can be written as follows:
\begin{equation} \label{eq:mild}\begin{split}
	\psi(t\,,x) &= (\mathcal{P}_t\psi_0)(x)+ \int_{(0,t)\times\T} p_{t-s}(x\,,y) 
			V(\psi(s\,,y))\,\d s\,\d y \\
	&\hskip2.5in+ \lambda \int_{(0,t)\times\T} p_{t-s}(x\,,y) \sigma(\psi(s\,,y))\,W(\d s\,\d y);
\end{split}\end{equation}
where the function $p$ denotes the heat kernel for the operator
$\partial_t-\partial^2_x$ on $(0\,,\infty)\times\T$ with periodic boundary conditions,
and $\{\mathcal{P}_t\}_{t\ge0}$ denotes the associated heat semigroup. That is,
\begin{equation} \label{eq:heat-kernel-expansion}
	p_t(x\,,y)= \frac{1}{\sqrt{4\pi t}}\sum_{k=-\infty}^\infty
	\exp\left\{- \frac{(x-y+2k)^2}{4t}\right\}
	\quad\text{for all $t>0$ and $x,y\in\T$},
\end{equation}
and
\[
	\mathcal{P}_0f=f
	\quad\text{and}\quad
	(\mathcal{P}_tf)(x)=\int_{\T} p_t(x\,,y)f(y)\,\d y,
\]
for all $(t\,,x)\in(0\,,\infty)\times\T$ and for every $f\in C(\T)$ (say).

The first of our two main theorems is a standard existence and uniqueness theorem.  

\begin{theorem} \label{th:exist-unique}
	The random integral equation
	\eqref{eq:mild} has a predictable random-field solution 
	$\psi=\{\psi(t\,,x)\}_{t\in\R_+,x\in\T}$ that is unique among all solutions that are 
	a.s.\ in $C_+(\R_+\times\T)$. Moreover,
	$\psi(t)\in C_+^\alpha(\T)$ a.s.\ for every $t>0$ and $\alpha\in(0\,,1/2)$.
	If in addition $\psi_0\neq\mathbb{0}$,
	then $\psi(t\,,x)>0$ for all $(t\,,x)\in(0\,,\infty)\times\T$ a.s.
\end{theorem}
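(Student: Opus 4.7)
The plan is a localisation and pathwise comparison argument, followed by a standard positivity result. The only non-routine ingredient is the super-linear nonlinearity $V(x)=x-F(x)$, which places (\ref{eq:RD}) outside Walsh's globally Lipschitz framework. I exploit the dissipativity of $V$ at infinity together with $M_V:=\sup_{y\ge 0}V(y)<\infty$ (Lemma~\ref{lem:F}(3)) to obtain a uniform a priori bound. Non-negativity follows from pathwise comparison; strict positivity relies on the strong positivity theorem for multiplicative-noise SPDEs with $\sigma(0)=0$, which is the hardest step.

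For each $N\in\N$, choose a smooth, globally Lipschitz extension $V_N:\R\to\R$ that agrees with $V$ on $[0\,,N]$, satisfies $V_N(0)=0$, and $V_N(z)\le M_V$ for all $z\in\R$ (take $V_N=V$ on $[0\,,N]$, linear on $(-\infty,0]$, and smoothly rolling off to the constant $V(N)$ on $[N\,,\infty)$). Walsh's theory \cite{wal86} gives a unique predictable, a.s.\ continuous solution $\psi_N$ of the mild form (\ref{eq:mild}) with $V_N$ in place of $V$. Since $V_N(0)=\sigma(0)=0$, the identically zero function solves the same SPDE with initial value $\mathbb{0}$, and the standard pathwise comparison theorem for SPDEs driven by a common noise yields $\psi_N\ge\mathbb{0}$ a.s. Since $V_N\le M_V$ everywhere, the same comparison theorem gives $\psi_N\le v$ a.s., where $v$ solves
\begin{equation*}
	\partial_t v = \partial_x^2 v + M_V + \lambda\sigma(v)\dot{W},\qquad v(0)=\psi_0,
\end{equation*}
an SPDE with globally Lipschitz coefficients; standard $L^p$ moment bounds and Kolmogorov's continuity theorem produce $\sup_{(t,x)\in[0,T]\times\T}v(t\,,x)<\infty$ a.s.\ for every $T>0$.

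Setting $\tau_N:=\inf\{t>0:\|\psi_N(t)\|_{C(\T)}\ge N\}$, the identity $V_N=V$ on $[0\,,N]$ together with uniqueness forces $\psi_{N+1}=\psi_N$ on $[0\,,\tau_N]$ a.s., and the bound $\psi_N\le v$ implies $\tau_N\uparrow\infty$ a.s. Thus $\psi:=\lim_{N\to\infty}\psi_N$ is defined for all times, lies in $C_+(\R_+\times\T)$ a.s., and solves (\ref{eq:mild}). Uniqueness among $C_+(\R_+\times\T)$-valued solutions follows by localising any two such solutions at the first time either exceeds $N$ and using that $V$ is Lipschitz on $[0\,,N]$. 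The spatial H\"older regularity $\psi(t)\in C_+^\alpha(\T)$ for every $\alpha<1/2$ and $t>0$ is then read off from (\ref{eq:mild}): the term $\mathcal{P}_t\psi_0$ is smooth for $t>0$, the deterministic heat convolution against $V(\psi)$ is Lipschitz in $x$ on the event $\{\sup_{[0,T]\times\T}\psi\le N\}$, and the stochastic convolution has finite moments in $C^\alpha(\T)$ for every $\alpha<1/2$ by the factorisation method (using that $\sigma(\psi)$ is locally bounded a.s.).

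The most delicate step is strict positivity when $\psi_0\in C_+(\T)\setminus\{\mathbb{0}\}$. The heat semigroup instantaneously turns $\psi_0$ into a strictly positive continuous function, so $\mathcal{P}_t\psi_0>0$ for every $t>0$. On the event $\{\tau_N=\infty\}$ the solution $\psi$ coincides with $\psi_N$, whose coefficients are globally Lipschitz and satisfy $V_N(0)=\sigma(0)=0$, so the strong positivity theorem for multiplicative-noise SPDEs due to Mueller and its subsequent refinements applies: a Girsanov-type estimate combined with the spreading of positivity by the heat kernel rules out the solution ever touching zero after the initial time. Letting $N\to\infty$ yields $\psi(t\,,x)>0$ for all $(t\,,x)\in(0\,,\infty)\times\T$ a.s., completing the proof.
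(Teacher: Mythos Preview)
Your argument is correct and follows the same architecture as the paper: truncate $V$ to a globally Lipschitz $V_N$, solve the truncated SPDE via Walsh's theory, use comparison to get nonnegativity, establish a uniform a~priori bound to show the localising stopping times $\tau_N\uparrow\infty$, pass to the limit, and finally invoke Mueller's strict-positivity theorem.

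The one genuine difference is your choice of upper comparison process. The paper uses $V(w)\le w$ for $w\ge 0$ to compare $\psi_N$ to the solution of $\partial_t u=\partial_x^2 u + u + \lambda\sigma(u)\dot W$, which has a linear drift; it then writes $u(t,x)=\e^t U(t,x)$ to reduce to an SPDE with time-dependent but uniformly Lipschitz diffusion and no drift, and appeals to moment bounds for $U$. You instead exploit $V\le M_V<\infty$ (Lemma~\ref{lem:F}(3)) directly and compare to $\partial_t v=\partial_x^2 v + M_V + \lambda\sigma(v)\dot W$, which has constant drift and is immediately amenable to standard $L^p$ bounds. Your route is slightly more elementary here: it avoids the exponential change of variables entirely. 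Conversely, the paper's comparison yields somewhat sharper quantitative tail bounds for $\tau_N$ (see \eqref{eq:tail:T_N}), which it records for potential later use but does not actually need for Theorem~\ref{th:exist-unique} itself.

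One small point of care: your consistency claim ``$\psi_{N+1}=\psi_N$ on $[0,\tau_N]$'' requires knowing that $\psi_{N+1}$ also stays in $[0,N]$ up to $\tau_N$, which follows once you localise at the first time \emph{either} process exits $[0,N]$ and observe they then coincide up to that time; this is routine but worth stating. The paper sidesteps this by additionally arranging $V_N\ge V_{N+1}$ for large $N$ (via Lemma~\ref{lem:F}(4)), which gives $\psi_N\ge\psi_{N+1}$ monotonically, though your direct localisation is equally valid.
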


The second result of this paper describes the invariant measure(s) of the
solution to \eqref{eq:RD}. This is a meaningful undertaking since, as we shall see in 
Proposition \ref{pr:feller} below,
the  infinite-dimensional stochastic process $\{\psi(t)\}_{t\ge0}$ is a Feller process.

Recall the function $\mathbb{0}$ from \eqref{01}, 
and recall also that $\delta_{\mathbb{0}}$ denotes point mass on $\mathbb{0}$.
Because $\sigma(0)=0$, it is easy to see that if we replaced the initial profile $\psi_0$
with the initial profile $\mathbb{0}$, then the solution $\psi$ would be identically zero. This is basically
another way to say that $\delta_{\mathbb{0}}$ is always an invariant measure for \eqref{eq:RD}.
The next theorem explores the question of uniqueness for this invariant measure 
[``phase transition'' refers to the lack of uniqueness of an invariant measure
in this context].

\begin{theorem} \label{th:RD}
	If ${\rm L}_\sigma>0$, then there exist $\lambda_1>\lambda_0>0$
	such that the following are valid independently of the choice
	of the initial profile $\psi_0\in C_+(\T)\setminus\{\mathbb{0}\}$:
	\begin{compactenum}
		\item If $\lambda\in(0\,,\lambda_0)$, then:
			\begin{compactenum}
			\item There exists a unique  probability measure 
			$\mu_+$ on $C_+(\T)$ that is invariant for \eqref{eq:RD} 
			and $\mu_+\{\mathbb{0}\}=0$. Moreover,
			$\mu_+$ charges $C_{>0}(\T)$;
		\item {\rm (Ergodic decomposition).}
			The set of all  probability measures on $C_+(\T)$ that
			are invariant for \eqref{eq:RD}
			is the collection $\mathscr{I\hskip-0.5em{M}}$ $[$see \eqref{cal:L}$]$
			of all convex combinations of
			$\mu_1 := \mu_+$ and $\mu_0:= \delta_{\mathbb{0}}$;
		\item  For every $\alpha\in(0\,,1/2)$,  $\mu_+$ is a probability measure on $C^\alpha_+(\T)$
			and
			\begin{equation}\label{tail:mu+}
				\int  \|\omega\|_{C^\alpha(\T)}^k\,\mu_+(\d \omega)<\infty
				\qquad\text{for every real number $k\ge2$}.
			\end{equation}
		\item {\rm (Ergodic theorem).}
			$\mu_+(\bullet) =\lim_{T\to\infty}T^{-1} \int_0^T \P\{ \psi(t)\in \bullet\}\,\d t$,
			where convergence holds in total variation; and
		\end{compactenum}
		\item If $\lambda>\lambda_1$, then $\delta_{\mathbb{0}}$ 
			is the only invariant measure for \eqref{eq:RD},
			and almost surely, $\lim_{t\to\infty}\psi(t)=\mathbb{0}$ in $C(\T)$. In fact, 
			$\limsup_{t\to\infty} t^{-1} \log \|\psi(t)\|_{C(\T)}<0$ a.s.
	\end{compactenum}
\end{theorem}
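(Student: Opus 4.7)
The plan is to track the spatial mass $M(t) := \int_\T \psi(t,x)\,\d x$. Integrating the mild formulation \eqref{eq:mild} against $\mathbb{1}$ and using $\int_\T p_t(x,y)\,\d x = 1$ shows that $M$ is a continuous nonnegative semimartingale whose drift $\int_\T V(\psi)\,\d y$ is bounded above by $M$ (since $V(x)\le x$ on $\R_+$) and whose martingale part has quadratic variation $\lambda^2\int_0^t\int_\T \sigma(\psi)^2\,\d y\,\d s$. Applying It\^o's formula to $M^\kappa$ for small $\kappa\in(0\,,1)$, the It\^o correction $-\tfrac12\kappa(1-\kappa)\lambda^2 M^{\kappa-2}\int\sigma(\psi)^2\,\d y$ is controlled, using ${\rm L}_\sigma>0$ together with the Cauchy--Schwarz bound $M^2\le 2\int\psi^2\,\d y$, by $-\tfrac14\kappa(1-\kappa){\rm L}_\sigma^2\lambda^2 M^\kappa$. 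Taking expectations produces
\[
	\frac{\d}{\d t}\E\bigl[M(t)^\kappa\bigr] \le \kappa\bigl(1-\tfrac14(1-\kappa){\rm L}_\sigma^2\lambda^2\bigr)\E\bigl[M(t)^\kappa\bigr],
\]
so any $\lambda_1>2/{\rm L}_\sigma$ yields a negative exponential rate for sufficiently small $\kappa$. A Chebyshev/Borel--Cantelli argument along integer times, combined with the finite-time fluctuation estimates from Theorem~\ref{th:exist-unique}, upgrades this to $\limsup t^{-1}\log M(t)<0$ a.s.; heat-semigroup smoothing from $t$ to $t+1$ (with {\bf (F3)} taming the nonlinear reaction) then transfers the decay from $M(t)=\|\psi(t)\|_{L^1(\T)}$ to $\|\psi(t)\|_{C(\T)}$. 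Since any invariant law inherits the exponential decay, $\delta_{\mathbb{0}}$ is the unique invariant measure.

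\textbf{Part 1 (persistence, $\lambda<\lambda_0$).} The strategy has four steps. (i) \emph{Persistence}: the deterministic PDE $\partial_t u = \partial_x^2 u + V(u)$ has a globally attracting constant equilibrium $\psi_*>0$ with $V(\psi_*)=0$, and a Lyapunov functional (say on $\int(\psi-\psi_*)^2\,\d x$) combined with small-noise estimates shows that, for $\lambda$ small, with positive probability the trajectory stays in a compact subset of $C_{>0}(\T)$ for all $t\ge0$; the superlinear growth of $F$ [Lemma~\ref{lem:F}(2,4)] controls upward excursions, the stability of $\psi_*$ controls downward ones. (ii) \emph{Construction of $\mu_+$}: apply Krylov--Bogolyubov, with tightness of the time-averaged laws on $C^\alpha(\T)$ coming from uniform $C^\alpha$-moment bounds provided by Theorem~\ref{th:exist-unique} and Lemma~\ref{lem:F}(3)--(4); persistence guarantees that the limit charges $C_{>0}(\T)$, so renormalizing the restriction to $C_+(\T)\setminus\{\mathbb{0}\}$ yields $\mu_+$, and \eqref{tail:mu+} is inherited from the uniform moment estimates. (iii) \emph{Uniqueness and ergodic theorem}: establish strong Feller and irreducibility of the transition semigroup on $C_{>0}(\T)$ (via a Bismut--Elworthy integration-by-parts formula and a Stroock--Varadhan-type support argument, both relying on ${\rm L}_\sigma>0$) and invoke Doob--Khasminskii to get total-variation convergence. (iv) \emph{Ergodic decomposition}: because $\{\mathbb{0}\}$ is absorbing ($\sigma(0)=V(0)=0$) while $C_{>0}(\T)$ is forward-invariant by Theorem~\ref{th:exist-unique}, any invariant $\mu$ splits as $\mu(\{\mathbb{0}\})\delta_{\mathbb{0}}+(1-\mu(\{\mathbb{0}\}))\mu_+$ by uniqueness on each class.

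The principal obstacle is step (i) of Part 1: despite $\mathbb{0}$ being absorbing and $\sigma$ degenerating there, one must show that mass near $\psi_*$ survives \emph{uniformly in time}, which requires more than finite-horizon moment bounds. I would expect to combine exponential stability of the deterministic flow at $\psi_*$ with large-deviation-style estimates on rare downward excursions, using smallness of $\lambda$ to make the exceedance probabilities summable. Secondary difficulties are: establishing strong Feller in step (iii) under merely Lipschitz (not $C^1$) $\sigma$, which needs a careful Malliavin/Bismut argument tailored to the torus; and, in Part 2, preserving the exponential rate when passing from $L^1(\T)$-decay to $C(\T)$-decay, where {\bf (F3)} is essential to prevent the nonlinear reaction from amplifying residual noise above the decay rate.
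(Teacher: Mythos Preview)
Your outline has the right architecture but departs from the paper's methods in every substantive step, and two of those departures are genuine gaps.

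\textbf{Part 2.} Your $M^\kappa$-It\^o calculation is a legitimate alternative. The paper instead uses comparison: since $V(w)\le w$, one has $\psi\le u$ where $u$ solves $\partial_t u=\partial_x^2 u+u+\lambda\sigma(u)\dot W$; setting $v=e^{-t}u$ reduces to an equation covered by \cite{KKMS20}, which directly gives $\limsup_{t\to\infty}t^{-1}\log\|v(t)\|_{C(\T)}\le -c\lambda^2$ a.s., hence $\limsup t^{-1}\log\|\psi(t)\|_{C(\T)}\le 1-c\lambda^2<0$ for large $\lambda$. This bypasses your $L^1$-to-$C(\T)$ passage entirely. Your route can be closed by interpolating the pointwise bound $\|\psi(t)\|_{C(\T)}\lesssim(\|\psi(t)\|_{C^\alpha(\T)}\,M(t))^{\alpha/(1+\alpha)}$ against the uniform $C^\alpha$-moment bounds of Proposition~\ref{pr:tight}, but the paper's approach is shorter.

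\textbf{Part 1(i).} This is the principal gap. Your claim that ``with positive probability the trajectory stays in a compact subset of $C_{>0}(\T)$ for all $t\ge0$'' is almost certainly false (the noise is non-degenerate on $C_{>0}(\T)$), and more importantly a Lyapunov functional such as $\int(\psi-\psi_*)^2\,\d x$ controls $L^2$-distance, not $\inf_{x\in\T}\psi(t,x)$, which is what Krylov--Bogolyubov on $C_{>0}(\T)$ requires; see \eqref{eq:condition-3}. The paper devotes all of Section~5 to this via a ``random walk argument'': it builds comparison processes $v_n\le\psi$ on successive random intervals, shows that the chain $X_n=\log_2\inf_{\T}v_n(\tau_n)$ is dominated below by a biased random walk reflected at a finite level (using the large-deviation estimate of Lemma~\ref{lem:large-dev} for small $\lambda$), and deduces that the time the infimum spends below $\varepsilon$ has vanishing density as $\varepsilon\downarrow0$. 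No Lyapunov or finite-horizon large-deviation argument that I can see yields this uniform-in-time infimum control.

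\textbf{Part 1(iii).} The paper does \emph{not} prove strong Feller; under merely Lipschitz $\sigma$ the Bismut--Elworthy--Li route is unavailable, exactly as you flag. Instead it builds a successful coupling (Sections~\ref{sec:coupling}--8): a support theorem (Proposition~\ref{pr:support}) is proved via Girsanov to bring two solutions into a common small $C(\T)$-ball with uniformly positive probability, and then Mueller's PM/AM couplings (Lemmas~\ref{lem:pm coupling}--\ref{lem:am coupling}) merge them in finite time. Iterating along the regeneration times of Lemma~\ref{lem:strip} gives an a.s.\ successful coupling (Theorem~\ref{th:successful-coupling}), from which uniqueness and the total-variation ergodic theorem follow immediately. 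Your Doob--Khasminskii plan would need hypotheses on $\sigma$ the paper does not assume.
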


\begin{remark}\label{rem:RD}
	As a by-product of our {\it a priori} estimates, we shall prove 
	that \eqref{tail:mu+} can sometimes be improved upon.
	For instance, if there exists $\nu>0$ such that $F(x)=x^{1+\nu}$
	for all $x\ge0$, then it follows from Example \ref{ex:R:1}
	below [see also Example \ref{ex:R}] that for every $\alpha\in(0\,,1/2)$ there exists a real number
	$q>0$ such that
	\[
		\int \exp\left( q\|\omega\|_{C^\alpha(\T)}^{\nu/2(1+\nu)}\right)
		\mu_+(\d\omega)<\infty.
	\]
	Among others, this remark applies to the Fisher-KPP $(\nu=1)$ and the Allen-Cahn  ($\nu=2$) SPDEs.
\end{remark}

Some of the content of Theorem \ref{th:RD} was predicted earlier by \cite{ZTPS}.
One might ask the following open question.

\begin{OP}
	Does there exist a unique number $\lambda_c\in[\lambda_0\,,\lambda_1]$ such 
	that \eqref{eq:RD} has a unique invariant measure when $\lambda>\lambda_c$
	and infinitely-many invariant measures when $0<\lambda<\lambda_c$? 
\end{OP}

A standard method for proving that critical exponents exist in interacting particle systems
is to establish a suitable monotonicity property and use comparison arguments.
Since $\psi$ is not monotone in $\lambda$, comparison arguments are likely to not 
help with this problem. In this context, 
we hasten to add that we have also found no mention of this problem in the work of  
\cite{ZTPS}.  

Part (c) of Theorem  \ref{th:RD} and Remark \ref{rem:RD} contain information about
the non-trivial extremum $\mu_+$ of the collection $\mathscr{I\hskip-0.5em{M}}$ of all invariant measures of
\eqref{eq:RD} when $\lambda$ is small. More detailed information is presented in 
\S\ref{sec:support}. For example, we prove that $\mu_+(C^{1/2}(\T))=0$
under a mild additional
constraint \eqref{(F4)} on the nonlinearity $F$. This complements part (c) of
Theorem \ref{th:RD}, as the latter implies that $\mu_+(C^\alpha(\T))=1$
for every $\alpha\in(0\,,1/2)$. It is also proved in \S\ref{sec:support} that the typical
function in the support of $\mu_+$ doubles the Hausdorff dimension of every non-random set
in the same way as Brownian motion \citep{McKean}.

Let us conclude this section by mentioning that
we will prove Theorem \ref{th:exist-unique} in 
Section \ref{sec:pf-thm-1}. Theorem \ref{th:RD} and Remark \ref{rem:RD} are proved
in Section \ref{sec:pf-RD}, and the intervening five sections
are devoted to developing the requisite technical results
that support the argument of Section \ref{sec:pf-RD}.
A few additional technical results are included in the appendices.

%%%%%%%%%%%%%%%%%%%%%%%%%%%%%%%%%%%%%%%%%%%%%%%%%%

%%%%%%%%%%%%%%%%%%%%%%%%%%%%%%%%%%%%%%%%%%%%%%%%%%
%%%%%%%%%%%%%%%%%%%%%%%%%%%%%%%%%%%%%%%%%%%%%%%%%%
%%%%% Proof of Theorem th:exist-unique %%%%%%%%%%%
%%%%%%%%%%%%%%%%%%%%%%%%%%%%%%%%%%%%%%%%%%%%%%%%%%
%%%%%%%%%%%%%%%%%%%%%%%%%%%%%%%%%%%%%%%%%%%%%%%%%%

%%%%%%%%%%%%%%%%%%%%%%%%%%%%%%%%%%%%%%%%%%%%%%%%%%

\section{Proof of Theorem \ref{th:exist-unique}}\label{sec:pf-thm-1}

One of the first goals of this section is to establish the 
well-posedness of the SPDE \eqref{eq:RD} 
subject to periodic boundary, and  non-random initial value 
$\psi(0)=\psi_0\in C_+(\T)$.

For the duration of the proof we will 
choose and fix a number $\alpha\in(0\,,1/2)$, and assume that
\[
	\lambda=1.
\]
This assumption can be made without incurring any loss in generality.
For otherwise we can replace $\sigma$ by $\bar\sigma:=\lambda\sigma$
and observe that $\bar\sigma$ too is Lipschitz continuous and vanishes at 
the origin.

Standard well-posedness theory for SPDE requires Lipschitz continuous 
coefficients, but the diffusion term $V$ is not Lipschitz.  Thus, we will 
truncate $V$ and take the limit as we remove the truncation.  To that 
end, let us define for all integers $N\ge 1$ a function $V_N:\R\to\R$ 
as follows:
\begin{equation}\label{V_N}
	V_N(w) := \begin{cases}
		0 &\text{if $w\le 0$},\\
		V(w)&\text{if $0<w< N$},\\
		V(N)&\text{if $w\geq N$}.
	\end{cases}
\end{equation}

Every  $V_N$ is a bounded and Lipschitz continuous function. 
Standard results \citep[see][Chapter 3]{wal86} ensure 
that for each integer $N\ge 1$ there exists a 
unique continuous random field $\psi_N$ that solves the SPDE,
\begin{equation} \label{eq:RD-N}
	\partial_t\psi_N(t\,,x) =  \partial_x^2\psi_N(t\,,x) +
	V_N\left(\psi_N(t\,,x)\right) + 
	\sigma\left(\psi_N(t\,,x)\right) \dot{W}(t\,,x),
\end{equation}
for $(t\,,x)\in(0\,,\infty)\times\T$, and subject to periodic boundary conditions
with $\psi_{N,0}(x) = \psi_N(0\,,x) = \psi_0(x)$ for all $x\in\T$. 
To be precise, $\psi_N$ is the unique solution to the following mild 
formulation of \eqref{eq:RD-N} [compare with \eqref{eq:mild}]:
\begin{equation} \label{eq:key:psi}\begin{split}
	\psi_N(t\,,x) &= (\mathcal{P}_t\psi_0)(x)+ \int_{(0,t)\times\T} p_{t-s}(x\,,y) 
		V_N(\psi_N(s\,,y))\,\d s\,\d y \\
	&\hskip2in + \int_{(0,t)\times\T} p_{t-s}(x\,,y) \sigma(\psi_N(s\,,y))\,W(\d s\,\d y).  
\end{split}\end{equation}
where, as in \eqref{eq:mild} and \eqref{eq:heat-kernel-expansion}, 
$p$ denotes the heat kernel and $\{\mathcal{P}_t\}_{t\ge0}$ the
corresponding heat semigroup. We pause to mention a technical aside: The standard literature on
SPDE does not treat $\T$ as the torus, rather as an interval in $\R$. Therefore, ``continuity''
in that setting does not quite mean what it does in the present setting. For this reason,
we complete the above picture by claiming, and subsequently proving, that $(t\,,x)\mapsto \psi_N(t\,,x)$
is indeed a.s.\ continuous in the present sense for every $N\ge1$. Because of the Kolmogorov
continuity theorem, this claim follows as soon as we prove that
$\psi_N(t\,,1)=\psi_N(t\,,-1)$ a.s.\ for every $t>0$ and $N\in\N$.
But this follows immediately from \eqref{eq:key:psi}
and the elementary fact that  $p_t(x\,,y)=p_t(x+2\,,y)$ for every $x,y\in\T$,
where we recall addition in $\T$ is always addition mod 2. We return to the bulk of the argument.

Because $V_N(0)=0$, $\sigma(0)=0$, and $\psi_0\ge0$, the comparison theorem 
for SPDEs (Lemma \ref{lem:comparison}) tells us that
with probability one,
\begin{equation}\label{eq:psi_N:ge:0}
	\psi_N(t\,,x) \ge 0\qquad\text{for all $t\ge0$,
	$x\in\T$, and $N\ge 1$.}    
\end{equation}

Next, let us observe that the $V_N$'s have the following consistency property:
$V_N(x)=V_{N+1}(x)$ for all 
$x\in[0\,,N]$. Furthermore,
Part 4 of Lemma \ref{lem:F} ensures that there exists a non-random integer $N_0>1$ such that
\begin{equation*}
	V_N(x) \geq V_{N+1}(x)\quad\text{for all $x\in\R$ and all $N\ge N_0$.}
\end{equation*}
Thus, a second appeal to the comparison theorem for SPDEs 
(Lemma \ref{lem:comparison}) yields the following a.s\ bound:
\begin{equation*}
	\psi_N(t\,,x) \geq \psi_{N+1}(t\,,x)\quad\text{for all $t\ge0$, 
	$x\in\T$, and all $N\ge N_0.$}
\end{equation*}
In particular,
\begin{equation*}
	\psi(t\,,x) := \lim_{N\to\infty}\psi_N(t\,,x)
\end{equation*}
exists for all $t\ge0$ and $x\in\T$ off a single $\P$-null set. 
Since the infinite-dimensional process $t\mapsto\psi_N(t)$ is 
progressively measurable with respect to the underlying Brownian 
filtration $\cF:=\{\cF_t\}_{t\geq0}$, so is the random process 
$t\mapsto\psi(t)$. 

Define  
\begin{equation*}%\label{eq:T_N}
	T_N := \inf\left\{ t\ge0: \|\psi_N(t)\|_{C(\T)} \ge N\right\}
	\qquad\text{for all $N\in\N$},
\end{equation*}
where $\inf\varnothing:=\infty$. Since $\{f\in C(\T):\, \|f\|_{C(\T)}\ge N\}$
is closed for every $N\in\N$, and because $\cF$ satisfies the ``usuall conditions'' of
stochastic integration theory, every $T_N$ is an $\cF$-stopping time.
The uniqueness assertion
for each $\psi_N$, and elementary properties of the 
Walsh stochastic integral \citep{wal86}, together imply that
\begin{equation*}
	\psi_N(t) = \psi_{N+1}(t)\qquad\text{for all $t\in[0\,,T_N)$,}
\end{equation*}
almost surely for all large $N$. We can iterate this to see that $\psi_N(t)=\psi_{N+M}(t)$
for all $t\in[0\,,T_N)$ a.s.\ for all large $N$ and $M\ge1$. Let $M\to\infty$ to find that
\begin{equation*}
	\psi(t) = \psi_N(t)\qquad\text{for all $t\in[0\,,T_N)$},
\end{equation*}
almost surely for every $N\ge1$. Because $\psi_N$ is a.s.\ continuous, 
it follows that $T_N$ can also be realized as the first
time $t\ge0$ that $\|\psi(t)\|_{C(\T)}\ge N$. In particular, 
$T_N\le T_{N+1}$ a.s.\ for all $N\ge1$ and hence,
\begin{equation*}
	T_\infty := \lim_{N\to\infty}T_N\quad\text{exists a.s.}
\end{equation*}
Moreover, it is immediate that $\psi$ is almost surely continuous on 
$(0\,,T_\infty)\times\T$ a.s. 

\begin{proposition}\label{pr:tail:T_N}
	$T_\infty=\lim_{N\to\infty} T_N=\infty$ a.s. Therefore, $\psi\in C_+(\R_+\times\T)$ a.s.
\end{proposition}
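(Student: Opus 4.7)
The plan is to dominate every truncated solution $\psi_N$ by a single continuous random field $Z$ whose law does not depend on $N$; locally bounded behaviour of $Z$ in time will then force the stopping times $T_N$ to diverge.

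The key observation is that Lemma \ref{lem:F}(3) gives $K:=\sup_{x\ge0}V(x)<\infty$, and this pointwise upper bound is inherited by every truncation: $V_N(w)\le K$ for every $w\in\R$ and every $N\ge1$. Since $\sigma$ is globally Lipschitz continuous and the constant $K$ is trivially Lipschitz, Walsh's well-posedness theory produces a unique random field $Z$ with $Z\in C(\R_+\times\T)$ a.s.\ that solves the mild form of
\[
	\partial_t Z = \partial_x^2 Z + K + \sigma(Z)\,\dot W,\qquad Z(0\,,\cdot)=\psi_0,
\]
subject to periodic boundary conditions. Because $V_N(\cdot)\le K$ pointwise, the diffusion coefficients agree, and the initial data match, the comparison theorem for SPDEs (Lemma \ref{lem:comparison}) gives $\psi_N(t\,,x)\le Z(t\,,x)$ almost surely for every $N\ge1$, $t\ge0$, and $x\in\T$. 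Combined with the lower bound $\psi_N\ge0$ from \eqref{eq:psi_N:ge:0}, this yields the uniform-in-$N$ estimate
\[
	\|\psi_N(t)\|_{C(\T)}\le \|Z(t)\|_{C(\T)}\qquad\text{a.s., for every }t\ge0.
\]

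Now fix an arbitrary $T>0$. Because $Z$ is a.s.\ continuous on the compact set $[0\,,T]\times\T$, the quantity $M_T:=\sup_{t\in[0,T]}\|Z(t)\|_{C(\T)}$ is finite a.s. For any integer $N>M_T$ one has $\|\psi_N(t)\|_{C(\T)}\le M_T<N$ for every $t\in[0\,,T]$, and hence $T_N>T$. Since $T_N$ is non-decreasing in $N$, this forces $T_\infty\ge T$ a.s., and as $T$ was arbitrary we obtain $T_\infty=\infty$ a.s. Continuity of $\psi$ on $\R_+\times\T$ then follows from the identification $\psi\equiv\psi_N$ on $[0\,,T_N)$ together with the a.s.\ continuity of each $\psi_N$; non-negativity follows from \eqref{eq:psi_N:ge:0}.

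The only step that deserves real caution is the comparison: one has to invoke Lemma \ref{lem:comparison} to compare an SPDE with a bounded, truncated, Lipschitz drift against one with a constant drift that shares the same diffusion coefficient. This is a textbook application of the monotonicity-in-the-drift form of the SPDE comparison principle, but it is where the argument lives or dies. Global existence of the dominating field $Z$, by contrast, is routine because both its drift and its diffusion coefficient are globally Lipschitz, so standard Picard iteration plus BDG moment estimates produce a solution on every $[0\,,T]\times\T$ with the required continuity.
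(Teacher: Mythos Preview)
Your proof is correct, and in fact cleaner than the paper's for the bare statement of the proposition. The paper takes a different comparison: rather than using $V_N(w)\le K=\sup_{x\ge0}V(x)$, it uses the linear bound $V_N(w)\le w$ (valid since $F\ge0$) and compares $\psi_N$ to the solution $u$ of $\partial_t u=\partial_x^2 u + u + \sigma(u)\dot W$. Because that drift is unbounded, the paper then rewrites $u=\e^t U$ with $U$ solving an SPDE with time-dependent but uniformly Lipschitz diffusion, obtains explicit moment bounds $\E[\sup_{t\in[0,T]}\|U(t)\|_{C(\T)}^k]<\infty$, and concludes via Chebyshev that $\P\{T_N\le T\}=O(N^{-k})$.

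Your route avoids the change of variables and the moment computation entirely: a constant drift keeps the dominating SPDE globally Lipschitz from the start, and a.s.\ continuity of $Z$ on compacts is all you need. The trade-off is that the paper's approach delivers quantitative tail decay for $\P\{T_N\le T\}$ as a by-product, which it immediately recycles in the Corollary following the proposition to get an explicit lower growth rate $T_N=\Omega((\log N)^3/(\log\log N)^2)$. Your argument, being purely qualitative, does not give this for free --- though one could of course recover moment bounds on $Z$ by the same standard estimates if needed.
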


\begin{proof}
	Since $\psi_N\ge0$ a.s.\ [see \eqref{eq:psi_N:ge:0}]
	and $V_N(w)\le w$ for all $w\ge0$, the comparison 
	theorem for SPDEs (Lemma \ref{lem:comparison})
	ensures that $0\le \psi_N(t\,,x) \le  u (t\,,x)$ for all $t\ge0$ 
	and $x\in\T$, where $ u $
	denotes the  unique, continuous mild solution to the SPDE ,
	\begin{equation*}
		\partial_t u (t\,,x) = 
		\partial_x^2 u (t\,,x) 
		+  u (t\,,x)+ 
		\sigma( u (t\,,x))\dot{W}(t\,,x),
	\end{equation*}
	for $t>0$ and $x\in\T$, subject to periodic boundary conditions
	and  initial profile $u (0)=\sup_{x\in \T} \psi_0(x)$.  See \cite{wal86}.
	
	Define
	\begin{equation*}
		g(t\,,w) := \e^{-t}\sigma\left( \e^t w\right)\qquad\text{for all $t\ge0$
		and $w\in\R$}.
	\end{equation*}
	It is easy to see that we may write $ u (t\,,x) = \e^t   U (t\,,x)$,
	where $  U $ denotes the unique, continuous mild solution to the SPDE,
	\begin{equation*}
		\partial_t  U (t\,,x) = \partial_x^2
		  U (t\,,x) 
		+ g(t\,,  U (t\,,x))\dot{W}(t\,,x),
	\end{equation*}
	for $t>0$ and $x\in\T$, subject to same boundary and initial values as $ u $ was.
	This elementary fact holds because:
	\begin{compactenum}
		\item The fundamental solution to the linear operator
			$\varphi \mapsto \partial_t \varphi - \partial^2_x \varphi - \varphi$ is
			$\exp(t)$ times the fundamental solution to the heat operator
			$\varphi\mapsto \partial_t \varphi - \partial^2_x\varphi$; and
		\item $|g(t\,,w)-g(t\,,z)|\le\lip_\sigma |w-z|$ uniformly for all $t\ge0$
			and $w,z\in\R$. Therefore, we can apply standard methods from SPDEs to establish
			the said results about the random field $  U $; see  \citet[][Chapter 3]{wal86}.
	\end{compactenum}
	Because $ u $ and $  U $ are
	also nonnegative a.s., we can combine the preceding comparison arguments in order
	to see that
	\begin{equation} \label{psi:V}
		\sup_{N\in\N}
		\E\left(\sup_{t\in[0,T]}\left\| \psi_N(t)\right\|_{C(\T)}^k\right) \le 
		\e^{kT}\E\left( \sup_{t\in[0,T]}\left\| 
		  U (t)\right\|_{C(\T)}^k\right),
	\end{equation}
	for all real numbers $k\ge2$ and $T>0$.
	
	Now the proof of Proposition 4.1 in 
	\cite{KKMS20} can easily be repurposed in order to show that 
	there exist real numbers $D_1,D_2>0$
	(depending only on $\lip_\sigma$ and $\|\psi_0\|_{C(\T)}$) such that
	\[
		\E\left( |   U (t\,,x)|^k\right) \le D_1^k\exp\{D_2 k^3t\},
	\]
	uniformly for all $x\in\T$ and $t\ge0$. Also, standard methods
	\citep[see][Remark 4.3]{KKMS20}
	can be employed to show that
	\begin{equation}\label{U:KCT}
		\sup_{\substack{x,y\in\T\\ 0\le s,t\le T\\(t,x)\neq(s,y)}}\left\| 
		\frac{|  U (t\,,x)-  U (s\,,y)|}{|x-y|^{1/2}
		+|s-t|^{1/4}}\right\|_k<\infty
		\qquad\text{for every $T>0$ and $k\ge2$.}
	\end{equation}
       We may take the supremum
	over all $s,t\in[0\,,T]$ (and not $s,t\in[t_0\,,T]$) because we are not
	maximizing over an auxiliary parameter $\lambda$ in the present
	setting.
	In any case,
	we may combine the preceding  in order to 
	deduce that
	\begin{equation}\label{U:in:Lk}
		\sup_{t\in[0,T]}\|  U (t)\|_{C(\T)}\in L^k(\Omega)
		\qquad\text{for every $k\ge 2$ and $T>0$.}
	\end{equation}
	This and \eqref{psi:V} in turn imply that
	\begin{equation}\label{P(T_N>T)}
		\P\left\{ T_N\le T\right\} = 
		\P\left\{ \sup_{t\in[0,T]}\left\| \psi_N(t)\right\|_{C(\T)}
		\ge N\right\} = O(N^{-k})\qquad\text{as $N\to\infty$},
	\end{equation}
	which is more than good enough to complete the proof, since we have demonstrated already
	that $\psi_N\in C_+(\T)$ a.s.\ and $\psi=\psi_N$
	a.s.\ on $[0\,,T_N)\times\T$ for every $N\in\N$.
\end{proof}

We can now return to the proof of Theorem \ref{th:exist-unique} and establish 
that the continuous, progressively-measurable process $\psi$ is indeed
the solution to \eqref{eq:RD}.

As part of the proof of Proposition \ref{pr:tail:T_N} we showed that
\[
	\sup_{t\in[0,T]}\|\psi(t)\|_{C(\T)}\in \bigcap_{k\ge 2}L^k(\Omega),
\]
for all real numbers $T>0$; see for example \eqref{P(T_N>T)}. Since
$\sigma$ is Lipschitz, this fact and a standard appeal to the Burkholder-Davis-Gundy
inequality together show that
\begin{equation}\label{I}
	\mathcal{I}(t\,,x) := \int_{(0,t)\times\T}
	p_{t-s}(x\,,y) \sigma(\psi(s\,,y))\, W(\d s\,\d y)
\end{equation}
is a well-defined Walsh stochastic integral for every $(t\,,x)\in\R_+\times\T$,
and $\mathcal{I}$ is continuous a.s. Furthermore, elementary properties of
stochastic integrals show that, because $\psi=\psi_N$ on $[0\,,T_N)\times\T$,
\begin{equation*}
	\mathcal{I}(t\,,x)=\int_{(0,t)\times\T}
	p_{t-s}(x\,,y) \sigma(\psi_N(s\,,y))\, W(\d s\,\d y),
\end{equation*}
for all $(t\,,x) \in[0\,,T_N)\times\T$, almost surely. 
This and \eqref{eq:key:psi} together imply that, with probability one,
$\psi$ solves \eqref{eq:mild}
simultaneously for all $(t\,,x)\in[0\,,T_N)\times\T$. Let $N\to\infty$
and appeal to Proposition \ref{pr:tail:T_N} in order to see that, indeed,
\eqref{eq:RD} has a continuous mild solution $\psi$. The uniqueness 
of this $\psi$ follows from the uniqueness of the $\psi_N$'s.

Next we prove that $\psi(t\,,x)\ge0$ a.s.\ for every $t>0$ and $x\in\T$. Indeed,
for every $N\in\N$,
\[
	\P\left\{ \psi(t\,,x) <0 ~,~ T_N >t \right\}
	= \P\left\{ \psi_N(t\,,x) <0 ~,~ T_N >t \right\}=0,
\]
since $\psi_N$ solves an SPDE with Lipschitz-continuous coefficients; 
see \cite{Mueller-Support} and \cite{Shiga94}. Since $\lim_{N\to\infty}\P\{T_N\le t\}=0$,
this proves that $\psi(t\,,x)\ge0$ a.s. 

Suppose in addition that $\psi_0\neq\mathbb{0}$. Thanks to the comparison theorem
\citep{Mueller-Support,Shiga94},
$\psi$ is everywhere bounded below by the solution to \eqref{eq:RD}.
[Apply comparison first to $\psi_N$ and then use the facts that $\psi=\psi_N$
on $[0\,,T_N)\times\T$ and $\lim_{N\to\infty}T_N=
\infty$ a.s.] Therefore, the strict positivity theorem of \cite{Mueller-Support} implies that
\[
	\P\left\{\inf_{s\in[0,t]}\inf_{x\in\T}\psi_N(s\,,x)\le0\right\}=0,
\]
for all $t>0$ and $N\in\N$;
see also \cite{Mueller-Nualart} and \citet[eq.\ (5.15)]{CJK12}. In particular, it follows that
\[
	\P\left\{ \inf_{x\in\T}\psi(t\,,x) \le 0 \right\}
	= \lim_{N\to\infty} \P\left\{ \inf_{x\in\T}\psi_N(t\,,x) \le 0 ~,~ T_N >t \right\}=0,
\]
for all $t\ge0$.
This completes all but one part of the proof of Theorem \ref{th:exist-unique}:
It remains only to prove that
$\P\{\psi(t)\in C^\alpha(\T)\}=1$ for every $\alpha\in(0\,,1/2)$ and $t>0$.\footnote{This is a somewhat subtle statement.
	For example, the condition ``$t>0$'' cannot in general be replaced by
	``$t\ge0$,'' as $\psi(0)$ need not be in $C^\alpha(\T)$ for any $\alpha\in(0\,,1/2)$.
}
This fact follows
immediately from the continuity of $\psi(t)$ -- shown earlier here --
and the fact that
\[
	\E\left( \|\psi(t)\|_{C^\alpha(\T)}\right)<\infty
	\qquad
	\text{for every $\alpha\in(0\,,1/2)$ and $t>0$};
\]
see Proposition \ref{pr:tight} below.
An inspection of the
proof of Proposition \ref{pr:tight} shows that our reasoning is not circular. Thus, we may conclude the proof of
Theorem \ref{th:exist-unique}.\qed\\

%\begin{corollary}
%	If $\inf_{\T}\psi_0>0$, then there exists a number $\varepsilon\in(0\,,1)$ such that
%	$\P\{T_N\le\varepsilon\log N\}=O(N^{-2})$ as $N\to\infty$. In particular,
%	\[
%		\liminf_{N\to\infty} \frac{T_N}{\log N}>0\quad\text{a.s.}
%	\]
%\end{corollary}
%
%
%\begin{proof}
%	$c_1,c_2\ge1$ such that
%	\[
%		\E\left( \sup_{t\in[0,T]}\|U(t)\|_{C(\T)}^k\right) \le c_1^{k/2}\e^{c_2k^3T}
%		\qquad\text{for all real numbers $T\ge1$ and $k\ge2$}.
%	\]
%	\[
%		\P\left\{ T_N\le T\right\} \le \frac{c_1^{k/2}\e^{c_2k^3T + kT}}{N^k}\le
%		\frac{c_1^{k/2}\e^{2c_2k^3T}}{N^k}.
%	\]
%	\[
%		\P\left\{ T_N\le \varepsilon \log N\right\} \le
%		c_1^{k/2} N^{2c_2\varepsilon k^3-k}
%	\]
%	\[
%		\P\left\{ T_N \le \frac{\log N}{4c_2k^2}\right\} \le \left( \frac{c_1}{N}\right)^{k/2}\
%		\qquad\text{for all $n\in\N$ and $k\ge2$}.
%	\]
%\end{proof}

We pause to observe that the proof of Proposition \ref{pr:tail:T_N} can be slightly 
generalized in order to yield information about the rate at which $T_N$ goes to infinity as 
$N\to\infty$. Although we will not need the following in the sequel, we state and prove
it for potential future uses.

\begin{corollary}%\label{cor:tail:T_N}
	There exists a real number $L\ge 0$ such that 
	\begin{equation}\label{eq:tail:T_N}
		\P\{T_N \le T\} \le L\exp\left\{ - \frac{(\log N)^{3/2}}{L\sqrt T} \right\}
		\qquad\text{for all $N\in\N$ and $T>0$}.
	\end{equation}
	In particular,\footnote{We recall that $a_N=\Omega(b_N)$ as $N\to\infty$ for
		positive $a_N$ and $b_N$'s iff
		$\liminf_{N\to\infty}(a_N/b_N)>0$.}
	\[
		T_N =\Omega\left( \frac{(\log N)^3}{(\log\log N)^2}\right)\qquad\text{as $N\to\infty$ a.s.}
	\]
\end{corollary}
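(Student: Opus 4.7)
The strategy is to sharpen the argument already used for Proposition~\ref{pr:tail:T_N} by tracking the dependence of the moment bounds on $k$, and then to optimize the resulting Chebyshev-type estimate. Recall from the proof of Proposition~\ref{pr:tail:T_N} that $0\le\psi_N(t,x)\le u(t,x)=e^t U(t,x)$ a.s., where $U$ is the solution to an SPDE with a Lipschitz (uniformly in $t$) diffusion coefficient $g(t,\cdot)$, and that
\[
	\E\left(|U(t,x)|^k\right)\le D_1^k\exp\{D_2 k^3 t\}
\]
uniformly in $(t,x)\in\R_+\times\T$, together with the moment H\"older bound \eqref{U:KCT} whose constant depends on $T,k$ in an analogous manner. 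First I would combine these two ingredients with a standard Kolmogorov--\v Centsov / chaining argument to upgrade the pointwise moment bound to a bound on the running supremum, namely
\[
	\E\!\left(\sup_{(t,x)\in[0,T]\times\T}|U(t,x)|^k\right)\le C_1^k\exp\{C_2 k^3 T\},
\]
for positive constants $C_1,C_2$ that do not depend on $k\ge2$ or $T>0$; the cube in the $k^3$ exponent is preserved because the chaining argument only multiplies by polynomial factors in $k$ which can be absorbed at the cost of enlarging $C_1,C_2$.

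Next, since $\psi=\psi_N$ on $[0\,,T_N)\times\T$ and $\psi_N\le e^T\sup_{[0,T]\times\T}|U|$, Chebyshev's inequality yields
\[
	\P\{T_N\le T\}\le\P\!\left\{\sup_{(t,x)\in[0,T]\times\T}|U(t,x)|\ge Ne^{-T}\right\}
	\le N^{-k}e^{kT}C_1^k\exp\{C_2 k^3 T\},
\]
valid for every integer $k\ge 2$. Taking logarithms and optimizing the resulting expression $-k\log N+k(\log C_1+T)+C_2k^3T$ in $k$ gives the stationary choice $k=\sqrt{(\log N-\log C_1-T)/(3C_2T)}$; for $\log N$ sufficiently large relative to $T$ this produces a bound of the order $-c(\log N)^{3/2}/\sqrt T$ in the exponent, while for the remaining regime (say $\log N \le 2T$) the claimed inequality \eqref{eq:tail:T_N} is trivially satisfied by choosing the constant $L$ large enough. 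The combination of the two regimes delivers \eqref{eq:tail:T_N}.

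For the almost-sure lower bound on $T_N$, set $\tau_N := (\log N)^3/(\log\log N)^2$. A direct application of \eqref{eq:tail:T_N} with $T=\varepsilon\tau_N$ gives
\[
	\P\{T_N\le\varepsilon\tau_N\}\le L(\log N)^{-1/(L\sqrt{\varepsilon})},
\]
which is not summable in $N$. However, $T_N$ is nondecreasing in $N$, so I would apply Borel--Cantelli along the doubly-exponential subsequence $N_j:=\exp(2^j)$: then $\log N_j=2^j$ and $\log\log N_j\sim j\log 2$, whence $\P\{T_{N_j}\le\varepsilon\tau_{N_j}\}\le L\exp(-cj/\sqrt\varepsilon)$ which is summable for every $\varepsilon>0$. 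Borel--Cantelli therefore yields $T_{N_j}\ge\varepsilon\tau_{N_j}$ for all sufficiently large $j$ a.s. For arbitrary $N$ with $N_j\le N<N_{j+1}$ the ratio $\tau_{N_{j+1}}/\tau_{N_j}$ is uniformly bounded (since $\log N_{j+1}=2\log N_j$), so monotonicity of $T_N$ gives $T_N\ge T_{N_j}\ge\varepsilon\tau_{N_j}\ge (\varepsilon/C)\tau_N$ eventually, a.s. Letting $\varepsilon$ be any fixed positive number establishes the desired $\Omega$-bound. The only mildly delicate step is the sharpening of the supremum moment bound in step one -- the rest is calculus and a straightforward Borel--Cantelli along a sparse subsequence.
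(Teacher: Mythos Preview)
Your proposal is correct and follows essentially the same approach as the paper: both arguments upgrade the pointwise moment bound on $U$ to a sup-norm bound of the form $C_1^k\e^{C_2k^3T}$ via chaining, apply Chebyshev, optimize in $k$ to obtain \eqref{eq:tail:T_N}, and then use Borel--Cantelli along a subsequence combined with the monotonicity of $N\mapsto T_N$. The only cosmetic difference is the choice of subsequence: the paper takes $N=2^n$ (which already yields a summable series once $\varepsilon$ is small enough, and that suffices for an $\Omega$-bound), whereas you take the sparser $N_j=\exp(2^j)$, which makes the series summable for every $\varepsilon>0$ and slightly cleans up the interpolation step.
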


\begin{proof}
	One can combine \eqref{U:KCT} and \eqref{U:in:Lk}, and apply a chaining argument
	or see \citet[Proposition 5.8]{KKMS20},
	in order to see that there exists a number $L_1>0$ such that
	\[
		\E\left( \sup_{t\in[0,T]}\|U(t)\|_{C(\T)}^k\right) \le L_1^k\e^{L_1 k^3T}
		\qquad\text{for all $T>0$ and $k\ge 2$}.
	\]
	Therefore, we may deduce from \eqref{psi:V} that
	\[
		\sup_{N\in\N}
		\E\left( \sup_{t\in[0,T]}\|\psi_N(t)\|_{C(\T)}^k\right) \le L_1^k\e^{L_2 k^3T}
		\qquad\text{for all $T>0$ and $k\ge 2$},
	\]
	for a suitably large number $L_2>1$ that does not depend on $(k\,,T)$. In particular,
	the Chebyshev inequality yields a number $L_3>0$ such that
	\[
		\P\{T_N \le T\} \le \inf_{k\ge2}\frac{L_1^k\e^{L_2k^3T}}{N^k}
		\le \exp\left\{ - \frac{(\log N)^{3/2}}{L_3\sqrt T} \right\}
		\qquad\text{for all $T>0$, $N>L_1^2$}.
	\]
This implies \eqref{eq:tail:T_N} for all $N\in\N$, provided that
	we choose a sufficiently large $L$. The lower bound for the growth rate of $T_N$
	follows from that probability bound, a suitable appeal to the Borel-Cantelli lemma along
	the subsequence $N=2^n$ [$n\in\N$], and monotonicity. We omit the details.
\end{proof}

We now return to the main discussion and conclude the section by observing
that the proof of the positivity portion of
Theorem \ref{th:exist-unique} used a localization argument -- via stopping times
$\{T_N\}_{N=1}^\infty$ -- that reduced the positivity of the solution of
\eqref{eq:RD} to the positivity of the solution of an SPDE with Lipschitz-continuous coefficients.
The very same localization argument can be used, in conjunction with the comparison theorem
of SPDEs with Lipschitz-continuous
coefficients \citep{Mueller-Support,Shiga94}, in order to yield the following.

\begin{lemma} \label{lem:comparison}
	For every $i=1,2$, let $\psi^{(i)}$ denote the continuous solution to the following SPDE
	with periodic boundary conditions, as in \eqref{eq:RD}:
	\begin{equation*}\left[\begin{split}
		&\partial_t u(t\,,x)  =  \partial_x^2 u(t\,,x) + a_i(u(t\,,x)) 
			+ \sigma(u(t\,,x)) \dot{W}(t\,,x)
			\qquad\text{for $(t\,,x)\in(0\,,\infty)\times\T$};\\
		&\text{subject to}\quad u(0\,,x) =\psi_0^{(i)}(x)
			\hskip2in\text{for all $x\in\T$};   
	\end{split}\right.\end{equation*}
	where $\psi_0^{(i)}\in C(\T)$ is non random, and $a_i:\R\to\R$ is Lipschitz continuous.
	Suppose, in addition, that $\psi_0^{(1)}\le\psi_0^{(2)}$ everywhere on $\T$, and $a_1\le a_2$
	everywhere on $\R$.
	Then,
	\[
		\P\left\{\psi^{(1)}(t\,,x) \le \psi^{(2)}(t\,,x)
		\quad\text{for all $(t\,,x)\in \R_+\times\T$}\right\}=1.
	\]
\end{lemma}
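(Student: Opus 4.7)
The plan is to deduce this directly from the classical pathwise comparison theorem of \cite{Shiga94,Mueller-Support} for SPDEs with Lipschitz coefficients. Because $a_1$, $a_2$, and $\sigma$ are all globally Lipschitz, standard Walsh-type theory \citep[][Chapter 3]{wal86} produces unique continuous mild solutions $\psi^{(1)}$ and $\psi^{(2)}$ on $\R_+\times\T$, so the non-Lipschitz obstacles that forced the truncation/localization scheme of Section \ref{sec:pf-thm-1} do not arise in an essential way. One can nevertheless reuse that localization scheme verbatim: set $\tau_N := \inf\{t\ge 0 : \max_{i=1,2}\|\psi^{(i)}(t)\|_{C(\T)}\ge N\}$, replace $a_1$ and $a_2$ by their restrictions to $[-N,N]$ (making them bounded and still Lipschitz), apply comparison to the resulting truncated solutions (which agree with $\psi^{(1)}$ and $\psi^{(2)}$ on $[0\,,\tau_N)\times\T$ by uniqueness), and let $N\to\infty$ using $\tau_N\to\infty$ a.s.

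The core invocation is then the following: if two mild solutions to an SPDE driven by the same space-time white noise share a common Lipschitz diffusion coefficient $\sigma$, have Lipschitz drifts $a_1 \le a_2$ pointwise on $\R$, and start from continuous initial data $\psi_0^{(1)} \le \psi_0^{(2)}$, then with probability one $\psi^{(1)}(t\,,x) \le \psi^{(2)}(t\,,x)$ for every $(t\,,x)\in\R_+\times\T$ simultaneously. This is precisely what the lemma claims, so the only remaining task is to verify that the cited theorem is valid on the torus rather than on $\R$ or on a bounded interval with Dirichlet boundary.

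This last verification is the potential obstacle, and it is routine. The arguments in \cite{Shiga94,Mueller-Support} rely only on (i) nonnegativity of the heat kernel, which is immediate from the series representation \eqref{eq:heat-kernel-expansion}; (ii) well-posedness of mild solutions for Lipschitz coefficients, which holds on $\T$ as noted above; and (iii) a Gronwall-type estimate on the negative part of $\psi^{(2)} - \psi^{(1)}$, which is a pointwise-in-$(t\,,x)$ computation insensitive to the boundary. If a self-contained argument is preferred, one subtracts the two mild formulations, applies It\^o's formula to a smooth convex approximation $\phi_\varepsilon$ of $u\mapsto (u^+)^2$ evaluated at $\psi^{(1)}(t\,,x)-\psi^{(2)}(t\,,x)$, decomposes the drift difference as $a_1(\psi^{(1)})-a_2(\psi^{(2)}) = [a_1(\psi^{(1)})-a_1(\psi^{(2)})] + [a_1(\psi^{(2)})-a_2(\psi^{(2)})]$ where the second bracket is $\le 0$, and exploits the Lipschitz bound on $\sigma$ to close a Gronwall inequality for $\E\int_\T \phi_\varepsilon(\psi^{(1)}(t\,,x)-\psi^{(2)}(t\,,x))\,\d x$; sending $\varepsilon\downarrow 0$ and using $\psi_0^{(1)}\le \psi_0^{(2)}$ yields $(\psi^{(1)}-\psi^{(2)})^+\equiv 0$, whence the result.
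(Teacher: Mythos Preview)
Your proposal is correct and follows exactly the approach the paper takes: the paper's ``proof'' is the single paragraph preceding the lemma, which says only that the localization argument from the proof of Theorem~\ref{th:exist-unique} combined with the comparison theorem of \cite{Mueller-Support,Shiga94} yields the result. You have spelled out precisely this, including the torus adaptation and an optional self-contained $(\psi^{(1)}-\psi^{(2)})^+$ argument, so your write-up is in fact more detailed than what the paper provides.
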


We will repeatedly appeal to this comparison result.

%%%%%%%%%%%%%%%%%%%%%%%%%%%%%%%%%%%%%%%%%%%%%%%%%%

%%%%%%%%%%%%%%%%%%%%%%%%%%%%%%%%%%%%%%%%%%%%%%%%%%
%%%%%%%%%%%%%%%%%%%%%%%%%%%%%%%%%%%%%%%%%%%%%%%%%%
%%%%% Existence of Invariant Measures %%%%%%%%%%%%
%%%%%%%%%%%%%%%%%%%%%%%%%%%%%%%%%%%%%%%%%%%%%%%%%%
%%%%%%%%%%%%%%%%%%%%%%%%%%%%%%%%%%%%%%%%%%%%%%%%%%

%%%%%%%%%%%%%%%%%%%%%%%%%%%%%%%%%%%%%%%%%%%%%%%%%%

\section{Existence of Invariant Measures}
%\label{sec:exist-invar}

We now begin the proof of Theorem \ref{th:RD}.  Throughout, let 
$\psi$ denote the unique mild
solution to \eqref{eq:RD} with values in $C_+(\R_+\times\T)$, whose existence was established in 
Theorem \ref{th:exist-unique}.

%%%%%%%%%%%%%%%%%%%%%%%%%%%%%%%%%%%%%%%%%%%%%%%%%%
%%%%%%%%%%%%%%%%%%%%%%%%%%%%%%%%%%%%%%%%%%%%%%%%%%
%%%%% Tightness %%%%%%%%%%%%%%%%%%%%%%%%%%%%%%%%%%
%%%%%%%%%%%%%%%%%%%%%%%%%%%%%%%%%%%%%%%%%%%%%%%%%%
%%%%%%%%%%%%%%%%%%%%%%%%%%%%%%%%%%%%%%%%%%%%%%%%%%

\subsection{Tightness}

The first stage of our demonstration of Theorem \ref{th:RD} is proof of tightness
in a suitable space. We begin the proof of tightness with a few technical lemmas.

%\begin{proposition}\label{pr:tight}
%	For fixed $\lambda>0$, there exists constants $\theta \in (0,1)$ and
%	$C:=C(\lip, \lambda, k, \psi_0) \in (0,\infty)$
%	such that for all $k\geq 1$
%	\begin{equation} \label{eq:k-moment}
%	\sup_{t\geq 1}\E \left[\sup_{x\in [-1,1]} \psi(t, x;\lambda)^k\right] \leq	 C,
%	\end{equation} and 
%	\begin{equation} \label{eq:GL:cont}
%	\sup_{t\geq 1} \E \left[\,\left( \sup_{\substack{-1\leq x, y \leq 1 \\ x\neq y}}
%	\frac{\left|\psi(t, x;\lambda)-\psi(t, y; \lambda)\right|}{|x-y|^\theta}  \right)^k\,\,\,\right] \leq C.	
%	\end{equation}
%\end{proposition}

\begin{lemma}\label{lem:p-p}
	There exists a real number $K>0$ such that
	\[
		\int_0^\infty\d s\int_{\T}\d y\
		| p_s(x\,,y)-p_s(z\,,y)| \le K|x-z|\log_+(1/|x-z|)
		\qquad\text{for all $x,z\in\T$},
	\]
	where $\log_+a:=\log(\e\vee a)$ for all $a\ge0$.
\end{lemma}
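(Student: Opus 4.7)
My approach is to use the Fourier/spectral expansion of the torus heat kernel,
\[
	p_s(x,y) = \frac{1}{2}\sum_{n\in\Z} e^{-\pi^2 n^2 s}\, e^{i\pi n (x-y)},
\]
since on the torus the constant mode $n=0$ is precisely what would cause the $s$-integral to diverge, and this mode cancels upon subtracting $p_s(z,y)$. The image representation \eqref{eq:heat-kernel-expansion} taken alone would give a divergent bound for large $s$, so some form of this cancellation is essential.

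First I would write
\[
	p_s(x,y) - p_s(z,y) = \frac{1}{2}\sum_{n\neq 0} e^{-\pi^2 n^2 s}\bigl(e^{i\pi n x}-e^{i\pi n z}\bigr)e^{-i\pi n y},
\]
take absolute values term-by-term, and use $\int_\T |e^{-i\pi n y}|\,\d y = 2$ to obtain
\[
	\int_\T |p_s(x,y)-p_s(z,y)|\,\d y \;\le\; \sum_{n\neq 0} e^{-\pi^2 n^2 s}\,\bigl|e^{i\pi n x}-e^{i\pi n z}\bigr|.
\]
Next I would integrate in $s$ using $\int_0^\infty e^{-\pi^2 n^2 s}\,\d s = 1/(\pi^2 n^2)$, together with the elementary pointwise bound $|e^{i\pi n x}-e^{i\pi n z}| \le \min(2\,,\pi|n| r)$ where $r := |x-z|$, to reduce the whole claim to the deterministic estimate
\[
	\int_0^\infty \d s \int_\T \d y\, |p_s(x,y)-p_s(z,y)| \;\le\; \frac{2}{\pi^2}\sum_{n\ge 1} \frac{\min(2\,,\pi n r)}{n^2}.
\]

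Finally, I would estimate the last sum by splitting at $N_\star := \lceil 1/r \rceil$. For $n \le N_\star$ the minimum equals $\pi n r$, contributing at most $(2r/\pi)\sum_{n\le N_\star} 1/n \lesssim r\,(1+\log(1/r))$ by the harmonic sum; for $n > N_\star$ the minimum equals $2$, contributing at most $(4/\pi^2)\sum_{n>N_\star} 1/n^2 \lesssim 1/N_\star \lesssim r$. Combining the two pieces and absorbing the $O(r)$ term into the logarithmic term (since $\log_+(1/r)\ge 1$) yields the claimed bound $K\,|x-z|\log_+(1/|x-z|)$.

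There is no real obstacle; the only subtle point is recognizing that the $\log$ factor arises naturally from the harmonic sum that appears after the $s$-integration, and that the exponentially decaying factor $e^{-\pi^2 n^2 s}$ on each nonzero Fourier mode is what makes the integral over the full half-line $[0,\infty)$ finite in the first place.
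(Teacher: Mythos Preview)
Your proposal is correct and follows essentially the same route as the paper: both arguments use the spectral expansion $p_s(x,y)=\tfrac12\sum_{n\in\Z}e^{-\pi^2n^2s}e^{i\pi n(x-y)}$ (the paper derives it via Poisson summation), bound $|e^{i\pi nx}-e^{i\pi nz}|$ by $\min(2,\pi|n|\,|x-z|)$, integrate the Gaussian factor in $s$ to get $1/(\pi^2n^2)$, and then split the resulting sum at $n\sim 1/|x-z|$ to produce the harmonic-sum logarithm. The only cosmetic difference is that the paper also singles out the trivial case $|x-z|\ge 1$ explicitly, which your bound handles automatically since $\log_+(1/|x-z|)\ge 1$.
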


\begin{proof}
	Consider the free-space heat kernel for the operator $\partial_t-\partial^2_x$,
	given by
	\[
		G_t(x) := \frac{1}{\sqrt{4\pi t}}\exp\left( -\frac{x^2}{4t}\right)
		\qquad\text{for $t>0$ and $x\in\R$}.
	\]
	In this way we can see from \eqref{eq:heat-kernel-expansion} that 
	$p_t(x\,,y) = \sum_{k=-\infty}^\infty G_t(2k+x-y)$ for
	all $t>0$ and $x,y\in\T$. Thus, we find that
	\begin{equation}\label{p=exp}
		p_t(x\,,y) = \sum_{k=-\infty}^\infty \int_{-\infty}^\infty
		G_t(2w+x-y)\e^{2\pi iwk}\,\d w
		= \frac{1}{2}\sum_{k=-\infty}^\infty\e^{ - \pi^2k^2t -
		i\pi (x-y)k},
	\end{equation}
	by the Poisson summation formula \citep[see][p.\ 630]{FellerVol2}.
	It follows that
	\[
		|p_t(x\,,y) - p_t(z\,,y)| \le \frac{1}{2}\sum_{k=-\infty}^\infty
		\e^{-\pi^2 k^2t}\left| 1 - \e^{i\pi (x-z)k}\right|
		=\frac{1}{\sqrt{2}} \sum_{k=-\infty}^\infty
		\e^{-\pi^2 k^2t}\sqrt{1-\cos(\pi k|x-z|)},
	\]
	uniformly for all $t>0$ and $x,y,z\in\T$.
	Since $1-\cos a \le a^2\wedge 1$ for every $a\in\R$,
	\begin{equation}\label{p-p}
		|p_t(x\,,y) - p_t(z\,,y)| \le \sqrt 2 \sum_{k=1}^\infty
		\e^{-\pi^2 k^2t}\left((|x-z|\pi k)\wedge 1\right),
	\end{equation}
	where the implied constant is universal and finite.
	Integrate both sides over $y\in\T$ and $t\ge0$, and apply Tonelli's theorem to find that
	\[
		\int_0^\infty\d t \int_{\T}\d y\ |p_t(x\,,y)-p_t(z\,,y)|
		\le \frac{2\sqrt 2}{\pi^2} \sum_{k=1}^\infty \frac{(|x-z|\pi k)\wedge 1}{k^2}.
	\]
	Consider separately the cases that
	$|\pi k|\le |x-z|^{-1}$ and $|\pi k|>|x-z|^{-1}$ in order to finish in the case
	that $|x-z|\le1$. If $|x-z|>1$, then we use the trivial bound $|x-z|\le2$
	to see that 
	\[
		C:=\sup_{x,z\in\T}
		\int_0^\infty\d t \int_{\T}\d y\ |p_t(x\,,y)-p_t(z\,,y)|<\infty.
	\]
	The case $|x-z|>1$ follows from this, since $C\le C|x-z|\log_+(1/|x-z|)$.
\end{proof}

\begin{lemma}\label{p-p:2}
	There exists a real number $K>0$ such that
	\[
		\int_0^\infty\d s\int_{\T}\d y\,
		| p_s(x\,,y)-p_s(z\,,y)|^2 \le K|x-z|
		\qquad\text{for all $x,z\in\T$}.
	\]
\end{lemma}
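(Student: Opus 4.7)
The plan is to proceed via Parseval's identity, using the Fourier representation of the periodic heat kernel that was already derived in the proof of Lemma \ref{lem:p-p}. Recall that \eqref{p=exp} gives
\[
p_t(x,y) = \frac{1}{2}\sum_{k=-\infty}^\infty \e^{-\pi^2 k^2 t - i\pi(x-y)k}.
\]
The functions $e_k(y) := 2^{-1/2}\e^{i\pi k y}$ ($k\in\Z$) form a complete orthonormal system of $L^2(\T)$ under the mass-$2$ Haar measure on $\T$. Hence $p_t(x,\cdot)-p_t(z,\cdot)$ has Fourier coefficients $2^{-1/2}\e^{-\pi^2k^2 t}(\e^{-i\pi xk}-\e^{-i\pi zk})$, and Parseval yields
\[
\int_{\T}|p_t(x,y)-p_t(z,y)|^2\,\d y = \frac{1}{2}\sum_{k\in\Z}\e^{-2\pi^2 k^2 t}\bigl|\e^{-i\pi xk}-\e^{-i\pi zk}\bigr|^2 = \sum_{k\in\Z}\e^{-2\pi^2 k^2 t}\bigl(1-\cos(\pi k(x-z))\bigr).
\]

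Next I would carry out the $t$-integration term by term (justified by Tonelli, since all summands are nonnegative), noting that $\int_0^\infty \e^{-2\pi^2 k^2 t}\,\d t = (2\pi^2 k^2)^{-1}$ for $k\neq 0$ while the $k=0$ term vanishes identically. Setting $h:=|x-z|\in[0\,,2]$, this reduces the claim to
\[
\sum_{k\neq 0}\frac{1-\cos(\pi k h)}{2\pi^2 k^2}\leq K h.
\]

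To bound this sum I would use the elementary inequality $1-\cos\theta\leq \tfrac12\theta^2\wedge 2$ and split at $|k|\sim 1/(\pi h)$: the low-frequency terms ($|k|\leq 1/(\pi h)$) contribute at most $O(h)$ via the quadratic bound $1-\cos(\pi kh)\leq \tfrac12(\pi k h)^2$ and the fact that there are $O(1/h)$ such integers, while the high-frequency terms contribute at most $O(h)$ via $1-\cos\leq 2$ and $\sum_{|k|>N}k^{-2}\leq 2/N$ with $N\asymp 1/h$. (Alternatively, one may invoke the closed-form identity $\sum_{k\geq 1}k^{-2}(1-\cos(\pi k h))=\tfrac{\pi^2}{2}h(1-h/2)$ for $h\in[0\,,2]$, which gives the sharp constant $K=\tfrac12$ directly.) There is no real obstacle here -- the whole argument reduces to standard Fourier bookkeeping -- the only point worth watching is the normalization of Haar measure on $\T$, which is fixed to have total mass $2$ and which is what makes the $e_k$ above orthonormal.
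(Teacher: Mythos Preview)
Your proposal is correct and follows essentially the same route as the paper's proof: both expand $p_t(x,\cdot)-p_t(z,\cdot)$ via the Fourier representation \eqref{p=exp}, use orthogonality (your Parseval) to obtain the identity $\int_{\T}|p_t(x,y)-p_t(z,y)|^2\,\d y=\sum_{k}\e^{-2\pi^2k^2t}(1-\cos(\pi k(x-z)))$, integrate in $t$, and then split the resulting sum at $|k|\asymp 1/|x-z|$ using $1-\cos\theta\lesssim \theta^2\wedge 1$. Your care with the Haar-measure normalization and the optional closed-form evaluation are nice touches, but there is no substantive difference in approach.
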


\begin{proof}
  Use (\ref{p=exp}) to obtain that
	\begin{equation*}
	   p_t(x\,,y) - p_t(z\,,y) 
		= \frac{1}{2} \sum_{k=-\infty}^{\infty} \e^{-\pi^2 k^2 t}\e^{-i\pi yk}\left( \e^{-i\pi x k} - \e^{-i \pi z k}  \right) .
	\end{equation*}
	The orthogonality of $\left\{ \e^{-i\pi yk} :  k\in\Z  \right\}$ in $L^2([-1, 1])$  and the elementary bound used for (\ref{p-p}) give
	\begin{eqnarray}
	    \int_{\mathbb{T}}\d y\,|p_t(x\,,y) - p_t(z\,,y)|^2
		&=&  \sum_{k=-\infty}^{\infty} \e^{-2 \pi^2 k^2 t} \label{eq:p2}
		    \left[1 -  \cos\left(\pi k (x-z)\right)\right]\\
		&\le& \sum_{k=-\infty}^{\infty} \e^{-2 \pi^2 k^2 t}[\left(\pi^2 k^2(x-z)^2\right)\wedge 1]. \nonumber
	\end{eqnarray}
	Integrate both sides over $t\geq 0$ to have
	\begin{equation*}
	   \int_{0}^{\infty}\d t \int_{\R} \d y\,
		 | p_s(x\,,y)-p_s(z\,,y)|^2
		 \le \sum_{k=-\infty}^{\infty} \frac{(\pi^2(x-z)^2k^2)\wedge 1}{2\pi^2 k^2 t}.
	\end{equation*}
	Like in lemma \ref{lem:p-p}, we consider the cases that
	$|\pi k|\le |x-z|^{-1}$ and $|\pi k|>|x-z|^{-1}$ to finish the proof.
	
	%According to Lemma B.4 of \cite{KKMS20}, for every $\delta\in(0\,,1)$
	%there exists a number $C_\delta>0$ such that
	%\begin{equation}\label{p-p:2}
	%	\int_{\T} \left| p_t(x\,,y)-p_t(z\,,y)\right|^2\d y 
	%	\le \frac{C_\delta |x-y|^\delta}{t^{(1+\delta)/2}\wedge t^{\delta/2}},
	%\end{equation}
	%for all $t>0$ and $x,z\in\T$. Thus,
	%\[
	%	\int_0^1\d t\int_{\T}\d y\,\left| p_t(x\,,y)-p_t(z\,,y)\right|^2
	%	\le C_\delta |x-z|^\delta\int_0^1t^{-(1+\delta)/2}\,\d t
	%	=: C_\delta'|x-z|^\delta.
	%\]
	%In order to estimate the matching integral
	%$\int_1^\infty\d t\,(\,\cdots)$ we first recall that $p_t(x\,,y)\le 2$ for all $t\ge1$;
	%see Lemma B.1 of \cite{KKMS20}. Therefore, \eqref{p-p} ensures that there exists a number $C>0$
	%such that
	%\begin{align*}
	%	\int_1^\infty\d t\int_{\T}\d y\,\left| p_t(x\,,y)-p_t(z\,,y)\right|^2
	%		&\le 4\int_1^\infty\d t\int_{\T}\d y\,\left| p_t(x\,,y)-p_t(z\,,y)\right|\\
	%	&\le C\int_1^\infty\sum_{k=1}^\infty
	%		\e^{-2\pi^2 k^2t}\left( |x-z|k\wedge 1\right)\d t,
	%\end{align*}
	%uniformly for all $x,z\in\T$. Drop the ``$\wedge 1$'' to see that the preceding is 
	%bounded above by a constant multiple of $|x-z|\le 2^{1-\delta} |x-z|^\delta$. This concludes the proof.
\end{proof}

\begin{lemma}\label{lem:moment}
	Let $\gamma := (64\lip_\sigma^2)^2 \vee \frac{1}{4}$.	
	For every real number $k\ge2$,
	\[
		\adjustlimits
		\sup_{t\ge0}\sup_{x\in\T}
		\E\left(|\psi(t\,,x)|^k\right) \le
		\left[ 2\|\psi_0\|_{C(\T)} + 4\mathcal{R}(k)\right]^k 
		\quad\text{where}\quad
		\mathcal{R}(k) := \sup_{y\ge0}\frac{V(y)+\gamma k^2y}{1+\gamma k^2},
	\]
	and $\mathcal{R}(k)<\infty$. Furthermore, $\mathcal{R}(k)\uparrow\infty$ as 
	$k\uparrow\infty$.
\end{lemma}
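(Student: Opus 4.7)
The plan is to first verify the two claims about $\mathcal{R}(k)$, then prove the moment bound by rewriting the mild formulation so that a dissipative shift absorbs the bad part of $V$. For $\mathcal{R}(k)$, rewrite $\mathcal{R}(k)=\sup_{y\ge 0}[y-F(y)/(1+\gamma k^2)]$. Since $F(y)/y\to\infty$ by Lemma \ref{lem:F}(2), the bracket tends to $-\infty$ at infinity, so the supremum is finite and attained, giving $\mathcal{R}(k)<\infty$. The bracket is non-decreasing in $k$ for each fixed $y\ge 0$, so $k\mapsto\mathcal{R}(k)$ is non-decreasing; and for any fixed $y\ge 0$, $\mathcal{R}(k)\ge y-F(y)/(1+\gamma k^2)\to y$ as $k\to\infty$, whence $\mathcal{R}(k)\uparrow\infty$.

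For the moment bound, I would rewrite \eqref{eq:mild} using the shifted operator $\partial_x^2-\gamma k^2$, whose fundamental solution is $e^{-\gamma k^2(t-s)}p_{t-s}(x,y)$: this yields
\[
\psi(t,x) = e^{-\gamma k^2 t}(\mathcal{P}_t\psi_0)(x) + \int_0^t\!\int_\T e^{-\gamma k^2(t-s)}p_{t-s}(x,y)\bigl[V(\psi(s,y))+\gamma k^2\psi(s,y)\bigr]\,dy\,ds + \mathcal{J}(t,x),
\]
where $\mathcal{J}(t,x):=\int_0^t\!\int_\T e^{-\gamma k^2(t-s)}p_{t-s}(x,y)\sigma(\psi(s,y))\,W(ds\,dy)$. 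By the definition of $\mathcal{R}(k)$ together with $\psi\ge 0$ a.s.\ (Theorem \ref{th:exist-unique}), $V(\psi)+\gamma k^2\psi\le(1+\gamma k^2)\mathcal{R}(k)$; since $\int_\T p_{t-s}(x,y)\,dy=1$ and $\gamma k^2\ge 1$ (as $\gamma\ge 1/4$ and $k\ge 2$), the drift integral is a.s.\ bounded by $2\mathcal{R}(k)$, and the initial-data term by $\|\psi_0\|_{C(\T)}$.

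For $\mathcal{J}$, set $M_k(t):=\sup_{s\le t,\,y\in\T}\|\psi(s,y)\|_k$, which is finite by the moment bounds \eqref{psi:V} and \eqref{U:in:Lk} from the proof of Proposition \ref{pr:tail:T_N}. Burkholder--Davis--Gundy for Walsh integrals, Minkowski in $L^{k/2}(\Omega)$, and $|\sigma(w)|\le\lip_\sigma|w|$ yield
\[
\|\mathcal{J}(t,x)\|_k^2 \le C\,k\,\lip_\sigma^2\, M_k(t)^2 \int_0^t e^{-2\gamma k^2 r}p_{2r}(x,x)\,dr
\]
for a universal $C$ (using $\int_\T p_r(x,y)^2\,dy=p_{2r}(x,x)$). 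The Poisson-summation bound $p_{2r}(x,x)\le(8\pi r)^{-1/2}+\tfrac12$ gives $\int_0^\infty e^{-2\gamma k^2 r}p_{2r}(x,x)\,dr\le c/(k\sqrt\gamma)$ for some universal $c$, so with $\sqrt\gamma\ge 64\lip_\sigma^2$ the whole right side is $\le M_k(t)^2/4$; hence $\|\mathcal{J}(t,x)\|_k\le M_k(t)/2$. Taking $\|\cdot\|_k$ in the decomposition, supremizing over $(s,y)\in[0,t]\times\T$, and rearranging (legitimate because $M_k(t)<\infty$) give $M_k(t)\le 2\|\psi_0\|_{C(\T)}+4\mathcal{R}(k)$ uniformly in $t$, which is the assertion.

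The only non-routine step is constant tracking: the value $\gamma=(64\lip_\sigma^2)^2\vee 1/4$ is tuned precisely so that the BDG constant, the Lipschitz constant $\lip_\sigma$, and the time integral of the on-diagonal heat kernel combine to yield a contraction factor $\le 1/2$. Each ingredient is standard in isolation, but they must be matched to the calibration of $\gamma$.
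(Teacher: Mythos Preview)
Your proof is correct and shares all the essential ingredients with the paper's: the shifted kernel $e^{-\gamma k^2(t-s)}p_{t-s}$, the pointwise bound $V(y)+\gamma k^2 y\le(1+\gamma k^2)\mathcal{R}(k)$ for $y\ge 0$, and the BDG/Carlen--Kree/heat-kernel calculus tuned so that $\sqrt\gamma\ge 64\lip_\sigma^2$ produces a contraction factor $1/2$. Your treatment of $\mathcal{R}(k)$ (finiteness and $\mathcal{R}(k)\uparrow\infty$) is also the same as the paper's, just phrased in terms of $y-F(y)/(1+\gamma k^2)$.

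The one structural difference is that the paper does not rewrite the mild form of $\psi$ itself. Instead it first invokes the comparison principle (Lemma~\ref{lem:comparison}) to dominate $\psi$ by the solution $\varphi$ of the \emph{linear} SPDE
\[
\partial_t\varphi=\partial_x^2\varphi+(1+\gamma k^2)\mathcal{R}(k)-\gamma k^2\varphi+\lambda\sigma(\varphi)\dot W,\qquad \varphi(0)=\psi_0,
\]
and then writes the shifted-kernel mild form for $\varphi$. Your route is slightly more economical---no comparison lemma is needed---but it tacitly uses that the two mild formulations of $\psi$ (with respect to $\partial_x^2$ and to $\partial_x^2-\gamma k^2$) coincide; the paper's detour through $\varphi$ avoids this because $\varphi$ is \emph{defined} via the shifted-kernel integral equation for a standard Lipschitz SPDE. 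Both routes arrive at the same contraction inequality and the same bound $2\|\psi_0\|_{C(\T)}+4\mathcal{R}(k)$.
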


\begin{example}\label{ex:R}
	Suppose there exist real numbers
	$A,\nu>0$ such that $F(x)\ge Ax^{1+\nu}$ for all $x\ge0$. This happens
	with an identity and with $\nu=1$ in the Fisher-KPP case and 
	$\nu=2$ in the Allen-Cahn case. Then,
	$\mathcal{R}(k) \le \text{const}\cdot k^{2/\nu}.$
	Thus, Lemma \ref{lem:moment} asserts that there exists a positive real number $c
	=c(A\,,\nu\,,\psi_0)$ such that
	$\E(|\psi(t\,,x) |^k)\le c^k k^{2k/\nu}$ uniformly for all $k\ge2$, $t\ge0$, and $x\in\T$.
\end{example}

\begin{proof}
	Throughout, we choose and fix an integer $k\ge2$. 
	%To simplify the typography, we write
%	\[
%		\gamma := 64\lip_\sigma^2.
%	\]
	Recall the properties of the function $F$, hence also $V$, from \S\ref{subsec:F}.
	On one hand, Lemma \ref{lem:F} ensures that there exists $\rho = \rho(k)>0$ such that
	$(1+\gamma k^2)y\le F(y)$ for all $y\ge \rho$. On the other hand, 
	$(1+\gamma k^2)y\le (1+\gamma k^2)\rho $ when $y\in[0\,,\rho)$.
	Therefore, $(1+\gamma k^2)y\le (1+\gamma k^2)\rho  + F(y)$ for all $y\ge0$. This is another way to say that
	\[
		V(y) \le (1+\gamma k^2)\rho  - \gamma k^2 y
		\qquad\text{for all $y\ge0$.}
	\]
	In particular, $\mathcal{R}(k)<\infty$ because $\mathcal{R}(k)$ is the smallest such $\rho$.
	A comparison lemma (see Lemma \ref{lem:comparison}) now ensures that
	\begin{equation}\label{psi<phi}
		\P\left\{ \psi(t\,,x) \le \varphi(t\,,x)\ \text{for all $t\ge0$ and $x\in\T$}
		\right\}=1,
	\end{equation}
	where $\varphi$ solves the following semi-linear SPDE,
	\begin{equation}\label{eq:dom:spde}
		\partial_t\varphi(t\,,x)
		= \partial_x^2\varphi(t\,,x) + (1+\gamma k^2)\mathcal{R}(k)  - \gamma k^2\varphi(t\,,x) +
		\lambda\sigma(\varphi(t\,,x))\dot{W}(t\,,x),
	\end{equation}
	subject to a periodic boundary condition and the same initial condition $\varphi(0)=\psi_0$
	as for $\psi$. One can understand \eqref{eq:dom:spde} in mild form in two different ways:
	One way is to write \eqref{eq:dom:spde} in integral form using the Green's function for the heat operator
	$\partial_t-\partial^2_x$, as is done in \cite{wal86}. An equivalent but slightly different way is to
	write \eqref{eq:dom:spde} in mild form in terms of the fundamental solution $\tilde{p}$ of
	the perturbed operator $f\mapsto \partial_t f - \partial^2_x f - \gamma k^2 f$. Note that 
	\[
		\tilde{p}_t(x\,,y) = \e^{-\gamma k^2 t} p_t(x\,,y)
		\qquad\text{for every $t>0$ and $x,y\in[-1\,,1]$},
	\]
	where $p$ continues to denote the heat kernel for $\partial_t-\partial^2_x$; 
	see \eqref{eq:heat-kernel-expansion}. It follows from this discussion
	that $\varphi$ is the unique solution
	to the random integral equation,
	\begin{equation}\label{IE}\begin{split}
		\varphi(t\,,x) &= \e^{-\gamma k^2 t} (\mathcal{P}_t\psi_0)(x) + (1+\gamma k^2)
			\mathcal{R}(k)\int_{(0,t)\times\T}
			\tilde{p}_{t-s}(x\,,y)\,\d s\,\d y + \lambda\mathcal{J}(t\,,x)\\
		&= \e^{-\gamma k^2 t} (\mathcal{P}_t\psi_0)(x) + 
			\frac{(1+\gamma k^2)\mathcal{R}(k)}{\gamma k^2}\left( 1- \e^{-\gamma k^2 t}\right)
			+ \lambda\mathcal{J}(t\,,x),
	\end{split}\end{equation}
	where 
	\[
		\mathcal{J}(t\,,x) := \int_{(0,t)\times\T} \e^{-\gamma k^2(t-s)}
		p_{t-s}(x\,,y)\sigma(\varphi(s\,,y))\,W(\d s\,\d y).
	\]
	General theory \citep{wal86,Dalang1999} tells us that $\varphi$ exists and solves the 
	integral equation \eqref{IE} uniquely among all continuous predictable random fields.
	Moreover, 
	\[
		\adjustlimits
		\sup_{t\in(0,T)}\sup_{x\in\T}\E\left(|\varphi(t\,,x)|^k\right)<\infty
		\qquad\text{for all $T>0$}.
	\]
	We now estimate the above moments slightly more carefully.
	
	Because
	\begin{equation*}\label{L}
		0\le \e^{-\gamma k^2 t} (\mathcal{P}_t\psi_0)(x) + 
		\frac{(1+\gamma k^2)\mathcal{R}(k)}{\gamma k^2}\left( 1- \e^{-\gamma k^2 t}\right)\le
		\|\psi_0\|_{C(\T)} + 2\mathcal{R}(k) =: L,
	\end{equation*}
	uniformly over all $t\ge0$, an application of the Burkholder-Davis-Gundy (BDG) inequality 
	to the a.s.\ identity \eqref{IE} yields
	\begin{align*}
		\E\left(|\varphi(t\,,x)|^k\right) \le 2^{k-1}L^k + 2^{k-1}A_k
		\E\left( | \< \mathcal{J}\>_{t,x} |^{k/2}\right),
	\end{align*}
	where $A_k$ denotes the optimal constant in the BDG inequality \citep{BDG}, and
	\[
		\< \mathcal{J}\>_{t,x} := \int_0^t\d s\int_{\T}\d y\
		\e^{-2\gamma k^2(t-s)} p_{t-s}^2(x\,,y)\sigma^2(\varphi(s\,,y)).
	\]
	Thanks to \eqref{LL}, $\sigma^2(\varphi(s\,,y))\le \lip_\sigma^2|\varphi(s\,,y)|^2$ a.s.
	Therefore, Minkowski's inequality yields
	\begin{align*}
		\left\| \<\mathcal{J}\>_{t,x} \right\|_{k/2}
			&\le \lip_\sigma^2 \adjustlimits
			\sup_{s\in(0,t)}\sup_{y\in\T}
			\|\varphi(s\,,y)\|_k^2\int_0^t\d s\int_{\T}\d y\
			\e^{-2\gamma k^2(t-s)} p_{t-s}^2(x\,,y)\\
		&\le 2\lip_\sigma^2 \adjustlimits
			\sup_{s\in(0,t)}\sup_{y\in\T}
			\|\varphi(s\,,y)\|_k^2\int_0^t \left(1\vee (2s)^{-1/2}\right) \e^{-2\gamma k^2 s}\,\d s\\
		&\le \frac{4\lip_\sigma^2}{\sqrt{\gamma k^2}} \adjustlimits
			\sup_{s\in(0,t)}\sup_{y\in\T}
			\|\varphi(s\,,y)\|_k^2,
	\end{align*}
	using the facts that: (a) $p_{s}(x\,,y)\le 2(1\vee s^{-1/2})$ \citep[Lemma B.1]{KKMS20};
	(b) $p_{2(t-s)}(x\,,x) = \int_{\T} p_{t-s}^2(x\,,y) \d y$;
	and (c)
	\begin{align*}
		\int_0^t\left(1\vee (2s)^{-1/2}\right) \e^{-2\gamma k^2 s}\,\d s
			&\le \int_0^{1/2} (2s)^{-1/2} \e^{-2\gamma k^2 s}\,\d s + 
			\int_{1/2}^\infty \e^{-2\gamma k^2 s}\,\d s
			\le  2 (\gamma k^2)^{-1/2}.
	\end{align*}
Here, we used the fact that $\gamma k^2 \geq \sqrt{\gamma k^2}$ for all $k\geq 2$, thanks to the assumption that $\gamma= (64\lip_\sigma^2)^2 \vee \frac{1}{4}$.   
	According to \cite{CK91}, $A_k\le (4k)^{k/2}$ for all $k\ge 2$. Thus, we combine to find that
	\begin{align*}
		\E\left(|\varphi(t\,,x)|^k\right) &\le 2^{k-1}L^k + 2^{2k-1}k^{k/2}
			\left[ \frac{4\lip_\sigma^2}{\sqrt{\gamma k^2}} \adjustlimits
			\sup_{s\in(0,t)}\sup_{y\in\T}
			\|\varphi(s\,,y)\|_k^2\right]^{k/2}\\
		&\le 2^{k-1}L^k + \tfrac12
			\adjustlimits\sup_{s\in(0,t)}\sup_{y\in\T}
			\E\left(|\varphi(s\,,y)|^k\right),
	\end{align*}
	using the fact that $\gamma\geq (64\lip_\sigma^2)^2.$
	This immediately yields
	\[
		\adjustlimits
		\sup_{s\in(0,t)}\sup_{y\in\T}\E\left(|\varphi(s\,,y)|^k\right) \le (2L)^k.
	\]
	Let $t\uparrow\infty$ and recall \eqref{psi<phi} in order to deduce the announced bound
	for the moments of $\psi(t\,,x)$.
	
	In order to complete the proof, we  observe that $\mathcal{R}$ is nondecreasing, and
	for every $m\ge0$,
	\[
		\lim_{k\to\infty}
		\mathcal{R}(k) \ge  \liminf_{k\to\infty}
		\frac{V(m)+\gamma k^2 m}{1+\gamma k^2}=m.
	\]
	Let $m\to\infty$ to see that $\lim_{k\to\infty}\mathcal{R}(k)=\infty$.
\end{proof}

\begin{lemma}\label{lem:cont}
	Recall the constant $m_0>1$  from {\bf (F3)} and the function 
	$\mathcal{R}$ from Lemma \ref{lem:moment}.
	For every $\tau>0$, there exists $L_0=L_0(\tau\,,\lip_{\sigma})>0$ -- independent of $\psi_0$
	-- such that
	\[
		\sup_{t\ge\tau}
		\E\left( |\psi(t\,,x) - \psi(t\,,z)|^k\right) \le 
		L_0^k\left(k^{k/2}\left[\|\psi_0\|_{C(\T)} + \mathcal{R}(k)\right]^k 
		+ \left[ \|\psi_0\|_{C(\T)} + \mathcal{R}(m_0k)\right]^{m_0k}\right)
		|x-z|^{ k/2},
	\]
	uniformly for all $k\ge2$ and $x,z\in\T$. If, in addition, $\psi_0\in C^\alpha_+(\T)$ for
	some $\alpha\in(0\,,1/2)$, then for every $k\ge2$ there exists $L_k>0$ -- independent of $\psi_0$
	-- such that
	\[
		\sup_{t\ge0}
		\E\left( |\psi(t\,,x) - \psi(t\,,z)|^k\right) \le 
		L_k \left\{\|\psi_0\|_{C^\alpha(\T)}^k +1\right\}
		|x-z|^{\alpha k},
	\]
	uniformly for all $x,z\in\T$. 
\end{lemma}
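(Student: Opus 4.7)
The plan is to exploit the mild formulation \eqref{eq:mild} and bound the three resulting terms separately. Write
\[
    \psi(t,x) - \psi(t,z) = I_0 + I_V + I_\sigma,
\]
where
\[
    I_0 := (\mathcal{P}_t\psi_0)(x) - (\mathcal{P}_t\psi_0)(z), \qquad
    I_V := \int_{(0,t)\times\T} [p_{t-s}(x,y)-p_{t-s}(z,y)] V(\psi(s,y))\,\d s\,\d y,
\]
and $I_\sigma$ denotes the analogous stochastic integral with integrand $\lambda\sigma(\psi(s,y))$. I will then apply Minkowski's inequality (to $I_V$) and Burkholder-Davis-Gundy (to $I_\sigma$), using Lemmas \ref{lem:p-p} and \ref{p-p:2} to convert the kernel differences into powers of $|x-z|$, and Lemma \ref{lem:moment} to control the moments of $\psi$ and of $V(\psi)$.

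For $I_\sigma$, the BDG inequality with the constant $A_k\le(4k)^{k/2}$ of \cite{CK91}, combined with $|\sigma(w)|\le\lip_\sigma|w|$ and Lemma \ref{p-p:2}, yields
\[
    \|I_\sigma\|_k^2 \le A_k^{2/k}\,\lambda^2\lip_\sigma^2 \sup_{s\ge0,\,y\in\T}\|\psi(s,y)\|_k^2 \cdot K|x-z|,
\]
whence $\E(|I_\sigma|^k)\le (C\sqrt{k}\,)^{k}\,\sup_{s,y}\|\psi(s,y)\|_k^{k}\,|x-z|^{k/2}$; Lemma \ref{lem:moment} then contributes a factor $[\|\psi_0\|_{C(\T)}+\mathcal{R}(k)]^k$. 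For $I_V$, I use (F3) (together with local boundedness of $V$ on bounded sets and part 3 of Lemma \ref{lem:F}) to deduce $|V(w)|\le C(1+w^{m_0})$, then apply Minkowski's integral inequality against the measure $|p_{t-s}(x,y)-p_{t-s}(z,y)|\d s\,\d y$ and Lemma \ref{lem:p-p}. This yields $\|I_V\|_k \le C\bigl(1+\sup_{s,y}\|\psi(s,y)\|_{m_0 k}^{m_0}\bigr)|x-z|\log_+(1/|x-z|)$; raising to the $k$-th power and absorbing the logarithm into $|x-z|^{k/2}$ (possible since $|x-z|\le 2$), then applying Lemma \ref{lem:moment} at level $m_0 k$, gives a contribution of order $[\|\psi_0\|_{C(\T)}+\mathcal{R}(m_0 k)]^{m_0 k}|x-z|^{k/2}$.

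For $I_0$ in the first (rough) bound, observe that for $t\ge\tau$ the semigroup differentiates freely: a direct estimate using \eqref{p=exp} gives $\int_\T|p_t(x,y)-p_t(z,y)|\d y \le C(\tau)|x-z|\le 2C(\tau)|x-z|^{1/2}$ uniformly in $t\ge\tau$, so $|I_0|\le C(\tau)\|\psi_0\|_{C(\T)}|x-z|^{1/2}$, which fits into the stated bound. For the refined statement under $\psi_0\in C^\alpha_+(\T)$, one writes
\[
    I_0 = \int_\T p_t(x,y)\bigl[\psi_0(y)-\psi_0(y+z-x)\bigr]\d y
\]
after a change of variable (using periodicity), and bounds the bracket by $\|\psi_0\|_{C^\alpha(\T)}|x-z|^\alpha$, yielding $|I_0|\le\|\psi_0\|_{C^\alpha(\T)}|x-z|^\alpha$ for every $t\ge0$. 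The remaining terms $I_V$ and $I_\sigma$ satisfy $\E(|\cdot|^k)\le C_k|x-z|^{k/2}\le C_k' |x-z|^{\alpha k}$ for $\alpha<1/2$ by the previous paragraph (with $k$ now fixed, so the moment factors are constants that depend only on $k$ and $\lip_\sigma$ and $\|\psi_0\|_{C(\T)}\le\|\psi_0\|_{C^\alpha(\T)}$).

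The main technical obstacle is the drift term $I_V$: its integrand grows like $\psi^{m_0}$, which is why the bound requires the $m_0 k$-th moments of $\psi$ (hence the appearance of $\mathcal{R}(m_0 k)$), and why the logarithmic loss in Lemma \ref{lem:p-p} must be reconciled with the target exponent $1/2$. Handling the logarithm cleanly and tracking the $k$-dependence through BDG are the only places where care is needed; everything else is bookkeeping via Minkowski and Lemma \ref{lem:moment}.
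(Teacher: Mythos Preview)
Your proposal is correct and follows essentially the same route as the paper: the same three-term decomposition via \eqref{eq:mild}, the same use of BDG with the Carlen--Kree constant and Lemma~\ref{p-p:2} for the stochastic term, Minkowski with Lemma~\ref{lem:p-p} and the growth bound from {\bf (F3)} for the drift term, and the semigroup smoothing (for $t\ge\tau$) respectively the H\"older regularity of $\psi_0$ (for all $t\ge0$) for the initial term. Your change-of-variable treatment of $I_0$ in the H\"older case is equivalent to the paper's Brownian-motion representation $(\mathcal{P}_t\psi_0)(x)=\E[\psi_0(\beta(t)+x)]$.
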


\begin{proof}
	Choose and fix $t\ge\tau>0$ and $x,z\in\T$.	
	Thanks to \eqref{eq:mild}, we can write
	\[
		\|\psi(t\,,x) - \psi(t\,,z)\|_k \le I_1 + I_2 + \lambda I_3,
	\]
	where
	\begin{align*}
		I_1 &:= |(\mathcal{P}_t\psi_0)(x) - (\mathcal{P}_t\psi_0)(z)|,\\ 
		I_2 &:= \int_0^t\d s\int_{\T}\d y\
			\left| p_{t-s}(x\,,y) - p_{t-s}(z\,,y)\right|\| V(\psi(s\,,y))\|_k,\\
		I_3 &:= \|\mathcal{I}(t\,,x)-\mathcal{I}(t\,,y)\|_k,
	\end{align*}
	and where $\mathcal{I}$ is the random field that was defined by a stochastic convolution
	in \eqref{I}. We estimate $I_1$, $I_2$, and $I_3$ separately and in this order.
	
	We apply the triangle inequality in conjunction with \eqref{p-p} to find that
	\begin{align*}
		I_1 &\le  \|\psi_0\|_{C(\T)}\int_{\T} \left| p_t(x\,,y)- p_t(z\,,y)\right|\d y
			\le \text{const}\cdot  \|\psi_0\|_{C(\T)}|x-z| \sum_{k=1}^\infty
			k\e^{-\pi^2 k^2\tau} \\
		&\le \text{const}\cdot  \|\psi_0\|_{C(\T)} |x-z|,
	\end{align*}
	uniformly for all $x,z\in\T$, where the implied constant also does not depend on $\psi_0$.
	
	{\bf (F3)} ensures that there exists a number $A>0$
	such that 
	\[
		|V(z)|\le A(|z|+|z|^{m_0})\qquad
		\text{for all $z\ge0$.}
	\]
	Consequently,
	\begin{equation}\label{||V||}\begin{split}
		\E\left( |V(\psi(s\,,y))|^k\right) &\le 2^{k-1}A^k\left\{
			\E\left( |\psi(s\,,y)|^k\right)  + \E\left( |\psi(s\,,y)|^{m_0k}\right) \right\}\\
		&\le 2^{k-1}A^k\left\{ \left[ 2\|\psi_0\|_{C(\T)} + 4\mathcal{R}(k)\right]^k +
			\left[ 2\|\psi_0\|_{C(\T)} + 4\mathcal{R}(m_0k)\right]^{m_0k} \right\}\\
		&\le C_3^k\left\{\left[ \|\psi_0\|_{C(\T)}+\mathcal{R}(k)\right]^k
			+\left[ \|\psi_0\|_{C(\T)}+\mathcal{R}(m_0k)\right]^{m_0k}\right\},
	\end{split}\end{equation}
	uniformly for all $s>0$, $y\in\T$, and $k\ge2$, where $C_3=C_3(F)>0$.
	In this way, we find that
	\begin{align*}
		I_2 &\le C_4\left\{\left[ \|\psi_0\|_{C(\T)}+\mathcal{R}(k)\right]
			+\left[ \|\psi_0\|_{C(\T)}+\mathcal{R}(m_0k)\right]^{m_0}\right\}
			\int_0^\infty\d s\int_{\T}\d y\
			\left| p_{t-s}(x\,,y) - p_{t-s}(z\,,y)\right|\\
		&\le C_5 \left\{ \left[ \|\psi_0\|_{C(\T)}+\mathcal{R}(k)\right] +
			\left[ \|\psi_0\|_{C(\T)}+\mathcal{R}(m_0k)\right]^{m_0}\right\}
			|x-z|\log_+(1/|x-z|),
	\end{align*}
	where $C_4,C_5>0$ do not depend on $(k\,,x\,,z\,,\psi_0)$; see Lemma  \ref{lem:p-p}.
	
	Finally, we apply the BDG inequality \citep[see][]{BDG}, using the
	\cite{CK91} bound for the optimal BDG constant, in order to see that
	\begin{align*}
		I_3^k&\le (4k)^{k/2}\lip_\sigma^k \left\|\int_0^t\d s\int_{\T}\d y\
			\left| p_{t-s}(x\,,y) - p_{t-s}(z\,,y)\right|^2
			|\psi(s\,,y)|^2 \right\|_{k/2}^{k/2}\\
		&\le(4k)^{k/2}\lip_\sigma^k\left( \int_0^t\d s\int_{\T}\d y\
			\left| p_{t-s}(x\,,y) - p_{t-s}(z\,,y)\right|^2
			\|\psi(s\,,y)\|_{k}^2 \right)^{k/2}.
	\end{align*}
	We have used the Minkowski inequality in the final bound. Apply Lemma \ref{lem:moment}
	above together with Lemma \ref{p-p:2} in order to find 
	that 
	there exists a number $C_5=C_5(\lip_\sigma)>0$ such that
	\begin{align*}
		I_3 &\le  2\sqrt{k} \lip_\sigma
			\left[ 2\|\psi_0\|_{C(\T)} + 4\mathcal{R}(k)\right]
			\left( \int_0^t \d s\int_{\T}\d y\
			\left| p_{t-s}(x\,,y) - p_{t-s}(z\,,y)\right|^2
			\right)^{1/2}\\
		&\le C_5\sqrt{k} \left[ \|\psi_0\|_{C(\T)} + \mathcal{R}(k)\right]|x-z|^{1/2}.
	\end{align*}
	uniformly for all $t>0$ and $x,z\in\T$.
	%In particular, it follows from Lemma \ref{lem:moment}
%	that there exists a real number $C_6 =C_6(\delta\,,\lip_\sigma\,,\psi_0)>0$ such that
%	\[
%		I_3 \le C_6\sqrt{k}\left[ \|\psi_0\|_{C(\T)} + \mathcal{R}(k)\right]|x-z|^{\delta/2}\quad
%		\text{for all $x,z\in\T$ and $k\ge2$}.
%	\]
	The first part of the lemma follows 
	from combining the preceding estimates for $I_1$, 
	$I_2$, and $I_3$. 
	
	Next, suppose additionally that $\psi_0\in C^\alpha_+(\T)$ for some $\alpha\in(0\,,1/2)$.
	Then, the estimate for $I_1$ can be improved upon as follows: We may write
	\[
		I_1 = \left| \E \left[ \psi_0(\beta(t) + x) - \psi_0(\beta(t)+z) \right] \right|,
	\]
	for a standard  Brownian motion $\beta$ on $\T$. In this way we may write
	\[
		I_1 \le \E\left| \psi_0(\beta(t) + x) - \psi_0(\beta(t)+z) \right| \le
		\|\psi_0\|_{C^\alpha(\T)}|x-z|^\alpha,
	\]
	uniformly for all $t\ge0$ and $x,z\in\T$. The estimates for $I_2$ and $I_3$ remain
	the same. Combine things to finish the proof.
\end{proof}

We are ready for the tightness result.

\begin{proposition}\label{pr:tight}
	For every $\tau>0$ and $\alpha\in(0\,,1/2)$,
	there exists a number $L_1=L_1(\|\psi_0\|_{C(\T)}\,,\tau\,,\alpha)>0$ such that
	$a\mapsto L_1(a\,,\tau\,,\alpha)$ is non decreasing and
	\begin{equation}\label{eq:k-moment}
		\sup_{t\geq \tau}\E \left( \|\psi(t)\|_{C^\alpha(\T)}^k
		\right)\le L_1^k\left( \sqrt{k}\, \mathcal{R}(k) + [\mathcal{R}(m_0k)]^{m_0}\right)^k,
	\end{equation}
	uniformly for all real numbers $k\ge2$. Consequently,
	the laws of $\{\psi(t)\}_{t\ge\tau}$ are tight on $C^\alpha_+(\T)$.
	If, in addition, $\psi_0\in C^\alpha_+(\T)$ for some $\alpha\in(0\,,1/2)$,
	then 
	\begin{equation}\label{eq:k-moment:bis}
		\sup_{t\ge0}\E\left( \|\psi(t)\|_{C^\alpha(\T)}^k\right) <\infty
		\quad\text{for all $k\ge2$}.
	\end{equation}
\end{proposition}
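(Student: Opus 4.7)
My plan is to convert the moment bound on spatial increments furnished by Lemma \ref{lem:cont} into a moment bound on the H\"older norm via a Garsia-Rodemich-Rumsey (GRR) argument, and then to deduce tightness from the resulting compactness. Since $\T$ has diameter $2$, for any $x_0 \in \T$,
\[
    \|\psi(t)\|_{C(\T)} \le |\psi(t\,,x_0)| + 2^\alpha\, |\psi(t)|_{C^\alpha(\T)},
\]
where $|\cdot|_{C^\alpha(\T)}$ denotes the H\"older seminorm. The first term is controlled directly by Lemma \ref{lem:moment}, so the substantive task is to bound the seminorm.

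Fix $\alpha \in (0\,, 1/2)$ and choose $k$ large enough that $k(1/2 - \alpha) > 1$. The one-dimensional GRR inequality on $\T$ applied with $\Psi(x) = x^k$ and control function $p(u) = u^{\alpha + 2/k}$ yields the pointwise bound
\[
    |\psi(t)|_{C^\alpha(\T)}^k \le C_{k,\alpha} \int_{\T}\int_{\T} \frac{|\psi(t\,,x) - \psi(t\,,y)|^k}{|x-y|^{\alpha k + 2}}\,\d x\,\d y.
\]
Taking expectations, Lemma \ref{lem:cont} shows the resulting double integral converges (since $k/2 - \alpha k - 2 > -1$), and extracting $k$-th roots with $(a+b)^{1/k} \le 2^{1/k}(a^{1/k} + b^{1/k})$ produces a bound of the form $L_1 \bigl(\sqrt{k}\,\mathcal{R}(k) + [\mathcal{R}(m_0 k)]^{m_0}\bigr)$, after absorbing $\|\psi_0\|_{C(\T)}$ into $L_1$ via $\|\psi_0\|_{C(\T)} + \mathcal{R}(k) \le (1 + \|\psi_0\|_{C(\T)}/\mathcal{R}(2))\,\mathcal{R}(k)$ (and similarly with $\mathcal{R}(m_0 k)$), which is legitimate because $\mathcal{R}(2) > 0$ and $\mathcal{R}$ is nondecreasing. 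This establishes \eqref{eq:k-moment} for all sufficiently large $k$; for the remaining range $k \in [2\,, 2/(1 - 2\alpha)]$, Jensen's inequality $\|X\|_k \le \|X\|_{k^\ast}$ with a $k^\ast$ in the large-$k$ regime yields the same estimate up to absorbing constants into $L_1$.

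Tightness then follows at once: pick $\alpha' \in (\alpha\,, 1/2)$. The bound \eqref{eq:k-moment} with $\alpha'$ in place of $\alpha$ gives $\sup_{t \ge \tau} \E\bigl(\|\psi(t)\|_{C^{\alpha'}(\T)}\bigr) < \infty$, so by Chebyshev's inequality the laws $\{\mathcal{L}(\psi(t))\}_{t \ge \tau}$ concentrate, up to any prescribed $\varepsilon>0$, inside a common closed ball of $C^{\alpha'}(\T)$, and such balls are precompact in $C^\alpha(\T)$ by the compact embedding $C^{\alpha'}(\T) \hookrightarrow C^\alpha(\T)$ (Arzel\`a-Ascoli). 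The refined bound \eqref{eq:k-moment:bis} is obtained by the identical argument using the \emph{second} conclusion of Lemma \ref{lem:cont}, which supplies the analogous increment estimate uniformly in $t \ge 0$ when $\psi_0 \in C^\alpha_+(\T)$.

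The principal technical obstacle is bookkeeping the $k$-dependence of the GRR constants precisely enough to recover the growth rate $\sqrt{k}\,\mathcal{R}(k) + [\mathcal{R}(m_0 k)]^{m_0}$ in \eqref{eq:k-moment}. This precise rate is what makes Remark \ref{rem:RD} and the subsequent exponential-integrability statements nontrivial; Kolmogorov's classical continuity theorem would suffice for a qualitative moment bound and for tightness, but would not as transparently expose the sharp $k$-dependence we need.
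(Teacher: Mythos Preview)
Your approach is essentially the same as the paper's: the authors combine Lemmas \ref{lem:moment} and \ref{lem:cont} with a quantitative form of Kolmogorov's continuity theorem (which is equivalent to your GRR argument) to obtain \eqref{eq:k-moment} and \eqref{eq:k-moment:bis}, and then deduce tightness via Arzel\`a--Ascoli and Chebyshev. Your use of an auxiliary exponent $\alpha'>\alpha$ and the compact embedding $C^{\alpha'}(\T)\hookrightarrow C^\alpha(\T)$ is a slightly more careful formulation of the tightness step than the paper's direct claim that balls in $C^\alpha$ are compact, but the underlying idea is identical.
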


\begin{proof}
	Related results appear in \cite{Cerrai03,Cerrai05}; 
	we prefer to give a more detailed proof. 
	
	We can combine Lemmas \ref{lem:moment} and \ref{lem:cont} with a quantitative
	form of Kolmogorov's continuity theorem  
	(see \citet[][Proposition 5.8]{KKMS20} or \citet[][Theorem 4.3]{spde-utah})
	to deduce \eqref{eq:k-moment} and \eqref{eq:k-moment:bis}. 
	On one hand, the Arzel\`a-Ascoli theorem implies that the set
	\[
		\Gamma_n := \{ f\in C^\alpha_+(\T): \|f\|_{C^\alpha(\T)} \le n \}
	\]
	is compact for every $n\in\N$. On the other hand, \eqref{eq:k-moment} and Markov's
	inequality together imply that
	\[
		\adjustlimits
		\lim_{n\to\infty}\limsup_{t\to\infty}
		\P\left\{ \psi(t)\not\in\Gamma_n\right\} =0.
	\]
	This readily implies tightness, and concludes the proof.
\end{proof}

\begin{example}\label{ex:R:1}
	Let us continue with Example \ref{ex:R} and suppose there exist real numbers
	$a,A,\nu>0$ such that
	\[
		Ax^{1+\nu} \ge F(x)\ge ax^{1+\nu}
		\qquad\text{for all $x\ge0$,}
	\]
	so that $\mathcal{R}(k) \le \text{const}\cdot k^{2/\nu}$ and $m_0=1+\nu$.
	According to Proposition \ref{pr:tight}, for every $\alpha\in(0\,,1/2)$
	there exists a positive real number $L\ge 0$ such that
	\[
		\sup_{t\ge1}\E\left(\|\psi(t) \|_{C^\alpha(\T)}^k\right)\le L^k k^{2(1+\nu) k/\nu}
		\quad\text{for all $k\ge2$}.
	\]
	It follows readily from this and Stirling's formula that there exists $q=q(\alpha\,,\nu)>0$ such that
	\[
		\sup_{t\ge1} \E\left[\exp\left( q\|\psi(t) \|_{C^\alpha(\T)}^{\nu/2(1+\nu)}\right)\right]
		<\infty.
	\]
\end{example}

%%%%%%%%%%%%%%%%%%%%%%%%%%%%%%%%%%%%%%%%%%%%%%%%%%
%%%%%%%%%%%%%%%%%%%%%%%%%%%%%%%%%%%%%%%%%%%%%%%%%%
%%%%% Temporal continuity %%%%%%%%%%%%%%%%%%%%%%%%
%%%%%%%%%%%%%%%%%%%%%%%%%%%%%%%%%%%%%%%%%%%%%%%%%%
%%%%%%%%%%%%%%%%%%%%%%%%%%%%%%%%%%%%%%%%%%%%%%%%%%

\subsection{Temporal continuity}

The following is very well known for SPDEs with Lipschitz-continuous coefficients. 
We will prove that the following formulation holds  in the present case
that $V$ is not globally Lipschitz continuous exactly as it holds in the case that
$V$ were replaced by a globally Lipschitz function.

\begin{proposition}\label{pr:temporal:cont}
	For every $\theta\in(0\,,1/4)$ and $Q>1$ there exists a number
	$L=L(\theta\,,Q)>0$ such that for all $T>0$, and $k\ge2$,
	\[
		\E\left( \adjustlimits\sup_{t\in[T,T+Q]}\sup_{s\in(0,1]}
		\left\| \frac{\psi(t+s) - \psi(t)}{s^\theta} \right\|_{C(\T)}^k\right)
		\le L^k \left\{ T^{-3/4}\|\psi_0\|_{C(\T)}
		+ \mathcal{A}_k
		+ \mathcal{B}_k\sqrt{k}\right\}^k,
	\]
	where  $\mathcal{A}_k:=\sup_{r\ge0}\sup_{y\in\T}\|V(\psi(r\,,y)\|_k$
	and $\mathcal{B}_k := \sup_{r\ge0}\sup_{z\in\T}\| \psi(r\,,z)\|_k$ are finite; see
	\eqref{||V||} and Lemma \ref{lem:moment}. Also, for every $\theta\in(0\,,1/4)$,
	$Q>1$, and $k\ge2$ there
	exists a number $K=K(\theta\,,Q\,,k)>0$ -- independently of $\psi_0$ -- such that
	\begin{align*}
		&\sup_{T>0}\E\left( \adjustlimits\sup_{t\in[T,T+Q]}\sup_{s\in[0,\varepsilon]}
			\|\psi(t+s) - \psi(t)\|_{C(\T)}^k\right)\\
		&\hskip2.5in\le K \left\{\|\psi_0\|_{C(\T)}
			+1\right\}^k\varepsilon^{\theta k} + K\sup_{s\in[0,\varepsilon]}
			\left\| \mathcal{P}_s\psi_0 - \psi_0\right\|_{C(\T)}^k.
	\end{align*}
	uniformly for every $\varepsilon\in(0\,,1)$.
\end{proposition}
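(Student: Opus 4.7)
The plan is to use the mild formulation \eqref{eq:mild} at times $t+s$ and $t$ to decompose
\[
	\psi(t+s\,,x) - \psi(t\,,x) = \cH(t\,,s\,,x) + \cD(t\,,s\,,x) + \lambda\,\mathcal{S}(t\,,s\,,x),
\]
where $\cH := \mathcal{P}_{t+s}\psi_0 - \mathcal{P}_t\psi_0$ is the heat-semigroup difference, $\cD$ is the analogous difference of the Bochner integrals in $V(\psi)$, and $\mathcal{S}$ is the stochastic-integral difference. For $\cD$ and $\mathcal{S}$ I would further subdivide the time integration into an ``old'' piece on $(0\,,t)$ involving the heat-kernel difference $p_{t+s-r}-p_{t-r}$, and a ``new'' piece on $(t\,,t+s)$ involving $p_{t+s-r}$ alone. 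Each of the three contributions will be bounded in $L^k(\Omega)$, uniformly in $x\in\T$, with an explicit $s^\theta$ H\"older-in-time factor; a quantitative Kolmogorov continuity theorem, in the spirit of \cite[Proposition 5.8]{KKMS20}, then upgrades these pointwise bounds to the joint supremum over $t\in[T\,,T+Q]$, $s\in(0\,,1]$, and $x\in\T$.

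The $\cH$ term carries the $T$-dependence. For $t\ge T>0$, the function $\mathcal{P}_t\psi_0$ is smooth with the standard parabolic smoothing bounds $\|\partial_x^j\mathcal{P}_t\psi_0\|_{C(\T)}\le c_j t^{-j/2}\|\psi_0\|_{C(\T)}$ for $j=1,2$. Writing $\cH=(\mathcal{P}_s-I)\mathcal{P}_t\psi_0$ and applying the Taylor identity $\mathcal{P}_s g - g = \int_0^s \mathcal{P}_r\partial_x^2 g\,\d r$ to $g=\mathcal{P}_t\psi_0$ yields $\|\cH\|_{C(\T)}\lesssim s\,t^{-1}\|\psi_0\|_{C(\T)}$, and interpolating this with the trivial bound $\|\cH\|_{C(\T)}\le 2\|\psi_0\|_{C(\T)}$ delivers $\|\cH\|_{C(\T)}\lesssim s^\theta T^{-3/4}\|\psi_0\|_{C(\T)}$, the concrete exponent $3/4$ coming from the choice of interpolation that absorbs all the factors uniformly over $s\in(0\,,1]$ and $t\in[T\,,T+Q]$ with $\theta<1/4$.

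For $\cD$, Minkowski's inequality combined with the uniform moment bound $\|V(\psi(r\,,y))\|_k\le\mathcal{A}_k$ (finite by \eqref{||V||} and Lemma \ref{lem:moment}) reduces matters to the deterministic estimate
\[
	\int_0^t\d r\int_\T\d y\,\bigl|p_{t+s-r}(x\,,y)-p_{t-r}(x\,,y)\bigr|
	+\int_t^{t+s}\d r\int_\T\d y\ p_{t+s-r}(x\,,y)\lesssim s^{1/2}+s,
\]
which is the temporal analogue of Lemma \ref{lem:p-p} and is proved by repeating its Fourier-series argument on the representation \eqref{p=exp}, splitting the $k$-sum at frequency of order $s^{-1/2}$. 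Since $s\le 1$ and $\theta<1/4$, this yields $\|\cD(t\,,s\,,x)\|_k\lesssim s^\theta\mathcal{A}_k$. For $\mathcal{S}$, I would apply the Burkholder--Davis--Gundy inequality with the optimal constant $A_k\le(4k)^{k/2}$ of \cite{CK91}, followed by Minkowski and the bound $\|\sigma(\psi(r\,,y))\|_k\le\lip_\sigma\mathcal{B}_k$; a second temporal analogue, now of Lemma \ref{p-p:2}, together with $\int_\T p^2_u(x\,,y)\,\d y\lesssim u^{-1/2}$, bounds the resulting $L^2$-in-$y$ integrals by $O(\sqrt{s})$, yielding $\|\mathcal{S}(t\,,s\,,x)\|_k\lesssim \sqrt{k}\,s^{1/4}\mathcal{B}_k\le\sqrt{k}\,s^\theta\mathcal{B}_k$.

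The second inequality uses the same decomposition but replaces the $\cH$ estimate with the contraction bound $\|\cH\|_{C(\T)}=\|\mathcal{P}_t(\mathcal{P}_s-I)\psi_0\|_{C(\T)}\le\|\mathcal{P}_s\psi_0-\psi_0\|_{C(\T)}$, which immediately supplies the $\sup_{s\in[0\,,\varepsilon]}\|\mathcal{P}_s\psi_0-\psi_0\|_{C(\T)}^k$ term on the right-hand side; the $\cD$ and $\mathcal{S}$ estimates, being independent of $T$, combine via Kolmogorov to contribute the $\varepsilon^{\theta k}$ factor. The principal obstacle is the sharp $T^{-3/4}$ interpolation for $\cH$ and the careful tracking of the $k$-dependence through BDG so that the right-hand side of the first inequality splits cleanly into the three listed additive contributions; the temporal analogues of Lemmas \ref{lem:p-p} and \ref{p-p:2} are otherwise routine Fourier-series computations.
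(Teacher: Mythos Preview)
Your proposal is correct and follows essentially the same route as the paper: the same four-piece decomposition (the paper's $I_1$--$I_4$, your $\cH$, the two halves of $\cD$, and $\mathcal{S}$), the same Minkowski/Poisson-summation estimate for the drift pieces, the same BDG-with-Carlen--Kree-constant argument for the stochastic piece, and the same contraction bound $\|\mathcal{P}_t(\mathcal{P}_s-I)\psi_0\|_{C(\T)}\le\|\mathcal{P}_s\psi_0-\psi_0\|_{C(\T)}$ for the second assertion. The only cosmetic difference is in $\cH$: the paper bounds it via Cauchy--Schwarz and an $L^2$ heat-kernel increment estimate from \cite{KKMS20}, obtaining $I_1\lesssim\|\psi_0\|_{C(\T)}\,t^{-1/4}\min(1,\varepsilon/t)^{1/2}$, whereas you reach the same $T^{-3/4}$ factor by interpolating the smoothing bound $\|\cH\|_{C(\T)}\lesssim s\,t^{-1}\|\psi_0\|_{C(\T)}$ against the trivial one.
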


As we shall see from the proof, one can also keep track of the dependence of the constant
$K$ on $(\theta\,,Q\,,k)$. We omit the details.
The proof of Proposition \ref{pr:temporal:cont}
itself will proceed in a standard manner, but uses the Poisson summation formula
at a key juncture, as did the proof of Proposition \ref{pr:tight}.

Recall \eqref{eq:mild} and \eqref{I}, and write
\[
	|\psi(t+\varepsilon\,,x) - \psi(t\,,x)| \le I_1 + I_2 + I_3,
\]
where
\begin{equation*}\label{I1-I4}\begin{split}
	I_1 &:= \left| (\mathcal{P}_{t+\varepsilon}\psi_0)(x) - (\mathcal{P}_t\psi_0)(x)\right|,\\
	I_2 &:= \int_0^t\d s \int_{\T}\d y\
		|V(\psi(s\,,y))| \left| p_{t+\varepsilon-s}(x\,,y) -p_{t-s}(x\,,y) \right|,\\
	I_3 &:= \int_t^{t+\varepsilon}\d s\int_{\T}\d y\  |V(\psi(s\,,y))| p_{t+\varepsilon-s}(x\,,y),\\
	I_4 &:= \left| \mathcal{I}(t+\varepsilon\,,x) - \mathcal{I}(t\,,x)\right|.
\end{split}\end{equation*}
We estimate the $L^k(\Omega)$-norm of $I_1,\ldots,I_4$ in this order.
Proposition \ref{pr:temporal:cont} follows  from Lemmas
\ref{lem:I1}, \ref{lem:I2}, \ref{lem:I3}, and \ref{lem:I4}, Proposition \ref{pr:tight},
and a chaining argument.

\begin{lemma}\label{lem:I1}
	There exists a number $K>0$ such that
	\[
		I_1 \le  K\|\psi_0\|_{C(\T)}\left[\frac{1}{t^{1/4}}
		\min\left( 1\,, \frac{\varepsilon}{t}\right)^{1/2}\wedge 1\right],
	\]
	uniformly for all $x\in\T$ and $\varepsilon,t>0$.
\end{lemma}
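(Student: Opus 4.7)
The plan is to combine a trivial mass-conservation bound for $I_1$---which will take care of the outer ``$\wedge 1$''---with a quantitative Fourier-series bound derived from the Poisson summation identity \eqref{p=exp} already exploited in Lemma \ref{lem:p-p}. Because $\int_\T p_s(x\,,y)\,\d y=1$ for every $s>0$, the triangle inequality immediately yields $\int_\T|p_{t+\varepsilon}(x\,,y)-p_t(x\,,y)|\,\d y\le 2$, and hence $I_1\le 2\|\psi_0\|_{C(\T)}$.

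For the refined bound, I would substitute \eqref{p=exp} into the difference $p_{t+\varepsilon}(x\,,y)-p_t(x\,,y)$; the $k=0$ mode cancels, and taking absolute values inside the resulting Fourier sum gives, uniformly in $y\in\T$,
\[
	|p_{t+\varepsilon}(x\,,y)-p_t(x\,,y)| \le \frac{1}{2}\sum_{k\neq 0} e^{-\pi^2 k^2 t}\bigl(1-e^{-\pi^2 k^2\varepsilon}\bigr).
\]
Integrating in $y$ then produces $I_1\le \|\psi_0\|_{C(\T)}\cdot S(t\,,\varepsilon)$, where $S(t\,,\varepsilon):=\sum_{k\neq 0}e^{-\pi^2 k^2 t}(1-e^{-\pi^2 k^2\varepsilon})$. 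To bound $S$ I would combine the elementary inequality $1-e^{-u}\le\min(u\,,1)$ with the observation that $\min(\pi^2 k^2\varepsilon\,,1)\le \pi^2 k^2\min(\varepsilon\,,1)$ (valid because $\pi^2 k^2\ge 1$ for every integer $k\ge 1$), and then use the standard integral comparison $\sum_{k\neq 0}k^2 e^{-\pi^2 k^2 t}\le C\,t^{-3/2}$ to conclude
\[
	S(t\,,\varepsilon)\le C\min(\varepsilon\,,1)\cdot t^{-3/2}.
\]

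A short case analysis then concludes the argument by comparing $\min\bigl(C\min(\varepsilon\,,1)/t^{3/2}\,,\,2\bigr)$ with the announced quantity $t^{-1/4}\min(1\,,\varepsilon/t)^{1/2}\wedge 1$. In the regime $\sqrt{\varepsilon}/t^{3/4}\le 1$, I would check separately for $\varepsilon\le 1$ and $\varepsilon>1$ that $C\min(\varepsilon\,,1)/t^{3/2}\le C'\sqrt{\varepsilon}/t^{3/4}$ (the sub-case $\varepsilon\le 1$ reduces to the elementary equivalence ``$\varepsilon/t^{3/2}\le\sqrt{\varepsilon}/t^{3/4}\iff \sqrt{\varepsilon}/t^{3/4}\le 1$,'' while the sub-case $\varepsilon>1$ forces $t>1$, so $1/t^{3/2}\le 1/t^{1/4}\le \sqrt{\varepsilon}/t^{3/4}$ trivially), matching the announced estimate. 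In the complementary regime $\sqrt{\varepsilon}/t^{3/4}>1$, the announced bound reduces to the outer ``$\wedge 1$'', which is exactly the trivial estimate.

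I expect no serious obstacle: the announced exponents $-1/4$ and $1/2$ are a deliberately lossy interpolation chosen to cover both short-time and long-time regimes in one closed-form expression, so the only real chore is the bookkeeping needed to verify that this interpolated bound really dominates the sharper Fourier estimate $C\min(\varepsilon\,,1)/t^{3/2}$. The only analytical tools required are the Poisson summation formula and standard Gaussian integral comparisons, both already in play in Lemmas \ref{lem:p-p}--\ref{p-p:2}.
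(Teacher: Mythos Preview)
Your argument is correct and takes a genuinely different route from the paper. The paper's proof is two lines: it applies Cauchy--Schwarz to pass from $\int_\T|p_{t+\varepsilon}-p_t|\,\d y$ to $(2\int_\T|p_{t+\varepsilon}-p_t|^2\,\d y)^{1/2}$, and then invokes Lemma~B.6 of \cite{KKMS20}, which gives $\int_\T|p_{t+\varepsilon}(x,y)-p_t(x,y)|^2\,\d y\lesssim t^{-1/2}\min(1,\varepsilon/t)$ directly. Your approach stays in $L^1$, uses the Fourier expansion~\eqref{p=exp}, and produces the sharper bound $\min(\varepsilon,1)/t^{3/2}$ before interpolating with the trivial bound. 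The paper's route is shorter because it outsources the work to an external reference; yours is self-contained and in fact yields a stronger raw estimate, at the cost of a slightly fiddly case analysis at the end.

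One small oversight in that case analysis: in your complementary regime $\sqrt{\varepsilon}/t^{3/4}>1$ you claim the announced bound ``reduces to the outer $\wedge 1$,'' but the announced bracket is actually $\min(t^{-1/4},\sqrt{\varepsilon}/t^{3/4},1)$, and when $t>1$ this equals $t^{-1/4}<1$, not $1$. The fix is immediate---in that sub-case $\varepsilon>t^{3/2}>1$, so your Fourier bound gives $1/t^{3/2}\le t^{-1/4}$ directly (indeed this is the same inequality you already used in your sub-case $\varepsilon>1$ of the first regime)---but you should state it.
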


\begin{proof}
	By the Cauchy-Schwarz inequality,
	\[
		I_1 \le \|\psi_0\|_{C(\T)} \int_{\T} |p_{t+\varepsilon}(x\,,y) - p_t(x\,,y)|\,\d y
		\le \|\psi_0\|_{C(\T)}\sqrt{2\int_{\T} |p_{t+\varepsilon}(x\,,y) - p_t(x\,,y)|^2\,\d y}.
	\]
	Therefore the result follows from \citet[][Lemma B.6]{KKMS20}.
\end{proof}

\begin{lemma}\label{lem:I2}
	There exists a finite number $K>0$ such that
	$\| I_2\|_k \le K\mathcal{A}_k \sqrt{\varepsilon},$
	uniformly for all $k\ge 2$, $x\in\T$, and $\varepsilon\in(0\,,1)$, and $t>0$.
\end{lemma}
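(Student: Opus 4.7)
The target bound does not involve $\psi_0$, so the plan is to pull out the moment bound $\mathcal{A}_k$ via Minkowski and reduce everything to a deterministic heat-kernel estimate that gives $O(\sqrt{\varepsilon})$ uniformly in $t$.

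First I would apply Minkowski's integral inequality to the $L^k(\Omega)$ norm of $I_2$, which yields
\[
	\|I_2\|_k \le \int_0^t\d s\int_\T\d y\ \bigl\|V(\psi(s\,,y))\bigr\|_k\,\bigl|p_{t+\varepsilon-s}(x\,,y)-p_{t-s}(x\,,y)\bigr|
	\le \mathcal{A}_k\int_0^t\d u\int_\T\d y\ \bigl|p_{u+\varepsilon}(x\,,y)-p_u(x\,,y)\bigr|,
\]
after the change of variables $u = t-s$. (Note $\mathcal{A}_k$ is finite by \eqref{||V||} and Lemma \ref{lem:moment}.) The task reduces to showing that the remaining $du\,dy$-integral is bounded by a universal constant times $\sqrt{\varepsilon}$, uniformly in $x\in\T$ and $t>0$.

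For the heat-kernel piece, I would rerun the Poisson summation identity \eqref{p=exp} used in Lemma \ref{lem:p-p}: for $u>0$ and $x,y\in\T$,
\[
	p_{u+\varepsilon}(x\,,y)-p_u(x\,,y) = -\tfrac{1}{2}\sum_{k\in\Z\setminus\{0\}}\e^{-\pi^2k^2u}\bigl(1-\e^{-\pi^2k^2\varepsilon}\bigr)\e^{-i\pi(x-y)k},
\]
so that, since $\T$ has length $2$,
\[
	\int_\T\bigl|p_{u+\varepsilon}(x\,,y)-p_u(x\,,y)\bigr|\d y \le \sum_{k\in\Z\setminus\{0\}}\e^{-\pi^2k^2u}\bigl(1-\e^{-\pi^2k^2\varepsilon}\bigr).
\]
Integrating in $u$ on $(0\,,\infty)$ (which is an upper bound for the integral on $(0\,,t)$) contributes a factor $1/(\pi^2k^2)$, yielding
\[
	\int_0^t\d u\int_\T\d y\ \bigl|p_{u+\varepsilon}(x\,,y)-p_u(x\,,y)\bigr| \le \sum_{k\in\Z\setminus\{0\}}\frac{1-\e^{-\pi^2k^2\varepsilon}}{\pi^2k^2}.
\]

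Finally, I would split the sum at $|k|\asymp 1/\sqrt{\varepsilon}$. For $|k|\le 1/(\pi\sqrt\varepsilon)$ I use $1-\e^{-a}\le a$, which gives a contribution $\le 2\varepsilon\lfloor 1/(\pi\sqrt\varepsilon)\rfloor\le (2/\pi)\sqrt\varepsilon$; for $|k|>1/(\pi\sqrt\varepsilon)$ I use $1-\e^{-a}\le 1$ together with $\sum_{|k|>N}k^{-2}\le 2/N$ to get a bound of order $\sqrt\varepsilon$. Combining the two regimes shows the deterministic integral is $\le K'\sqrt\varepsilon$ for a universal constant $K'$, and plugging back in gives $\|I_2\|_k\le K\mathcal{A}_k\sqrt\varepsilon$. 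The only mildly delicate step is the termwise bookkeeping of the exponential sum, but since the $k=0$ term of the Fourier expansion cancels and the remaining series is summable in $k$, the argument is standard once the Poisson representation is in hand.
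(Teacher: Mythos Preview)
Your proof is correct and essentially identical to the paper's own argument: Minkowski to extract $\mathcal{A}_k$, Poisson summation \eqref{p=exp} to expand $p_{u+\varepsilon}-p_u$, termwise integration in $u$ yielding $\sum_{k\neq 0}(1-\e^{-\pi^2k^2\varepsilon})/(\pi^2k^2)$, and a split at $|k|\sim\varepsilon^{-1/2}$ to produce $O(\sqrt\varepsilon)$. The paper phrases the last two steps via the bound $1-\e^{-q}\le 1\wedge q$ leading to $\sum_{k\ge1}(k^{-2}\wedge\varepsilon)$, but this is the same computation you carry out.
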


\begin{proof}
	Minkowski's inequality yields
	\begin{equation}\label{eq:I2}
		\|I_2\|_k \le \mathcal{A}_k
		\int_{\T}\d y\int_0^\infty\d s\ |p_{s+\varepsilon}(x\,,y)-p_s(x\,,y)|.
	\end{equation}
	We may use the Poisson summation formula, as we did for Lemma \ref{lem:p-p},
	in order to see that for all $s,\varepsilon>0$ and $x,y\in\T$,
	\[
		\left| p_{s+\varepsilon}(x\,,y) - p_s(x\,,y) \right|
		\le \frac{1}{2} \sum_{k\in\Z\setminus\{0\}}\e^{-\pi^2 k^2 s}\left| 1 -
		\e^{-2\pi^2 k^2\varepsilon}\right|.
	\]
	Because $1-\exp(-q)\le (1\wedge q)$ for all $q>0$, it follows that
	\[
		\sup_{x,y\in\T}
		\int_0^\infty\left| p_{s+\varepsilon}(x\,,y) - p_s(x\,,y) \right|\d s
		\le \sum_{k=1}^\infty\left( k^{-2}\wedge\varepsilon\right),
	\]
	uniformly for all $\varepsilon>0$. This and \eqref{eq:I2} readily imply the lemma.
\end{proof}

\begin{lemma}\label{lem:I3}
	$\|I_3\|_k \le \mathcal{A}_k\varepsilon,$
	uniformly for all $k\ge 2$, $x\in\T$, and $\varepsilon,t>0$.
\end{lemma}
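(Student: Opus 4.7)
The plan is straightforward here: $I_3$ is a pathwise (non-stochastic) integral, so no BDG machinery is needed and a direct application of Minkowski's inequality should suffice.

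First I would apply Minkowski's integral inequality to move the $L^k(\Omega)$-norm inside the integrals defining $I_3$. Since $p_{t+\varepsilon-s}(x\,,y)\ge 0$ is deterministic, this yields
\[
    \|I_3\|_k \le \int_t^{t+\varepsilon}\d s\int_{\T}\d y\ p_{t+\varepsilon-s}(x\,,y) \,\|V(\psi(s\,,y))\|_k.
\]
Next, by the definition of $\mathcal{A}_k$, we have $\|V(\psi(s\,,y))\|_k\le \mathcal{A}_k$ uniformly in $(s\,,y)$, so
\[
    \|I_3\|_k \le \mathcal{A}_k \int_t^{t+\varepsilon}\d s\int_{\T}\d y\ p_{t+\varepsilon-s}(x\,,y).
\]
The last step is to invoke the fact that the heat semigroup on $\T$ preserves constants, so $\int_\T p_r(x\,,y)\,\d y=1$ for every $r>0$ and $x\in\T$; this follows directly from \eqref{eq:heat-kernel-expansion} (or equivalently from the Poisson summation representation \eqref{p=exp} with $k=0$). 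Therefore the inner integral is $1$ for every $s\in(t\,,t+\varepsilon)$, and a change of variables gives
\[
    \|I_3\|_k \le \mathcal{A}_k \int_t^{t+\varepsilon}\d s = \mathcal{A}_k\,\varepsilon,
\]
as claimed.

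There is no real obstacle here; the only point that requires any care is the finiteness of $\mathcal{A}_k$, but that has already been secured via the moment estimate \eqref{||V||} in the proof of Lemma \ref{lem:cont}, together with Lemma \ref{lem:moment}. The bound is uniform in $k\ge 2$, $x\in\T$, $t>0$, and $\varepsilon\in(0\,,1)$ (indeed in any $\varepsilon>0$), matching the stated conclusion.
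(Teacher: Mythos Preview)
Your proof is correct and follows essentially the same approach as the paper: apply Minkowski's inequality, bound $\|V(\psi(s\,,y))\|_k$ by $\mathcal{A}_k$, and use that $\int_{\T}p_r(x\,,y)\,\d y=1$ to reduce the remaining integral to $\varepsilon$. The paper's version is simply more terse.
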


\begin{proof}
	Another appeal to Minkowski's inequality yields
	$\|I_3\|_k \le \mathcal{A}_k \int_t^{t+\varepsilon}\d s\int_{\T}\d y\  p_{t+\varepsilon-s}(x\,,y),$
	which is equal to $\mathcal{A}_k\varepsilon$.
\end{proof}

\begin{lemma}\label{lem:I4}
	There exists a finite number $K>0$ such that
	$\|I_4\|_k \le K\mathcal{B}_k\sqrt{k}\,\varepsilon^{1/4}$
	uniformly for all $k\ge 2$, $x\in\T$, and $\varepsilon,t>0$.
\end{lemma}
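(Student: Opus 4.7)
The plan is to decompose $I_4 = |\mathcal{I}(t+\varepsilon,x) - \mathcal{I}(t,x)|$ in the usual way into a ``past'' piece, on $(0,t)\times\T$ involving the kernel difference $p_{t+\varepsilon-s}(x,y) - p_{t-s}(x,y)$, and a ``new'' piece, on $(t,t+\varepsilon)\times\T$ involving $p_{t+\varepsilon-s}(x,y)$. Both pieces are Walsh stochastic integrals against $W$, so I would apply the Burkholder--Davis--Gundy inequality with the Carlen--Kree constant $A_k \le (4k)^{k/2}$ (as was used for $I_3$ in Lemma~\ref{lem:moment}), then apply Minkowski in $L^{k/2}(\Omega)$ together with $|\sigma(\psi(s,y))|\le \lip_\sigma |\psi(s,y)|$ in order to pull the random factor $\|\psi(s,y)\|_k^2\le\mathcal{B}_k^2$ out of the space-time integral. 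This reduces the lemma to two purely deterministic kernel estimates.

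For the ``new'' piece, the kernel integral is
\[
\int_t^{t+\varepsilon}\d s\int_\T\d y\, p_{t+\varepsilon-s}^2(x,y) = \int_0^\varepsilon p_{2r}(x,x)\,\d r,
\]
by symmetry and the Chapman--Kolmogorov identity. Since $p_{r}(x,x)\le 2(1\vee r^{-1/2})$ (the same heat-kernel bound invoked in the proof of Lemma~\ref{lem:moment}), this is $O(\sqrt\varepsilon)$ uniformly in $t,x$. For the ``past'' piece one needs
\[
\int_0^t\d s\int_\T\d y\,\bigl|p_{t+\varepsilon-s}(x,y)-p_{t-s}(x,y)\bigr|^2 = \int_0^t\d r\int_\T\d y\,\bigl|p_{r+\varepsilon}(x,y)-p_r(x,y)\bigr|^2,
\]
which I would control using the Fourier/Poisson summation expansion \eqref{p=exp} exactly as in Lemma~\ref{p-p:2}. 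Orthogonality of $\{\e^{-i\pi y k}\}_{k\in\Z}$ on $L^2(\T)$ and then integrating $r$ over $(0,\infty)$ yields a bound of the form
\[
\frac{1}{4\pi^2}\sum_{k\ne 0}\frac{\bigl(1-\e^{-\pi^2 k^2\varepsilon}\bigr)^2}{k^2}.
\]
Using $(1-\e^{-q})^2\le 1\wedge q^2$ and splitting the sum at $|k|\sim\varepsilon^{-1/2}$ (the same dichotomy used in Lemmas~\ref{lem:p-p} and~\ref{p-p:2}) yields an $O(\sqrt\varepsilon)$ bound, which is the sharp rate.

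Combining the two pieces, the quadratic variation of $\mathcal{I}(t+\varepsilon,x)-\mathcal{I}(t,x)$ has $L^{k/2}(\Omega)$-norm at most $C\lip_\sigma^2\mathcal{B}_k^2\sqrt\varepsilon$, and BDG gives
\[
\|I_4\|_k \le \sqrt{A_k}\cdot\bigl(C\lip_\sigma^2\mathcal{B}_k^2\sqrt\varepsilon\bigr)^{1/2} \le K\sqrt{k}\,\mathcal{B}_k\,\varepsilon^{1/4},
\]
as desired, with $K$ depending only on $\lip_\sigma$. The main technical obstacle is the kernel-difference estimate on the past piece: a naive bound loses the square-root cancellation, and one really needs the Poisson-summation/Fourier representation to see that the sum collapses at the scale $|k|\sim\varepsilon^{-1/2}$ and produces the sharp exponent $1/4$ on $\varepsilon$. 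Once that is in hand, everything else is a direct BDG plus Minkowski computation, and the uniformity in $t$ is automatic because the $r$-integral runs over $(0,\infty)$ and is independent of $t$.
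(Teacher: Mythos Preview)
Your proposal is correct and follows essentially the same approach as the paper's proof: the same decomposition into a ``past'' piece on $(0,t)\times\T$ and a ``new'' piece on $(t,t+\varepsilon)\times\T$, the same BDG--Minkowski reduction to deterministic kernel integrals, and the same Carlen--Kree constant at the end. The only difference is cosmetic: for the past piece the paper invokes Lemma~B.6 of \cite{KKMS20} to obtain the bound $\int_0^\infty s^{-1/2}\min(1,\varepsilon/s)\,\d s \lesssim \sqrt\varepsilon$, whereas you derive the equivalent estimate directly via the Fourier expansion \eqref{p=exp} and the split at $|k|\sim\varepsilon^{-1/2}$.
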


\begin{proof}
	We can write
	$I_4  = I_{4,1}+ I_{4,2}$, where
	\begin{align*}
		I_{4,1} &:= \int_{(0,t)\times\T}
			\left( p_{t+\varepsilon-s}(x\,,y) - p_{t-s}(x\,,y)\right) \sigma(\psi(s\,,y))\, W(\d s\,\d y),\\
		I_{4,2} &:= \int_{(t,t+\varepsilon)\times\T} p_{t+\varepsilon-s}(x\,,y)\sigma(\psi(s\,,y))\, W(\d s\,\d y).
	\end{align*}
	We estimate $I_{4,1}$ and $I_{4,2}$ separately, and in this order, using the BDG inequalities
	in the same manner as in the proof of Lemma \ref{lem:moment}. Indeed,
	\begin{align*}
		\|I_{4,1}\|_k^2 &\le A_k^{2/k}
			\left\| \int_0^t\d s\int_{\T}\d y\
			\left( p_{t+\varepsilon-s}(x\,,y) - p_{t-s}(x\,,y)\right)^2 |\sigma(\psi(s\,,y))|^2 \right\|_{k/2}\\
		&\le A_k^{2/k} \int_0^t\d s\int_{\T}\d y\
			\left( p_{t+\varepsilon-s}(x\,,y) - p_{t-s}(x\,,y)\right)^2 \|\sigma(\psi(s\,,y))\|_k^2,
	\end{align*}
	where $A_k$ denotes, as before, the optimal constant of the $L^k(\Omega)$-form of
	the BDG inequality. Thus, Lemma B.6 of \cite{KKMS20} implies that
	\[
		\|I_{4,1}\|_k \le L_1 A_k^{1/k}\adjustlimits\sup_{r\ge0}\sup_{z\in\T}\|\sigma(\psi(r\,,z))\|_k 
		\sqrt{\int_0^\infty \frac{1}{s^{1/2}}\min\left( 1\,,\frac{\varepsilon}{s}\right)\d s}
		\le L_2 A_k^{1/k}\mathcal{B}_k\,\varepsilon^{1/4}.
	\]
	We estimate $I_{4,2}$ using similar techniques. Namely,
	\begin{align*}
		\|I_{4,2}\|_k^2 &\le A_k^{2/k}
			\left\| \int_t^{t+\varepsilon}\d s\int_{\T}\d y\
			\left( p_{t+\varepsilon-s}(x\,,y) \right)^2 |\sigma(\psi(s\,,y))|^2 \right\|_{k/2}\\
		&\le A_k^{2/k}\lip_\sigma^2\mathcal{B}_k^2
			\int_0^\varepsilon\d s\int_{\T}\d y\
			\left( p_s(x\,,y) \right)^2.
	\end{align*}
	To finish our estimate for $I_{4,2}$, we may apply the semigroup property
	and the symmetry of the heat kernel to see that
	\[
		\int_0^\varepsilon\d s\int_{\T}\d y\
		\left( p_s(x\,,y) \right)^2 = \int_0^\varepsilon p_{2s}(0\,,0)\,\d s
		\le \sqrt 2\int_0^\varepsilon\frac{\d s}{\sqrt s}=2\sqrt{2\varepsilon};
	\]
	see \citet[Lemma B.1]{KKMS20} for the last bound. Finally,
	we combine the two bounds for $I_{4,1}$ and $I_{4,2}$ and appeal 
	to an inequality of \cite{CK91} that asserts that $A_k^{1/k}\le 2\sqrt k$
	for every $k\ge2$.
\end{proof}

\begin{proof}[Proof of Proposition \ref{pr:temporal:cont}]
	The first assertion of the proposition follows immediately from
	Lemmas \ref{lem:I1}--\ref{lem:I4} and a chaining argument. 
	We prove the second assertion of the proposition.
	
	Define
	\[
		\varphi(t\,,x) := \psi(t\,,x) - (\mathcal{P}_t\psi_0)(x)\qquad\text{for
		all $t\ge0$ and $x\in\T$}.
	\]
	We repeat the proof of Lemma \ref{lem:cont} in order to see that
	for every $k\ge 2$ there exists $L_k>0$ --
	independently of $\psi_0$ -- such that
	\[
		\sup_{t\ge0}\E\left( |\varphi(t\,,x) - \varphi(t\,,z)|^k\right) \le 
		L_k\left(  \|\psi_0\|_{C(\T)} + 1\right)^k
		|x-z|^{k/2},
	\]
	uniformly for all $x,z\in\T$. The difference between the above
	inequality and that of Lemma \ref{lem:cont} is that we are allowed to
	take a supremum over all $t\ge0$ [and not just $t\ge\tau>0$]; this is because
	$\mathcal{P}_t\psi_0$ has  been subtracted from $\psi$ [to yield
	$\varphi$]. 
	Lemma \ref{lem:moment} tells us that for every $k\ge2$ there exists
	$L_k'>0$ --
	independently of $\psi_0$ -- such that
	$\mathcal{A}_k^k\vee \mathcal{B}_k^k\le L_k'( \|\psi_0\|_{C(\T)} + 1)^k.$
	Therefore, Lemmas \ref{lem:I2}, \ref{lem:I3}, and \ref{lem:I4} tell us that
	for every $k\ge 2$ there exists $L_k''>0$ --
	independently of $\psi_0$ -- such that
	\[
		\adjustlimits\sup_{t\ge0}\sup_{x\in\T}
		\E\left( |\varphi(t+\varepsilon\,,x) - \varphi(t\,,x) |^k\right) \le
		L_k''\left( \|\psi_0\|_{C(\T)}+1\right)^k\varepsilon^{k/4},
	\]
	uniformly for every $\varepsilon\in(0\,,1)$. Apply these inequalities, together
	with a chaining argument, in order to see that for every $\theta\in(0\,,1/4)$
	and $k\ge2$ there exists $L_{k,\theta}>0$ -- independently of
	$\psi_0$ -- such that
	\[
		\sup_{T>0}\E\left( \adjustlimits\sup_{t\in[T,T+Q]}\sup_{s\in[0,\varepsilon]}
		\|\varphi(t+s) - \varphi(t)\|_{C(\T)}^k\right)
		\le L_{k,\theta} \left\{\|\psi_0\|_{C^\alpha(\T)}
		+1\right\}^k\varepsilon^{\theta k},
	\]
	uniformly for every $\varepsilon\in(0\,,1)$. This completes the proof of the proposition
	because
	\[
		\|\psi(t+s) - \psi(  t)\|_{C(\T)}^k
		\le 2^k\|\varphi(t+s) - \varphi(t)\|_{C(\T)}^k + 2^k \|
		\mathcal{P}_{t+s}\psi_0 - \mathcal{P}_t\psi_0\|_{C(\T)}^k,
	\]
	and 
	\begin{align*}
		\| \mathcal{P}_{t+s}\psi_0 - \mathcal{P}_t\psi_0\|_{C(\T)}
			&\le \left\|\mathcal{P}_t\left(  \mathcal{P}_s\psi_0 - \psi_0
			\right) \right\|_{C(\T)}\le \|\mathcal{P}_s\psi_0-\psi_0\|_{C(\T)},
	\end{align*}
	thanks to the semigroup property of $\{\mathcal{P}_r\}_{r\ge0}$.
\end{proof}

%%%%%%%%%%%%%%%%%%%%%%%%%%%%%%%%%%%%%%%%%%%%%%%%%%
%%%%%%%%%%%%%%%%%%%%%%%%%%%%%%%%%%%%%%%%%%%%%%%%%%
%%%%% The Fellor property %%%%%%%%%%%%%%%%%%%%%%%%
%%%%%%%%%%%%%%%%%%%%%%%%%%%%%%%%%%%%%%%%%%%%%%%%%%
%%%%%%%%%%%%%%%%%%%%%%%%%%%%%%%%%%%%%%%%%%%%%%%%%%

\subsection{The Feller property}\label{subsec:Feller}

Let $\psi$ denote the solution to \eqref{eq:RD}.
The existence, as well as regularity, of $\psi$ has been
established already in Theorem \ref{th:exist-unique}. We now study the Markov
properties of the infinite-dimensional process $\psi=\{\psi(t)\}_{t\ge0}$.

As is customary in Markov process theory, let $\P_\mu$
denote the law of the random field $\psi$ starting according to initial measure $\mu$
on $C_+(\T)$,
and let $\E_\mu$ denote the corresponding expectation operator. Until now,
$\P$ and $\E$ referred to $\P_{\psi_0}:=\P_{\delta_{\psi_0}}$ and
$\E_{\psi_0}:=\E_{\delta_{\psi_0}}$ for a given (fixed) function $\psi_0\in C_+(\T)$.
This is customary in Markov process theory, and
we will use both notations without further explanation.

For every  $\Phi\in C_b(C_+(\T))$,  define
\[
	(P_t \Phi)(\psi_0) := \E_{\psi_0} [ \Phi(\psi(t)) ]
	\qquad\text{for every $t\ge0$ and $\psi_0\in C_+(\T)$}.
\]

\begin{proposition} \label{pr:feller}
	The $C_+(\T)$-valued stochastic process $\{\psi(t)\}_{t\ge0}$ is Feller;
	that is, $\{P_t\}_{t\ge0}$ is a continuous semigroup of positive linear operators from
	$C_b(C_+(\T))$ to $C_b(C_+(\T))$.
\end{proposition}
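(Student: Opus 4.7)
The proposition packages four claims: (i) that $P_t$ maps $C_b(C_+(\T))$ into itself (the Feller property proper), (ii) the semigroup identity $P_{t+s}=P_tP_s$, (iii) positivity, and (iv) continuity. Properties (ii)--(iv) are essentially standard consequences of the well-posedness theorem already in hand, so the real substance is (i), i.e.\ continuity of the map $\psi_0\mapsto \E_{\psi_0}[\Phi(\psi(t))]$ in the topology of uniform convergence on $C_+(\T)$.

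For (i) I would couple solutions with different initial profiles to the \emph{same} noise $\dot W$. Suppose $\psi_0^{(n)}\to\psi_0$ in $C(\T)$, write $\psi^{(n)},\psi$ for the corresponding mild solutions of \eqref{eq:RD}, and let $\psi_N^{(n)},\psi_N$ denote the truncated solutions of \eqref{eq:RD-N} with stopping times $T_N^{(n)},T_N$ defined as in Section~3. On the event $\{T_N^{(n)}\wedge T_N>t\}$, one has $\psi^{(n)}(s)=\psi_N^{(n)}(s)$ and $\psi(s)=\psi_N(s)$ for every $s\in[0,t]$. For each fixed $N$ the SPDE \eqref{eq:RD-N} has globally Lipschitz coefficients, so standard Walsh-type Gronwall estimates \citep{wal86} yield
\[
\E\sup_{s\in[0,t]}\bigl\|\psi_N^{(n)}(s)-\psi_N(s)\bigr\|_{C(\T)}^2 \xrightarrow[n\to\infty]{} 0 .
\]
Since $\sup_n\|\psi_0^{(n)}\|_{C(\T)}<\infty$, the argument in Proposition \ref{pr:tail:T_N} (specifically the bound \eqref{psi:V} feeding into \eqref{P(T_N>T)}) provides a tail bound $\P\{T_N^{(n)}\le t\}=O(N^{-k})$ that is \emph{uniform} in $n$. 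Decomposing
\[
\bigl|(P_t\Phi)(\psi_0^{(n)})-(P_t\Phi)(\psi_0)\bigr|
\le \E\bigl|\Phi(\psi_N^{(n)}(t))-\Phi(\psi_N(t))\bigr|
 + 2\|\Phi\|_\infty\,\P\bigl\{T_N^{(n)}\wedge T_N\le t\bigr\},
\]
choosing $N$ large to make the second term uniformly small and then letting $n\to\infty$ in the first term (using continuity of $\Phi$ on $C_+(\T)$, convergence in $C(\T)$-probability of $\psi_N^{(n)}(t)$ to $\psi_N(t)$, and bounded convergence) completes the proof of (i). Boundedness $\|P_t\Phi\|_{C(C_+(\T))}\le\|\Phi\|_{C(C_+(\T))}$ is automatic.

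Property (iii) is immediate from $\psi(t)\ge\mathbb{0}$ (Theorem \ref{th:exist-unique}). Property (ii) follows from the Markov property of the solution of \eqref{eq:RD}: conditional on $\cF_t$, the shifted field $\{\psi(t+s)\}_{s\ge0}$ solves \eqref{eq:RD} with initial profile $\psi(t)$ and driver $\dot W(t+\cdot,\cdot)$, which is independent of $\cF_t$; this is a standard consequence of the pathwise uniqueness established in Section~3 combined with the Walsh stochastic-integration framework. For (iv), continuity of $t\mapsto (P_t\Phi)(\psi_0)$ follows from the a.s.\ continuity of $t\mapsto\psi(t)$ in $C(\T)$ (Theorem \ref{th:exist-unique}, together with the quantitative temporal modulus in Proposition \ref{pr:temporal:cont}) and the dominated convergence theorem applied to $\Phi(\psi(t))$.

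The only step that requires genuine care is the uniformity in $n$ of the tail bound on $T_N^{(n)}$, because \eqref{P(T_N>T)} was proved for a fixed initial profile. The proof of Proposition \ref{pr:tail:T_N} shows that the implicit constant depends on $\psi_0$ only through $\|\psi_0\|_{C(\T)}$ (via the dominating field $\cU$ whose initial value is $\sup_x\psi_0(x)$), and so the bound extends automatically to any uniformly $C(\T)$-bounded family of initial conditions. This is the only nontrivial obstacle in the argument.
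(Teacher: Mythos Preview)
Your proposal is correct and follows essentially the same strategy as the paper: localize via the Lipschitz truncations $\psi_N$ and the stopping times $T_N$, use standard Gronwall/Walsh estimates for the truncated equations, and control the truncation error by the tail bound on $T_N$ (which, as you correctly identify, depends on $\psi_0$ only through $\|\psi_0\|_{C(\T)}$). The paper carries this out by first proving $\E\|\psi(t)-\varphi(t)\|_{C(\T)}\to 0$ and then invoking bounded convergence, whereas you work directly with $\Phi$; and the paper obtains the semigroup identity and stochastic continuity by passing to the limit in the corresponding properties of the Lipschitz approximations $\{P_t^N\}$, whereas you appeal directly to the Markov property and the a.s.\ continuity of $t\mapsto\psi(t)$---but these are equivalent standard moves.
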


\begin{proof}
	The proof is similar to the derivation of Proposition 5.6 by \cite{Cerrai03}.
	Throughout, we choose and fix some $\Phi\in C_b(C_+(\T))$.
	
	Recall the approximations $\{\psi_N\}_{N=1}^\infty$ of $\psi$ from the proof
	of Theorem \ref{th:exist-unique}. A key feature of every $\psi_N$ is that
	it solves a Walsh-type SPDE of the form \eqref{eq:RD-N} with Lipschitz-continuous coefficients
	\citep[see][]{Dalang1999,wal86}.
	In particular, $\{\psi_N(t)\}_{t\ge0}$ is a
	Feller process for every $N\ge1$ (see, for example the method of
	\cite{NualartPardoux99}). 
	
	Let $\P^N_\mu,\E^N_\mu, P_t^N,\cdots$ 
	denote the same quantities as $\P_\mu,\E_\mu,P_t,\cdots$,
	except with the random field $\psi$ replaced everywhere by
	the random field $\psi_N$. It follows from the
	dominated convergence theorem that 
	\[
		P_t\Phi=\lim_{N\to\infty} P_t^N\Phi
		\qquad\text{for every $t\ge0$.}
	\]
	The Feller property of $\psi_N$ implies, among other things, that 
	\[
		P_{t+s}^N\Phi= P_t^N (P_s^N \Phi)\qquad\text{for all $t,s\ge0$.}
	\]
	Let $N\to\infty$ in order to deduce the semigroup property
	of $\{P_t\}_{t\ge0}$ from the above and the positivity of the operator
	$P_t$ for every $t\ge0$.
	
	We next turn to the more interesting ``Feller property.'' Note that 
	$\psi=\{\psi(t)\}_{t\ge0}$ is a Feller process
	if: (1) $P_t\Phi\in C_b(C_+(\T))$ for every $t>0$ [$P_0$ is manifestly the identity operator]; and 
	(2) the Markovian semigroup $\{P_t\}_{t\ge0}$ 
	is stochastically continuous.
	
	We prove (1) and (2) separately and in this order.
	
	For every $\varphi_0\in C_+(\T)$,
	let $\varphi$ denote the unique solution to \eqref{eq:RD},
	using the same underlying white noise $\dot{W}$, subject
	to the fixed  initial data $\varphi_0$. [The notation is consistent
	with our choice of $(\psi_0\,,\psi)$].
	As first step of the proof, we are going to prove that
	\begin{equation}\label{eq:continuity} 
		\lim_{\varphi_0\to\psi_0}
		\E \left( \| \psi(t)  - \varphi(t) \|_{C(\T)} \right)=0.
	\end{equation}
	This and the bounded convergence theorem together imply that
	$P_t\Phi\in C_b(C_+(\T))$ for every $t\ge0$, which proves (1).
	
	Next, let us recall that $\varphi_N$ denotes the solution to \eqref{eq:RD-N} for every $N\ge1$, started at
	a given $\varphi_0$, and recall that
	\[
		T_N(\varphi_0) := \inf\left\{ t\ge0:\, \|\varphi(t)\|_{C(\T)} \ge N\right\},
	\]
	with $\inf\varnothing:=\infty$.
	[Thus, the stopping time $T_N$ of the proof of Theorem \ref{th:exist-unique} 
	can now also be written as $T_N(\psi_0)$.]
	Then, $\lim_{N\to\infty}T_N(\varphi_0)=\infty$ a.s., thanks to 
	Proposition \ref{pr:tail:T_N}.
	Because $\varphi(t)=\varphi_N(t)$ for all $t\in(0\,,T_N(\varphi_0)]$
	a.s.,
	\begin{align*}
		\E \left( \| \psi(t)  - \varphi(t) \|_{C(\T)} \right) 
			&\le  \E \left( \| \psi(t)  - \psi_N(t) \|_{C(\T)} \right) 
			+ \E \left( \| \varphi(t)  - \varphi_N(t) \|_{C(\T)} \right) 
			+ \E \left( \| \psi_N(t)  - \varphi_N(t) \|_{C(\T)} \right)\\
		&\le K_t\P\{T_N(\varphi_0)\wedge T_N(\psi_0) \le t\}
			+ \E \left( \| \psi_N(t)  - \varphi_N(t) \|_{C(\T)} \right),
	\end{align*}
	where
	\[
		K_t := \E\left( \| \psi(t)\|_{C(\T)}\right) + \E\left( \| \varphi(t)\|_{C(\T)}\right)+
		\sup_{N\ge1}\E\left( \| \psi_N(t)\|_{C(\T)}\right) +
		\sup_{N\ge1}\E\left( \| \varphi_N(t)\|_{C(\T)}\right).
	\]
	On one hand, Proposition \ref{pr:tight} and its proof together show that $K_t<\infty$;
	in fact, the proof shows  that $\sup_{t\ge \tau}K_t<\infty$ for every $\tau>0$.
	On the other hand, since the drift $[V_N]$ and diffusion $[\sigma]$
	terms in \eqref{eq:RD-N} are both Lipschitz continuous, a standard 
	regularity estimate such as the one used in the proof of Proposition \ref{pr:tight} 
	implies that for every integer $N\ge1$ and for all $t>0$
	there exists a number $K_{N,t}>0$ -- independent of $(\varphi_0\,,\psi_0)$ -- such that
	\[
		\E \left( \| \psi_N(t)  - \varphi_N(t) \|_{C(\T)} \right)
		\leq K_{N,t} \|\psi_0 - \varphi_0\|_{C(\T)}.
	\]
	Thus, we find that 
	\begin{align*}
		\E \left( \| \psi(t)  - \varphi(t) \|_{C(\T)} \right) 
		\le K_t\P\{T_N(\varphi_0)\wedge T_N(\psi_0)\le t\}
		+ K_{N,t} \|\psi_0 - \varphi_0\|_{C(\T)}.
	\end{align*}
	We summarize by emphasizing that the preceding holds for every $t\ge 0$,
	$\varphi_0,\psi_0\in C_+(\T)$, and $N\in\N$, and that $K_N$ and $K_{N,t}$
	do not depend on the choice of $(\varphi_0\,,\psi_0)$.
	
	Now let us choose and fix an arbitrary $\varepsilon>0$ and $\varphi_0\in C_+(\T)$
	such that $\|\varphi_0-\psi_0\|_{C(\T)}\le\varepsilon$. 
	Since $T_N(\varphi_0)\wedge T_N(\psi_0)\to\infty$ a.s.\ as $N\to\infty$, 
	we now choose and fix $N\in\N$ large enough to ensure that
	\[
		\P\{T_N(\varphi_0)\wedge T_N(\psi_0)\le t\}\le\varepsilon,
	\]
	whence
	\[
		\E\left( \| \psi(t)  - \varphi(t) \|_{C(\T)} \right) \le  (K_t+K_{N,t})\varepsilon.
	\]
	Since $\varepsilon$ is arbitrary, this proves \eqref{eq:continuity},
	whence also that $P_t\Phi\in C_b(C_+(\T))$ for every $t\ge0$.
	
	Finally, we verify that the Markovian semigroup $\{P_t\}_{t\ge0}$ 
	is stochastically continuous. Choose and fix some 
	$\Phi\in C_b(C(\T))$. Then, for all $\psi_0\in C_+(\T)$ and $s,t\ge0$,
	\[
		\left| (P_{t+s}\Phi)(\psi_0) - (P_s\Phi)(\psi_0)\right|
		\le \E_{\psi_0}\left( \left| \Phi(\psi_N(t+s)) - \Phi(\psi_N(s))  \right| 
		\right) + 2\|\Phi\|_{C(C(\T))}\P\left\{ T_N(\psi_0)\le t+s\right\}.
	\]
	Because the coefficients $V_N$ and $\sigma$ in the SPDE \eqref{eq:RD-N} are Lipschitz
	continuous, the random field $\psi_N$ is uniformly continuous in $L^1(\Omega)$;
	see \cite{Dalang1999}. Thus, the bounded convergence theorem implies that
	\[
		\lim_{t\downarrow0}\left| (P_{t+s}\Phi)(\psi_0) - (P_s\Phi)(\psi_0)\right|
		\le 2\|\Phi\|_{C(C(\T))}\lim_{N\to\infty}\P\left\{ T_N(\psi_0)\le s\right\}=0.
	\]
	This verifies the desired
	stochastic continuity of $t\mapsto P_t$, and completes the proof.
\end{proof}

%%%%%%%%%%%%%%%%%%%%%%%%%%%%%%%%%%%%%%%%%%%%%%%%%%
%%%%%%%%%%%%%%%%%%%%%%%%%%%%%%%%%%%%%%%%%%%%%%%%%%
%%%%% The Krylov-Bogoliubov argument %%%%%%%%%%%%%
%%%%%%%%%%%%%%%%%%%%%%%%%%%%%%%%%%%%%%%%%%%%%%%%%%
%%%%%%%%%%%%%%%%%%%%%%%%%%%%%%%%%%%%%%%%%%%%%%%%%%

\subsection{The Krylov-Bogoliubov argument}
\label{subsec:KB}

Define for every number $t\ge0$ and
all Borel sets $\Gamma\subset C_+(\T)$ a probability measure $\nu P_t$ as follows:
\[
	(\nu P_t)(\Gamma) := \P_\nu\{\psi(t)\in\Gamma\} = \int_{C_+(\T)}\nu(\d\psi_0)\ (P_t\bm{1}_\Gamma)(\psi_0),
\]
for every Borel regular probability measure $\nu$ on $C_+(\T)$.
Thus, $(t\,,\nu)\mapsto \nu P_t$ defines
the transition functions of the $C_+(\T)$-valued Markov process $\psi:=\{\psi(t)\}_{t\ge0}$
which solves uniquely the stochastic PDE \eqref{eq:RD}.
Recall that $\nu$ is an \emph{invariant measure} for $\psi$ [equivalently,
for \eqref{eq:RD}] if
\[
	\nu P_t=\nu\qquad\text{for every $t\ge0$.}
\]
Recall also that  $\delta_{\mathbb{0}}$ is always an invariant measure for \eqref{eq:RD},
and of course is concentrated on the trivial solution,
$\psi(t)=\mathbb{0}$ for all $t\ge0$.  

Our next effort is toward proving that if 
$\psi_0\neq\mathbb{0}$, ${\rm L}_\sigma>0$ [see \eqref{LL}], and $\lambda$ is sufficiently small,
then there also exists a non-trivial invariant measure  $\mu_+\perp\delta_{\mathbb{0}}$.   
The standard way to do this sort of thing is to appeal to the Krylov-Bogoliubov argument 
\citep[see][Corollary  3.1.2]{DZ96}, which we shall recall. But first, let us state and prove a simple consequence
of Theorem \ref{th:exist-unique}.

\begin{lemma}\label{lem:non-trivial-inv-meas}
	Let $\nu$ be any probability measure on $C_+(\T)$
	that is invariant  for \eqref{eq:RD}. Then,
	\[
		\nu\left\{\omega\in C_+(\T):\, \inf_{x\in\T}\omega(x)=0<\sup_{x\in\T}\omega(x)
		\right\}=0.
	\]
\end{lemma}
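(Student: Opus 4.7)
The plan is to combine the invariance of $\nu$ with the strict positivity assertion of Theorem \ref{th:exist-unique}, which says that whenever the initial profile is not identically zero, the solution becomes everywhere strictly positive for all positive times. Since $\T$ is compact and $\psi(t,\cdot)$ is a.s.\ continuous, everywhere positivity of the solution at time $t>0$ automatically upgrades to a positive uniform infimum. This will show that for \emph{any} initial profile $\psi_0\in C_+(\T)$, $\psi(t)$ a.s.\ avoids the set
\[
	A := \left\{\omega\in C_+(\T):\, \inf_{x\in\T}\omega(x)=0<\sup_{x\in\T}\omega(x)\right\},
\]
and then invariance of $\nu$ will translate this into $\nu(A)=0$.

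To implement the plan, I fix an arbitrary $t>0$ and partition the space of initial profiles into $\{\mathbb{0}\}$ and its complement. If $\psi_0=\mathbb{0}$, then uniqueness forces $\psi(t)=\mathbb{0}$ a.s., and since $\sup_{x\in\T}\mathbb{0}(x)=0$, we have $\mathbb{0}\not\in A$, so $\P_{\mathbb{0}}\{\psi(t)\in A\}=0$. If instead $\psi_0\in C_+(\T)\setminus\{\mathbb{0}\}$, then the last assertion of Theorem \ref{th:exist-unique} gives $\psi(t,x)>0$ for every $x\in\T$ almost surely; combined with continuity of $x\mapsto\psi(t,x)$ on the compact torus, this yields $\inf_{x\in\T}\psi(t,x)>0$ a.s., so again $\psi(t)\not\in A$ with probability one. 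Thus $\P_{\psi_0}\{\psi(t)\in A\}=0$ for every $\psi_0\in C_+(\T)$.

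Finally, I use the invariance of $\nu$: for every $t>0$,
\[
	\nu(A) = (\nu P_t)(A) = \int_{C_+(\T)} \P_{\psi_0}\{\psi(t)\in A\}\,\nu(\d\psi_0) = 0,
\]
which is the desired conclusion. There is no real obstacle here; the entire content is that the strong positivity result from Theorem \ref{th:exist-unique} is exactly what is needed to rule out invariant mass on functions that graze zero without vanishing identically.
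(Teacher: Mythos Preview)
Your proof is correct and follows essentially the same approach as the paper: combine the strict positivity assertion of Theorem \ref{th:exist-unique} with the invariance of $\nu$ via $\nu(A)=(\nu P_t)(A)$. The only cosmetic difference is that the paper first decomposes $\nu=\eta\delta_{\mathbb{0}}+(1-\eta)\tilde\nu$ to reduce to the case $\nu\{\mathbb{0}\}=0$, whereas you handle the $\psi_0=\mathbb{0}$ case directly by noting $\mathbb{0}\notin A$; your route is arguably slightly cleaner.
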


Among other things, Lemma \ref{lem:non-trivial-inv-meas} tells us that if $\nu$ is an invariant
measure for \eqref{eq:RD} that satisfies  $\nu\{\mathbb{0}\}=0$, 
then 
\[
	\nu\left\{\omega\in C_+(\T):\,\inf_{x\in\T}\omega(x)>0\right\}=1.
\]

\begin{proof}[Proof of Lemma \ref{lem:non-trivial-inv-meas}]
	We can always decompose $\nu$ as
	\[
		\nu=\eta\delta_{\mathbb{0}}+(1-\eta)\tilde{\nu},
	\]
	where  $\tilde{\nu}$ is a probability measure on $C_+(\T)$ such that $\tilde{\nu}\{\mathbb{0}\}=0$
	and $\eta\in[0\,,1]$. Therefore, it suffices to prove the lemma with $\nu$ replaced by $\tilde{\nu}$.
	Alternatively, we can relabel $[\tilde{\nu}\to\nu]$ to see that we may -- and will --
	assume without loss of generality that $\nu\{\mathbb{0}\}=0$.
	
	If $\psi(0)=\omega\in C_+(\T)\setminus\{\mathbb{0}\}$, then 
	Theorem \ref{th:exist-unique}  ensures that
	\[
		\P_{\omega}\left\{ \psi(t\,,x)>0\text{ for all $t>0$ and $x\in\T$}\right\}=1,
	\]
	whence
	\[
		\P_\omega\left\{\inf_{x\in\T}\psi(t\,,x)>0\right\}=1\quad\text{for all $t>0$
		and $\omega\in C_+(\T)\setminus\{0\}$}.
	\]
	[Note the exchange of $\P_\omega$ with the ``for all $t>0$'' quantifier.]
	Integrate over all such $\omega$ $[\d\nu]$ and consider $t=1$ in order to see that
	\[
		\P_\nu\left\{ \inf_{x\in\T}\psi(1\,,x)>0 \right\}=1.
	\]
	This proves the lemma, since the $\P_\nu$-law of $\psi(1)$ is $\nu$.
\end{proof}

When $\psi_0\in C_+(\T)\setminus\{\mathbb{0}\}$ and ${\rm L}_\sigma>0$,
Theorem \ref{th:exist-unique} and Proposition \ref{pr:feller} together imply
that $\psi$ a Feller process, taking values in the cone
$C_{>0}(\T)$ of all $f\in C(\T)$ such that $f(x)>0$ for all $x\in\T$,
endowed with relative topology. 

We can now recall the following specialization of the Krylov-Bogoliubov theorem
\cite[Corollary  3.1.2]{DZ96}.

\begin{lemma}[A Krylov--Bogoliubov theorem]\label{lem:DZ-tight}
	Suppose there exists a probability measure $\nu$ on $C_{>0}(\T)$
	such that the probability measures
	\[
		\left\{ \frac{1}{T}\int_0^{T} (\nu P_s) \,\d s\right\}_{T>0}
	\]
	has a tight infinite subsequence in $C_{>0}(\T)$. Then, $\psi$ has an invariant measure
	which is a probability measure on $C_{>0}(\T)$.
\end{lemma}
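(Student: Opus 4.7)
The plan is to follow the classical Krylov--Bogoliubov argument, adapted to the fact that we are working in the relative topology of the cone $C_{>0}(\T)$ rather than on all of $C_+(\T)$. Let $\{T_n\}_{n\ge1}$ denote the tight subsequence, and define
\[
	\mu_n(\bullet) := \frac{1}{T_n}\int_0^{T_n} (\nu P_s)(\bullet)\,\d s
	\qquad\text{for every $n\ge 1$.}
\]
By Prokhorov's theorem applied in the Polish space $C_{>0}(\T)$ (which inherits a complete separable metric structure from $C(\T)$), the sequence $\{\mu_n\}_{n\ge 1}$ has a further subsequence -- which I will relabel $\{\mu_n\}_{n\ge1}$ -- that converges weakly to some probability measure $\mu$ on $C_{>0}(\T)$. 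Because tightness is measured using compact subsets of $C_{>0}(\T)$, every weak limit is automatically a probability measure on $C_{>0}(\T)$; there is no escape of mass to the boundary $\{\omega\in C_+(\T):\inf_x\omega(x)=0\}$.

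Next I would check that $\mu$ is invariant. Fix $t\ge0$ and $\Phi\in C_b(C_{>0}(\T))$. By Proposition \ref{pr:feller}, the map $P_t\Phi$ -- originally defined via $(P_t\Phi)(\omega)=\E_\omega[\Phi(\psi(t))]$ on $C_+(\T)$ -- restricts to a bounded continuous function on $C_{>0}(\T)$ (continuity is a consequence of the Feller property established on $C_+(\T)$; boundedness is immediate). The semigroup identity gives, for every $n\ge1$,
\begin{align*}
	\int P_t\Phi\,\d\mu_n
	&= \frac{1}{T_n}\int_0^{T_n}\d s \int (P_t\Phi)\,\d(\nu P_s)
	= \frac{1}{T_n}\int_0^{T_n}\d s \int \Phi\,\d(\nu P_{s+t}) \\
	&= \frac{1}{T_n}\int_t^{T_n+t}\d s \int \Phi\,\d(\nu P_s)
	= \int \Phi\,\d\mu_n + \frac{1}{T_n}\left(\int_{T_n}^{T_n+t}\!\!\!-\int_0^t\right) \int \Phi\,\d(\nu P_s)\,\d s.
\end{align*}
The last error term is bounded in absolute value by $2t\|\Phi\|_{C(C_{>0}(\T))}/T_n$ and hence vanishes as $n\to\infty$. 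Passing to the limit using the weak convergence $\mu_n\Rightarrow\mu$ and the continuity of $\Phi$ and $P_t\Phi$ yields
\[
	\int P_t\Phi\,\d\mu = \int \Phi\,\d\mu,
\]
which is the required invariance identity $\mu P_t=\mu$.

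The only nontrivial point, and really the main thing to verify carefully, is that the weak limit $\mu$ genuinely sits on $C_{>0}(\T)$ rather than leaking onto $\{\mathbb{0}\}$ or onto the boundary of the positive cone under an embedding into $C_+(\T)$. I would handle this by framing tightness from the outset inside the Polish space $C_{>0}(\T)$ with its relative topology: the hypothesis is that some infinite subsequence of $\{\mu_n\}$ is tight as measures on $C_{>0}(\T)$, so each approximating compact set is already a subset of $C_{>0}(\T)$, and the Prokhorov limit is a probability measure there. A secondary technical check, easily handled, is that the Feller semigroup acts on $C_b(C_{>0}(\T))$: since $P_t\Phi$ is continuous on $C_+(\T)$ by Proposition \ref{pr:feller}, its restriction is continuous on $C_{>0}(\T)$, and this is all we need to justify the integration-by-parts style manipulation above.
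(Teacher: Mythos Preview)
The paper does not actually prove Lemma~\ref{lem:DZ-tight}; it is stated as a specialization of the Krylov--Bogoliubov theorem with a citation to Da~Prato and Zabczyk \cite[Corollary~3.1.2]{DZ96}. Your argument is precisely the classical Krylov--Bogoliubov proof from that reference, carried out in the cone $C_{>0}(\T)$, and it is correct. One small imprecision: $C_{>0}(\T)$ does not literally ``inherit a complete separable metric structure from $C(\T)$'' since it is open (not closed) in $C(\T)$; rather, as an open subset of a Polish space it admits an equivalent complete metric by Alexandrov's theorem, which is all that Prokhorov requires. Your handling of the Feller property on $C_{>0}(\T)$ is also fine: the paper notes just before the lemma that Theorem~\ref{th:exist-unique} and Proposition~\ref{pr:feller} together make $\psi$ a Feller process with values in $C_{>0}(\T)$, and your observation that $P_t\Phi$ restricts to a bounded continuous function on $C_{>0}(\T)$ is exactly what underlies that remark.
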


Because $\mathbb{0}\not\in C_{>0}(\T)$, the preceding invariant measure
cannot be $\delta_{\mathbb{0}}$. This is the desired non-triviality result.
Thus, our next challenge is to verify the conditions of Lemma \ref{lem:DZ-tight}.
We apply that lemma with $\nu := \delta_{\mathbb{1}}$. In order to discuss
tightness, we need a family of compact subsets of $C_{>0}(\T)$, which we build next.

Choose and fix  some $\alpha\in(0\,,1/2)$, 
and define for all $\varepsilon,\delta\in(0\,,1)$,
\[
	A(\varepsilon) := \left\{ f\in C^\alpha(\T):\, \inf_{x\in\T} f(x)\ge\varepsilon\right\}
	\quad\text{and}\quad
	B(\delta) := \left\{ f\in C^\alpha(\T):\, \| f\|_{C^\alpha(\T)}\le 1/\delta\right\}.
\]
According to the Arz\`ela-Ascoli theorem, every $B(\delta)$ is compact
[in the topology of $C(\T)$]. Since every $A(\varepsilon)$ is closed, 
$A(\varepsilon)\cap B(\delta)\subset C(\T)$
is compact, and of course also in $C_{>0}(\T)$.
We propose to prove that if $\psi_0=\mathbb{1}$, ${\rm L}_\sigma>0$,
and $\lambda$ is sufficiently small, then
\begin{equation}\label{claim:DZ-tight}
	\adjustlimits\lim_{\varepsilon,\delta\downarrow0}\limsup_{T\to\infty}
	\frac 1T\, \E_{\mathbb{1}}\left[\int_{0}^T
	\bm{1}_{\{\psi(t)\not\in A(\varepsilon)\cap B(\delta)\}}
	\,\d t\right]=0.
\end{equation}
Given that \eqref{claim:DZ-tight} is true, Lemma \ref{lem:DZ-tight} [with $\nu=\delta_{\mathbb{1}}$] 
readily implies the following.

\begin{proposition}\label{pr:inv:meas:exists}
	If $\psi_0=\mathbb{1}$, ${\rm L}_\sigma>0$,
	and $\lambda$ is sufficiently small, then $\psi$ has an invariant
	measure $\mu_+$ on $C_{>0}(\T)$.
\end{proposition}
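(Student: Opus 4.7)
The proposition is essentially a direct corollary of Lemma~\ref{lem:DZ-tight} (Krylov--Bogoliubov) applied with $\nu:=\delta_{\mathbb{1}}$, once one has verified the tightness condition \eqref{claim:DZ-tight}. So the first move is trivial: observe that by the Arzel\`a--Ascoli theorem together with the closedness of $A(\varepsilon)$ in $C(\T)$, each set $A(\varepsilon)\cap B(\delta)$ is compact in $C_{>0}(\T)$; then \eqref{claim:DZ-tight} says precisely that the Ces\`aro averages $T^{-1}\int_0^T(\delta_{\mathbb{1}} P_s)\,\d s$ are tight on $C_{>0}(\T)$ along a subsequence $T\to\infty$. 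Lemma~\ref{lem:DZ-tight} then produces an invariant probability measure $\mu_+$ supported on $C_{>0}(\T)$. Since $\mathbb{0}\notin C_{>0}(\T)$, this $\mu_+$ is automatically distinct from $\delta_{\mathbb{0}}$.

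\textbf{The real work: verifying \eqref{claim:DZ-tight}.} I would split the complement by a union bound,
\[
	\bm{1}_{\{\psi(t)\notin A(\varepsilon)\cap B(\delta)\}}
	\le \bm{1}_{\{\psi(t)\notin B(\delta)\}} + \bm{1}_{\{\psi(t)\notin A(\varepsilon)\}},
\]
and treat the two terms separately. The $B(\delta)^c$ term is the easy one: Proposition~\ref{pr:tight} applied with initial profile $\mathbb{1}$ gives
$\sup_{t\ge 1}\E_{\mathbb{1}}(\|\psi(t)\|_{C^\alpha(\T)}^k)<\infty$ for every $k\ge 2$, so Markov's inequality yields
$\P_{\mathbb{1}}\{\psi(t)\notin B(\delta)\}=O(\delta^k)$ uniformly in $t\ge 1$; averaging over $[0\,,T]$ and then sending $\delta\downarrow 0$ kills this contribution. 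The $t\in[0\,,1]$ portion of the average contributes $O(1/T)\to 0$.

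\textbf{The main obstacle.} The genuinely hard term is $\P_{\mathbb{1}}\{\psi(t)\notin A(\varepsilon)\}$, i.e.\ controlling how often $\inf_{x\in\T}\psi(t\,,x)$ dips below $\varepsilon$. This is where the hypotheses ${\rm L}_\sigma>0$ and $\lambda$ small come in essentially. My plan would be to exploit the fact that the \emph{deterministic} equation $\partial_t\psi=\partial_x^2\psi+V(\psi)$ has $\mathbb{1}$ (or, more generally, the positive equilibrium determined by $V$, which is stable since $F'(1)>1$ under our hypotheses on $F$) as an attractor, so for small noise the solution should concentrate near a strictly positive equilibrium. Concretely, I would construct a Lyapunov-type functional $U:C_{>0}(\T)\to[0\,,\infty)$ that blows up as $\inf_x\psi(x)\downarrow 0$ and for which the generator of the SPDE satisfies a bound of the form $\mathcal{L}U\le -c_1 U+c_2+c_3\lambda^2$ for suitable constants. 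A natural candidate is $U(\psi)=\int_\T G(\psi(x))\,\d x$ with $G(u)\sim\log(1/u)$ near $0$ and $G(u)\sim u^{m_0}$ near $\infty$, together with a spatial-regularity term to bound $B(\delta)^c$ simultaneously. Applying such a bound in integrated (mild) form --- and using that $|\sigma(\psi)/\psi|\le\lip_\sigma$ to control the martingale part, while the quadratic-variation correction is $O(\lambda^2)$ --- yields $\sup_{t\ge 1}\E_{\mathbb{1}}[U(\psi(t))]<\infty$ once $\lambda$ is small enough to absorb the correction into the negative drift. Combined with Chebyshev, this gives $\P_{\mathbb{1}}\{\inf_x\psi(t\,,x)<\varepsilon\}\to 0$ uniformly in $t\ge 1$ as $\varepsilon\downarrow 0$, which is exactly what \eqref{claim:DZ-tight} requires. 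The delicate points will be (i) making Itô's formula rigorous in infinite dimensions for a functional that is singular at $0$ (likely via a truncation and limiting argument paralleling the $\psi_N$ approximation from Section~\ref{sec:pf-thm-1}), and (ii) quantifying ``$\lambda$ sufficiently small'' so that the $O(\lambda^2)$ Itô correction is strictly dominated by the deterministic restoring drift coming from $V$.
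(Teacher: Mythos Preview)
Your reduction to Krylov--Bogoliubov with $\nu=\delta_{\mathbb{1}}$, the splitting into $B(\delta)^c$ and $A(\varepsilon)^c$, and the treatment of $B(\delta)^c$ via Proposition~\ref{pr:tight} all match the paper exactly. The gap is in your handling of the $A(\varepsilon)^c$ term.

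The Lyapunov approach you sketch has two real problems. First, the inequality $\mathcal{L}U\le -c_1 U+c_2+c_3\lambda^2$ fails for $U(\psi)=\int_\T G(\psi)\,\d x$ with $G(u)\sim\log(1/u)$: since $V(u)\approx u$ near $0$, the drift contribution to $\d U$ is $-\int V(\psi)/\psi\,\d x\approx -2$, a \emph{constant}, not $-c_1U$. Even switching to $G(u)=u^{-p}$ (which does give a drift proportional to $-U$), you need $p>1/\alpha>2$ for the integral to detect a small infimum through a $C^\alpha$ bound, and then you must control the It\^o correction. Second, and more seriously, the It\^o formula you invoke is not available: for an SPDE driven by space-time white noise, $\psi(t\,,\cdot)$ is nowhere differentiable in $x$, and the formal second-order correction $\tfrac{\lambda^2}{2}\int G''(\psi)\sigma^2(\psi)\,\d x$ arises from the \emph{diagonal} of the noise covariance, which is a delta function. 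Making sense of $\mathcal{L}U$ for nonlinear $G$ with singularities at $0$ would require a renormalization argument that is at least as hard as the proposition itself, and your parenthetical ``via a truncation and limiting argument'' does not address this.

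The paper takes an entirely different route (Section~5): a pathwise random-walk comparison. One builds stopping times $\tau_0<\tau_1<\cdots$ and comparison processes $v_n$ with $\psi\ge v_n$ on $[\tau_n,\tau_{n+1})$, where each $v_n$ solves an SPDE with \emph{constant} drift $\tfrac12 L_{\tau_n}(v_n)$ in place of $V$. The embedded chain $X_n:=\log_2 L_{\tau_n}(v_n)$ moves by $\pm1$ with reflection at $M-1$, and a large-deviation estimate for the stochastic convolution (Lemma~\ref{lem:large-dev}) shows that for $\lambda$ small the upward step has probability $\ge 2/3$. Coupling $X$ to a biased simple random walk then yields \eqref{eq:goal:KB}, which via Proposition~\ref{pr:reduction} gives \eqref{eq:condition-3}. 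This argument uses only the comparison principle and tail bounds for Gaussian-type stochastic integrals---no infinite-dimensional It\^o calculus on singular functionals.
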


Proposition \ref{pr:tight} and Chebyshev's inequality together imply that
\[
	\limsup_{T\to\infty}
	\frac 1T\, \E_{\mathbb{1}}\left[\int_{0}^T
	\bm{1}_{\{\psi(t)\not\in B(\delta)\}}\,\d t\right] \le\sup_{t\ge1}\P_{\mathbb{1}}\left\{ 
	\|\psi(t)\|_{C^\alpha(\T)} > 1/\delta\right\} = o(1)\qquad\text{as $\delta\downarrow0$}.
\]
Therefore, \eqref{claim:DZ-tight} -- whence also Proposition \ref{pr:inv:meas:exists} --
follows as soon as we establish the following: 
\[
	\adjustlimits
	\lim_{\varepsilon\downarrow0}\limsup_{T\to\infty}
	\frac 1T\, \E_{\mathbb{1}}\left[\int_{0}^T
	\bm{1}_{\{\inf_{x\in\T}\psi(t,x) < \varepsilon\}}\,\d t\right] =0.
\]
Recall that Fatou's lemma can be recast as follows: If $\{\xi_T\}_{T>1}$ is a process 
that take values in $[0\,,1]$, then
\[
	\limsup_{T\to\infty}\E(\xi_T)\le\E\left(\limsup_{T\to\infty}\xi_T\right).
\]
We apply this with $\xi_T:= T^{-1}\int_0^T\bm{1}_{\{\inf_\T\psi(t)<\varepsilon\}}\,\d t$,
and then apply the monotone convergence theorem in order to be able to deduce the above,
whence also Proposition \ref{pr:inv:meas:exists}, from the following assertion:
\begin{equation}\label{eq:condition-3}
	\P_{\mathbb{1}}\left\{ \adjustlimits
	\lim_{\varepsilon\downarrow0}\limsup_{T\to\infty}
	\frac 1T \int_{0}^T
	\bm{1}_{\{\inf_{x\in\T}\psi(t,x) < \varepsilon\}}\,\d t =0 \right\}=1.
\end{equation}
We prove \eqref{eq:condition-3} in the next few sections. This will complete our proof
of Proposition \ref{pr:inv:meas:exists}. 

%%%%%%%%%%%%%%%%%%%%%%%%%%%%%%%%%%%%%%%%%%%%%%%%%%

%%%%%%%%%%%%%%%%%%%%%%%%%%%%%%%%%%%%%%%%%%%%%%%%%%
%%%%%%%%%%%%%%%%%%%%%%%%%%%%%%%%%%%%%%%%%%%%%%%%%%
%%%%% A Random Walk Argument %%%%%%%%%%%%%%%%%%%%%
%%%%%%%%%%%%%%%%%%%%%%%%%%%%%%%%%%%%%%%%%%%%%%%%%%
%%%%%%%%%%%%%%%%%%%%%%%%%%%%%%%%%%%%%%%%%%%%%%%%%%

%%%%%%%%%%%%%%%%%%%%%%%%%%%%%%%%%%%%%%%%%%%%%%%%%%

\section{A Random Walk Argument}

For every continuous space-time process $h=\{h(t\,,x)\}_{t\ge0,x\in\T}$ define
\[
	L_t(h)  :=\inf_{x\in\T}h(t\,,x)\quad\text{and}\quad
	U_t(h) :=\sup_{x\in\T}h(t\,,x)\qquad\text{for every $t\ge0$}.
\]
If, in particular, $h\ge0$ then $U_t(h)=\|h(t)\|_{C(\T)}$ for all $t\ge0$.
Our goal is to construct comparison processes which give a series of lower bounds 
for $L_t(\psi)$, all the time controlling $U_t(\psi)$, where 
$\psi$ denote a solution to \eqref{eq:RD} with the initial profile 
$\psi_0=\mathbb{1}$.   
Our argument repeatedly uses the comparison 
principle for SPDEs in the form of Lemma \ref{lem:comparison}. 

\subsection{An associated chain}
By the definition of $F$, we can choose and fix a strictly negative integer $M\in-\N$, sufficiently negative
to ensure that there exists $c\in(0\,,1)$ such that
$(1-c)v\le V(v)=v-F(v)\le v$ for all $v\in (0\,,2^{M+1})$; see Lemma \ref{lem:F}.
Of course the second inequality holds for all $v>0$.  For simplicity, we assume that $c=1/2$. 
It will not be difficult to study the general case $c\in(0\,,1)$ after we adjust the
argument to follow. In other words, we will proceed with the assumption that
\begin{equation}\label{12v<V<v}
	\tfrac12v \leq V(v) = v-F(v) \leq v
	\qquad\text{for all $v\in(0\,,2^{M+1})$}.
\end{equation}
From now on, the symbol ``$M$'' will be used only for this purpose.

As was mentioned in the preamble to this section,
throughout we let $\psi$ denote a solution to \eqref{eq:RD} with the initial profile 
$\psi_0=\mathbb{1}$.  Now we set up our random walk comparison.

We will define $\mathscr{F}$-stopping 
times $0=\tau_0<\tau_1<\cdots$ and comparison processes 
$v_0,v_1,\ldots$ such that
\begin{equation} \label{eq:comparison-reqirement}
	\psi(t)\geq v_n(t)\quad\text{for $t\in\left[\tau_n\,,\tau_{n+1}\right)$,
	everywhere on $\T$}.
\end{equation}
For this reason, we will  define $v_n(t)$ only for $t\geq\tau_n$. 

To start the process, let us define 
\begin{equation*}
	\tau_0  := 0 \quad\text{and}\quad
	v_0(0\,,x) := 2^{M-2}\text{ for all $x\in\T$},
\end{equation*}
%Because $\psi_0\geq v_0$, it follows that
%\[
%	\psi(t\,, x) \geq v_0(t\,, x)
%	\quad\text{for all $t\geq 0$ and $x\in\T$.}
%\]
%where

%\begin{equation*}
%m_0 =\sup\left\{m\in\Z: m<M-2,\, 2^m\leq L_0\right\}.
%\end{equation*} 
%We assume that $m_0>0$. If $m_0=0$. then we restart the process at  time 1, and use the Markov property of solutions to conclude that our new initial function is strictly nonnegative with probability one.
Now we proceed inductively on $n$.  Suppose that we have 
defined $\tau_n$ and $v_n(\tau_n)=\{v_n(\tau_n\,,x)\}_{x\in\T}$. 
Let $\{\theta_t\}_{t\ge0}$ denote the standard shift operator on our probability space.
Informally speaking, this means that
$\theta_t\dot{W}(s\,,x) := \dot{W}(s+t\,,x)$ for all $s,t\ge0$ and $x\in\T$.
More precisely, $\theta_t$ is defined via
\[
	\int_{\R_+\times\T}  h(s\,,x)\,\theta_t W(\d s\,\d x) :=
	\int_{(t,\infty)\times\T} h(s-t\,,x)\, W(\d s\,\d x)
	\quad\text{for all $t\ge0$ and $h\in L^2(\R_+\times\T)$}.
\]
With the above shifts in mind, we define $w_n$ for $n\geq 0$ as the unique adapted and continuous
solution to the following SPDE:
\begin{equation*} %\label{eq:lower-bound-processs}
\left[\begin{split}
	&\partial_tw_{n}(t\,,x)=\partial_x^2w_{n}(t\,,x) + 
		\tfrac12 L_{\tau_n}(v_n)
  		+\lambda\sigma(w_{n}(t\,,x))\theta_{\tau_{n}}\dot{W}(t\,,x)
		\quad\text{for $(t\,,x)\in(0\,,\infty)\times\T$},\\
	&\text{subject to }w_{n}(0\,,x)=L_{\tau_{n}}(v_n)\hskip2.25in\text{for all $x\in\T$}.
\end{split}\right. 
\end{equation*}
Define $\tau_{n+1}$
to be the smallest $t+\tau_n>\tau_n$ that at least one of the following occurs:
\begin{compactenum}
\item $L_t(w_{n})=2L_{\tau_{n}}(v_{n})$
\item $L_t(w_{n})=\frac{1}{2}L_{\tau_{n}}(v_{n})$
\item $U_t(w_{n})=4L_{\tau_{n}}(v_{n})$.  
\end{compactenum}
If such a $t$ does not exist, then $\tau_{n+1}:=\infty$.
If such a $t$ does exist, then we let
\begin{equation*}
	v_{n}(\tau_n+t\,,x) := w_{n}(t\,,x)\qquad\text{for all $t\geq 0$ and $x\in\T$}.
\end{equation*}
In case 1, that is a.s.\ on $\{L_{\tau_{n+1}}(v_{n})=2L_{\tau_{n}}(v_{n})\}\cup\{
\tau_{n+1}<\infty\}$, we let
\begin{equation*}
	v_{n+1}(\tau_{n+1},x) :=
	\begin{cases}
		2L_{\tau_{n}}(v_{n})  &\text{if $L_{\tau_n} (v_n) \leq 2^{M-2}$}, \\
		2^{M-2} & \text{if $L_{\tau_n} (v_n) \geq  2^{M-1}$,}
	\end{cases}
\end{equation*}
for every $x\in\T$.
And in cases 2 and 3, that is a.s.\ on
\[
	\left\{L_{\tau_{n+1}}(v_{n})=\tfrac{1}{2}L_{\tau_{n}}(v_{n})\right\}\cup
	\left\{U_t(v_{n})=4L_{\tau_{n}}(v_{n})\right\}\cup
	\left\{\tau_{n+1}<\infty\right\},
\]
we define
\begin{equation*}
	v_{n+1}(\tau_{n+1},x) := \tfrac{1}{2}L_{\tau_{n}}(v_{n})\qquad
	\text{for every $x\in\T$}.
\end{equation*}
If $\tau_{n+1}<\infty$ a.s., then by
\eqref{12v<V<v}, the Markov property, and the comparison theorem for SPDEs,
\eqref{eq:comparison-reqirement} holds
almost surely.  This finishes our inductive construction, provided that we can prove the
following. 

\begin{lemma}%\label{lem:tau:finite}
	If ${\rm L}_\sigma>0$, then $\P_{\mathbb{1}}\{\tau_{n+1}<\infty\}=1$ for all $n\in\Z_+$.
\end{lemma}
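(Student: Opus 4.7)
The plan is to exploit the strictly positive drift $\tfrac12 L_{\tau_n}(v_n)$ in the equation for $w_n$. Set $\ell := L_{\tau_n}(v_n)$, which is $\cF_{\tau_n}$-measurable and strictly positive by the inductive construction (each $v_m(\tau_m\,,\cdot)$ is a positive constant). I intend to show that the spatial average
\[
	\bar w_n(t) := \tfrac12 \int_\T w_n(t\,,x)\,\d x
\]
grows linearly in $t$ at rate $\ell/2$, up to a martingale fluctuation that is $o(t)$. This will force the upper barrier $U_t(w_n)=4\ell$ of condition~3 to be crossed in finite time, whence $\tau_{n+1}<\infty$ a.s.

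The key identity comes from a stochastic Fubini argument: because $w_n(0\,,\cdot)\equiv \ell$ and the drift is a constant independent of $x$ and $w_n$, integrating the mild formulation of the equation in $x\in\T$ and using $\int_\T p_t(x\,,y)\,\d x = 1$ yields
\[
	\bar w_n(t) = \ell + \tfrac{\ell}{2}\,t + \tfrac{\lambda}{2}\,N_t,
	\qquad
	N_t := \int_{(0,t)\times\T} \sigma(w_n(s\,,y))\,\theta_{\tau_n}W(\d s\,\d y),
\]
where $N$ is a continuous local martingale (with respect to the shifted filtration $\{\cF_{\tau_n+t}\}_{t\ge0}$) whose quadratic variation is $\langle N\rangle_t = \int_0^t \d s\int_\T \sigma^2(w_n(s\,,y))\,\d y$. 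On the event $A:=\{\tau_{n+1}=\infty\}$, continuity of $t\mapsto L_t(w_n)$ and $t\mapsto U_t(w_n)$, combined with the failure of each of conditions (1)--(3), confines $w_n$ to the band $\ell/2 < w_n(t\,,x) < 4\ell$ for every $t\ge 0$ and $x\in\T$. Since $\sigma(0)=0$ and $\sigma$ is Lipschitz, this uniform bound gives $\langle N\rangle_t \le 32\,\lip_\sigma^2\,\ell^2\,t$ on $A$.

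The remaining step is to control $N_t$. Invoking the Dambis--Dubins--Schwarz representation $N_t = B_{\langle N\rangle_t}$ for a Brownian motion $B$, and using $|B_s|/s \to 0$ as $s\to\infty$ (law of the iterated logarithm), one obtains $N_t/t \to 0$ almost surely on $A$. Consequently $\bar w_n(t)/t \to \ell/2 > 0$ on $A$, so $\bar w_n(t) \to \infty$; but $\bar w_n(t) \le U_t(w_n) < 4\ell$ on $A$, a contradiction. Hence $\P_{\mathbb{1}}(A)=0$, as required. The only mild obstacle is that $\ell$ is random rather than deterministic; this is handled by conditioning on $\cF_{\tau_n}$ and invoking the strong Markov property, under which $\theta_{\tau_n}\dot W$ is a space-time white noise independent of $\cF_{\tau_n}$. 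We note in passing that the hypothesis ${\rm L}_\sigma>0$ of the lemma is not actually used in this step: the positive drift alone forces the contradiction.
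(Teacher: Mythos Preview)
Your approach is essentially the same as the paper's: integrate the mild equation in $x$ to obtain a semimartingale $\bar w_n(t)=\ell+\tfrac{\ell}{2}t+\tfrac{\lambda}{2}N_t$, bound the quadratic variation of $N$ linearly in $t$ on the event $A=\{\tau_{n+1}=\infty\}$ using the band $\ell/2<w_n<4\ell$, and derive a contradiction with $\bar w_n(t)\le U_t(w_n)<4\ell$.

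One point deserves care. Your passage ``$N_t=B_{\langle N\rangle_t}$ and $|B_s|/s\to0$, hence $N_t/t\to0$ on $A$'' tacitly assumes $\langle N\rangle_t\to\infty$; the DDS/LIL argument gives $N_t/\langle N\rangle_t\to0$ only on $\{\langle N\rangle_\infty=\infty\}$, and then the bound $\langle N\rangle_t\le 32\lip_\sigma^2\ell^2 t$ converts this to $N_t/t\to0$. The paper handles exactly this by invoking ${\rm L}_\sigma>0$, which yields the lower bound $\langle N\rangle_t\ge \tfrac12{\rm L}_\sigma^2\ell^2 t$ on $A$ and forces $\langle N\rangle_\infty=\infty$. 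Your closing remark that ${\rm L}_\sigma>0$ is not actually needed is in fact correct, but it requires an additional sentence you did not supply: on $A\cap\{\langle N\rangle_\infty<\infty\}$ the martingale $N_t$ converges a.s.\ to a finite limit (e.g.\ by DDS, $N_\infty=B_{\langle N\rangle_\infty}$), so $N_t/t\to0$ there as well. With that dichotomy made explicit, your argument is complete and slightly sharper than the paper's, which uses the hypothesis ${\rm L}_\sigma>0$ where it is not strictly necessary.
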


\begin{proof}
	Since $\tau_0=0$, we may [and will] assume that we have proved that $\P_{\mathbb{1}}
	\{\tau_n<\infty\}=1$ for some $n\in\Z_+$,
	and proceed to prove that $\P_{\mathbb{1}}\{\tau_{n+1}<\infty\}=1$. With this in mind, choose and fix
	some $n\in\Z_+$ and suppose that
	$\tau_n<\infty$ a.s. Then, for every $t>0$ and $x\in\T$,
	\[
		w_{n}(t\,,x) = \left( 1 + \frac t2\right) L_{\tau_n}(v_n) + \lambda
		\int_{(0,t)\times\T} p_{t-s}(x\,,y)\sigma(w_{n}(s\,,y))\,\theta_{\tau_{n}} W(\d s\,\d y)
		\qquad\text{a.s.},
	\]
	and 
	\[
		\tau_{n+1} \le \hat{\tau}_{n+1} := 
		\inf\left\{ t>0:\,  L_t(w_{n}) \text{ or }U_t(w_{n})\notin\left[\tfrac12 L_{\tau_n}(v_n)\,,
		4L_{\tau_n}(v_n)\right]\right\},
	\]
	where $\inf\varnothing=\infty$. Note that $\hat{\tau}_{n+1}$
	is a stopping time with respect to the filtration $\mathscr{F}$.
	It remains to prove that $\hat{\tau}_{n+1}<\infty$
	$\P_{\mathbb{1}}$-a.s. 
	
	Define
	\[
		\hat{w}_{n}(t) := \int_{\T}w_{n}(t\,,x)\,\d x
		\qquad\text{for all $t\ge0$.}
	\]
	A stochastic Fubini argument yields
	\begin{equation}\label{bar(w)}
		\hat{w}_{n}(t) = (2+t)L_{\tau_n}(v_n) + \lambda \mathcal{M}_t
		\qquad\text{for all $t\ge0$},
	\end{equation}
	where 
	\[
		\mathcal{M}_t := \int_{(0,t)\times\T}\sigma(w_{n}(s\,,y))\,\theta_{\tau_{n}} W(\d s\,\d y)
		\qquad\text{for all $t\ge0$}
	\]
	defines a continuous $L^2(\P)$-martingale.
	Since $\sigma$ is Lipschitz and $\sigma(0)=0$, it follows that
	$|\sigma(z)|\le \lip_\sigma|z|$ for all $z\in\R$. Therefore,
	the quadratic variation of $\mathcal{M}$ is given by
	\[
		\< \mathcal{M}\>_t = \int_0^t\d s\int_{\T}\d y\ \sigma^2(w_{n}(s\,,y))
		\le \lip_\sigma^2\int_0^t\d s\int_{\T}\d y\ \left| w_{n}(s\,,y) \right|^2
		\quad\text{for all $t\ge0$.}
	\]
	It follows from this inequality that
	\[
		\< \mathcal{M}\>_t \le 32[\lip_\sigma  L_{\tau_n}(v_n) ]^2t
		\qquad\text{for all $t\ge0$ a.s.\ on $\{\hat{\tau}_{n+1}=\infty\}$.}
	\]
	The law of large numbers for
	continuous $L^2(\P)$-martingales then implies that 
	\begin{equation}\label{<M>:to:0}
		\limsup_{t\to\infty}
		\left| \frac{\mathcal{M}_t}{t} \right| \le 32 \left[ \lip_\sigma L_{\tau_n}(v_n) \right]^2
		\lim_{t\to\infty}\left| \frac{\mathcal{M}_t}{\< \mathcal{M}\>_t}\right| =0\qquad\text{a.s.\
		on $\{\hat{\tau}_{n+1}=\infty\}\cap\mathcal{E}$},
	\end{equation}
	for the event $\mathcal{E} := \{\lim_{t\to\infty}\< \mathcal{M}\>_t=\infty\}$.
	Since ${\rm L}_\sigma>0$, the inequality $|\sigma(z)|\ge{\rm L}_\sigma|z|$ -- valid
	for all $z\in\R$ -- has content, and implies that
	$\< \mathcal{M}\>_t\ge \tfrac12{\rm L}_\sigma^2 L_{\tau_n}^2(v_n)t$ for all $t\ge0$
	a.s.\ on $\{\hat{\tau}_{n+1}=\infty\}$. In particular, 
	\[
		\P_{\mathbb{1}}\left(\{\hat{\tau}_{n+1}=\infty\}\cap\mathcal{E} \right)
		=\P_{\mathbb{1}}\{\hat{\tau}_{n+1}=\infty\}.
	\]
	This fact, \eqref{<M>:to:0}, and \eqref{bar(w)} together imply that
	$\lim_{t\to\infty}t^{-1}\hat{w}_{n}(t) = L_{\tau_n}(v_n)>0$
	a.s.\ on $\{\hat{\tau}_{n+1}=\infty\}$, whence
	\[
		\P_{\mathbb{1}}\left\{ \sup_{t\ge0} \hat{w}_{n}(t)  =\infty
		~,~ \hat{\tau}_{n+1}=\infty\right\}=\P_{\mathbb{1}}\left\{\hat{\tau}_{n+1}=\infty\right\}.
	\]
	Since
	\[
		\sup_{t\ge0} \hat{w}_{n}(t) \le 4L_{\tau_n}(v_n)<\infty\quad\text{a.s.\ on}\quad
		\{\hat{\tau}_{n+1}=\infty\}, 
	\]
	it follows that $\P_{\mathbb{1}}\{\hat{\tau}_{n+1}=\infty\}=0$,
	as was claimed.
\end{proof}

Now we define an embedded ``reflected chain'' $X$, along with 
the length of time $\tau_n$ for step $n$:
\begin{align*}
	X_n&=\log_2L_{\tau_n}(v_{n}),  &\text{for }n\geq0,\\
	\ell_n&=\tau_{n}-\tau_{n-1},   &\text{for }n\geq1.
\end{align*}
Let us pause to explain why we refer to $X$ as a ``reflected chain.''
Firstly, the strong Markov property of every infinite-dimensional diffusion
$\{w_n(t)\}_{t\ge0}$ implies that for every $n\in\Z_+$, and given the value of $X_n$, the random variable
$X_{n+1}$ is conditionally independent of $\mathscr{F}_{\tau_n}$, and clearly $X_0=M-2$; thus,
$X$ is a time-inhomogenous Markov chain that starts at $M-2$. Secondly, the definition of
$X$ implies immediately that $X$ moves in three distinct ways:
$X_{n+1}-X_n=\pm 1$ for all $n\ge0$, and in all cases,
except that $X_{n+1}-X_n= M-2 - X_n\le  -1$ a.s.\
on $\{X_n\ge M-1\}$.  We find that $X_0=M-2$ and from time one onward,
$X$ moves in increments of $\pm1$ and is reflected at $M-1$ by an increment of $-1$ to stay in 
$\Z\cap(-\infty\,,M-1]$ henceforth.
The latter exception ensures that $X_n\le M-1$ for all $n\ge1$, 
and explains the use of ``reflection at $M-1$.'' Finally, 
we will soon demonstrate that, if the parameter $\lambda$ in \eqref{eq:RD} is small enough, then
\[
	\P(X_{n+1}-X_n=+1\mid X_n)\ge \tfrac23\quad
	\text{for all $n\ge1$, $\P_{\mathbb{1}}$-a.s.\ on the event $\{X_n< M-1\}$.}
\]
This will imply that $X$ moves upward at least as fast as a random walk with upward drift,
when it is not being reflected. We now return to the main part of the discussion.

In order to study the walk $X$ we make some definitions.  
For simplicity, define for all $n\in\N$, $t\ge0$, and $x\in\T$,
\begin{equation*}
	\mathcal{I}_n(t\,,x) := \lambda\int_{(0,t)\times\T} p_{t-s}(x-y)
	\sigma(v_n(\tau_n+s\,,y))\,\theta_{\tau_{n}}W(\d y\,\d s).
\end{equation*}
Note that, in the definition of $\mathcal{I}_n(t\,,x)$, the parameter $t$ refers to the 
time beyond $\tau_n$. Thanks to the mild formulation of $w_n$, and hence also
$v_n$, the following is valid a.s.\ for every $t\in[0\,,\ell_{n+1}]$:
\begin{equation} \label{eq:vn-equals}
\begin{split}	
w_n(t\,,x) = v_{n}(\tau_n+t\,,x) &= 2^{X_n}+t2^{X_n-1}+\mathcal{I}_{n}(t\,,x) \\
&= \left(1+\frac{t}{2}\right)L_{\tau_n}(v_n)+\mathcal{I}_{n}(t\,,x).
\end{split}
\end{equation}
Next we estimate $\mathcal{I}_n$. For possibly random numbers $T\geq0$
and $\delta>0$, consider the event
\begin{equation*}
	A_n(T,\lambda\,,\delta)
	:= \left\{\adjustlimits
	\sup_{0\leq t\leq T}\sup_{x\in\T}
	 |\mathcal{I}_n(t\,,x)|\leq\delta\right\}.
\end{equation*}
By \eqref{LL} and by the definition of $\tau_n$ and $\tau_{n+1}$,
\begin{equation} \label{eq:sup-sigma}
	|\sigma(v_n(t\,,x))|\leq 4\text{Lip}_{\sigma}L_{\tau_n}(v_n)
	\quad\text{for every $t\in[\tau_n\,,\tau_{n+1}]$},
\end{equation}
almost surely.
We define a truncated version of $\sigma$ and corresponding versions
of $\mathcal{I}$ and $A$ as follows:
\begin{align}
\label{eq:def-sigma-n}
	\sigma_n(t\,,x) &= \sigma(v_{n}(\tau_n+t\,,x))\bm{1}_{(0,\ell_{n+1})}(t),\\\label{eq:def-I-tr}
	\mathcal{I}^{\text{tr}}_n(t\,,x) & =\lambda
		\int_{(0,t)\times\T}p_{t-s}(x-y)\sigma_n(s\,,y)\,
		\theta_{\tau_{n}}W(\d y\,\d s),\\
	A_n^{\text{tr}}(T,\lambda\,,\delta)
		&=\left\{\adjustlimits
		\sup_{t\in[0,T]}\sup_{x\in\T}
		|\mathcal{I}_n^{\text{tr}}(t\,,x)|\leq\delta\right\}. \nonumber
\end{align}
Since $\sigma_n=0$ beyond time $\ell_{n+1}$, elementary properties of the Walsh stochastic integral
ensure that  a.s.\ on the event $\{\ell_{n+1}<T\}$,
\begin{equation*}
	\sup_{0\leq t\leq T}\sup_{x\in\T}
	|\mathcal{I}_n^{\text{tr}}(t\,,x)|\leq\delta
	\quad\Leftrightarrow\quad
	\sup_{0\leq t\leq\ell_{n+1}}\sup_{x\in\T}
	|\mathcal{I}_n(t\,,x)|\leq\delta.
\end{equation*}
Because the underlying probability space is assumed to be complete, it follows that
\begin{equation*}
	A_n^{\text{tr}}(T,\lambda\,,\delta)\cap\{\ell_{n+1}<T\}
	=A_n(\ell_{n+1}\,,\lambda\,,\delta)\cap\{\ell_{n+1}<T\}\qquad\text{a.s.}
\end{equation*}

Next we give a probabilistic estimate for $\mathcal{I}_n^{\text{tr}}(t\,,x)$.
To keep the flow of the argument, we postpone the proof until the end of 
the paper.

\begin{lemma} \label{lem:large-dev}
	There exist numbers $C_0,C_1>0$ such that
	\begin{equation*}
		\P\left(\left.\adjustlimits
		\sup_{0\leq t\leq T}\sup_{x\in\T}
		|\mathcal{I}_n^{\text{tr}}(t\,,x)| > \rho \ \right|\, \mathscr{F}_{\tau_n}\right) \\
		\leq C_0\exp\left(-\frac{C_1\rho^2}
		{16T^{1/2}\lambda^2 \lip^2_\sigma L^2_{\tau_n}(v_n)}\right),
	\end{equation*}
	$\P_{\mathbb{1}}$-a.s.\ for every $\rho,T>0$ and $n\ge1$.
\end{lemma}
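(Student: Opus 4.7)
The plan is to exploit the fact that, after truncation at $t = \ell_{n+1}$, the integrand $\sigma_n$ in \eqref{eq:def-I-tr} is uniformly bounded: by \eqref{eq:sup-sigma} and \eqref{eq:def-sigma-n} we have $|\sigma_n(s,y)| \le 4\lip_\sigma L_{\tau_n}(v_n)$ almost surely for all $(s,y)$. Conditionally on $\cF_{\tau_n}$, the quantity $L_{\tau_n}(v_n)$ is known and $\theta_{\tau_n}W$ is a white noise independent of $\cF_{\tau_n}$, so the standard $L^k(\Omega)$ estimates for the Walsh stochastic integral apply conditionally with this constant bound replacing the Lipschitz-type bound we used in the moment computations of Lemma \ref{lem:moment} and Lemma \ref{lem:cont}.

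First I would establish single-point moment bounds: by BDG (with the Carlen--Kree bound $A_k^{1/k}\le 2\sqrt k$) together with $\int_0^t \d s \int_{\T}\d y\, p_{t-s}^2(x,y) = \int_0^t p_{2s}(0,0)\,\d s \le 2\sqrt{2t}$ [see the last display in the proof of Lemma \ref{lem:I4}], one gets, conditionally on $\cF_{\tau_n}$,
\[
	\left\|\mathcal{I}_n^{\text{tr}}(t,x)\right\|_k \le C\sqrt{k}\,\lambda \lip_\sigma L_{\tau_n}(v_n)\, t^{1/4},
\]
uniformly in $t\in[0,T]$ and $x\in\T$. Next I would derive matching increment estimates: mimicking the $I_3$ computation in the proof of Lemma \ref{lem:cont} but with the constant bound $4\lip_\sigma L_{\tau_n}(v_n)$ instead of $\lip_\sigma\|\psi(s,y)\|_k$, and using Lemma \ref{p-p:2} and \citet[Lemma B.6]{KKMS20}, one obtains
\[
	\left\| \mathcal{I}_n^{\text{tr}}(t,x) - \mathcal{I}_n^{\text{tr}}(s,z)\right\|_k \le C\sqrt{k}\,\lambda\lip_\sigma L_{\tau_n}(v_n)\left(|x-z|^{1/2} + |t-s|^{1/4}\right),
\]
conditionally on $\cF_{\tau_n}$, uniformly in $s,t\in[0,T]$ and $x,z\in\T$.

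Then I would apply a quantitative Kolmogorov/chaining bound, exactly as in \citet[Proposition 5.8]{KKMS20}, to transfer these $L^k$-moment bounds to the supremum, yielding a constant $C'$ such that
\[
	\left\| \adjustlimits\sup_{0\le t\le T}\sup_{x\in\T}\left|\mathcal{I}_n^{\text{tr}}(t,x)\right|\right\|_k
	\le C'\sqrt{k}\,\lambda\lip_\sigma L_{\tau_n}(v_n)\, T^{1/4},
\]
again conditionally on $\cF_{\tau_n}$. A conditional Chebyshev inequality followed by optimization over $k\ge 2$ — the optimum being essentially $k \asymp \rho^2/[T^{1/2}\lambda^2\lip_\sigma^2 L_{\tau_n}^2(v_n)]$ — converts this $L^k$-bound scaling as $c^k k^{k/2}$ into the claimed sub-Gaussian conditional tail bound, with the numerical constant $16$ absorbed into $C_1$.

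The main obstacle I expect is purely technical: propagating the chaining constants from \citet[Proposition 5.8]{KKMS20} cleanly through the conditional expectation $\E(\,\cdot\mid\cF_{\tau_n})$, so that the final exponent depends only on the $\cF_{\tau_n}$-measurable quantity $L_{\tau_n}(v_n)$ and the deterministic parameters $T,\lambda,\lip_\sigma$. This is handled by the strong Markov property: conditioned on $\cF_{\tau_n}$, the field $\theta_{\tau_n}W$ is an independent white noise, so all the moment computations above are performed with a deterministic constant in place of $L_{\tau_n}(v_n)$, giving a deterministic tail bound that one can then re-interpret as the desired conditional inequality.
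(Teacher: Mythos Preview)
Your proposal is correct and follows essentially the same approach as the paper: the paper's proof is a brief sketch that invokes \citet[Theorem 4.2, page 126]{spde-utah} after noting the uniform bound $|\sigma_n|\le 4\lip_\sigma L_{\tau_n}(v_n)$, and that cited result is proved precisely via the BDG-plus-chaining-plus-Chebyshev-optimization scheme you describe. Your version is more self-contained, but the underlying argument is the same.
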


For convenience let us define $\mathscr{P}_n$ to be the conditional measure,
\begin{equation*}
	\mathscr{P}_n(\cdot)=\P(\,\cdot\mid\mathscr{F}_{\tau_n}).
\end{equation*}
We now select
\begin{equation}\label{eq:T_delta}
	\delta := \tfrac{1}{4}L_{\tau_n}(v_n).
\end{equation}
Note that $\delta$ is random, but $\mathscr{F}_{\tau_n}$-measurable.
Moreover, \eqref{eq:vn-equals} assures us that, as long as $T\ge3$,
the following holds a.s.\ on $A_n^{\text{tr}}(T,\lambda\,,\delta)\cap \{ \ell_{n+1} > T\}$:
\begin{equation*}
	v_{n}(\tau_n+T, x) \geq \tfrac{5}{2}L_{\tau_n}(v_n)-\delta 
	> 2L_{\tau_n}(v_n).
\end{equation*}
This proves that $A_n^{\text{tr}}(T,\lambda\,,\delta)=
A_n^{\text{tr}}(T,\lambda\,,\delta)\cap \{ \ell_{n+1} \leq T\}$ a.s.  for (say) $T=3$.
In addition,  \eqref{eq:vn-equals} ensures that
a.s.\ on $A_n^{\text{tr}}(T,\lambda\,,\delta)$,
\begin{equation*}
	\tfrac{1}{2}L_{\tau_n}(v_n) < L_{\tau_n}(v_n)-\delta \leq v_{n} (\tau_n+ t\,,x) 
	\leq \tfrac{5}{2}L_{\tau_n}(v_n) + \delta < 4L_{\tau_n}(v_n),
\end{equation*}
for all $t\in[0\,,\ell_{n+1}]$ and $x\in\T$ a.s.
That is, $\ell_{n+1}\le T$ a.s.\ 
on $A_n^{\text{tr}}(T,\lambda\,,\delta)$,  and we are in 
case 1 (i.e., $L_t(w_n)=2L_{\tau_n}(v_n)$). This is another way to say that the random walk $X$ moves up except possibly at the 
reflecting boundary.

We apply Lemma \ref{lem:large-dev},  conditionally on $\mathscr{F}_{\tau_n}$, in order to see that
\begin{equation} \label{eq:prob-1}
\begin{split}
	\mathscr{P}_n\left(A_n^{\text{tr}}(T,\lambda\,,\delta)^c\right) 
	&\leq C_0\exp\left(- \frac{C_1\delta^2}{T^{1/2}
	\lambda^2(4\text{Lip}_\sigma L_{\tau_n}(v_n))^2}\right)
	=C_0\exp\left(- \frac{C_1}{256\sqrt{T}\,
	 \text{Lip}_\sigma ^2\lambda^2}\right),
\end{split}
\end{equation}
$\P_{\mathbb{1}}$-a.s.\ uniformly for all $n\ge1$, $T>0$, and every initial choice of $\lambda>0$ in \eqref{eq:RD}.
We emphasize that the right-most quantity in \eqref{eq:prob-1} is non random.
In any case, we apply the above with $T=3$ in order to see that  there exists a
non-random number $\lambda_1=\lambda_1(C_1\,,\sigma)\in(0\,,1)$ such that
\begin{equation} \label{eq:walk-moves-up}
	\mathscr{P}_n\big(X_{n+1}=(X_n+1)\big)1_{\{X_n\leq M-2\}}
	\geq \mathscr{P}_n(A_n^{\text{tr}}(3\,,\lambda\,,\delta))1_{\{X_n\leq M-2\}}
	\geq \frac{2}{3}1_{\{X_n\leq M-2\}},
\end{equation} 
$\P_{\mathbb{1}}$-a.s.\ for every $\lambda\in(0\,,\lambda_1)$ and $n\ge1$.
This proves that the random walk $X$ has an upward drift,
thereby concluding our random walk argument.

%%%%%%%%%%%%%%%%%%%%%%%%%%%%%%%%%%%%%%%%%%%%%%%%%%
%%%%%%%%%%%%%%%%%%%%%%%%%%%%%%%%%%%%%%%%%%%%%%%%%%
%%%%% A reduction %%%%%%%%%%%%%%%%%%%%%%%%%%%%%%%%
%%%%%%%%%%%%%%%%%%%%%%%%%%%%%%%%%%%%%%%%%%%%%%%%%%
%%%%%%%%%%%%%%%%%%%%%%%%%%%%%%%%%%%%%%%%%%%%%%%%%%

\subsection{A reduction}

In this subsection we reduce our proof of the existence of
non-trivial invariant measures [Proposition \ref{pr:inv:meas:exists}]
to condition \eqref{eq:goal:KB} below. That condition will be verified
in the following subsection provided additionally that ${\rm L}_\sigma>0$
and $\lambda$ is sufficiently small. 

\begin{proposition}\label{pr:reduction}
	Suppose $\lambda\in(0\,,1)$ is small enough to ensure that \eqref{eq:walk-moves-up} holds.
	Then,
	Condition \eqref{eq:condition-3} --
	whence also Proposition \ref{pr:inv:meas:exists} --
	follows provided that
	\begin{equation}\label{eq:goal:KB}\adjustlimits
		\lim_{k\to\infty}\limsup_{n\to\infty}\frac1n\sum_{j=0}^{n-1}
		\bm{1}_{\{X_{j+1}<-k\}}=0\qquad\text{a.s.}
	\end{equation}
\end{proposition}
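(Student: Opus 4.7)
The plan is to translate the time-average statement \eqref{eq:condition-3} into the step-average statement \eqref{eq:goal:KB} by exploiting the comparison $\psi(t) \ge v_n(t)$ and the Markovian structure of the embedded chain $X$. First, I would observe that on $[\tau_n, \tau_{n+1})$ the lower stopping threshold has not yet been hit, so $L_{t-\tau_n}(w_n) > \tfrac{1}{2} L_{\tau_n}(v_n) = 2^{X_n - 1}$; combined with the pathwise bound $\psi(t) \ge v_n(t) = w_n(t - \tau_n)$, this gives $\inf_{x \in \T} \psi(t, x) > 2^{X_n - 1}$ for all $t \in [\tau_n, \tau_{n+1})$. Consequently, the indicator $\bm{1}_{\{\inf_x \psi(t, x) < 2^{-k}\}}$ vanishes on $[\tau_n, \tau_{n+1})$ whenever $X_n > -k$. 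Partitioning $[0, T]$ and writing $N(T) := \max\{n \ge 0 : \tau_n \le T\}$, I obtain the pathwise bound
\[
\int_0^T \bm{1}_{\{\inf_x \psi(t, x) < 2^{-k}\}}\, \d t \le \sum_{n=1}^{N(T) + 1} \ell_n \bm{1}_{\{X_{n-1} \le -k\}}.
\]

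Next, I would establish uniform conditional-moment bounds for the holding times $\ell_n$. On $\{\ell_{n+1} > t\}$ with $t \ge 2$, the lower threshold $L_s(w_n) = \tfrac12 L_{\tau_n}(v_n)$ has not triggered on $[0, t]$, and \eqref{eq:vn-equals} then forces $\sup_{[0, t] \times \T}|\mathcal{I}_n^{\text{tr}}| \ge (t/2 - 1) L_{\tau_n}(v_n)$. Inserting $\rho = (t/2 - 1) L_{\tau_n}(v_n)$ into Lemma \ref{lem:large-dev} cancels the dependence on $L_{\tau_n}(v_n)$ and yields the sub-Gaussian tail
\[
\mathscr{P}_n(\ell_{n+1} > t) \le C_0 \exp\left( - \frac{C_1 (t/2 - 1)^2}{16 \sqrt{t}\, \lambda^2 \lip_\sigma^2} \right), \qquad t \ge 2,
\]
so that $\sup_n \E[\ell_n^p \mid \mathscr{F}_{\tau_{n-1}}] < \infty$ for every $p \ge 1$, independently of $X_{n-1}$. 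A complementary short-time estimate applied to the rescaled process $u_n := w_n / L_{\tau_n}(v_n)$, whose noise coefficient has a Lipschitz bound independent of $L_{\tau_n}(v_n)$, gives the matching lower bound $\E[\ell_n \mid \mathscr{F}_{\tau_{n-1}}] \ge c > 0$ uniformly in $n$.

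Third, I would invoke the martingale strong law of large numbers. Decomposing $\ell_n = \E[\ell_n \mid \mathscr{F}_{\tau_{n-1}}] + \Delta_n$ with $\Delta_n$ a martingale difference having uniformly bounded conditional second moments, both $\frac1N \sum_{n=1}^N \Delta_n$ and $\frac1N \sum_{n=1}^N \Delta_n \bm{1}_{\{X_{n-1} \le -k\}}$ tend to $0$ a.s. Writing $\bar{m} := \sup_n \E[\ell_n \mid \mathscr{F}_{\tau_{n-1}}] < \infty$, one therefore has
\[
\frac{1}{N}\sum_{n=1}^N \ell_n \bm{1}_{\{X_{n-1} \le -k\}} \le \bar{m} \cdot \frac{\#\{n \le N : X_{n-1} \le -k\}}{N} + o(1) \quad \text{and} \quad \frac{\tau_N}{N} \ge c + o(1) \text{ a.s.}
\]
Combining these with the pathwise bound from step one, using $T \ge \tau_{N(T)}$, and letting $k \to \infty$ while invoking \eqref{eq:goal:KB}, delivers \eqref{eq:condition-3}.

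The main obstacle is the uniform moment control for $\ell_n$: Lemma \ref{lem:large-dev} applied with the natural choice $\rho = \tfrac14 L_{\tau_n}(v_n)$ yields a bound that stays bounded away from zero as $t \to \infty$, so one must instead optimize $\rho$ as a function of the stopping horizon $t$, leveraging the deterministic upward drift of $\tfrac12 L_{\tau_n}(v_n)$ per unit time in \eqref{eq:vn-equals}, to obtain the sub-Gaussian decay. Once this moment bound is in hand the reduction is a routine SLLN-type calculation, and the scale-invariance of the stopping rule ensures the lower bound $\E[\ell_n \mid \mathscr{F}_{\tau_{n-1}}] \ge c$ without further work.
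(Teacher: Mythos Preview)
Your proposal is correct in its overall architecture and arrives at the same conclusion as the paper, but it organizes the SLLN step differently and contains one minor misstatement.

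On the comparison with the paper: the paper does not work directly with conditional moments of $\ell_n$. Instead it constructs two i.i.d.\ sequences $\underline{\ell}_n\le\ell_n\le\bar{\ell}_n$ (Lemmas~\ref{lem:ell-underline} and~\ref{lem:ell-bar}) by enlarging/shrinking the events coming from Lemma~\ref{lem:large-dev}, and then applies the classical strong law to those i.i.d.\ sequences. It then controls $\tfrac1n\sum_j\ell_{j+1}\bm{1}_{\{X_{j+1}\le-k\}}$ via Cauchy--Schwarz against $\tfrac1n\sum_j\ell_{j+1}^2$, which is handled by $\bar{\ell}$. Your route---uniform bounds on $\E[\ell_n\mid\mathscr{F}_{\tau_{n-1}}]$ and $\E[\ell_n^2\mid\mathscr{F}_{\tau_{n-1}}]$, followed by the martingale SLLN applied to the increments $\Delta_n\bm{1}_{\{X_{n-1}\le-k\}}$---is more direct and avoids the coupling construction entirely. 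Both approaches rest on the same tail estimate for $\ell_{n+1}$, and your derivation of that estimate from Lemma~\ref{lem:large-dev} is exactly what the paper does inside Lemma~\ref{lem:ell-bar}. The paper's Cauchy--Schwarz gives a bound of order $\sqrt{\limsup_n n^{-1}\sum\bm{1}_{\{X_{j+1}\le-k\}}}$ (see \eqref{eq:lowertail1}), whereas your decomposition gives the sharper linear dependence; this costs nothing here since \eqref{eq:goal:KB} only asks for a vanishing limit.

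One small correction: in your tail-bound paragraph you invoke the non-triggering of the \emph{lower} threshold $L_s(w_n)=\tfrac12 L_{\tau_n}(v_n)$, but that inequality only gives $\inf_x\mathcal{I}_n(s,x)>-(\tfrac12+\tfrac s2)L_{\tau_n}(v_n)$, which is useless. What you need (and what the paper uses) is that condition~1 has not triggered, i.e.\ $L_s(w_n)<2L_{\tau_n}(v_n)$ for $s\le t$; inserting $s=t$ into \eqref{eq:vn-equals} then forces $\inf_x\mathcal{I}_n(t,x)<(1-\tfrac t2)L_{\tau_n}(v_n)$, which for $t\ge2$ yields $\sup_{[0,t]\times\T}|\mathcal{I}_n^{\text{tr}}|\ge(\tfrac t2-1)L_{\tau_n}(v_n)$ as you claim. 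With that fix your argument goes through.
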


Proposition \ref{pr:reduction} hinges on two coupling lemmas that respectively bound 
$\{\ell_n\}_{n=1}^\infty$ from above and from below by ``better behaved'' sequences
of random variables. 

\begin{lemma}\label{lem:ell-underline}
	Suppose $\lambda\in(0\,,1)$ is small enough to ensure that \eqref{eq:walk-moves-up} holds.
	Then, there exists a  sequence $\{\underline{\ell}_n\}_{n=1}^\infty$ of  i.i.d.\
	random variables such that $\ell_n\geq\underline{\ell}_n\ge0$ a.s.\ 
	for all $n\ge1$, and $0<\E_{\mathbb{1}}[\underline{\ell}_1]\leq1$.
\end{lemma}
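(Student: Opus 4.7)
The plan is to exhibit a non-random probability distribution $G$ on $[0\,,1]$ that, conditional on $\mathscr{F}_{\tau_n}$, stochastically minorizes $\ell_{n+1}$, and then to extract the i.i.d.\ sequence $\{\underline{\ell}_n\}$ by a standard quantile coupling. The crux of the argument is the uniform-in-$\mathscr{F}_{\tau_n}$ lower bound on the conditional law of $\ell_{n+1}$; the coupling is essentially formal once this minorization is in hand.

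Set $L:=L_{\tau_n}(v_n)$, and fix $t\in(0\,,1]$. Taking $\delta:=L/4$ in $A_n^{\text{tr}}$, as in \eqref{eq:T_delta}, the non-random bound \eqref{eq:prob-1} reads
\[
	\mathscr{P}_n\bigl(A_n^{\text{tr}}(t\,,\lambda\,,L/4)^c\bigr) \le C_0\exp\!\left(-\frac{C_1}{256\sqrt{t}\,\lambda^2\lip_\sigma^2}\right).
\]
On $A_n^{\text{tr}}(t\,,\lambda\,,L/4)$ one has $\ell_{n+1}>t$: because $\mathcal{I}_n$ and $\mathcal{I}_n^{\text{tr}}$ coincide on $[0\,,\ell_{n+1}]$ (since $\sigma_n=\sigma(v_n)$ there), the identity \eqref{eq:vn-equals} yields
\[
	w_n(s\,,x)\in\bigl[(1+s/2)L-L/4\,,\,(1+s/2)L+L/4\bigr] \subset \bigl[\tfrac{3L}{4}\,,\tfrac{7L}{4}\bigr]
	\qquad \text{for $s\in[0\,,\min(\ell_{n+1}\,,t)]$, $x\in\T$};
\]
since $3L/4>L/2$ and $7L/4<2L<4L$, none of the three triggers defining $\tau_{n+1}$ fires on this closed interval, so $\min(\ell_{n+1}\,,t)<\ell_{n+1}$, which forces $\min(\ell_{n+1}\,,t)=t$. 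Setting $G(t):=1\wedge C_0\exp\bigl(-C_1/(256\sqrt t\,\lambda^2\lip_\sigma^2)\bigr)$ for $t\in(0\,,1)$, $G(t):=1$ for $t\ge1$, and $G(0):=0$ therefore defines a right-continuous, non-decreasing probability distribution function supported in $[0\,,1]$ with $\mathscr{P}_n(\ell_{n+1}\le t)\le G(t)$ for every $t\ge0$.

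For the coupling, enlarge the probability space by adjoining an i.i.d.\ Uniform$(0\,,1)$ sequence $\{U_n\}$ independent of $W$. With $F_{n-1}$ denoting the random conditional distribution function of $\ell_n$ given $\mathscr{F}_{\tau_{n-1}}$, define
\[
	V_n := F_{n-1}(\ell_n-) + U_n\bigl(F_{n-1}(\ell_n)-F_{n-1}(\ell_n-)\bigr).
\]
The distributional transform makes each $V_n$ Uniform$(0\,,1)$ conditional on $\mathscr{F}_{\tau_{n-1}}$, so $\{V_n\}_{n\ge1}$ is i.i.d.\ Uniform$(0\,,1)$. Then $\underline{\ell}_n := G^{-1}(V_n)$ is i.i.d.\ with distribution function $G$; since $F_{n-1}\le G$ pointwise and $V_n\le F_{n-1}(\ell_n)$, one has $G(\ell_n)\ge V_n$, giving $\underline{\ell}_n\le\ell_n$ a.s. Because $G$ is supported in $[0\,,1]$ and $1-G(t)\to 1$ as $t\downarrow 0$,
\[
	\E_{\mathbb{1}}[\underline{\ell}_1] = \int_0^1\bigl(1-G(t)\bigr)\d t\in(0\,,1].
\]

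The real work is the domination step: recognizing that the choice $\delta=L/4$ cancels the $L$-dependence of Lemma \ref{lem:large-dev} and then using the explicit representation of $w_n$ to show that the $A_n^{\text{tr}}$-event forces $\ell_{n+1}>t$. The coupling is routine, but it does require adjoining an auxiliary i.i.d.\ uniform sequence to handle any atoms of $F_{n-1}$.
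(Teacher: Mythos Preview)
Your proof is correct, but it takes a more elaborate route than the paper. The paper does not build the entire minorizing distribution $G$ on $[0,1]$; instead it fixes a single $T\in(0,1)$ small enough that the right-hand side of \eqref{eq:prob-1} satisfies $R(T)<1$, enlarges the bad event $A_n^{\text{tr}}(T,\lambda,\delta)^c$ to an event $A_n^{(1)}(T)^c$ of conditional probability exactly $R(T)$ and independent of $\mathscr{F}_{\tau_n}$, and then sets $\underline{\ell}_{n+1}:=T\,\bm{1}_{A_n^{(1)}(T)}$. This yields an i.i.d.\ two-point sequence with $\E_{\mathbb{1}}[\underline{\ell}_1]=T(1-R(T))\in(0,1]$, bypassing the distributional-transform machinery altogether. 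Your approach has the conceptual advantage of using the full minorization (so $\E_{\mathbb{1}}[\underline{\ell}_1]$ is as large as this method can give), and it makes the auxiliary randomization explicit via the adjoined uniforms $\{U_n\}$; the paper's version is shorter because only a crude positive lower bound on $\E_{\mathbb{1}}[\underline{\ell}_1]$ is needed downstream, and a single Bernoulli suffices for that.
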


\begin{proof} 
	Choose and fix a small enough $\lambda$, as designated,
	and recall the $\mathscr{F}_{\tau_n}$-measurable
	random variable $\delta$ from \eqref{eq:T_delta} for every $n\in\Z_+$. For every $T>0$, let 
	\begin{equation*}%\label{eq:R(T)}
		R(T) := C_0\exp\left(- \frac{C_1}{256\sqrt{T}\,
	 	\text{Lip}_\sigma^2}\right)
	\end{equation*}
	denote the supremum of the
	right-hand side of \eqref{eq:prob-1} over all $\lambda\in(0\,,1)$. 
	Throughout, we choose and fix $T\in(0\,,1)$
	such that $R(T)<1$.
	
	Because $A_n^{\text{tr}}(T,\lambda\,,\delta)^c$ is independent of $\mathscr{F}_{\tau_n}$,
	we can enlarge the event 
	$A_n^{\text{tr}}(T,\lambda\,,\delta)^c$ to an event $A_n^{(1)}(T)^c$ whose 
	probability is exactly equal to $R(T)<1$, and such 
	that $A_n^{(1)}(T)$ is independent of $\mathscr{F}_{\tau_n}$.
	By virtue of construction,
	\begin{equation*}
		A_n^{\text{tr}}(T,\lambda\,,\delta) \supseteq A_n^{(1)}(T).
	\end{equation*}
	In addition, using \eqref{eq:vn-equals}, we see that  a.s.\ on 
	$A_{n}^{\text{tr}}(T,\lambda\,,\delta) \cap \{ \ell_{n+1} < T\}$, we have
	\begin{align*}
		\adjustlimits\sup_{t\in[0,\ell_{n+1}]}\sup_{x\in\T}w_{n}(t\,,x)&<2L_{\tau_n}(v_n),  \\
		\adjustlimits\inf_{t\in[0,\ell_{n+1}]}\inf_{x\in\T}w_{n}(t\,,x)&>\tfrac{1}{2}L_{\tau_n}(v_n).
	\end{align*}
	Thus,
	$A_{n}^{\text{tr}}(T,\lambda\,,\delta)=A_{n}^{\text{tr}}(T,\lambda\,,\delta) \cap \{ \ell_{n+1} \geq T\}$,
	and $\ell_{n+1}\ge T$
	a.s.\ on $A_{n}^{\text{tr}}(T,\lambda\,,\delta)$.  With this in mind, we can define
	\begin{equation*}
		\underline{\ell}_{n+1} := T\bm{1}_{A^{(1)}_{n}(T)}
		\leq T\bm{1}_{A^{\text{tr}}_{n}(T,\lambda,\delta)} \leq \ell_{n+1}\wedge T\le
		\ell_{n+1}\wedge 1.
	\end{equation*}
	This is the desired sequence, with the announced properties.
\end{proof}

Lemma \ref{lem:ell-underline} provides a suitable lower sequence for $\{\ell_n\}_{n=1}^\infty$.
The following matches that result with a corresponding upper sequence.

\begin{lemma} \label{lem:ell-bar}
	There exists a  sequence $\{\bar{\ell}_n\}_{n=1}^\infty$ of almost 
	surely nonnnegative i.i.d.\
	random variables such that with probability one for all $n\geq1$ we 
	have $\ell_n\leq\bar{\ell}_n$.  Finally,
	$0<\E_{\mathbb{1}}(|\bar{\ell}_1|^k)<\infty$ for every real number $k\ge 2$.
\end{lemma}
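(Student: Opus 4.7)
The plan is to establish a uniform conditional tail bound $\P(\ell_{n+1}>T\mid\mathscr{F}_{\tau_n})\le g(T)$ with $g$ decaying fast enough to have all polynomial moments, and then to invoke the standard quantile (Skorokhod) coupling to build an i.i.d.\ sequence $\{\bar\ell_n\}_{n\ge1}$ with CDF $1-g$ on an enlarged probability space, such that $\ell_n\le\bar\ell_n$ a.s.\ pathwise. Exponential decay of $g$ will then give all moments of $\bar\ell_1$.

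For the tail estimate I would work with the spatial average $\hat w_n(t):=\int_\T w_n(t\,,x)\,\d x$. A stochastic Fubini argument identical to the one that produced \eqref{bar(w)} in the proof that $\tau_{n+1}<\infty$ gives
\[
	\hat w_n(t)=(2+t)c+\lambda\mathcal{M}_t,\qquad
	c:=L_{\tau_n}(v_n),\qquad
	\mathcal{M}_t:=\int_{(0,t)\times\T}\sigma(w_n(s\,,y))\,\theta_{\tau_n}W(\d s\,\d y).
\]
On $\{\ell_{n+1}>T\}$ we have $w_n(s\,,y)\in(c/2\,,4c)$ for every $(s\,,y)\in[0\,,T]\times\T$; integrating over $\T$ (Haar mass $2$) gives $\hat w_n(T)\le 8c$, which forces $-\lambda\mathcal{M}_T\ge (T-6)c$ whenever $T>6$. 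The same pointwise bound, together with $|\sigma(w_n)|\le\lip_\sigma|w_n|$, yields $\langle\mathcal{M}^{\ell_{n+1}}\rangle_T\le 32\lip_\sigma^2 c^2 T$. Applying Bernstein's exponential inequality for continuous martingales conditionally on $\mathscr{F}_{\tau_n}$ then gives
\[
	\P\bigl(\ell_{n+1}>T\mid\mathscr{F}_{\tau_n}\bigr)
	\le \exp\!\left(-\frac{(T-6)^2}{64\lip_\sigma^2\lambda^2 T}\right)
	\qquad\text{for every }T>6,
\]
uniformly in $n$ and, crucially, in the random scale $c$.

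With the tail bound in hand, set $g(T):=1\wedge\exp(-(T-6)^2/(64\lip_\sigma^2\lambda^2 T))$ and $G:=1-g$, which is a proper CDF on $\R_+$ with exponentially decaying tail. Enlarge the probability space to carry i.i.d.\ $\mathrm{Unif}(0\,,1)$ variables $\{U_n\}_{n\ge1}$ independent of $\mathscr{F}_\infty$, and write $F_n(\cdot):=\P(\ell_n\le\cdot\mid\mathscr{F}_{\tau_{n-1}})$. The probability-integral transforms $\tilde U_n:=F_n(\ell_n^-)+U_n(F_n(\ell_n)-F_n(\ell_n^-))$ are i.i.d.\ $\mathrm{Unif}(0\,,1)$, so $\bar\ell_n:=G^{-1}(\tilde U_n)$ are i.i.d.\ with CDF $G$; the pointwise inequality $F_n\ge G$ gives $G^{-1}\ge F_n^{-1}$, whence $\bar\ell_n\ge\ell_n$ almost surely. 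The exponential decay of $g$ then yields $\E(|\bar\ell_1|^k)<\infty$ for every real $k\ge2$.

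The main obstacle is the tail bound itself: it must be \emph{simultaneously} uniform in $n$ and in the random starting scale $c=L_{\tau_n}(v_n)$, even though $V$ is not globally Lipschitz. The mechanism that makes this possible is the scale invariance of the Bernstein exponent -- both the squared deficit $((T-6)c)^2$ and the quadratic variation $32\lip_\sigma^2 c^2 T$ scale like $c^2$, so $c$ cancels in the exponent. Once this is in place, stochastic Fubini, Bernstein's inequality, and the quantile coupling are all routine.
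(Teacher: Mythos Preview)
Your proof is correct and takes a genuinely different route from the paper's. Both arguments exploit that on $\{\ell_{n+1}>T\}$ the noise must cancel a drift of order $Tc$, and both rely on the scale invariance that cancels $c=L_{\tau_n}(v_n)$ from the resulting exponent. The difference is in how the noise is controlled. The paper works with the pointwise representation $w_n(t\,,x)=(1+t/2)c+\mathcal{I}_n(t\,,x)$ from \eqref{eq:vn-equals} and invokes the sup-norm large-deviation estimate of Lemma~\ref{lem:large-dev} with $\rho=Tc/4$; this yields the sharper tail $\P(\ell_{n+1}>T\mid\mathscr{F}_{\tau_n})\lesssim\exp(-c_1T^{3/2}/\lambda^2)$. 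You instead pass to the spatial average $\hat w_n$, reduce to a one-dimensional continuous martingale, and apply Bernstein's inequality to the stopped martingale $\mathcal{M}^{\ell_{n+1}}$; this gives a tail of order $\exp(-cT)$, which suffices for all polynomial moments but is weaker by a factor $T^{1/2}$ in the exponent. Your approach has the virtue of being self-contained---it does not need the chaining-type estimate that underlies Lemma~\ref{lem:large-dev}---while the paper's buys the stronger decay. Your Skorokhod-quantile construction with the conditional CDFs $F_n$ is also a cleaner way to produce the i.i.d.\ dominating sequence than the paper's brief appeal to Lemma~\ref{lem:coupling}; the paper glosses over why the resulting $\bar\ell_n$ come out i.i.d., whereas your probability-integral-transform argument makes this explicit.
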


\begin{proof}
	By \eqref{eq:vn-equals}, if $\ell_{n+1}>T$, then
	\begin{equation*}
	2^{X_n}+T2^{X_n-1}+\mathcal{I}_n^{\text{tr}}(T,x) \leq 2^{X_n + 2}.
	\end{equation*}
	Thus, if $\ell_{n+1}>T$ and   $T$ is large enough, we have 
	\begin{equation*}
	\sup_{0\leq t\leq T}\sup_{x\in\T}|\mathcal{I}_n^{\text{tr}}(t\,,x)|
	   > 2^{X_n}+T2^{X_n-1}-2^{X_n+2}.
	\end{equation*}
	That is, $\mathcal{I}_n^{\text{tr}}(t\,,x)$ must counteract the drift 
	which is pushing $v_{n+1}$ out of the interval around 
	$2^{X_n}=L_{\tau_n}(v_n)$.  
	
	Choose $T_0$ so large that $T>T_0$ implies that
	\begin{equation*}
	2^{X_n}+T2^{X_n-1}-2^{X_n+2}>T2^{X_n-2}=\frac{TL_{\tau_n}(v_n)}{4}.
	\end{equation*}
	
	Then using Lemma \ref{lem:large-dev} we can choose $T_0$ so large that for 
	$T>T_0$,  the following holds $\P_{\mathbb{1}}$-a.s.:
	\begin{equation*}\begin{split}
		\P(\ell_{n+1}>T\mid\mathscr{F}_{\tau_n}) &\leq \P\left(\left.\sup_{0\leq t\leq T}\sup_{x\in\T}
			|\mathcal{I}_n^{\text{tr}}(t\,,x)| > \frac{TL_{\tau_n}(v_n)}{4}\ \right|\,\mathscr{F}_{\tau_n}\right) \\
		&\leq \left[ c_0\exp\left( -\frac{c_1 T^{3/2}}{\lambda^2 } \right) \right] \wedge 1;
	\end{split}
	\end{equation*}
	where $c_0,c_1$ are non-random, positive real numbers that do not depend on $(n\,,\lambda)$.
	By the strong Markov property, $\ell_{n+1}$ is independent of $\mathscr{F}_{\tau_n}$ though
	it is measurable with respect to $\mathscr{F}_{\tau_{n+1}}$ by virtue of construction. 
	Therefore, Lemma \ref{lem:coupling} ensures that
	we can find a random variable $\bar{\ell}_{n+1}$, independent 
	of $\mathscr{F}_{\tau_n}$, such that $\ell_{n+1}\leq\bar{\ell}_{n+1}$ and
	\[
		\P_{\mathbb{1}} \left\{\bar{\ell}_{n+1}>t\right\} 
		= c_0\exp\left( -\frac{c_1 t^{3/2}}{\lambda^2} \right) \wedge 1
		\qquad\text{for all $t\ge T_0$},
	\]
	and $\{\bar{\ell}_m\}_{m=1}^\infty$ is i.i.d.\ [$\P_{\mathbb{1}}$].
	Finally, we can study the moments of $\ell_1$ as follows:
	On one hand, Lemma \ref{lem:ell-underline} ensures that
	\[
		\|\bar{\ell}_1\|_2\ge
		\E_{\mathbb{1}}(\bar{\ell}_1)\ge\E_{\mathbb{1}}(\ell_1)\ge\E_{\mathbb{1}}(\underline{\ell}_1)>0.
	\]
	On the other hand,
	\[
		\E_{\mathbb{1}}(|\bar{\ell}_1|^k) =
		k\int_0^\infty t^{k-1}\P_{\mathbb{1}}\{\bar{\ell}_{n+1}>t\} \,\d t
		\le T_0^k + kc_0\int_{T_0}^\infty t^{k-1}\exp\left( -\frac{c_1 t^{3/2}}{\lambda^2} \right) \d t<\infty,
	\]
	for every $k\ge2$.
\end{proof}

We are ready to prove Proposition \ref{pr:reduction}.

\begin{proof}[Proof of Proposition \ref{pr:reduction}]
	Thanks to \eqref{eq:comparison-reqirement} we may observe that for every $n\in\N$
	and $\varepsilon\in(0\,,1/8)$,
	\begin{align*}
		\frac{1}{\tau_n}\int_0^{\tau_n} \bm{1}_{\{\inf_{x\in\T}\psi(t,x)<\varepsilon\}}\,\d t
		&=\frac{1}{\tau_n}\sum_{j=0}^{n-1}\int_{\tau_j}^{\tau_{j+1}} 
			\bm{1}_{\{L_t(\psi)<\varepsilon\}}\,\d t\\
		&\le \frac{1}{\tau_n}\sum_{j=0}^{n-1}\int_{\tau_j}^{\tau_{j+1}} 
			\bm{1}_{\{L_t(v_{j})<\varepsilon\}}\,\d t\\
		&\le \frac{1}{\tau_n}\sum_{j=0}^{n-1}\int_{\tau_j}^{\tau_{j+1}} 
			\bm{1}_{\{U_t(v_{j})<8\varepsilon\}}\,\d t
	\end{align*}
	$\P_{\mathbb{1}}$-almost surely. The very construction of the stopping times
	$\tau_1,\tau_2,\ldots$ ensures that if there exists $t\in[\tau_j\,,\tau_{j+1}]$,
	and a realization of the process $\psi$, such that $L_t(v_{j})<\varepsilon$, 
	then certainly $U_t(v_{j})\le 8\varepsilon<1$
	for all $t\in[\tau_j\,,\tau_{j+1}]$ [for that realization of $\psi$], whence also 
	$L_{\tau_{j+1}}(v_{j+1})\le 8\varepsilon<1$, for the same realization of
	$\psi$. In this way, we find that
	\[
		\frac{1}{\tau_n}\int_0^{\tau_n} \bm{1}_{\{\inf_{x\in\T}\psi(t,x)<\varepsilon\}}\,\d t
		\le \frac{1}{\tau_n}\sum_{j=0}^{n-1} \ell_{j+1}\bm{1}_{\{X_{j+1}\le
		-|\log_2(8\varepsilon)|\}}\qquad\text{$\P_{\mathbb{1}}$-a.s.}
	\]
	If $\lambda$ is sufficiently small,
	then Lemma \ref{lem:ell-underline} and the strong law of large numbers 
	together imply that
	\begin{equation}\label{liminf>0}
		\liminf_{n\to\infty} \frac{\tau_n}{n} = 
		\liminf_{n\to\infty}\frac1n \sum_{j=1}^n\ell_j \ge 
		\lim_{n\to\infty} \frac1n\sum_{j=1}^n\underline{\ell}_j
		= \E(\underline{\ell}_1)>0
		\qquad\text{$\P_{\mathbb{1}}$-a.s.}
	\end{equation}
	Similarly, Lemma \ref{lem:ell-bar} ensures that
	\begin{equation}\label{limsup-finite}
		\limsup_{n\to\infty}\frac{\tau_n}{n}\le\E(\bar{\ell}_1)<\infty
		\qquad\text{$\P_{\mathbb{1}}$-a.s.}
	\end{equation}
	
	The Cauchy-Schwarz inequality and \eqref{liminf>0}
	together yield the following: $\P_{\mathbb{1}}$-a.s.,
	\begin{align*}
		\limsup_{n\to\infty}
			\frac{1}{\tau_n}\int_0^{\tau_n} \bm{1}_{\{\inf_{x\in\T}\psi(t,x)<\varepsilon\}}\,\d t
		&\le \frac{1}{\E_{\mathbb{1}}(\underline{\ell}_1)}\limsup_{n\to\infty}
			\frac1n\sum_{j=0}^{n-1} \ell_{j+1}\bm{1}_{\{X_{j+1}\le
			-|\log_2(8\varepsilon)|\}}\\
		&\le \frac{1}{\E_{\mathbb{1}}(\underline{\ell}_1)}
			\sqrt{\limsup_{n\to\infty}\frac1n\sum_{j=0}^{n-1}\ell_{j+1}^2}
			\sqrt{\limsup_{n\to\infty}\frac1n\sum_{j=0}^{n-1}
			\bm{1}_{\{X_{j+1}\le -|\log_2(8\varepsilon)|}\}}.
	\end{align*}
	Thanks to Lemma \ref{lem:ell-bar}, we may deduce from the above that $\P_{\mathbb{1}}$-a.s.,
	\[
		\limsup_{n\to\infty}
		\frac{1}{\tau_n}\int_0^{\tau_n} \bm{1}_{\{\inf_{x\in\T}\psi(t,x)<\varepsilon\}}\,\d t
		\le \frac{\|\bar{\ell}_1\|_2}{\|\underline{\ell}_1\|_1}
		\sqrt{\limsup_{n\to\infty}\frac1n\sum_{j=0}^{n-1}
		\bm{1}_{\{X_{j+1}\le -|\log_2(8\varepsilon)|}\}}.
	\]
	This proves that \eqref{eq:goal:KB} implies \eqref{eq:condition-3}, except the
	non-random averaging variable $T\to\infty$ is replaced by the random averaging
	variable $\tau_n\to\infty$. In order to complete the proof, let us choose and fix
	2 numbers $a$ and $b$ such that
	\[
		\E(\bar{\ell}_1)>b\ge a>\E(\underline{\ell}_1).
	\]
	For every $T\gg1$ let $n=n(T)=\lceil T/a\rceil$, so that
	$a(n-1) < T \leq an$ and $n\ge 3$, whence $(n-1)^{-1}\le 1/2$. By enlarging
	$n(T)$ further to a finite random number, if need be, \eqref{liminf>0}
	and \eqref{limsup-finite} together ensure that
	\[
		an\le\tau_n\le bn,
	\]
	for all $T$ sufficiently large.
	In this way we find that, for all sufficiently large $T\gg1$,
	\begin{align*}
		\frac1T\int_0^T\bm{1}_{\{\inf_{x\in\T}\psi(t,x)<\varepsilon\}}
			\,\d t
			&\le \frac{1}{a(n-1)}\int_0^{an}\bm{1}_{\{
			\inf_{x\in\T}\psi(t,x)<\varepsilon\}}\,\d t\\
		&\le \frac{2}{an}\int_0^{an}\bm{1}_{\{\inf_{x\in\T}
			\psi(t,x)<\varepsilon\}}\,\d t\\
		&\le \frac{2b}{a\tau_n}\int_0^{\tau_n} \bm{1}_{\{
			\inf_{x\in\T}\psi(t,x)<\varepsilon\}}\,\d t.
	\end{align*}
	Now let $T\to\infty$ first, and then $a\to\E(\underline{\ell}_1)$
	and $b\to\E(\bar{\ell}_1)$ in order to see that $\P_{\mathbb{1}}$-a.s.,
	\begin{equation}\label{eq:lowertail1}
		\limsup_{T\to\infty}
		\frac1T\int_0^T \bm{1}_{\{\inf_{x\in\T}\psi(t,x)<\varepsilon\}}\,\d t
		\le \frac{2\|\bar{\ell}_1\|_2^2}{\|\underline{\ell}_1\|_1^2}
		\sqrt{\limsup_{m\to\infty}\frac1m\sum_{j=0}^{m-1}
		\bm{1}_{\{X_{j+1}\le -|\log_2(8\varepsilon)|}\}}.
	\end{equation}
	This proves Proposition \ref{pr:reduction}.
\end{proof}

%%%%%%%%%%%%%%%%%%%%%%%%%%%%%%%%%%%%%%%%%%%%%%%%%%
%%%%%%%%%%%%%%%%%%%%%%%%%%%%%%%%%%%%%%%%%%%%%%%%%%
%%%%% Proof of Proposition %%%%%%%%%%%%%%%%%%%%%%%
%%%%%%%%%%%%%%%%%%%%%%%%%%%%%%%%%%%%%%%%%%%%%%%%%%
%%%%%%%%%%%%%%%%%%%%%%%%%%%%%%%%%%%%%%%%%%%%%%%%%%

\subsection{Proof of Proposition \ref{pr:inv:meas:exists}}

We are ready to begin completing the proof of Proposition \ref{pr:inv:meas:exists},
which assures us of the existence of non-trivial invariant measures  when ${\rm L}_\sigma>0$
and $\lambda$ is sufficiently small.
Our method requires an analysis of the excursions of the chain
$X$ from the level $M-1$. The construction of the chain $X$ ensures that 
\[
	\P_{\mathbb{1}} \{X_0=M-2\}=1.
\]
Moreover, $\P_{\mathbb{1}}\{ |X_{n+1}-X_n|=1\}=1$ for every $n\ge 1$.

Set $\alpha_0:=0$ and for all $n\in\Z_+$ define
\[
	\alpha_{n+1} := \inf\left\{ j>\alpha_n:\, X_j =M-1\right\},
\]
where $\inf\varnothing:=\infty$. Then, the $\alpha_n$'s are stopping times
in the filtration $\{\mathscr{F}_{\tau_n}\}_{n=0}^\infty$.

\begin{lemma}\label{lem:alpha_n-finite}
	$\P_{\mathbb{1}}\{\alpha_n<\infty\}=1$ for every $n\ge1$.
\end{lemma}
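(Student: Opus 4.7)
The plan is to argue by induction on $n$, reducing both the base case and the inductive step to a single fundamental estimate: starting from the deterministic state $M-2$, the chain $X$ reaches the reflecting boundary $M-1$ in finitely many steps almost surely. Granted that estimate, the inductive step is handled by the strong Markov property of the SPDE applied at time $\tau_{\alpha_n}$: conditionally on $\mathscr F_{\tau_{\alpha_n}}$, the shifted noise $\theta_{\tau_{\alpha_n}}\dot W$ is independent, the chain starts afresh at $X_{\alpha_n}=M-1$ (forced to $M-2$ at the very next step by the reflection rule), and $\alpha_{n+1}-\alpha_n$ has the same law as $\alpha_1$, hence is a.s.\ finite.

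For the fundamental estimate, I would couple $X$ with a simple random walk $Z$ on $\mathbb Z$ having $\P(Z_{j+1}-Z_j=+1)=2/3 = 1-\P(Z_{j+1}-Z_j=-1)$, started at $Z_0=M-2$. On a suitable enlargement of the probability space, I would introduce i.i.d.\ Uniform$[0,1]$ variables $V_1,V_2,\ldots$ independent of the white noise, and re-realize the transitions of $X$ in the form $\{X_{j+1}=X_j+1\}=\{V_{j+1}\le p_j\}$ with $p_j:=\P(X_{j+1}=X_j+1\mid\mathscr F_{\tau_j})\ge 2/3$ on $\{X_j\le M-2\}$ by \eqref{eq:walk-moves-up} (the analogous bound being valid at $n=0$ since the derivation of \eqref{eq:prob-1} uses only $L_{\tau_n}(v_n)=2^{X_n}$, which holds already at $n=0$). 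Define $Z$ on the same uniforms via $\{Z_{j+1}=Z_j+1\}=\{V_{j+1}\le 2/3\}$. Then the inclusion $\{Z_{j+1}>Z_j\}\subseteq\{X_{j+1}>X_j\}$ holds whenever $X_j<M-1$, which yields $X_j\ge Z_j$ for every $j\le \alpha_1$. Since $Z$ has positive drift $1/3$, $Z_j\to+\infty$ a.s.\ by the strong law of large numbers, so $T^Z:=\inf\{j\ge 0:Z_j\ge M-1\}$ is a.s.\ finite. If $\alpha_1>T^Z$, then $X_{T^Z}$ is integer valued and $\le M-2$, contradicting $X_{T^Z}\ge Z_{T^Z}=M-1$; hence $\alpha_1\le T^Z<\infty$ a.s., which is what we need.

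The main obstacle is the bookkeeping around the coupling: one has to enlarge the probability space so that the Bernoulli structure of $X_{j+1}\mid\mathscr F_{\tau_j}$ can be re-simulated through a uniform variable independent of $\mathscr F_{\tau_j}$, and verify that the reflection of $X$ at $M-1$ does not spoil the ordering $X_j\ge Z_j$ (it does not, because the ordering is only used up to $\alpha_1$, strictly before any reflection event from the $X$ side). Everything else is classical: the SLLN for $Z$, and the strong Markov property at each $\alpha_n$ to complete the induction.
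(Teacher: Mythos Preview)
Your proposal is correct and follows essentially the same route as the paper: dominate $X$ from below, on each excursion, by a biased simple random walk with step $+1$ probability $2/3$, and use that the latter reaches $M-1$ in finite time. The only substantive difference is in the inductive step. The paper builds, for each excursion interval $(\alpha_i,\alpha_{i+1}]$, a fresh dominated walk $Y^{(i+1)}$ and arranges these to be \emph{independent} of one another; this independence is not needed for the present lemma but is exploited immediately afterwards (Lemma~\ref{lem:alpha_n/n} and the proof of Proposition~\ref{pr:inv:meas:exists}). You instead invoke time-homogeneity of $X$ and the strong Markov property at $\tau_{\alpha_n}$, which is cleaner for this lemma alone but would have to be supplemented later. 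One harmless slip: since the reflection sends $M-1$ deterministically to $M-2$ in one step, $\alpha_{n+1}-\alpha_n$ has the law of $1+\alpha_1$, not of $\alpha_1$; this does not affect a.s.\ finiteness.
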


\begin{proof}
	The proof works by coupling the inhomogeneous Markov chain $X$ to 
	an infinite family of independent, biased random walks. This coupling is
	the motivation behind the title of this section (``a random walk argument''),
	and will be useful in the sequel as well.
	
	First we prove that $\P_{\mathbb{1}}\{\alpha_1<\infty\}=1$.
	%Since $X_0=0>M$, it follows from the construction of the walk $X$ that $X_1=M-2<0$.
Define a random walk $\{Y_n^{(1)}\}_{n=1}^\infty$ on $\Z$ as follows:
	\begin{compactenum}
		\item $Y_0^{(1)}:=X_0=M-2$;
		\item Iteratively define $Y_n^{(1)}$ for every $n\ge1$ as follows:
			\begin{compactenum}
				\item When $X_{n+1}-X_n=-1$, set $Y_{n+1}^{(1)} = Y_n^{(1)}-1$;
				\item Next, let us introduce new variables 
					$\{\Delta_m\}_{m=1}^\infty$ that are independent of
					one another, such that a.s.\ on $\{X_m\le M-2\}$,
					\[
						\P\{\Delta_{m}=+1\}
						= 1 - \P\{\Delta_{m}=-1\}
						= \frac{2}{3\P( X_{m+1}-X_m=+1\mid \mathscr{F}_{\tau_m})},
					\]
					for every $m\ge 0$ such that $\P_{\mathbb{1}}\{X_m\le M-2\}>0$.
					For all other values of $m$, $\P\{\Delta_m=+1\}=0$. The preceding
					is a well-defined construction thanks to \eqref{eq:walk-moves-up}.
					Now we set $Y_{n+1}^{(1)} := Y_n^{(1)}+ \Delta_n$ whenever $X_{n+1}=X_n+1$ and 
					$\alpha_1>n$; and
				\item Let $\{Z_m\}_{m=1}^\infty$ be an independent, biased, simple random walk on 
					$\Z$ whose left-right probabilities given by $\P\{Z_1=+1\}=1-\P\{Z_1=-1\}=2/3$. 
					Finally, define
					$Y_{n+1}^{(1)} := Y_{n}^{(1)} + Z_{n+1-\alpha_1}$
					whenever $X_{n+1}=X_n+1$ and $\alpha_1\le n$.
			\end{compactenum}
	\end{compactenum}
	The above construction shows that $\{Y^{(1)}_n\}_{n=1}^\infty$ is a simple random
	walk on $\Z$ such that: 
	\begin{compactenum}
	\item[(i)] $Y^{(1)}_0=M-2$; 
	\item[(ii)]  $\P_{\mathbb{1}}\{Y^{(1)}_{n+1}-Y^{(1)}_n=+1\}=
		1-\P_{\mathbb{1}}\{Y^{(1)}_{n+1}-Y^{(1)}_n=-1\}=2/3$
		for all $n\ge0$; and 
	\item[(iii)] $\P_{\mathbb{1}}\{ Y^{(1)}_n\le X_n
			\text{ for all $1\le n\le\alpha_1$}\} =1$, where $Y^{(1)}_\infty:=X_\infty:=M-1$
			to make the notation work out correctly in case $\P_{\mathbb{1}}\{\alpha_1=\infty\}>0$
			[which we are about to rule out].
	\end{compactenum}
	Since $Y^{(1)}$ has an upward drift and starts at $M-2$, 
	it almost surely reaches $M-1$ in finite time. Because of item (iii) above, 
	$\alpha_1$ is not greater than the first hitting time of $M-2$ by $Y^{(1)}$.
	This proves that $\P_{\mathbb{1}}\{\alpha_1<\infty\}$; in fact, 
	that
	\[
		\limsup_{m\to\infty}m^{-1}\log\P_{\mathbb{1}}\{\alpha_1>m\}<0.
	\]
			
	To complete the proof, we work by induction.
	Suppose we have proved that
	$\P_{\mathbb{1}}\{\alpha_i<\infty\}=1$ for some $i\ge1$.
	We recycle the preceding random walk construction to produce
	a random walk $Y^{(i+1)}$ on $\Z$ that is 
	\emph{independent} of $Y^{(1)},\ldots,Y^{(i)}$ and:\footnote{In this proof
		we do not use the additional fact that $Y^{(1)},Y^{(2)},\ldots$ are independent
		from one another, but we will use that fact later on.}
	\begin{compactenum}
	\item[(i)] $Y^{(i+1)}_0=M-2$;
	\item[(ii)] $\P_{\mathbb{1}}\{Y^{(i+1)}_{n+1}-Y^{(i+1)}_n=+1\}=
		1-\P_{\mathbb{1}}\{Y^{(i+1)}_{n+1}-Y^{(i+1)}_n=-1\}=2/3$
		for all $n\ge0$; and 
	\item[(iii)] $\P_{\mathbb{1}}\{ Y^{(i+1)}_n\le X_{n+\alpha_i}\text{ for all 
		$1\le n\le\alpha_{i+1}-\alpha_i$}\}=1$
		where $Y^{(i+1)}_\infty:=X_\infty:=M-1$.
	\end{compactenum}
	Since $Y^{(i+1)}$ starts at $M-2$ and
	has an upward drift, it a.s.\ reaches $M-1$ in finite time. Therefore,
	the same argument that proved
	that $\P_{\mathbb{1}}\{\alpha_1<\infty\}=1$ now
	implies that $n\mapsto X_{n+\alpha_i}$ reaches $M-1$ in a.s.-finite
	time, whence $\P_{\mathbb{1}}\{\alpha_{i+1}<\infty\}=1$.
\end{proof}

Next we prove that $\alpha_n$ is asymptotically of sharp order $n$ as $n\to\infty$.
We will state and prove the complete result, though we need only the following 
asymptotic lower bound on $\alpha_n/n$.

\begin{lemma}\label{lem:alpha_n/n}
	$\P_{\mathbb{1}}$-almost surely,
	\[
		\frac23\le\liminf_{n\to\infty}\frac{\alpha_n}{n}\le
		\limsup_{n\to\infty}\frac{\alpha_n}{n}\le3.
	\]
\end{lemma}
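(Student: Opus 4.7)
My strategy is to read both bounds off the coupling with biased random walks that was built in the proof of Lemma \ref{lem:alpha_n-finite}, combined with the deterministic reflection rule of $X$ at level $M-1$.

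For the upper bound I use the sequence $\{Y^{(i)}\}_{i\ge 1}$ produced in Lemma \ref{lem:alpha_n-finite}: a mutually independent family of simple random walks on $\Z$, each satisfying $Y^{(i)}_0 = M-2$ and $\P_{\mathbb{1}}\{Y^{(i)}_{n+1}-Y^{(i)}_n = +1\} = 1-\P_{\mathbb{1}}\{Y^{(i)}_{n+1}-Y^{(i)}_n = -1\} = 2/3$, and such that $Y^{(i)}_n \le X_{n+\alpha_{i-1}}$ a.s.\ for every $1\le n\le \alpha_i - \alpha_{i-1}$. Define
\[
T_i := \inf\{n\ge 1:\, Y^{(i)}_n = M-1\}\qquad(i\ge 1).
\]
Since the chain $X$ is reflected at $M-1$, we have $X_j\le M-1$ for all $j\ge 1$; combining this with the domination $X_{n+\alpha_{i-1}}\ge Y^{(i)}_n$ on the $i$-th excursion, $X$ must take the value $M-1$ at time $\alpha_{i-1}+T_i$ at the latest, so $\alpha_i-\alpha_{i-1}\le T_i$ almost surely. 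The $T_i$'s are i.i.d.\ copies of the first-hitting time of level $1$ from $0$ by a biased random walk with upward probability $p=2/3$, and a standard computation gives $\E_{\mathbb{1}}(T_1)= 1/(2p-1)= 3$. The Kolmogorov strong law of large numbers then yields
\[
\limsup_{n\to\infty}\frac{\alpha_n}{n}\le \lim_{n\to\infty}\frac{1}{n}\sum_{i=1}^n T_i = 3\qquad\text{$\P_{\mathbb{1}}$-a.s.}
\]

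The lower bound is cheap and comes directly from the construction of $X$. Whenever $X_n=M-1$, the reflection rule forces $X_{n+1}=M-2$; and since subsequent increments are $\pm 1$, $X$ requires at least one further step to come back to $M-1$. Therefore $\alpha_{i+1}-\alpha_i\ge 2$ for every $i\ge 1$, and together with $\alpha_1\ge 1$ this gives the deterministic bound $\alpha_n\ge 2n-1$. Hence $\liminf_{n\to\infty} \alpha_n/n \ge 2$, which is well above the asserted $2/3$.

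I do not anticipate any serious obstacle: the whole argument amounts to recalling the coupling and independence of Lemma \ref{lem:alpha_n-finite}, quoting the elementary mean-hitting-time identity for a biased nearest-neighbour walk, and one application of the strong law of large numbers. The only point that deserves a moment of care is verifying that the $T_i$'s are genuinely i.i.d.\ and independent of the reflection events of $X$, but this is built into the construction in Lemma \ref{lem:alpha_n-finite} and explicitly noted in the footnote there.
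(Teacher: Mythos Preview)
Your argument is correct. The upper bound is handled exactly as in the paper: you dominate each excursion length $\alpha_i-\alpha_{i-1}$ by the first-passage time $T_i$ of the coupled biased walk $Y^{(i)}$ to level $M-1$, compute $\E_{\mathbb{1}}(T_1)=1/(2p-1)=3$ via Wald's identity, and invoke the strong law.

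Your lower bound, however, differs from the paper's and is in fact both simpler and sharper. The paper bounds $\alpha_k-\alpha_{k-1}\ge \bm{1}_{\{\beta_k=1\}}$ (where $\beta_k$ is the hitting time of $M-1$ by $Y^{(k)}$) and averages to obtain $\liminf_n \alpha_n/n\ge \P_{\mathbb{1}}\{\beta_1=1\}=2/3$. You instead exploit the deterministic fact that the reflection rule forces $X_{\alpha_i+1}=M-2$ whenever $X_{\alpha_i}=M-1$, so that the chain needs at least one further step to return to $M-1$; this yields $\alpha_{i+1}-\alpha_i\ge 2$ for every $i\ge 1$ and hence $\liminf_n \alpha_n/n\ge 2$. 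This is a purely combinatorial observation about the chain $X$ that bypasses the random-walk coupling entirely for the lower bound, and it improves the constant from $2/3$ to $2$. Since the lemma is only used later to guarantee that $\alpha_{2n}\ge n$ eventually (in the proof of Proposition~\ref{pr:inv:meas:exists}), either bound suffices, but yours is the cleaner route.
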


\begin{proof}
	Recall the independent biased random walks $Y^{(1)},Y^{(2)},\ldots$
	of the proof of Lemma \ref{lem:alpha_n-finite}, and define for every $k\in\N$,
	\[
		\beta_k:= \inf\left\{j\ge 1:\, Y^{(k)}_j = M-1
		\right\},
	\]
	where $\inf\varnothing=\infty$. Choose and fix an integer $k\ge1$.
	Evidently, $\beta_1,\beta_2,\ldots$ are i.i.d.\ random variables. And
	since $Y^{(k)}_1=M-2$, $Y^{(k)}$ has
	positive upward drift, and $Y^{(1)}_n - (n/3)$ defines a mean-zero martingale,
	a gambler's ruin computation shows that $\E_{\mathbb{1}}(\beta_1)=3$.
	
	Define $\alpha_0:=0$.
	Because $Y^{(k)}_n \le X_{n+\alpha_{k-1}}$ $\P_{\mathbb{1}}$-a.s.\ for all $n\in\N$,
	it follows from a little book keeping that
	\[
		\beta_k\ge\inf\{n\ge1:\, X_{n+\alpha_{k-1}}=M-1\}.
	\]
	Apply induction on $k$ to see that $\beta_k\ge\alpha_k-\alpha_{k-1}$
	for all $k\in\N$, $\P_{\mathbb{1}}$-a.s. Thus, the  strong law of large
	numbers implies that
	\[
		\limsup_{n\to\infty}\frac{\alpha_n}{n} = 
		\limsup_{n\to\infty}\frac1n\sum_{k=1}^n (\alpha_k-\alpha_{k-1}) \le 
		\lim_{n\to\infty}\frac1n\sum_{k=1}^n\beta_k=\E_{\mathbb{1}}(\beta_1)
		=3\qquad\text{$\P_{\mathbb{1}}$-a.s.}
	\]
	For the converse bound we might observe that, if
	$\beta_k=1$, then $X_{1+\alpha_{k-1}}=M-1$ and hence
	$\alpha_k-\alpha_{k-1}=1$. Therefore,
	\[
		\liminf_{n\to\infty}\frac{\alpha_n}{n} = 
		\liminf_{n\to\infty}\frac1n\sum_{k=1}^n (\alpha_k-\alpha_{k-1})
		\ge \lim_{n\to\infty}\frac1n\sum_{k=1}^n\bm{1}_{\{\beta_k=1\}}
		=\frac23\qquad\text{$\P_{\mathbb{1}}$-a.s.,}
	\]
	thanks to the strong law of large numbers. The lemma follows.
\end{proof}

We are ready to conclude this subsection by verifying Proposition \ref{pr:inv:meas:exists};
namely, that if ${\rm L}_\sigma>0$
and $\lambda$ is sufficiently small (which we assume is the case), then 
there exists a non-trivial invariant measure.

\begin{proof}[Proof of Proposition \ref{pr:inv:meas:exists}]
	Recall the i.i.d.\ random walks $Y^{(1)},Y^{(2)},\ldots$
	from the proof of Lemma \ref{lem:alpha_n-finite}. The very
	construction of the $Y^{(i)}$'s implies that
	\[
		\sum_{j=1}^{\alpha_n}\bm{1}_{\{X_j\le -k\}}
		= \sum_{\ell=1}^n \sum_{j=1}^{\alpha_\ell-\alpha_{\ell-1}}
		\bm{1}_{\{X_{j+\alpha_{\ell-1}}\le-k\}}
		\le \sum_{\ell=1}^n \sum_{j=1}^\infty\bm{1}_{\{Y_j^{(\ell)}\le-k\}}
		=: \sum_{\ell=1}^n\chi_\ell,
	\]
	notation being clear. Now, $\chi_1,\chi_2,\ldots$ are i.i.d., and a standard
	computation shows that
	\[
		\E_{\mathbb{1}}[\chi_1] \le 17\cdot 2^{-(k-M+2)/2};
	\]
	see Lemma \ref{lem:E-num} for example. Therefore, Kolmogorov's strong law of large numbers
	implies that 
	\[
		\limsup_{n\to\infty} n^{-1}\sum_{j=1}^{\alpha_{2n}}\bm{1}_{\{X_j\le -k\}}
		\le 34\cdot 2^{-(k-M+2)/2},
	\]
	$\P_{\mathbb{1}}$-a.s.
	Lemma \ref{lem:alpha_n/n} ensures that $\alpha_{2n}\ge n$ for all $n$ large.
	Therefore,
	\begin{equation}\label{I(X)}
		\limsup_{n\to\infty}\frac1n\sum_{j=1}^n\bm{1}_{\{X_j\le -k\}}
		\le 17\cdot 2^{-(k-M)/2}\qquad\text{$\P_{\mathbb{1}}$-a.s.}
	\end{equation}
	This implies \eqref{eq:goal:KB}. Therefore, Proposition \ref{pr:inv:meas:exists}
	follows from Proposition \ref{pr:reduction}.
\end{proof}

%%%%%%%%%%%%%%%%%%%%%%%%%%%%%%%%%%%%%%%%%%%%%%%%%%

%%%%%%%%%%%%%%%%%%%%%%%%%%%%%%%%%%%%%%%%%%%%%%%%%%
%%%%%%%%%%%%%%%%%%%%%%%%%%%%%%%%%%%%%%%%%%%%%%%%%%
%%%%% A Support Theorem %%%%%%%%%%%%%%%%%%%%%%%%%%
%%%%%%%%%%%%%%%%%%%%%%%%%%%%%%%%%%%%%%%%%%%%%%%%%%
%%%%%%%%%%%%%%%%%%%%%%%%%%%%%%%%%%%%%%%%%%%%%%%%%%

%%%%%%%%%%%%%%%%%%%%%%%%%%%%%%%%%%%%%%%%%%%%%%%%%%

\section{A Support Theorem}

In general, a ``support theorem'' for a probability measure $\nu$ is a full, or sometimes a partial,
description of the support of the measure $\nu$. In this section we provide a partial support
theorem that describes the support of the law of $\psi(t)$, at least for small values of $t$,
where $\psi$ solves the SPDE \eqref{eq:RD} starting from a given function $\psi_0\in C_{>0}(\T)$.

\begin{proposition}\label{pr:support}
	Choose and fix non-random number $A > A_0>0$, and $\alpha\in(0\,,1/2)$. 
	Then, for every non-random $\psi_0\in C_+(\T)$ with 
	$\frac12 A_0\le \inf_{x\in\T}\psi_0(x)\le
	\|\psi_0\|_{C^\alpha(\T)}\le A$,
	and for all $\delta>0$, there exists $t_0=t_0(A\,,A_0\,,\alpha\,,\delta)>0$
	and a strictly positive number $\mathfrak{p}_{A,A_0}(t_0\,,\alpha\,,\delta)$
	-- dependent on $(A\,,A_0\,,t_0\,,\alpha\,,\delta)$ but otherwise
	independently of $\psi_0$  --
	such that the solution $\psi$ to \eqref{eq:RD} with initial profile $\psi_0$ satisfies
	\[
		\P\left\{ \sup_{x\in\T}  |\psi(t_0\,,x)-A_0|  \le
		\delta \,, \|\psi(t_0)\|_{C^{\alpha/2}(\T)}\le
		A+1\right\} \ge \mathfrak{p}_{A,A_0}(t_0\,,\alpha\,,\delta).
	\]
\end{proposition}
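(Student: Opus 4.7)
The plan is to construct a smooth deterministic \emph{target trajectory} $\phi:[0\,,t_0]\times\T\to\R_{>0}$ that steers $\psi_0$ to the constant $A_0$, together with a bounded \emph{control} $u$ that realises $\phi$ as a solution of the controlled PDE $\partial_t\phi=\partial_x^2\phi+V(\phi)+\lambda\sigma(\phi)u$, and then to run a Girsanov/Stroock--Varadhan support argument to lift this to a positive-probability statement about the SPDE solution. The argument will use the hypothesis $\mathrm{L}_\sigma>0$ from Theorem \ref{th:RD} (in force throughout the relevant parts of the paper) to guarantee non-degeneracy of $\sigma$ along $\phi$.

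First I would set $\phi(t\,,x):=(1-\eta(t))\psi_0(x)+\eta(t)A_0$, where $\eta:[0\,,t_0]\to[0\,,1]$ is a fixed smooth function with $\eta(0)=0$, $\eta(t_0)=1$, and vanishing first and second derivatives at both endpoints. By construction $\phi(0)=\psi_0$, $\phi(t_0\,,\cdot)\equiv A_0$, and $\phi(t\,,x)\ge A_0/2$ on $[0\,,t_0]\times\T$, because $\inf\psi_0\ge A_0/2$; moreover $\|\phi(t)\|_{C^\alpha(\T)}\le A+A_0$ uniformly in $t$. Define the control
\[
	u(t\,,x):=\frac{\partial_t\phi(t\,,x)-\partial_x^2\phi(t\,,x)-V(\phi(t\,,x))}{\lambda\sigma(\phi(t\,,x))}.
\]
Because $|\sigma(\phi)|\ge {\rm L}_\sigma\phi\ge {\rm L}_\sigma A_0/2>0$ and $\phi$ is smooth in $t$ and $C^2$ in $x$, $u$ is a bounded deterministic function whose sup norm is controlled solely by $(A\,,A_0\,,t_0\,,\lambda\,,\sigma\,,V)$.

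Next, apply Girsanov's theorem: set
\[
	\mathcal{Z}:=\exp\left(\int_0^{t_0}\!\int_\T u(s\,,x)\,W(\d s\,\d x)-\tfrac12\int_0^{t_0}\!\int_\T u^2(s\,,x)\,\d s\,\d x\right),
\]
and define the equivalent probability measure $\tilde\P$ on $\mathcal{F}_{t_0}$ by $\d\tilde\P/\d\P=\mathcal{Z}$. Boundedness of $u$ ensures Novikov's condition, so under $\tilde\P$ the random measure $\tilde W(\d s\,\d x):=W(\d s\,\d x)-u(s\,,x)\,\d s\,\d x$ is a space-time white noise. Consequently, under $\tilde\P$, $\psi$ solves
\[
	\partial_t\psi=\partial_x^2\psi+V(\psi)+\lambda\sigma(\psi)u+\lambda\sigma(\psi)\dot{\tilde W},
\]
whose zero-noise counterpart is exactly the equation satisfied by $\phi$. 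Standard SPDE stability estimates (BDG applied after a localisation via stopping times $T_N$ as in Section \ref{sec:pf-thm-1}, so that $V$ becomes effectively Lipschitz, followed by Gronwall's lemma) yield for every $k\ge 2$
\[
	\tilde\E\left[\sup_{0\le t\le t_0}\|\psi(t)-\phi(t)\|_{C(\T)}^k\right]\le C_k(A\,,A_0\,,t_0\,,\lambda\,,\sigma).
\]
A Kolmogorov continuity argument mirroring Lemma \ref{lem:cont} and Proposition \ref{pr:tight}, carried out under $\tilde\P$, upgrades this to a uniform bound on $\tilde\E\|\psi(t_0)-\phi(t_0)\|_{C^{\alpha/2}(\T)}^k$. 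Two applications of Chebyshev's inequality then produce a strictly positive $q=q(A\,,A_0\,,t_0\,,\alpha\,,\delta)$ with
\[
	\tilde\P(E)\ge q,\qquad E:=\left\{\sup_{x\in\T}|\psi(t_0\,,x)-A_0|\le\delta\,,\ \|\psi(t_0)\|_{C^{\alpha/2}(\T)}\le A+1\right\}.
\]

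Finally, the Cauchy--Schwarz inequality under $\tilde\P$ gives
\[
	\P(E)=\tilde\E[\mathcal{Z}^{-1}\bm{1}_E]\ge\frac{\tilde\P(E)^2}{\tilde\E[\mathcal{Z}]},
\]
and a short computation -- using that $\tilde W$ is a white noise under $\tilde\P$, so $\int u\,\d\tilde W$ is Gaussian with variance $\|u\|_{L^2([0,t_0]\times\T)}^2$ -- yields $\tilde\E[\mathcal{Z}]=\exp(\|u\|_{L^2([0,t_0]\times\T)}^2)\le\exp(2t_0\|u\|_\infty^2)$, which is finite and depends only on the allowed parameters. This produces the claimed lower bound $\mathfrak{p}_{A,A_0}(t_0\,,\alpha\,,\delta)>0$. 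The principal obstacle I foresee is the $L^k$-stability step: because the drift $V(\psi)+\lambda\sigma(\psi)u$ is only locally Lipschitz, one must perform a careful truncation (e.g.\ comparison with the linear-drift dominating SPDE used in Lemma \ref{lem:moment}) to convert the a priori moment bounds into pathwise stability estimates with constants that do not blow up as $\psi_0$ varies over the prescribed class.
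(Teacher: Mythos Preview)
Your overall Girsanov-plus-target-trajectory strategy is natural, but there is a genuine gap in the construction of the control. You set $\phi(t\,,x)=(1-\eta(t))\psi_0(x)+\eta(t)A_0$ and then write
\[
	u(t\,,x)=\frac{\partial_t\phi(t\,,x)-\partial_x^2\phi(t\,,x)-V(\phi(t\,,x))}{\lambda\sigma(\phi(t\,,x))},
\]
claiming that ``$\phi$ is smooth in $t$ and $C^2$ in $x$.'' But $\partial_x^2\phi(t\,,x)=(1-\eta(t))\,\partial_x^2\psi_0(x)$, and the hypothesis only gives $\psi_0\in C^\alpha(\T)$ with $\alpha<1/2$. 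For generic such $\psi_0$, the distribution $\partial_x^2\psi_0$ is not a function, let alone bounded or in $L^2(\T)$; consequently $u\notin L^2([0\,,t_0]\times\T)$, the Wiener integral $\int u\,\d W$ is undefined, and Novikov's condition has no content. The approach could be repaired by using a target that first flows under the heat semigroup (so that $\partial_t\phi-\partial_x^2\phi=0$ near $t=0$) and only afterwards interpolates to $A_0$ from the smoothed profile $\mathcal{P}_{t_0/2}\psi_0$; then one must track how the $\sup$-norm of $u$ depends on $t_0$ through $\|\partial_x^2\mathcal{P}_{t_0/2}\psi_0\|_{C(\T)}$, and verify that the resulting constants stay uniform over the prescribed class of $\psi_0$.

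The paper avoids this regularity obstruction altogether by using a \emph{random} (adapted) shift rather than a deterministic control: the Girsanov density is built from
\[
	\frac{1}{\lambda t_0}\left\{\psi_0(y)-\tfrac{A_0}{2}\right\}
	\frac{\bm{1}_{[\delta/2,\,A+1]}(\psi(r\,,y))}{\sigma(\psi(r\,,y))},
\]
which involves only the \emph{values} of $\psi_0$ and $\psi$, never their spatial derivatives. Under the shifted measure, the added drift combines with the heat semigroup to drive $\psi(t_0)$ toward $A_0$ via an explicit mild-form computation, and the indicator function makes the denominator $\sigma(\psi)$ uniformly bounded below. The price is that the control is $\psi$-dependent and the analysis proceeds by direct estimates on $\psi$ under the new measure rather than by comparison to a deterministic $\phi$. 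Your stability/Chebyshev step, by contrast, would require that $\tilde\E\sup_{[0,t_0]}\|\psi-\phi\|_{C(\T)}^k\to 0$ as $t_0\downarrow 0$, which is correct in spirit but needs to be stated and proved with constants uniform in $\psi_0$; you correctly flag this as the other delicate point.
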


\begin{proof}
	It clearly suffices to prove the result when $\delta$ is small. Therefore, we may [and will]
	assume without any loss in generality that
	\begin{equation}\label{cond:delta}
		\delta < \frac{1}{16}\wedge \frac{A_0}{2} \wedge
		\left( \frac{A_0}{2A\mathscr{C}_\alpha}\right)^{2/\alpha},
	\end{equation}
	is sufficiently small (but fixed), where
	\begin{equation}\label{C_alpha}
		\mathscr{C}_\alpha := \frac{1}{\sqrt{2\pi}}\int_{-\infty}^\infty |x|^\alpha
		\e^{-x^2/2}\,\d x = \frac{2^{\alpha/2}}{\sqrt\pi}\Gamma\left( \frac{1+\alpha}{2}\right).
	\end{equation}
	
	The essence of the idea is quite natural: By regularity estimates on the paths of the solution
	[Propositions \ref{pr:tight} and \ref{pr:temporal:cont}] we may choose $t$ sufficiently
	small to ensure that
	\[
		\P\left\{ \|\psi(t)-\psi_0\|_{C(\T)}\le\delta\,, \|\psi(t)\|_{C^{\alpha/2}(\T)}\le A_0+1\right\}>0.
	\]
	Then, we apply Girsanov's theorem \citep{Allouba,DZ96}
	to shift the center of the above radius-$\delta$ ball in $C(\T)$. 
	Because of the multiplicative
	nature of the noise in \eqref{eq:RD}, and since $\sigma$ vanishes 
	at zero, the said appeal to Girsanov's theorem is somewhat
	non trivial. Therefore, we write a detailed proof.
	
	Fix a real number $k\ge 2$, and recall the random field $\mathcal{I}$ from \eqref{I}.
	We may apply the BDG inequality, in a manner  similar to our method of proof of Lemma \ref{lem:cont},
	in order to see that there exists a real number $c_k>0$
	such that simultaneously for every $x\in\T$ and $t>0$,
	\begin{align*}
		\|\mathcal{I}(t\,,x)\|_k^2 &\le c_k\int_0^t\d s\int_{\T}\d y\
			[p_{t-s}(x\,,y)]^2 \|\sigma(\psi(s\,,y))\|_k^2\\
		&\le c_k\lip_\sigma^2\sup_{r\ge0}\sup_{z\in\T}\|\psi(r\,,z)\|_k^2\int_0^t\d s\int_{\T}\d y\
			[p_s(x\,,y)]^2\\
		&= c_k\lip_\sigma^2\sup_{r\ge0}\sup_{z\in\T}\|\psi(r\,,z)\|_k^2\int_0^t p_{2s}(0\,,0)\,\d s;
	\end{align*}
	we have appealed to the semigroup property of the heat kernel in order to deduce the last inequality.  
	Now, Lemma B.1 of \citet{KKMS20}
	tells us that $p_{2s}(0\,,0)\le 2\max\{ s^{-1/2}\,,1\}$, and Proposition \ref{pr:tight}
	implies that $\sup_{r\ge0}\sup_{z\in\T}\|\psi(r\,,z)\|_k<\infty$. Therefore, there exists 
	$c_k'>0$ such that
	\begin{equation}\label{I(t,x):1}
		\sup_{x\in\T}\|\mathcal{I}(t\,,x)\|_k \le c_k' \max\left\{ t^{1/4}\,, t^{1/2}\right\}
		\qquad\text{for all $t>0$}.
	\end{equation}
	Next, we might observe from Lemma \ref{lem:cont} and the proof of 
	Proposition together  that there exists $c_k''>0$ such that
	\begin{equation}\label{I(t,x):2}
		\|\mathcal{I}(t\,,x) - \mathcal{I}(s\,,x')\|_k \le c_k''\left\{ |x-x'|^{1/2} + 
		|t-s|^{1/4}\right\},
	\end{equation}
	uniformly for all  $s,t>0$ and $x,x'\in\T$. Therefore, we may apply chaining
	together with \eqref{I(t,x):1} and \eqref{I(t,x):2} in order to find that
	there exists $C_k>0$ such that
	\begin{equation}\label{||I||}
		\E\left( \sup_{s\in(0,t)}\|\mathcal{I}(s)\|_{C^{\alpha/2}(\T)}^k\right) \le
		C_k t^{k/5}\qquad\text{for all $t\in(0\,,1]$}.
	\end{equation}
	The careful reader might find that we have made a few arbitrary choices here:
	The $C^{\alpha/2}(\T)$-norm can be replaced by a $C^\beta(\T)$-norm
	for any $\beta\in(0\,,\alpha)$, and $t^{k/5}$ can be replaced by $t^{\theta k}$
	for any $\theta\in(0\,,\frac14)$. Of course, in that case, $C_k=C_k(\theta\,,\beta)$.
	
	Next, we consider the events
	\[
		\bm{\mathcal{G}}_t := \left\{ 
		\sup_{s\in(0,t)}\|\mathcal{I}(s)\|_{C^{\alpha/2}(\T)} \le \frac{\delta}{10\lambda}\right\},
	\]
	as $t$ roams over $(0\,,1]$. According to Chebyshev's inequality, and thanks to \eqref{||I||},
	\[
		\P\left(\bm{\mathcal{G}}_t^c\right) 
		\le \frac{10^k\lambda^k}{\delta^k}C_k t^{\theta k}
		\qquad\text{for all $t\in(0\,,1]$}.
	\]
	Since $\sup_{w\ge0}V(w)<\infty$ [see Lemma \ref{lem:F}],
	it follows that,  for all sufficiently small values of $t_0 \in(0\,,\delta)$,
	\begin{equation}\label{P(G)}\begin{aligned}
		&\P\left( \bm{\mathcal{G}}_{t_0} \right) \ge \frac{1}{2},&
			2\sup_{w\ge0}V(w)t_0 < \frac{\delta}{10},\\
		&2\sup_{|w|<A+1}|V(w)| t_0\le \frac{\delta}{10},\qquad&
			\sup_{|w|<A+1}|V(w)|\sum_{k=1}^\infty \frac{k^2t_0\wedge 1}{k^{(4-\alpha)/2}}\le\frac{1}{20},\\
			& (A\mathscr{C}_\alpha {t_0})^{\alpha/2} \leq \frac{\delta}{10}.
	\end{aligned}\end{equation}
	From now on, we select and fix such a $t_0 = t_0(\delta\,,A) \in (0\,,\delta)$.
	
	According to \eqref{eq:mild}, with probability one,
	\[
		\psi(s\,,x) = (\mathcal{P}_s\psi_0)(x) + \int_0^s\d r\int_{\T}\d y\
		p_{s-r}(x\,,y) V(\psi(r\,,y)) + \lambda \mathcal{I}(s\,,x)\qquad\text{for
		all $s>0$ and $x\in\T$}.
	\]
	Let $W$ denote the Brownian sheet that is naturally associated to the white noise
	$\dot{W}$; that is,
	\[
		W(s\,,x) := \int_{(0,s)\times[-1,x]}\dot{W}(\d r\,\d y)\qquad\text{for
		all $s>0$ and $x\in\T$},
	\]
	where we recall $\T$, as a set, is identified with the interval $[-1\,,1]$. 
	
	Define
	\[
		D_s := \e^{-M_s -\frac12\< M\>_s} \qquad\text{for
		all $s>0$},
	\]
	where $\{M_s\}_{s\ge0}$ is the continuous local martingale defined by
	\[
		M_s :=  \frac{1}{\lambda t_0}  \int_{(0,s)\times\T}
		\left\{ \psi_0(y) - \frac{A_0}{2}\right\}
		\frac{\bm{1}_{[\delta/2,A+1]}(\psi(r\,,y))}{\sigma(\psi(r\,,y))}\, W(\d r\,\d y).
	\]
	Because ${\rm L}_\sigma>0$ [see \eqref{LL}],
	the quadratic variation of $M$ satisfies
	\begin{align*}
		\< M\>_s &= \frac{1}{\lambda^2 t_0^2}
			\int_{(0,s)\times\T} \left\{ \psi_0(y) - \frac{A_0}{2}\right\}^2
			\frac{\bm{1}_{[\delta/2,A+1]}(\psi(r\,,y))}{\sigma^2(\psi(r\,,y))}
			\,\d r\,\d y\\
		&\le\frac{1}{\lambda^2 t_0^2{\rm L}_\sigma^2}
			\left\{ A+ \frac{A_0}{2}\right\}^2\int_{(0,s)\times\T} 
			\frac{\bm{1}_{[\delta/2,A+1]}(\psi(r\,,y))}{|\psi(r\,,y)|^2}
			\,\d r\,\d y\\
		&\le\frac{4}{\lambda^2 t_0^2{\rm L}_\sigma^2\delta^2}
			\left\{ A+ \frac{A_0}{2}\right\}^2 \int_{(0,s)\times\T} \,\d r\,\d y=:Cs
			\quad\text{for every $s>0$}.
	\end{align*}
	This inequality shows
	that the exponential local martingale
	$\{D_s\}_{s\ge0}$ is in fact a continuous $L^2(\P)$-martingale. 
	The DDS martingale representation theorem ensures the existence of a Brownian motion
	$B$ such that $M_s=B(\<M\>_s)$ for all $s\ge0$, whence we learn from the 
	Cauchy-Schwarz inequality and the reflection principle
	that, for every $s>0$,
	\[
		\E(D_s^2) \le  \E\left( \e^{2M_s}\right) 
		=\E\left( \e^{ 2B(\<M\>_s) }\right)
		\le \E\left[ \exp\left\{ 2\sup_{r\in[0, Cs]}B(r)\right\} \right]
		\le 2\e^{2Cs}.
	\]
	Define
	\[
		\underline{W}(s\,,x) := W(s\,,x) + \frac{1}{\lambda t_0} \int_{(0,s)\times\T}
		\left\{ \psi_0(y) - \frac{A_0}{2}\right\}\frac{\bm{1}_{[\delta/2,A+1]}(\psi(r\,,y))}{\sigma(\psi(r\,,y))}
		\,\d r\,\d y,
	\]
	for all $s>0$ and $x\in\T$.
	Girsanov's theorem
	ensures that $\underline{W}$ is a Brownian sheet under the
	measure $\mathrm{Q}$ defined via
	\[
		D_s :=
		\left.\frac{\d\mathrm{Q}}{\d\P}\right|_{\mathscr{F}_s}\qquad\text{for
		all $s>0$};
	\]
	see \cite{Allouba}  for the precise version of the Girsanov theorem that we need, and 
	Chapter 10 of \cite{DZ14} for the general theory.
	Among other things, the Cauchy-Schwarz inequality implies that
	\begin{equation}\label{Q<P}
		\mathrm{Q}(\Lambda) \le \|D_t\|_2\sqrt{\P(\Lambda)}
		\le \e^{3Ct_0/2}\sqrt{2\P(\Lambda)}
		\qquad\text{for all $\Lambda\in\mathscr{F}_{t_0}$}.
	\end{equation}
	And a similar estimate holds that bounds $\P(\Lambda)$ by  a
	[large] multiple of $\sqrt{Q(\Lambda)}$ for every $\Lambda\in\mathscr{F}_{t_0}$.
	In particular, it follows that ${\rm Q}$ and $\P$ are mutually absolutely continuous
	probability measures on the sigma algebra $\mathscr{F}_{t_0}$.
	
	The above application of Girsanov's theorem ensures that 
	$\psi$ solves the following SPDE driven by $\underline{\dot{W}}$:
	$\mathrm{Q}$-almost surely,
	\[
		\partial_t\psi(s\,,x) = \partial^2_x\psi(s\,,x) + V(\psi(s\,,x)) 
		- \frac{1}{ t_0}\left(\psi_0(x)- \frac{A_0}{2}\right) \bm{1}_{[\delta/2,A+1]}(\psi(s\,,x)) 
		+ \lambda\sigma(\psi(s\,,x))\,\underline{\dot{W}}(s\,,x).
	\]
	We can write this in mild form [see \eqref{eq:mild}] in order to see that
	with probability one $[\mathrm{Q}]$,
	\begin{equation}\label{psi:dec}
		\psi(s\,,x) = (\mathcal{P}_s\psi_0)(x) + J_1(s\,,x) - J_2(s\,,x) + \lambda\underline{\mathcal{I}}(s\,,x),
	\end{equation}
	where 
	\begin{align*}
		J_1(s\,,x) & :=  \int_{(0,s)\times\T}p_{s-r}(x\,,y)V(\psi(r\,,y))\,\d r\,\d y,\\
		J_2(s\,,x) & :=  \frac{1}{t_0}
			\int_{(0,s)\times\T} p_{s-r}(x\,,y)
			\left\{ \psi_0(y) - \frac{A_0}{2}\right\}\bm{1}_{[\delta/2,A+1]}(\psi(r\,,y))\,\d r\,\d y,
	\end{align*}
	for every $s>0$ and $x\in\T$,
	and $\underline{\mathcal{I}}$ is defined exactly as was $\mathcal{I}$,
	but with $W$ replaced by $\underline{W}$. 
	
	Next, consider the events, 
	\[
		\underline{\bm{\mathcal{G}}}_s :=  \left\{ 
		\sup_{r\in(0,s)}\|\underline{\mathcal{I}}(r)\|_{C^{\alpha/2}(\T)} \le \frac{\delta}{10\lambda}\right\}.
	\]
	That is, $\underline{\bm{\mathcal{G}}}_s$ is defined exactly as was
	$\bm{\mathcal{G}}_s$, but with $\mathcal{I}$ replaced by $\underline{\mathcal{I}}$.
	Recall from Lemma \ref{lem:F}
	that
	\[
		K=\sup_{w\in\R}V(w)<\infty,
	\]
	and since $\psi\ge0$, observe that
	$\sup_{x\in\T}J_1(s\,,x) \le 2Ks\le 2Kt_0$ for every $s\in(0\,,t_0]$ a.s.
	Because $J_2(s\,,x)\ge 0$ a.s., we combine these statements 
	with \eqref{P(G)} and \eqref{psi:dec} in order to see that
	\begin{equation}\label{eq:duck1}
		\adjustlimits\sup_{s\in(0,t_0]}\sup_{x\in\T}\psi(s\,,x) \le A + 2Kt_0 + \frac{\delta}{10}
		<A+\frac12\qquad\text{${\rm Q}$-a.s.\ on $\underline{\bm{\mathcal{G}}}_{t_0}$}.
	\end{equation}
	Now, \eqref{eq:duck1} and \eqref{P(G)} together imply that, for all $s\in(0\,,t_0]$ and $x\in\T$,
	\begin{equation}\label{J_1<}
		|J_1(s\,,x)| \le \sup_{|w|\le A+1}|V(w)|
		\int_{(0,s)\times\T}p_{s-r}(x\,,y)\,\d r\,\d y \le \frac{\delta}{10}
		\qquad\text{${\rm Q}$-a.s.\ 
		on $\underline{\bm{\mathcal{G}}}_{t_0}$}.
	\end{equation}
	Next, we observe that
	\begin{align*}
		(\mathcal{P}_s\psi_0)(x) - J_2(s\,,x) &\ge (\mathcal{P}_s\psi_0)(x)
			-  \frac{1}{t_0}\int_{(0,s)\times\T} p_{s-r}(x\,,y) \left\{ \psi_0(y) - 
			\frac{A_0}{2}\right\}\,\d r\,\d y\\
		&  = (\mathcal{P}_s\psi_0)(x)
			-  \frac{1}{t_0}\int_0^s (\mathcal{P}_r\psi_0)(x)\,\d r  + \frac{s}{t_0}\frac{A_0}{2},
	\end{align*}
	for all $s\in(0\,,t_0]$ and $x\in\T$. Let $\{\beta(s)\}_{s\ge0}$ denote a Brownian motion
	on $\T$ and observe that
	\begin{equation}\label{eq:diff0}
	\begin{aligned}
		\left| \left(\frac st_0\right)(\mathcal{P}_s\psi_0)(x)
			-  \frac{1}{t_0}\int_0^s (\mathcal{P}_r\psi_0)(x)\,\d r\right|
			&= \frac{1}{t_0}\left|\int_0^s\left\{  \E\left[ \psi_0(\beta(s)+x) \right] 
			-\E\left[ \psi_0(\beta(r)+x) \right] \right\}\d r\right|\\
		&\le \frac{\|\psi_0\|_{C^\alpha(\T)}}{t_0}\int_0^s
			\E\left( |\beta(s)-\beta(r)|^\alpha\right)\d r\\
		&\le \frac{A\mathscr{C}_\alpha}{t_0}\int_0^s r^{\alpha/2}\,\d r&\text{[see \eqref{C_alpha}]}\\
		&<A\mathscr{C}_\alpha s^{\alpha/2}\qquad\text{for all $s\in(0\,,t_0]$ and $x\in\T$}.
	\end{aligned}
	\end{equation}
	Since $A\mathscr{C}_\alpha s^{\alpha/2} \le A\mathscr{C}_\alpha t_0^{\alpha/2}
	\leq \delta/10$, it follows from the preceding and from \eqref{cond:delta} that
	\[
		(\mathcal{P}_s\psi_0)(x)
		-  \frac{1}{t_0}\int_0^s (\mathcal{P}_r\psi_0)(x)\,\d r
		\ge \left(1-\frac{s}{t_0}\right)\frac{A_0}{2} -\frac{\delta}{10}
		\qquad\text{for all $s\in(0\,,t_0]$ and $x\in\T$}.
	\]
	and hence
	\[
		(\mathcal{P}_s\psi_0)(x)
		-  J_2(s\,,x) \ge \frac{A_0}{2} -\frac{\delta}{10}.
	\]
	Thus,  \eqref{P(G)},  \eqref{psi:dec}, and \eqref{J_1<} together ensure that
	\begin{equation}\label{eq:duck2}
		\adjustlimits\inf_{s\in(0,t_0]}\inf_{x\in\T}\psi(s\,,x)
		\ge \frac{A_0}{2} -\frac{3\delta}{10} > \frac{\delta}{2}\qquad\text{${\rm Q}$-a.s.\ 
		on $\underline{\bm{\mathcal{G}}}_{t_0}$},
	\end{equation}
	and \eqref{eq:duck1} and \eqref{eq:duck2} together yield the following
	${\rm Q}$-a.s.\ on $\underline{\bm{\mathcal{G}}}_{t_0}$: For
	all  $x\in\T$,
	\begin{equation}\label{LHS=A_0}\begin{split}
		(\mathcal{P}_{t_0}\psi_0)(x) - J_2(t_0\,,x) &= (\mathcal{P}_{t_0}\psi_0)(x)
			-  \frac{1}{t_0}\int_{(0,t_0)\times\T} \left\{ 
			\psi_0(y) - \frac{A_0}{2}\right\}p_{t_0-r}(x\,,y) \,\d r\,\d y\\
		&=A_0 + \mathcal{P}_{t_0}\psi_0(x) - \frac{1}{t_0} \int_0^{t_0} (\mathcal{P}_r\psi_0)(x) \, \d r.
	\end{split}\end{equation}
	Thus, it follows from \eqref{psi:dec}, \eqref{J_1<}, \eqref{eq:diff0}
	and the definition of the event $\underline{\bm{\mathcal{G}}}_{t_0}$ that,
	${\rm Q}$-a.s.\ on $\underline{\bm{\mathcal{G}}}_{t_0}$,
	\begin{equation}\label{good:stuff:1}
		\sup_{x\in\T}|\psi(t_0\,,x) - A_0 | \le \|(\mathcal{P}_s\psi_0 - J_2(s)\|_{C(\T)}+
		\|J_1(t_0)\|_{C(\T)} + \lambda
		\left\| \underline{\mathcal{I}}(t_0)\right\|_{C(\T)}
		\le \frac{3\delta}{10}<\delta.
	\end{equation}
	Also, \eqref{LHS=A_0} tells us that
	${\rm Q}$-a.s.\ on $\underline{\bm{\mathcal{G}}}_{t_0}$, 
	\begin{equation}\label{psi:good}\begin{split}
		|\psi(t_0\,,x) - \psi(t_0\,,y)| &\le |J_1(t_0\,,x)-J_1(t_0\,,z)| + 
			\lambda|\underline{\mathcal{I}}(t_0\,,x)-
			\underline{\mathcal{I}}(t_0\,,z)| +2\|(\mathcal{P}_s\psi_0 - J_2(s)\|_{C(\T)}\\
		&\le |J_1(t_0\,,x)-J_1(t_0\,,z)| + 
			\frac{\delta}{10}|x-z|^{\alpha/2} +\frac{\delta}{5},
	\end{split}\end{equation}
	simultaneously for all $x,z\in\T$. We estimate the remaining term
	as follows:
	Because of \eqref{eq:duck1}, the following holds ${\rm Q}$-a.s.\ on $\underline{\bm{\mathcal{G}}}_{t_0}$:
	\[
		|J_1(t_0\,,x)-J_1(t_0\,,z)|\le \sup_{w\le A+1}|V(w)|\int_{(0,t_0)\times\T}
		\left| p_r(x\,,y)-p_r(z\,,y)\right|\,\d r\,\d y,
	\]
	simultaneously for all $x,z\in\T$. Now we apply \eqref{p-p} to see that
	${\rm Q}$-a.s.\ on $\underline{\bm{\mathcal{G}}}_{t_0}$,
	\[
		|J_1(t_0\,,x)-J_1(t_0\,,z)| \le 2\sqrt 2\sup_{w\le A+1}|V(w)|
		\sum_{k=1}^\infty \left( |x-z|k\wedge 1\right)\int_0^{t_0}
		\e^{-\pi^2 k^2r}\,\d r,
	\]
	simultaneously for every $x,z\in\T$. Since $(|a|\wedge1)\le|a|^{\alpha/2}$ for all $a\in\R$,
	it follows that ${\rm Q}$-a.s.\ on $\underline{\bm{\mathcal{G}}}_{t_0}$,
	\begin{align*}
		|J_1(t_0\,,x)-J_1(t_0\,,z)| & \le 2\sqrt 2\sup_{w\le A+1}|V(w)|
			|x-z|^{\alpha/2} 
			\sum_{k=1}^\infty k^{\alpha/2}\left( \frac{1 - \e^{-\pi^2 k^2t_0}}{\pi^2k^2}\right)\\
		&\le \frac{2\sqrt 2}{\pi^2}\sup_{w\le A+1}|V(w)||x-z|^{\alpha/2} 
			\sum_{k=1}^\infty \frac{k^2t_0\wedge 1}{k^{(4-\alpha)/2}}\\
		&\le \frac{\sqrt 2}{20}\, |x-z|^{\alpha/2}\qquad\text{for all $x,z\in\T$};
	\end{align*}
	see \eqref{P(G)}.  This and \eqref{psi:good} together yield
	\[
		\sup_{\substack{x,z\in\T\\x\neq z}}
		\frac{|\psi(t_0\,,x) - \psi(t_0\,,z)|}{|x-z|^{\alpha/2}} \le 
		\frac{\sqrt 2}{20} + \frac{3\delta}{10} \qquad\text{
		${\rm Q}$-a.s.\ on $\underline{\bm{\mathcal{G}}}_{t_0}$.}
	\]
	Thus, we may deduce from \eqref{eq:duck1} that
	\begin{equation}\label{good:stuff:2}
		\|\psi(t_0)\|_{C^\alpha(\T)} \le 
		A + \frac12 + \frac{\sqrt 2}{20} + \frac{3\delta}{10} < A+1\qquad\text{
		${\rm Q}$-a.s.\ on $\underline{\bm{\mathcal{G}}}_{t_0}$.}
	\end{equation}
	
	Thanks to \eqref{P(G)} and Girsanov's theorem,
	${\rm Q}( \underline{\bm{\mathcal{G}}}_{t_0}) =
	\P( \bm{\mathcal{G}}_{t_0}) \ge 1/2$. Therefore,
	\eqref{Q<P}, \eqref{good:stuff:1}, and \eqref{good:stuff:2} together imply that
	\begin{align*}
		&\P\left\{  \left\| \psi(t_0) - A_0 \right\|_{C(\T)} \le \delta \,, \|\psi(t_0)\|_{C^{\alpha/2}(\T)}\le
			A+1\right\}\\
		&\ge \e^{-3Ct_0}\left|{\rm Q}
			\left\{ \left\| \psi(t_0) - A_0 \right\|_{C(\T)} \le \delta \,, \|\psi(t_0)\|_{C^{\alpha/2}(\T)}\le
			A+1 \right\}\right|^2 \ge \e^{-3Ct_0}\left|{\rm Q}(\underline{\bm{\mathcal{G}}}_{t_0})\right|^2
			\ge \frac14\e^{-3Ct_0}.
	\end{align*}
	This has the desired result.
\end{proof}

%%%%%%%%%%%%%%%%%%%%%%%%%%%%%%%%%%%%%%%%%%%%%%%%%%

%%%%%%%%%%%%%%%%%%%%%%%%%%%%%%%%%%%%%%%%%%%%%%%%%%
%%%%%%%%%%%%%%%%%%%%%%%%%%%%%%%%%%%%%%%%%%%%%%%%%%
%%%%% Natural, Indepedent and AM/PM Couplings %%%%
%%%%%%%%%%%%%%%%%%%%%%%%%%%%%%%%%%%%%%%%%%%%%%%%%%
%%%%%%%%%%%%%%%%%%%%%%%%%%%%%%%%%%%%%%%%%%%%%%%%%%

%%%%%%%%%%%%%%%%%%%%%%%%%%%%%%%%%%%%%%%%%%%%%%%%%%

\section{Natural, Independent, and AM/PM Couplings}\label{sec:coupling}

The principal aim of this section is proof of the statement that if ${\rm L}_\sigma>0$
and $\lambda$ is small, then there can 
only exist one probability measure $\mu_+$ on $C_+(\T)$ such that $\mu_+\{\mathbb{0}\}=0$ and $\mu_+$
is invariant for the SPDE \eqref{eq:RD}. 
We have demonstrated already  in Proposition \ref{pr:inv:meas:exists}
that if ${\rm L}_\sigma>0$ and $\lambda$ is sufficiently small, then
at least one invariant measure $\mu_+$ exists such that $\mu_+\{\mathbb{0}\}=0$. 
The main point of this section is that $\mu_+$ is the only invariant measure of the type
outlined. In order to do this, we build on coupling ideas of \cite{Mueller-Coupling}.
Let $\psi_1$ and $\psi_2$ denote the  solutions to the SPDE
\eqref{eq:RD} starting respectively, given respective initial data $\psi_{1,0},\psi_{2,0}\in C_{>0}(\T)$.
We will not assume that they are driven by the same noise, or even are defined on the same probability space.
With this in mind, recall that a \emph{coupling} of $(\psi_1\,,\psi_2)$ is a construction of $(\psi_1\,,\psi_2)$
jointly on the same probability space such that $\psi_1$ and $\psi_2$ have the correct respective
marginals. In other words, a coupling of $(\psi_1\,,\psi_2)$ involves the construction of 
two space-time white noises $\dot{\mathcal{W}}_1$ and  $\dot{\mathcal{W}}_2$ such that
the following stochastic integral equations
\begin{equation}\label{psi_j}\begin{split}
	\psi_j(t\,,x) &= (\mathcal{P}_t\psi_{j,0})(x) + \int_{(0,t)\times\T}
		p_{t-s}(x\,,y) V(\psi_j(s\,,y))\,\d s\,\d y\\
	&\hskip2in +\lambda
		\int_{(0,t)\times\T} p_{t-s}(x\,,y) \sigma(\psi_j(s\,,y))\,\mathcal{W}_j(\d s\,\d y)
\end{split}\end{equation}
are valid for all $t>0$, $x\in\T$, and $j\in\{1\,,2\}$. The novelty here can be in the fact that
$\mathcal{W}_1$ and $\mathcal{W}_2$ might be correlated with one another, and even
constructed {\it a priori} using the solution $(\psi_1\,,\psi_2)$ to the above. This is of course
a pairwise coupling. One can imagine also a more general $N$-wise coupling of $N\ge2$
solutions to \eqref{eq:RD}, etc.

Next we devote some time to describe
four notions of coupling, all of which are used in this paper. We call them
\emph{natural}, \emph{independent},  \emph{pairwise monotone} (PM, for short), 
and \emph{anchored monotone} (AM, for short) couplings
for the sake of comparison and ease of later reference. The first two coupling methods
are standard; the more subtle PM and the AM couplings of this paper were introduced
in \cite{Mueller-Coupling}.\\

\noindent\textbf{\itshape (i) Natural coupling.}
By a \emph{natural coupling} of $\psi_1$ and $\psi_2$ we simply mean the construction of
$\psi_1$ and $\psi_2$ using the same underlying white noise. This is the coupling that we have
tacitly used so far in the paper. The natural coupling has a number of obvious advantages.
For example, if $\psi_{1,0}\le\psi_{2,0}$, then $\psi_1\le\psi_2$ a.s.; see Lemma \ref{lem:comparison}.
Another attractive feature of natural couplings is that they are not limited to pairwise couplings, or even
$N$-wise couplings. One can in fact solve \eqref{eq:RD} simultaneously for every non-random
initial profile $\psi_0\in C_+(\T)$.\\

\noindent\textbf{\itshape (ii) Independent coupling.}
By an \emph{independent} coupling of $\psi_1$ and $\psi_2$ we simply mean that 
the underlying noises $\dot{\mathcal{W}}_1$ and $\dot{\mathcal{W}}_2$ in \eqref{psi_j}
are independent from one another. This is the most naive form of coupling, but as we shall
see has its uses.\\

\noindent\textbf{\itshape (iii) Pairwise monotone {\bf (PM)} coupling.}
\emph{PM coupling} refers to the first step of a two-step coupling method
that was introduced in \cite{Mueller-Coupling}. In order to recall
that method, and adapt it to the present setting, let us first 
define $\dot{\mathcal{W}}_1$ and $\dot{\mathcal{W}}_2$ to be two independent
space-time white noises.  Also, consider the real-valued functions
\begin{equation}\label{fg}
	f(y) := \sqrt{|y|\wedge 1}\quad\text{and}\quad
	g(y) := \sqrt{1 - |f(y)|^2} 
	=\sqrt{ 1 - \left( |y|\wedge 1\right)}\qquad\text{for $y\in\R$}.
\end{equation}
Now, we first let $\psi_1$ solve \eqref{eq:RD}, driven by $\dot{W}_1$; that is,
\begin{equation}\label{eq:psi_1}\begin{split}
	\psi_1(t\,,x) &= (\mathcal{P}_t\psi_{1,0})(x) + \int_{(0,t)\times\T}
		p_{t-s}(x\,,y) V(\psi_1(s\,,y))\,\d s\,\d y \\
	&\hskip2in + \lambda \int_{(0,t)\times\T} p_{t-s}(x\,,y)
		\sigma(\psi_1(s\,,y))\,\mathcal{W}_1(\d s\,\d y),
\end{split}\end{equation}
for every $t>0$ and $x\in\T$. Next, we let $\psi_2$ define the solution to the
coupled SPDE,
\begin{equation}\label{eq:psi_2}\begin{split}
	\psi_2(t\,,x) &= (\mathcal{P}_t\psi_{2,0})(x) + \int_{(0,t)\times\T}
		p_{t-s}(x\,,y) V(\psi_2(s\,,y))\,\d s\,\d y \\
	&\hskip.7in + \lambda \int_{(0,t)\times\T} p_{t-s}(x\,,y)
		\sigma(\psi_2(s\,,y)) g\left(\psi_1(s\,,y))-\psi_2(s\,,y)\right)
		\mathcal{W}_1(\d s\,\d y) \\
	&\hskip.7in + \lambda \int_{(0,t)\times\T} p_{t-s}(x\,,y)
		\sigma(\psi_2(s\,,y)) f\left(\psi_1(s\,,y))-\psi_2(s\,,y)\right)
		\mathcal{W}_2(\d s\,\d y).
\end{split}\end{equation}
Soon, we will elaborate on the existence of the PM coupling briefly,
following the work of \cite{Mueller-Coupling}, and adapting that
work to the present setting. For now, let us make a few remarks:
\begin{compactitem}
\item[-] As was mentioned by \cite{Mueller-Coupling} in a similar setting,
	we do not make a statement about the pathwise uniqueness of the solution to the
	SPDE system that defines $(\psi_1\,,\psi_2)$ in the PM coupling.
	Nor does pathwise uniqueness affect us. We care only about weak existence and
	uniqueness (in the probabilistic sense), which we shall establish soon.
\item[-] We can write the PM coupling of $(\psi_1\,,\psi_2)$, in differential notation,
	as the following interacting pair of SPDEs:
	\begin{equation*}\begin{split}
		\text{\ding{192}}&\left[\begin{split}
				&\partial_t\psi_1 = \partial^2_x\psi_1 + V(\psi_1) 
					+ \lambda\sigma(\psi_1)\dot{\mathcal{W}}_1
					\hskip2.3in \text{on $(0\,,\infty)\times\T$},\\
				&\text{subject to }\psi_1(0)=\psi_{1,0}
					\hskip2.95in \text{on $\T$},\\
			\end{split}\right.\\
		\text{\ding{193}}&\left[\begin{split}
				&\partial_t\psi_2 = \partial^2_x\psi_2 + V(\psi_2) 
					+ \lambda\sigma(\psi_2)\left[ g\left( \psi_1-\psi_2\right)\dot{\mathcal{W}}_1
					+ f\left( \psi_1-\psi_2\right)\dot{\mathcal{W}}_2\right]
					\hskip.15in \text{on $(0\,,\infty)\times\T$},\\
				&\text{subject to }\psi_2(0)=\psi_{2,0}
					\hskip2.93in \text{on $\T$}.\\
			\end{split}\right.{}.
	\end{split}\end{equation*}
	As long as the solution $(\psi_1\,,\psi_2)$ 
	exists as a 2-D predictable random field in the sense of \cite{wal86},
	and because $f^2+g^2=1$, the
	random distribution
	$g( \psi_1-\psi_2)\dot{\mathcal{W}}_1
	+ f( \psi_1-\psi_2)\dot{\mathcal{W}}_2$
	is {\it a priori} a space-time white noise; see Corollary \ref{cor:WN} of
	the appendix. This proves that if \ding{192} and \ding{193} jointly have a 
	random field solution
	$(\psi_1\,,\psi_2)$, then that solution is {\it a fortiori} a coupling 
	of $\psi_1$ and $\psi_2$. 
	
\item[-] If $|\psi_1-\psi_2|\ll 1$, then 
	$g(\psi_1-\psi_2)\approx1$ and $f(\psi_1-\psi_2)\approx 0$, and
	if $|\psi_1-\psi_2|\gg 1$, then 
	$g(\psi_1-\psi_2)\approx0$ and $f(\psi_1-\psi_2)\approx 1$. This suggests 
	somewhat informally that the PM coupling of $(\psi_1\,,\psi_2)$ ought to behave 
	roughly as follows:
	\[
		\partial_t\psi_2\approx\begin{cases}
			\partial^2_x\psi_2 + V(\psi_2) + \lambda \sigma(\psi_2)
			\dot{\mathcal{W}}_1&\text{when $|\psi_1-\psi_2|\ll1$},\\
		\partial^2_x\psi_2 + V(\psi_2) + \lambda \sigma(\psi_2)
			\dot{\mathcal{W}}_2 &\text{when $|\psi_1-\psi_2|\gg1$}.
		\end{cases}
	\]
	Of course, these remarks are not rigorous, in part because SPDEs are not local equations.
	Still, the preceding serves as a reasonable heuristic to
	suggest that the PM coupling of $(\psi_1\,,\psi_2)$ ought to behave as independent coupling when 
	$\psi_1$ and $\psi_2$ are far apart, and it works as natural coupling when
	$\psi_1$ and $\psi_2$ are close.
\end{compactitem}

Before we go on to describe AM coupling, let us pause and state and prove an existence
result [Proposition \ref{pr:coupling}], and a ``successful coupling'' result
[Lemma \ref{lem:pm coupling}],
for PM couplings. In particular, part 2 of the following proposition justifies the terminology
``pairwise monotone,'' or ``PM.''

\begin{proposition}\label{pr:coupling}
	Choose and fix two non-random functions
	$\psi_{1,0},\psi_{2,0}\in C^\alpha_+(\T)$
	for some $\alpha\in(0\,,1/2)$.
	After possibly enlarging the underlying probability space,
	one can contruct a pair $(\dot{\mathcal{W}}_1\,,\dot{\mathcal{W}}_2)$ of
	two independent space-time white noises for which \eqref{eq:psi_1} and
	\eqref{eq:psi_2} have random-field solutions $(\psi_1\,,\psi_2)$. Moreover:
	\begin{compactenum}
	\item For every $i\in\{1\,,2\}$, the law of  
		$\psi_i$ is the same as the law of \eqref{eq:RD} started at $\psi_{i,0}$;
	\item  If, in addition, $\psi_{1,0}\ge\psi_{2,0}$, then
		\[
			\P\{ \psi_1(t\,,x) \ge \psi_2(t\,,x)\text{ for all $t\ge0$ and $x\in\T$}\}=1;
			\text{ and}
		\]
	\item $\{(\psi_1(t)\,,\psi_2(t))\}_{t\ge0}$ is a Feller process with values in the space
		$C(\T\,;\R^2)$.
	\end{compactenum}
\end{proposition}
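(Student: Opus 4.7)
My overall plan is the three-tier program already in use in this paper: truncate $V$ to a Lipschitz $V_N$ as in \eqref{V_N}, construct the PM coupling $(\psi_{1,N},\psi_{2,N})$ of the resulting bounded-drift system, then pass $N\to\infty$ via the hitting times $T_N^{(i)}:=\inf\{t\ge0:\|\psi_{i,N}(t)\|_{C(\T)}\ge N\}$, both of which tend to $\infty$ a.s.\ by Proposition \ref{pr:tail:T_N} applied to each marginal. The only subtle step at fixed $N$ is that $f(y)=\sqrt{|y|\wedge 1}$ is merely Hölder-$1/2$ at the origin, so Walsh's Lipschitz theory does not apply directly to \eqref{eq:psi_2}. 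I would mollify to $(f_\varepsilon,g_\varepsilon)$ keeping $f_\varepsilon^2+g_\varepsilon^2\equiv 1$, solve the smoothed system by the standard Walsh fixed-point scheme, use Lemma \ref{lem:cont} and Proposition \ref{pr:temporal:cont} to get tightness of the pair on a Hölder path space, and then invoke Skorokhod's representation on a possibly enlarged probability space to identify the limit as a weak solution of \eqref{eq:psi_1}--\eqref{eq:psi_2}; the bounds used in those a priori estimates depend only on $\mathrm{Lip}_\sigma$, so the mollification does not affect them.

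Given weak existence, item 1 is immediate: the $\psi_1$-marginal is obvious because \eqref{eq:psi_1} is \eqref{eq:RD}, and for $\psi_2$ the identity $f^2+g^2\equiv 1$ lets Corollary \ref{cor:WN} ensure that
\[
	\tilde{\mathcal{W}}(\mathrm{d}s\,\mathrm{d}y) := g(\psi_1-\psi_2)\mathcal{W}_1(\mathrm{d}s\,\mathrm{d}y) + f(\psi_1-\psi_2)\mathcal{W}_2(\mathrm{d}s\,\mathrm{d}y)
\]
is a space-time white noise in the joint filtration. Rewriting \eqref{eq:psi_2} in terms of $\tilde{\mathcal{W}}$ shows that $\psi_2$ solves \eqref{eq:RD} with initial datum $\psi_{2,0}$, so its law is the claimed one by Theorem \ref{th:exist-unique}.

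The main obstacle lies in item 2. Setting $D:=\psi_1-\psi_2$, one checks that the joint noise variance of the SPDE for $D$ is
\[
	[\sigma(\psi_1)-\sigma(\psi_2)g(D)]^2+[\sigma(\psi_2)f(D)]^2
	=[\sigma(\psi_1)-\sigma(\psi_2)]^2+2\sigma(\psi_1)\sigma(\psi_2)\bigl(1-g(D)\bigr).
\]
Since $1-g(D)=1-\sqrt{1-(|D|\wedge 1)}\le |D|\wedge 1$, the square root of this variance is bounded by $\mathrm{Lip}_\sigma|D|+C\sqrt{|D|\wedge 1}$ near $D=0$; in particular, the noise driving $D$ dies like $\sqrt{|D|}$ as $D\downarrow 0$. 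This is exactly the Yamada--Watanabe-type modulus used in \cite{Mueller-Coupling}, and is what allows a comparison principle to prevent $D$ from crossing from positive to negative. I would apply that comparison to each truncated pair $(\psi_{1,N},\psi_{2,N})$ and pass to the limit via $T_N^{(1)}\wedge T_N^{(2)}\to\infty$ to conclude $\psi_1\ge\psi_2$ a.s. This step is the delicate one, because the PM system is not standard: the noise coefficients depend on both components jointly and are only Hölder near the diagonal, so one cannot invoke Shiga's theorem verbatim; the heavy lifting is done by the Mueller coupling comparison.

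Finally, item 3 follows from the same scheme as Proposition \ref{pr:feller}. For each $\varepsilon>0$ and $N\in\N$ the smoothed-truncated coupled system has Lipschitz coefficients (with $(\psi_1,\psi_2)$ jointly), so standard Walsh/Dalang theory yields a Feller semigroup on $C(\T;\R^2)$ together with continuity in the pair of initial data by a Gronwall argument. The passages $\varepsilon\downarrow 0$ and $N\uparrow\infty$ preserve the Feller property by combining that continuity with the tail bound of Proposition \ref{pr:tail:T_N} applied to each marginal, exactly as in the proof of Proposition \ref{pr:feller}; stochastic continuity of the semigroup is obtained by the same truncation-plus-$L^1$-continuity argument used there.
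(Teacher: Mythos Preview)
Your proposal is correct and follows essentially the same route as the paper: mollify $(f,g)$ to Lipschitz $(f_n,g_n)$ with $f_n^2+g_n^2\equiv1$, truncate $V$ to $V_N$, solve the resulting Lipschitz system, obtain tightness from the a~priori estimates of Lemma~\ref{lem:cont} and Proposition~\ref{pr:temporal:cont}, and pass to a weak limit on an enlarged probability space; parts~1--3 are then deduced exactly as you indicate via Corollary~\ref{cor:WN}, Mueller's comparison (Lemma~3.1 of \cite{Mueller-Coupling}) at the approximate level, and the argument of Proposition~\ref{pr:feller}. The only cosmetic difference is the order of the two limits: the paper sends $N\to\infty$ first (for fixed mollification~$n$) and then $n\to\infty$, whereas you remove the mollification first at fixed~$N$; either order works, and your explicit variance computation for $D=\psi_1-\psi_2$ is a helpful clarification of why the Yamada--Watanabe modulus appears.
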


\begin{proof}
	If the functions $x\mapsto V(x)=x-F(x)$ and $\sigma$ were replaced by bounded,
	globally Lipschitz functions, then parts 1 and 3 of this proposition reduce to the construction of
	\cite{Mueller-Coupling} with our $(\psi_1\,,\psi_2)$
	being replaced by $(u\,,v)$ of Mueller ({\it ibid.}). 
	We adapt Mueller's arguments, and fill in some additional details
	to cover the present setting. 
	
	Let us start with two independent space-time white noises $\dot{W}_1$ and $\dot{W}_2$.
	Theorem \ref{th:exist-unique} ensures that the process $\psi_1$ of \eqref{eq:psi_1}
	is well defined on any probability
	space that supports a space-time white noise $\dot{W}_1$. However, the non-Lipschitz
	behavior of $f$ and $g$ at the origin prevent us from using standard SPDE machinary to
	produce a strong solution $\psi_2$. We overcome this, as in \cite{Mueller-Coupling},
	by producing instead a weak solution (in the sense of probability).
	
	We follow \cite{Mueller-Coupling} and define, for every $n\in\N$ and $y\in\R$,
	\[
		f_n(y) := \left( \left[|y| + \frac1n\right]
		\wedge 1 \right)^{1/2} - \left( \frac1n\right)^{1/2}\quad\text{and}\quad
		g_n(y) := \sqrt{1 - |f_n(x)|^2}.
	\]
	Then every $f_n$ and $g_n$ is a Lipschitz continuous function, 
	and $\lim_{n\to\infty} f_n=f$ and $\lim_{n\to\infty} g_n=g$, 
	both limits holding uniformly on $\R$.
	
	Recall that $\psi_1$ has already been constructed via \eqref{eq:psi_1} using
	the space-time white noise $\dot{W}_1$. Theorem \ref{th:exist-unique} of course
	justifies the existence and uniqueness of this construction.
	
	For every $n,N\in\N$, let $\psi_{2,n,N}$ 
	denote the solution to the  SPDE,
	\begin{equation*}\left[\begin{split}
		&\partial_t \psi_{2,n,N}=\partial^2_x \psi_{2,n,N}  + 
			V_N(\psi_{2,n,N})  + \lambda \sigma(\psi_{2,n,N})  
			\left\{ g_n\left( \psi_1-\psi_{2,n,N}\right)\dot{W}_1
			+ f_n\left( \psi_1-\psi_{2,n,N}\right) \dot{W}_2\right\},\\
		&\hskip3in\text{on $(0\,,\infty)\times\T$},\\
		&\text{subject to}\quad \psi_{2,n,N}(0)=\psi_{2,0},
	\end{split}\right.\end{equation*}
	where $V_N$ denotes our existing truncation of $V$ from \eqref{V_N}.
	This is a standard SPDE with Lipschitz-continuous coefficient as in \cite{wal86},
	and hence has a strong solution on any probability space that supports
	two independent copies $\dot{W}_1$ and $\dot{W}_2$ of a space-time white
	noise. Because $f_n^2+g_n^2=1$,
	\[
		g_n\left( \psi_1-\psi_{2,n,N}\right)\dot{W}_1
		+ f_n\left( \psi_1-\psi_{2,n,N}\right) \dot{W}_2
	\]
	defines a space-time white noise; see Corollary \ref{cor:WN} of the appendix.
	Therefore, $\psi_{2,n,N}$ has the same law as $\psi_{2,N}$, started at
	$\psi_{2,0}$, where $\psi_{2,N}$ denotes the solution to \eqref{eq:RD} with $V$ replaced by
	$V_N$. The proof of Theorem \ref{th:exist-unique} shows that
	there exist stopping times $T_1,T_2,\ldots$ (depending on $n$) such that
	$\lim_{N\to\infty}T_N=\infty$ a.s.\ and 
	$\psi_{2,n,N}(t)=\psi_{2,n,N+1}(t)$ for all $t\in[0\,,T_N]$.
	In this way, we obtain a predictable random field $\psi_{2,n}$ such that
	$\psi_{2,n}(t)=\psi_{2,n,N}(t)$ for all $t\in[0\,,T_N]$,
	and $\psi_{2,n}$ is the strong solution to the SPDE,
	\begin{equation*}\left[\begin{split}
		&\partial_t \psi_{2,n}=\partial^2_x \psi_{2,n}  + 
			V(\psi_{2,n})  + \lambda \sigma(\psi_{2,n})  
			\left\{ g_n\left( \psi_1-\psi_{2,n}\right)\dot{W}_1
			+ f_n\left( \psi_1-\psi_{2,n}\right) \dot{W}_2\right\}\ 
			\text{on $(0\,,\infty)\times\T$},\\
		&\text{subject to}\quad \psi_{2,n}(0)=\psi_{2,0}.
	\end{split}\right.\end{equation*}
	Once again, Theorem \ref{th:exist-unique} ensures that
	this SPDE can be solved on any probability space that supports 
	$(\dot{W}_1\,,\dot{W}_2)$. 
	
	Since 
	\begin{equation}\label{W}
		\dot{w}_n := g_n\left( \psi_1-\psi_{2,n}\right)\dot{W}_1
		+ f_n\left( \psi_1-\psi_{2,n}\right) \dot{W}_2
	\end{equation}
	is a space-time white noise [Corollary \ref{cor:WN}], Theorem \ref{th:exist-unique}
	ensures that the law of $\psi_{2,n}$ is the same as the law of
	$\psi_2$, any solution to \eqref{eq:RD} started at $\psi_{2,0}$,
	and is in particular does not depend on $n\in\N$.
	
	Next, we use Proposition \ref{pr:tight} to see that 
	\[
		\sup_{t\ge 0}\E\left( \left\| \psi_1(t)
		\right\|^k_{C^\alpha(\T)}\right)<\infty\qquad\text{for 
		every $k\ge2$}.
	\]
	Propositions \ref{pr:tight} and \ref{pr:temporal:cont}, and a  chaining argument
	together imply that for every $t_0>0$,
	\[
		\E\left( \sup_{t\in(0,t_0)}\left\| \psi_1(t)
		\right\|^k_{C^\alpha(\T)}\right)<\infty\qquad\text{for 
		every $k\ge2$}.
	\]
	Let $\psi_2$ denote any solution to \eqref{eq:RD} starting at $\psi_{2,0}$.
	Since every $\psi_{2,n}$ has the same law as
	$\psi_2$, an appeal to the Arzel\`a-Ascoli theorem
	(see the proof of Proposition \ref{pr:tight} for details) shows that
	the random fields $[0\,,t_0]\ni t\mapsto \psi_{2,n}(t)$ --
	as $n$ varies in $\N$ -- are tight in the space $C([0\,,t_0]\times\T)$.
	Therefore, the laws of the vector-valued random fields
	\begin{equation}\label{DV}
		[0\,,t_0]\ni t\mapsto \left( \psi_1(t)\,,
		\psi_{2,n}(t)\,,\mathcal{S}(t)\,,\mathcal{T}_n(t)\right)
	\end{equation}
	are tight in $\mathscr{C} := C([0\,,t_0];\,C(\T\,;\R^4))$
	as $n$ roams over $\N$, where $\dot{w}_n$ is the white noise
	of \eqref{W}, and
	\begin{align*}
		\mathcal{S}(t\,,x) &:= \int_{(0,t)\times\T} p_{t-s}(x\,,y)
			\sigma(\psi_1(s\,,y))\,\dot{W}_1(\d s\,\d y),\\
		\mathcal{T}_n(t\,,x) &:= \int_{(0,t)\times\T} p_{t-s}(x\,,y)
			\sigma(\psi_{2,n}(s\,,y))\,\dot{w}_n(\d s\,\d y),
	\end{align*}
	are the stochastic integrals used to define $\psi_1$ and $\psi_{2,n}$
	in their mild form. Because tight probability laws on $\mathscr{C}$ 
	have weak subsequential limits, \eqref{DV}
	has a subsequence (as $n\to\infty$) that converges weakly to a vector-value random field\footnote{%
		The notation is deliberately slightly inconsistent, as the recently derived random field
		$\psi_1$ is not defined on the same probability space that the earlier-defined
		$\psi_1$ was. It does, however, have the same law of course.}
	\[
		[0\,,t_0]\ni t\mapsto \left( \psi_1(t)\,,
		\psi_2(t)\,,\mathcal{S}(t)\,,\mathcal{T}(t)\right),
	\]
	and
	\begin{align*}
		\mathcal{S}(t\,,x) &:= \int_{(0,t)\times\T} p_{t-s}(x\,,y)
			\sigma(\psi_1(s\,,y))\,\dot{W}_1(\d s\,\d y),\\
		\mathcal{T}(t\,,x) &:= \int_{(0,t)\times\T} p_{t-s}(x\,,y)
			\sigma(\psi_{2}(s\,,y))\,\dot{w}(\d s\,\d y),
	\end{align*}
	for a space-time white noise $w$. And we obtain, additionally, that
	this new pair $(\psi_1\,,\psi_2)$ solve respectively
	\eqref{eq:psi_1} and \eqref{eq:psi_2} for $t\in(0\,,t_0)$. This proves
	the existence assertion of the proposition.
	
	The above construction also readily yields part 1 of the proposition
	since any subsequential limit of $\psi_{2,n}$'s, as $n\to\infty$ has the
	same law as $\psi_2$ because each $\psi_{2,n}$ does.
	
	In the case that $V$ were replaced by a Lipschitz-continuous and bounded function,
	\citet[Lemma 3.1]{Mueller-Coupling} includes part 2. In other words, 
	the latter result shows that $\psi_1\ge\psi_{2,n,N}$. Let $N$ and
	$n$ tend to infinity -- as we did previously -- in order 
	to deduce part 2. 
	
	Finally, we observe that $(\psi_1\,,\psi_{2,n,N})$
	is a Feller process for the very same reasons that $\psi_1$ and
	$\psi_2$ are individually Feller.
	Moreover, the estimates required for the Feller property can all be made 
	to hold uniformly in $(n\,,N)$; see the proof of Proposition \ref{pr:feller}.
	Let $n,N\to\infty$ as above to deduce part 3 and hence the proposition.
\end{proof}

The following is the second, and final, result of this section about PM couplings. After
this, we shall move on to describe the fourth [and final] example of couplings for SPDEs,
which is our AM coupling.

\begin{lemma}\label{lem:pm coupling}
	Choose and fix non-random numbers $C_0>c_0>0$ and $\alpha,\varepsilon\in(0\,,1/2)$.
	Also consider two
	functions $\psi_{1,0},\psi_{2,0}\in C_+^\alpha(\T)$ such that
	$\psi_{2,0}\le\psi_{1,0}$,
	$\max_{i\in\{1,2\}}\|\psi_{i,0}\|_{C^\alpha(\T)} \le C_0$,
	and $\inf_{x\in\T}\psi_{2,0}(x) \ge c_0.$
	Let $(\psi_1\,,\psi_2)$ denote a PM coupling of 
	two solutions to \eqref{eq:RD} with respective
	initial profiles $\psi_{1,0}$ and $\psi_{2,0}$, and consider
	the stopping time,
	\[
		\tau:= \inf\left\{ s>0:\, \psi_1(s) = \psi_2(s) \right\}
		\qquad[\inf\varnothing:=\infty].
	\]
	Then there exists non-random numbers $t_1,\delta_1\in(0\,,1)$
	-- depending only on $(c_0\,,C_0\,,\alpha)$ -- such that
	\[
		 \P\left\{ \psi_1(\tau+s) = \psi_2(\tau+s)
		\text{ for all $s\ge0$} \text{ and }\tau\le t_1\right\} \ge 1-\varepsilon,
	\]
	provided that $\|\psi_{1,0}-\psi_{2,0}\|_{L^1(\T)}\le\delta_1$.
\end{lemma}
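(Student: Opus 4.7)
The plan is to reduce the problem to a scalar Feller-type hitting-time estimate for $m(t):=\|\psi_1(t)-\psi_2(t)\|_{L^1(\T)}$, where the integrand $D:=\psi_1-\psi_2\ge0$ is nonnegative by Proposition \ref{pr:coupling}(2). Subtracting \eqref{eq:psi_2} from \eqref{eq:psi_1}, applying the stochastic Fubini theorem, and using $\int_\T p_{t-s}(x,y)\,\d x=1$, we find
\[
	m(t)= \|\psi_{1,0}-\psi_{2,0}\|_{L^1(\T)}+\int_0^t\!\!\int_\T[V(\psi_1)-V(\psi_2)]\,\d y\,\d s + M_t,
\]
where $M$ is a continuous $L^2$-martingale whose quadratic variation, using $f^2+g^2=1$, equals
\[
	\langle M\rangle_t=\lambda^2\int_0^t\!\!\int_\T\Bigl\{(\sigma(\psi_1)-\sigma(\psi_2))^2+2(1-g(D))\sigma(\psi_1)\sigma(\psi_2)\Bigr\}\,\d y\,\d s.
\]

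Propositions \ref{pr:tight} and \ref{pr:temporal:cont}, applied to initial data with $\|\psi_{i,0}\|_{C^\alpha(\T)}\le C_0$, produce an event $\Omega_1$ of probability $\ge 1-\varepsilon/3$ and a constant $C_1=C_1(C_0,\alpha)$ on which $\sup_{t\in[0,1]}\sup_{x\in\T}(\psi_1(t,x)\vee\psi_2(t,x))\le C_1$. Since $0\le 1-g(D)\le D\wedge1$, $V$ is Lipschitz on $[0,C_1]$ with some constant $K$, and $\sigma(\psi_i)\le\lip_\sigma C_1$ on $\Omega_1$, the drift and quadratic variation satisfy
\[
	\Bigl|\int_0^t\!\!\int_\T[V(\psi_1)-V(\psi_2)]\,\d y\,\d s\Bigr|\le K\int_0^t m(s)\,\d s,\qquad \langle M\rangle_t\le c\lambda^2\int_0^t m(s)\,\d s,
\]
for some $c=c(C_1,\lip_\sigma)$. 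Hence on $\Omega_1$, $m$ is a nonnegative semimartingale with drift $O(m)$ and quadratic variation $O(\int_0^t m\,\d s)$.

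By Dambis--Dubins--Schwarz, $M_t=B(\langle M\rangle_t)$ for a Brownian motion $B$, and a standard comparison shows that $m$ is dominated, in the sense of first-passage-time to zero, by the Feller diffusion $\d Y=KY\,\d t+\sqrt{c\lambda^2 Y}\,\d B$ started at $Y_0=m(0)\le\delta_1$. For this SDE the point zero is accessible (the noise $\sqrt Y$ overwhelms the linear drift near zero) and absorbing, and its hitting time $T_0$ satisfies $\P\{T_0>t_1\}\to0$ as $Y_0\downarrow0$. Concretely, applying It\^o's formula (after a regularisation $\sqrt{m+\varepsilon}$) to $\sqrt{m(t\wedge\rho)}$, where $\rho:=\inf\{t:m(t)=0\}$, produces a negative drift of order $m^{-1/2}$ that dominates the positive drift of order $m^{1/2}$ near zero and yields a tail bound $\P\{\rho>t_1\}\le\varphi(\delta_1,t_1)$ with $\varphi(\delta_1,t_1)\to0$ as $\delta_1\to0$. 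Choose $\delta_1,t_1\in(0,1)$ so that this probability is $\le\varepsilon/3$.

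Since $D\ge0$, $\{m(\tau)=0\}=\{\psi_1(\tau)=\psi_2(\tau)\}$, whence $\tau=\rho$. Past time $\tau$ we have $\psi_1(\tau)=\psi_2(\tau)$, and because $g(0)=1$, $f(0)=0$, the noises driving \eqref{eq:psi_1} and \eqref{eq:psi_2} instantaneously coincide: both SPDEs become \eqref{eq:RD} started at the common value $\psi_1(\tau)$ and driven by $\theta_\tau\dot{\mathcal{W}}_1$. Uniqueness from Theorem \ref{th:exist-unique} (through the localisation $\{T_N\}$) forces $\psi_1=\psi_2$ on $[\tau,\infty)$. Combining with the previous paragraph gives the claimed probability. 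The main obstacle is the Feller hitting estimate: $M$ is \emph{not} of the form $\int\sqrt{m(s)}\,\d B_s$, only an upper bound on $\langle M\rangle$ is at hand, and the $+Km$ drift competes with the noise; turning the It\^o estimate on $\sqrt{m+\varepsilon}$ into a quantitative tail bound on $\rho$ with the right dependence on $m(0)$ is the technical heart of the lemma, and is the SPDE version of the sticky-coupling estimate of \cite{Mueller-Coupling}.
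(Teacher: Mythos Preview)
Your overall architecture---reduce to the scalar process $m(t)=\int_\T(\psi_1-\psi_2)\,\d x$, show it hits zero quickly, then argue it stays at zero---matches the paper. But the execution has a genuine gap in the hitting-time step.

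The problem is that your quadratic-variation estimate points the wrong way. You bound $\langle M\rangle_t\le c\lambda^2\int_0^t m(s)\,\d s$ from \emph{above}, and then claim a comparison with the Feller diffusion $\d Y=KY\,\d t+\sqrt{c\lambda^2 Y}\,\d B$. But an upper bound on $\langle M\rangle$ means $m$ may have \emph{less} noise than $Y$, not more; in the extreme case $\langle M\rangle\equiv0$ and positive drift, $m$ never reaches zero at all. For a first-passage-to-zero estimate you need a \emph{lower} bound on the quadratic variation: the noise must be strong enough to drive the process down. This is exactly where the hypotheses $\inf_x\psi_{2,0}(x)\ge c_0$ and ${\rm L}_\sigma>0$ enter, and your argument uses neither. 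The paper extracts the lower bound from the term $2\sigma(\psi_1)\sigma(\psi_2)\,f^2(\Delta)/(1+g(\Delta))$ in $\langle M\rangle$: since $f^2(\Delta)/(1+g(\Delta))\ge\tfrac12(\Delta\wedge1)$ and $|\sigma(\psi_2)|\ge{\rm L}_\sigma\inf_x\psi_2$, one gets $\d\langle M\rangle/\d t\gtrsim (\lambda{\rm L}_\sigma c_0)^2\, m(t)$ on a good event where $\psi_2$ stays above $c_0/2$ and $\Delta$ stays below $1$. That good event is then controlled via temporal continuity (your $\Omega_1$), and the hitting estimate comes from the stochastic differential inequality $\d X\le X\,\d t+\d M$ together with the lower bound on $\int_0^t e^{-s}X^{-1}\,\d\langle X\rangle_s$; see Proposition~\ref{pr:SDI}.

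A second, smaller issue is your persistence argument. Appealing to pathwise uniqueness for the coupled system after time $\tau$ is delicate precisely because $f,g$ are not Lipschitz at the origin---this is why only weak existence is claimed for the PM coupling in Proposition~\ref{pr:coupling}. The paper sidesteps this entirely: since $\d X\le X\,\d t+\d M$, the process $e^{-t}X(t)$ is a nonnegative continuous supermartingale, and a nonnegative supermartingale that hits zero stays at zero. This gives $m(\tau+s)=0$ for all $s\ge0$ on $\{\tau<\infty\}$ without any uniqueness argument.
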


Let us make two brief remarks first. We will prove the lemma afterward.

\begin{remark}
	In the context of PM couplings, when we say that $\tau$ is a stopping time
	we mean that $\tau$ is a stopping time with respect to the filtration 
	$\{\mathscr{G}_t\}_{t\ge0}$ generated
	by the underlying two noise $\dot{\mathcal{W}}_1$ and $\dot{\mathcal{W}}_2$
	used in the PM coupling. That is, for every $t\ge0$,
	we first let $\mathscr{G}_t$ define the sigma algebra
	generated by all random variables of the form
	$\int_{(0,t)\times\T}\phi(s\,,x)\,\mathcal{W}_i(\d s\,\d x)$ as $i$ ranges over
	$\{1\,,2\}$ and $\phi$ roams over non-random elements of $L^2([0\,,t]\times\T)$.
	This forms a filtration $\{\mathscr{G}_t\}_{t\ge0}$, which we then augment by making
	it  right continuous, and then by completing the sigma algebra $\mathscr{G}_0$ 
	with respect to the measure $\P$, as is done in  martingale theory.
\end{remark}

\begin{definition}
	Let $(\psi_1\,,\psi_2)$ denote a given coupling, using any coupling method,
	of the solutions to \eqref{eq:RD} with respective initial profiles $\psi_{1,0}$ and
	$\psi_{2,0}$. Also, choose and fix some number $t>0$. 
	We say that the coupling $(\psi_1\,,\psi_2)$ is \emph{successful by time $t$}
	if  $\psi_1(t+s) = \psi_2(t+s)\text{ for all $s\ge0$}.$
\end{definition}

We are ready to prove Lemma \ref{lem:pm coupling}.

\begin{proof}
	Define
	\[
		\Delta(t\,,x) := \psi_1(t\,,x) - \psi_2(t\,,x)\qquad
		\text{for all $t\ge0$ and $x\in\T$}.
	\]
	The same reasoning that led to eq.\ (3.4) of \cite{Mueller-Coupling}
	leads us to the assertion that $\Delta$ solves the SPDE
	\begin{equation*}\left[\begin{split}
		&\partial_t\Delta =\partial^2_x\Delta + V(\psi_1) - 
			V(\psi_2)
			+ \lambda\left[ \left| \sigma(\psi_1) - \sigma(\psi_2) \right|^2
			+ 2\sigma(\psi_1)\sigma(\psi_2)
			\frac{f^2(\Delta)}{1+g(\Delta)} \right]^{1/2}\dot{F},\\
		&\text{subject to }\Delta(0)=\psi_{1,0}-\psi_{2,0},
	\end{split}\right.\end{equation*}
	where $\dot{F}$ is a space-time white noise, and $f$ and $g$
	were defined in \eqref{fg}. 
	Since $\psi_{1,0}\ge\psi_{2,0}$, part 2 of Proposition \ref{pr:coupling} ensures that
	$\Delta\ge0$ a.s., and hence
	$\| \psi_1(t) - \psi_2(t) \|_{L^1(\T)}
	=\int_{\T} \Delta(t\,,x)\,\d x$ for all $t\ge0$.	The main portion of the proof is to
	demonstrate that 
	\begin{equation}\label{goal:coupling}
		\P\{\tau>t\}=\P\left\{\inf_{s\in(0,t)}X(s)>0\right\}<1
		\quad\text{for all sufficiently small $(t\,,\delta)\in(0\,,\infty)^2$,}
	\end{equation}
	where
	\[	
		X(t) := \int_{\T} \Delta(t\,,x)\,\d x
		\qquad[t\ge0].
	\]
	Since $\psi_1$ and $\psi_2$ are continuous random
	fields, this proves that $\P\{\tau> t\} <1$ for $(t\,,\delta)$ small,
	which is the more challenging portion of this proof. Once we prove this,
	we will easily complete the proof of the remainder of the proposition at the end.

	Lemma 3.2 of \cite{Mueller-Coupling} [with our $X$
	playing the role of that lemma's $U$] implies that
	\begin{equation}\label{XCM}
		X(t) = \delta + \int_0^t C(s)\,\d s +  M(t)\qquad\text{for all $t>0$},
	\end{equation}
	where $\delta = \| \psi_{1,0} - \psi_{2,0} \|_{L^1(\T)} \in (0\,,1)$, 
	\[
		C(t) = \int_{\T} 
		\left[V(\psi_1(t\,,x)) - V(\psi_2(t\,,x)) \right]\d x
		= X(t) - \int_{\T} 
		\left[F(\psi_1(t\,,x)) - F(\psi_2(t\,,x)) \right]\d x,
	\]
	and $M=\{M(t)\}_{t\ge0}$ is a continuous $L^2(P)$-martingale
	with quadratic variation,
	\begin{align*}
		\< M\>(t) &= \lambda^2 \int_0^t\d s\int_{\T}\d x\left[
			\left( \sigma(\psi_1(s\,,x)) - \sigma(\psi_2(s\,,x))\right)^2
			+ 2 \sigma(\psi_1(s\,,x))\sigma(\psi_2(s\,,x))
				\frac{f^2(\Delta(s\,,x))}{1 + g(\Delta(s\,,x))}\right],
	\end{align*}
	for every $t>0$. In particular, we use the facts that:
	(a) $g(z)\le 1$ for all $z\in\R$; and (b) $\psi_2\le\psi_1$ [Proposition
	\ref{pr:coupling}] in order to see that a.s.\ for all $t>0$,
	\begin{equation}\label{<M>:coupling}\begin{split}
		\frac{\d\<M\>(t)}{\d t} &\ge 2\lambda^2 {\rm L}_\sigma^2 \adjustlimits\inf_{r\in(0,t)}\inf_{y\in\T}
			\left| \psi_2(r\,,y)\right|^2 \int_{\T} 
			\frac{f^2(\Delta(s\,,x))}{1 + g(\Delta(s\,,x))}\,\d x\\
		&\ge \lambda^2 {\rm L}_\sigma^2 \adjustlimits\inf_{r\in(0,t)}\inf_{y\in\T}
			\left| \psi_2(r\,,y)\right|^2
			\int_{\T}\min\left\{ \Delta(s\,,x)\,,1\right\}\d x.
	\end{split}\end{equation}
	
	Because of \textbf{(F1)}, $F$ is non decreasing. Therefore, we can infer that
	$C(t)\le X(t)$ a.s.\
	for all $t\ge0$, and hence \eqref{XCM} ensures that $X$ satisfies the stochastic differential inequality,
	\[
		\d X(t) \le X(t)\,\d t + \d M(t).
	\]

	Next, we choose and fix some $\varepsilon\in(0\,, \frac12\{c_0\wedge (1-\delta)\})$, 
	and consider the stopping time,
	\[
		\mathcal{H} := \min_{i\in\{1,2\}} \inf\left\{ s>0:\, \sup_{x\in\T}
		\left| \psi_i(s\,,x) -\psi_{i,0}(x)\right| \ge \varepsilon\right\},
	\]
	where $\inf\varnothing:=\infty$. 
	
	For every $t>0$, the following holds almost surely on $\{\mathcal{H}>t\}$:
	\[
		\adjustlimits\sup_{s\in(0,t)}
		\sup_{x\in\T}\Delta(s\,,x)<  \delta+2\varepsilon<1
		\text{ and }
		\adjustlimits\inf_{r\in(0,t)}\inf_{y\in\T}\psi_2(r\,,y)> c_0-\varepsilon > \frac{c_0}{2}>0.
	\]
	Therefore, \eqref{<M>:coupling} implies that
	\[
		\frac{\d\< X\>(t)}{\d t} =
		\frac{\d\< M\>(t)}{\d t} \ge \left( \frac{\lambda{\rm L}_\sigma c_0}{4}
		\right)^2 X(t) 
		\qquad\text{a.s.\ on $\{\mathcal{H}>t\}$},
	\]
	for every non-random real number $t>0$. In particular,
	\[
		\int_0^t\e^{-s}\,\frac{\d\<X\>(s)}{X(s)}
		\ge \left( \frac{\lambda{\rm L}_\sigma c_0\sqrt{1-\e^{-t}}}{4}
		\right)^2\qquad\text{a.s.\ on $\{\mathcal{H}>t\}$}.
	\]
	Combine these facts together with Proposition \ref{pr:SDI}, and recall
	that $X(0)=\delta$, in order
	to see that
	\begin{align*}
		\P\left\{ \tau>t \,,\, \mathcal{H}>t\right\}
			&\le \P\left\{ \inf_{s\in(0,t)} X(s)>0 \,,\,
			\int_0^t\e^{-s}\,\frac{\d\<X\>(s)}{X(s)}
			\ge \left( \frac{\lambda{\rm L}_\sigma c_0 t \sqrt{1-\e^{-t}}}{4}
			\right)^2\right\}\\
		&\le 2\P\left\{|\mathcal{Z}| < \frac{8\sqrt{\delta}}{\lambda{\rm L}_\sigma
			c_0\sqrt{1-\e^{-t}}}\right\}
			\le \frac{16\sqrt{\delta}}{\lambda{\rm L}_\sigma
			c_0\sqrt{t}},
	\end{align*}
	where $\mathcal{Z}$ has a standard
	normal distribution. We have appealed to the simple bound $1-\exp(-t)\ge t$ and the fact that the probability density
	function of $\mathcal{Z}$ is at most $(2\pi)^{-1/2}<1/2$ for the last inequality.
	
	Next, we observe that
	\[
		\P\{\mathcal{H} \le t\} \le \sum_{i=1}^2\P\left\{ \adjustlimits\sup_{s\in(0,t)}\sup_{x\in\T}
		|\psi_i(s\,,x) - \psi_{i,0}(x) | \ge \varepsilon\right\}.
	\]
	Recall that, for a standard 1-D Brownian motion $\beta$, $i\in\{1\,,2\}$, and $s>0$,
	\[
		\| \mathcal{P}_s\psi_{i,0} - \psi_{i,0}\|_{C(\T)} 
		\le \sup_{x\in\T}\E\left| \psi_{i,0}(\beta(s)+x) - \psi_{i,0}(x)\right|
		\le \|\psi_{i,0}\|_{C^\alpha(\T)}\E\left( |\beta(s)|^\alpha\right)
		\le KC_0 s^{\alpha/2},
	\]
	where $K=\E(|\beta(1)|^\alpha)= 2^{(1+\alpha)/2}\pi^{-1/2}\Gamma((2+\alpha)/2)$.
	Therefore, Proposition \ref{pr:temporal:cont} and Chebyshev's inequality
	together imply that  for every $k\ge2$ there exists a real number
	$K_1=K_1(k\,,\alpha\,,A)>0$ such that
	$\P\{ \mathcal{H}\le t\} \le K_1 t^{\alpha k/2}$ for all $t\in[0\,,1]$. 
	Choose $k$ large enough to ensure that
	$\P\{\mathcal{H}\le t\}\le K_1\sqrt t$ and hence
	\[
		\inf_{t\in(0,1)}
		\P\left\{ \tau >t \right\}
		\le K_2\inf_{t\in(0,1)}\left( \sqrt{\frac{\delta}{ t}} + \sqrt t\right)
		\le 2K_2\delta^{1/4},
	\]
	where $K_2>0$ does not depend on $\delta\in(0\,,1)$, even though it 
	might depend on $(c_0\,,C_0\,,\alpha)$.
	This implies \eqref{goal:coupling}. In order to complete the proof, it
	suffices to show that
	\[
		\psi_1(\tau+s\,,x)=\psi_2(\tau+s\,,x)
		\text{ for all $s>0$ and $x\in\T$, almost surely on $\{\tau<\infty\}$}.
	\]
	Equivalently, it remains to prove that
	\[
		X(\tau+t) =0
		\text{ for all $t>0$, almost surely on $\{\tau<\infty\}$}.
	\]
	Define $Y(t) := \exp(-t)X(t)$ and apply It\^o's formula to \eqref{XCM}
	in order to see that $Y$ is a continuous, non-negative supermartingale.
	Since $\tau$ denotes the first time $Y$ hits zero, it follows from
	a classical exercise in elementary martingale theory that
	\[
		Y(\tau+t)=0
		\text{ for all $t>0$, almost surely on $\{\tau<\infty\}$}.
	\]
	Because $Y(s)=0$ iff $X(s)=0$ for any and every $s\ge0$, this has the desired effect.
\end{proof}

\noindent\textbf{\itshape (iv) Anchored monotone {\bf (AM)} coupling.}
The AM coupling is a more attractive variation of the PM coupling, where
the qualifier ``attractive'' is used in the same vein as it is used in particle systems.

Before we describe the AM coupling, let us mention the following ``attractive''
property of the AM coupling.

\begin{lemma}\label{lem:am coupling}
	Choose and fix non-random numbers $C_0>c_0>0$ and $\alpha,\varepsilon\in(0\,,1/2)$,
	and consider $\psi_{1,0},\psi_{2,0}\in C_+^\alpha(\T)$ such that
	$\max_{i\in\{1,2\}}\|\psi_{i,0}\|_{C^\alpha(\T)} \le C_0$,
	and $\min_{i\in\{1,2\}}\inf_{x\in\T}\psi_{i,0}(x) \ge c_0.$
	Let $(\psi_1\,,\psi_2)$ denote an AM coupling of 
	two solutions to \eqref{eq:RD} with respective
	initial profiles $\psi_{1,0}$ and $\psi_{2,0}$, and consider
	the stopping time,
	$\tau:= \{ s>0:\, \psi_1(s) = \psi_2(s) \}$,
	where $\inf\varnothing:=\infty$.
	Then, for the same numbers $t_1,\delta_1\in(0\,,1)$
	that arise in Lemma \ref{lem:pm coupling}, 
	\[
		 \P\left\{ \psi_1(\tau+s) = \psi_2(\tau+s)
		\text{ for all $s\ge0$} \text{ and }\tau\le t_1\right\} \ge 1-2\varepsilon,
	\]
	provided that $\|\psi_{1,0}-\psi_{2,0}\|_{L^1(\T)}\le\delta_1/2$.
\end{lemma}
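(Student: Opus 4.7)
The plan is to reduce Lemma \ref{lem:am coupling} to a double application of Lemma \ref{lem:pm coupling} by interposing an \emph{anchor} process $\psi_a$ that lies below both $\psi_1$ and $\psi_2$. Specifically, I would set $\psi_{a,0}(x):=\min\{\psi_{1,0}(x)\,,\psi_{2,0}(x)\}$ for $x\in\T$, and construct a joint realization $(\psi_1\,,\psi_2\,,\psi_a)$ -- the AM coupling -- on a single probability space so that each pair $(\psi_i\,,\psi_a)$ [$i\in\{1\,,2\}$] obeys the PM system \eqref{eq:psi_1}--\eqref{eq:psi_2} with $\psi_i$ playing the role of the larger function. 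The existence of such a coupling would be established by an approximation-and-tightness argument paralleling Proposition \ref{pr:coupling}: introduce three mutually independent driving white noises $\dot{\mathcal{W}}_a\,,\dot{\mathcal{W}}_1\,,\dot{\mathcal{W}}_2$, prescribe $\psi_a$ to solve \eqref{eq:RD} driven by $\dot{\mathcal{W}}_a$, and prescribe each $\psi_i$ to be driven by the mixture $g(\psi_i-\psi_a)\dot{\mathcal{W}}_a+f(\psi_i-\psi_a)\dot{\mathcal{W}}_i$ with $f\,,g$ as in \eqref{fg}; weak existence then follows from the Lipschitz smoothing $f_n\,,g_n$ and the localization-in-$N$ used in the proof of Proposition \ref{pr:coupling}, now applied to the triple of SPDEs.

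The verification of the PM hypotheses for each pair is routine. The identity $|\min(a\,,b)-\min(c\,,d)|\le|a-c|\vee|b-d|$ yields $\psi_{a,0}\in C^\alpha_+(\T)$ with $\|\psi_{a,0}\|_{C^\alpha(\T)}\le C_0$, and clearly $\inf_{\T}\psi_{a,0}\ge c_0$. Moreover, the decomposition $\psi_{i,0}-\psi_{a,0}=(\psi_{1,0}-\psi_{2,0})_\pm$ for $i=1\,,2$ gives
\[
\|\psi_{1,0}-\psi_{a,0}\|_{L^1(\T)}+\|\psi_{2,0}-\psi_{a,0}\|_{L^1(\T)}=\|\psi_{1,0}-\psi_{2,0}\|_{L^1(\T)}\le \delta_1/2,
\]
so each summand is at most $\delta_1$ and Lemma \ref{lem:pm coupling} applies separately to each pair $(\psi_i\,,\psi_a)$. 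Letting $\tau_i:=\inf\{s>0:\,\psi_i(s)=\psi_a(s)\}$ and $\mathcal{G}_i$ denote the event that $\tau_i\le t_1$ together with $\psi_i(\tau_i+s)=\psi_a(\tau_i+s)$ for all $s\ge 0$, Lemma \ref{lem:pm coupling} gives $\P(\mathcal{G}_i)\ge 1-\varepsilon$ for $i\in\{1\,,2\}$. On $\mathcal{G}_1\cap\mathcal{G}_2$, the time $\tau^*:=\tau_1\vee\tau_2\le t_1$ satisfies $\psi_1(\tau^*+s)=\psi_a(\tau^*+s)=\psi_2(\tau^*+s)$ for all $s\ge 0$, so $\tau\le\tau^*\le t_1$ and the two processes coincide from $\tau^*$ onwards. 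A union bound then yields $\P(\mathcal{G}_1\cap\mathcal{G}_2)\ge 1-2\varepsilon$.

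The main obstacle I anticipate is that the reduction just described a priori only guarantees agreement of $\psi_1$ and $\psi_2$ from $\tau^*$ onwards, whereas the stated conclusion concerns the possibly earlier first-meeting time $\tau\le\tau^*$. To close this gap, the AM coupling must be designed so that $\psi_1$ and $\psi_2$ stick together from the instant they first coincide; that is, the ``independent'' noise driving $\psi_2$ above $\psi_a$ should itself be PM-coupled to the one driving $\psi_1$. A natural remedy is to replace $\dot{\mathcal{W}}_2$ in $\psi_2$'s noise mixture by a nested PM mixture $g(\psi_2-\psi_1)\dot{\mathcal{W}}_1+f(\psi_2-\psi_1)\dot{\mathcal{W}}_{12}$ with a fresh independent $\dot{\mathcal{W}}_{12}$; when $\psi_1(t)=\psi_2(t)$ the vanishing $f(0)=0$ eliminates the $\dot{\mathcal{W}}_{12}$ contribution and forces the pair to evolve under a common driver. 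Verifying the weak existence of this three-body coupled system, and checking that the two-pair PM structure survives the nesting so that Lemma \ref{lem:pm coupling} applies unchanged to each pair, is the technical heart of the argument; with this in hand the conclusion follows from the routine union bound above.
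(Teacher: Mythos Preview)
Your core strategy matches the paper's: introduce an anchor process, PM-couple each $\psi_i$ to it, and union-bound. Two differences are worth flagging.

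First, the paper anchors from \emph{above} with $\psi_0:=\psi_{1,0}\vee\psi_{2,0}$, not the pointwise minimum. This is not cosmetic. In the PM system \eqref{eq:psi_1}--\eqref{eq:psi_2} the pure-noise process occupies the upper slot, and Lemma~\ref{lem:pm coupling} is stated under the ordering $\psi_{2,0}\le\psi_{1,0}$ with the mixture-driven $\psi_2$ below. In your construction $\psi_a$ is pure-noise-driven but starts at the minimum, so when you write ``Lemma~\ref{lem:pm coupling} applies separately to each pair $(\psi_i,\psi_a)$'' the ordering hypothesis is violated as written. One could repair this by proving a reversed-order variant of Lemma~\ref{lem:pm coupling} (the functions $f,g$ in \eqref{fg} are even and the supermartingale argument there is symmetric), but anchoring at the maximum lets the lemma apply verbatim and avoids the detour; the $C^\alpha$ and lower bounds for the max-anchor follow from the same pointwise inequality you used for the min.

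Second, the nested-PM refinement you propose in your last paragraph is absent from the paper and not needed. The paper simply observes that on $\mathcal G_1\cap\mathcal G_2$ one has $\psi_1=\psi=\psi_2$ on $[\tau_1\vee\tau_2,\infty)$ with $\tau_1\vee\tau_2\le t_1$, so the AM coupling is successful by time $t_1$; the strong Markov property of the triple (Lemma~\ref{lem:am-feller}) is invoked only to record that this merging time is a stopping time. Your observation that the first meeting time $\tau$ of $\psi_1$ and $\psi_2$ may precede $\tau_1\vee\tau_2$ is correct, but the paper effectively reads the conclusion as ``the coupling succeeds by time $t_1$'' (which is exactly what Theorem~\ref{th:successful-coupling} consumes), and for that the plain three-noise AM coupling with a union bound suffices without any additional nesting.
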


This is exactly the same assertion as the one in Lemma \ref{lem:pm coupling},
except we no longer need to assume that  $\psi_{1,0}\le\psi_{2,0}$. Next, we describe 
the AM coupling which accomplishes this generalization.  
Lemma \ref{lem:am coupling} will be an immediate consequence
of that description.

Suppose $\psi_{1,0},\psi_{2,0}\in C_+(\T)$ are fixed non-random initial profiles.
Let us introduce
three independent space-time white noises $\dot{\mathcal{W}}$, 
$\dot{\mathcal{W}}_1$, and $\dot{\mathcal{W}}_2$, and 
let $\psi$ denote the solution to the SPDE,
\[
	\partial_t \psi = \partial^2_x\psi + V(\psi) + \sigma(\psi)
	\dot{\mathcal{W}},\quad\text{subject to}\quad \psi(0)=
	\psi_0 := \psi_{1,0}\vee\psi_{2,0}.
\]
Then, we use $\dot{\mathcal{W}}$ and $\dot{\mathcal{W}}_i$ [$i=1,2$] 
to construct a PM coupling $(\psi\,,\psi_i)$, where the initial distribution of
$\psi_i$ is $\psi_{i,0}$. We refer to this construction of $(\psi_1\,,\psi_2)$
as an \emph{AM coupling} of the solutions to \eqref{eq:RD} with
respect initial data $(\psi_{1,0}\,,\psi_{2,0})$,
and to $\psi$ as the \emph{anchor} process for $\psi_1$ and $\psi_2$. 
The following is a ready consequence
of the proof of Proposition \ref{pr:feller}.

\begin{lemma}\label{lem:am-feller}
	Let $(\psi_1\,,\psi_2)$ denote an AM coupling of the solutions to
	the SPDE \eqref{eq:RD} with respective initial profiles $\psi_{1,0},\psi_{2,0}
	\in \cup_{\alpha\in(0,1/2)}C^\alpha_+(\T)$, and let $\psi$ denote
	the associated process. Then,
	$\{(\psi(t)\,,\psi_1(t)\,,\psi_2(t))\}_{t\ge0}$
	is a Feller process with values in $C(\T\,;\R^3)$.
\end{lemma}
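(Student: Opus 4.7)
The plan is to adapt the proof of Proposition \ref{pr:feller}, combined with the pairwise Feller property from Proposition \ref{pr:coupling}(3), to the triple $(\psi\,,\psi_1\,,\psi_2)$. Recall the two-stage construction of the AM coupling: first, $\psi$ solves \eqref{eq:RD} driven by $\dot{\mathcal{W}}$ with initial profile $\psi_0 := \psi_{1,0}\vee\psi_{2,0}$; then each $\psi_i$ is obtained via a PM coupling with $\psi$ using the auxiliary noise $\dot{\mathcal{W}}_i$. Since $\dot{\mathcal{W}}_1$ and $\dot{\mathcal{W}}_2$ are independent of $\dot{\mathcal{W}}$ and of each other, conditionally on the trajectory of $\psi$ the processes $\psi_1$ and $\psi_2$ are independent. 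This conditional independence is the structural fact that will glue the two PM couplings into a Feller process on the triple.

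First, I would establish the Markov and semigroup properties. As in the proof of Proposition \ref{pr:feller}, the strategy is a double truncation: replace $V$ by $V_N$ (see \eqref{V_N}) and the coupling functions $f\,,g$ of \eqref{fg} by the Lipschitz approximations $f_n\,,g_n$ constructed in the proof of Proposition \ref{pr:coupling}. In the doubly-truncated setting, the SPDE system for the resulting triple $(\psi_{n,N}\,,\psi_{1,n,N}\,,\psi_{2,n,N})$ has globally Lipschitz coefficients, so standard theory (\cite{Dalang1999,wal86}) gives the Feller property in $C(\T\,;\R^3)$, together with the semigroup identity $P_{s+t}=P_sP_t$ of the associated transition operators. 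Stopping-time localization as in Theorem \ref{th:exist-unique}, together with the weak-compactness argument already developed in the proof of Proposition \ref{pr:coupling}, lets one pass to the limits $N\to\infty$ and $n\to\infty$, transporting the Markov and semigroup properties to the AM triple.

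For the Feller property proper I would verify two ingredients: (i) $P_t$ maps $C_b(C(\T\,;\R^3))$ to itself for every $t>0$; and (ii) $\{P_t\}_{t\ge0}$ is stochastically continuous. For (i), suppose $(\psi_{1,0}^{(k)}\,,\psi_{2,0}^{(k)})\to(\tilde\psi_{1,0}\,,\tilde\psi_{2,0})$ in $C(\T)\times C(\T)$. The max operation is continuous from $C(\T)^2$ to $C(\T)$, so the anchor initial profiles $\psi_0^{(k)}:=\psi_{1,0}^{(k)}\vee\psi_{2,0}^{(k)}$ converge in $C(\T)$, and the proof of Proposition \ref{pr:feller} -- specifically the continuity estimate \eqref{eq:continuity} -- yields convergence of the associated anchor processes in $L^1(\Omega\,;C(\T))$ at every fixed $t>0$. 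Proposition \ref{pr:coupling}(3) then propagates this convergence to each of the two PM pairs $(\psi\,,\psi_1)$ and $(\psi\,,\psi_2)$ in distribution, and the conditional independence of $\psi_1$ and $\psi_2$ given $\psi$ upgrades those two pairwise convergences to joint convergence in distribution of the triple. Statement (ii) is then immediate from Proposition \ref{pr:temporal:cont} applied componentwise, exactly as at the end of the proof of Proposition \ref{pr:feller}.

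The main obstacle I anticipate is that pathwise uniqueness fails for PM couplings at the limiting level, so the whole truncation-limit argument must be carried out at the level of laws. The delicate point is to verify that any subsequential joint weak limit of $(\psi^{(n,N)}\,,\psi_1^{(n,N)}\,,\psi_2^{(n,N)})$ still carries the AM structure -- in particular the conditional independence of $\psi_1$ and $\psi_2$ given $\psi$ -- since this is precisely what allows the two pairwise Feller properties to be combined into a joint Feller property of the triple. This conditional independence survives the limit because at every finite level of truncation $\dot{\mathcal{W}}_1$ and $\dot{\mathcal{W}}_2$ remain independent of one another given $\dot{\mathcal{W}}$, and this property is preserved under Skorokhod embedding of the subsequence used to extract the weak limit.
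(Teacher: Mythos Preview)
Your proposal is correct and follows essentially the same approach as the paper, which merely states that the lemma ``follows from the method of proof of Proposition \ref{pr:feller}'' together with ``the comments made about the proof of part 3 of Proposition \ref{pr:coupling},'' and skips all details. Your write-up is in fact more detailed than the paper's, and your explicit use of the conditional independence of $\psi_1$ and $\psi_2$ given the anchor $\psi$ is a helpful structural observation that the paper leaves implicit.
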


In order to be guaranteed that we can perform this
construction, we might of course have to enlarge the underlying probability space;
see Proposition \ref{pr:coupling}. It follows immediately from Proposition
\ref{pr:coupling} that the marginal laws of $\psi_i$
is the same as the law of the solution to the SPDE \eqref{eq:RD} starting at $\psi_{i,0}$.
So this produces a coupling indeed. And Lemma \ref{lem:am-feller} follows from
the method of proof of Proposition \ref{pr:feller}. We skip the details of the proof,
and merely refer to the comments made about the proof of part 3 of Proposition
\ref{pr:coupling}.

Now we prove Lemma \ref{lem:am coupling}.

\begin{proof}[Proof of Lemma \ref{lem:am coupling}]
	This is basically the argument that appears at the very last portion of the paper by
	\cite{Mueller-Coupling}; see the paragraphs surrounding eq.\
	therein ({\it ibid.}). We repeat the proof here for the convenience
	of the reader.
	
	Proposition \ref{pr:coupling}
	insures that $\psi\ge \max\{\psi_1\,,\psi_2\}$ a.s., and Lemma \ref{lem:pm coupling}
	ensures that the coupling $(\psi\,,\psi_i)$ is successful for either choice
	of $i\in\{1\,,2\}$, with probability $\ge1-\varepsilon$, by the same time $t_1\in(0\,,1)$
	as was given in Lemma \ref{lem:pm coupling}, 
	provided that the condition $\|\psi_0-\psi_{i,0}\|_{L^1(\T)}\le\delta_1$ is met for
	either $i\in\{1\,,2\}$. Because $\|\psi_{1,0}-\psi_{2,0}\|_{L^1(\T)}\le \delta_1/2$, 
	we find that $\|\psi_0-\psi_{i,0}\|_{L^1(\T)}\le\delta_1$ for both $i=1,2$, in fact.
	Thus, it follows that 
	\[
		\P\left\{ \text{the PM coupling of $(\psi\,,\psi_i)$ is successful by time $t_1$}\right\}
		\ge 1-\varepsilon,
	\]
	for both $i=1,2$. If the coupling of $(\psi\,,\psi_1)$ is successful by time $t_1$
	and the coupling of $(\psi\,,\psi_2)$ is successful by time $t_1$, then certainly
	the AM coupling of $(\psi_1\,,\psi_2)$ is successful by time $t_1$.
	Therefore, 
	\begin{align*}
		&\P\left\{\text{the AM coupling of $(\psi_1\,,\psi_2)$ is not successful by time $t_1$}\right\}\\
		&\hskip1in\le \sum_{i=1}^2\P\left\{
			\text{the PM coupling of $(\psi\,,\psi_i)$ is not successful by time $t_1$}\right\}
			\le 2\varepsilon.
	\end{align*}
	Thanks to the strong Markov property of $(\psi\,,\psi_1\,,\psi_2)$
	[Lemma \ref{lem:am-feller}] if the above couplings are successful then the first 
	time to succeed is a stopping time. This completes the proof.
\end{proof}

\section{Uniqueness of a non-trivial invariant measure via coupling}

The main result of this section is the following uniqueness result.

\begin{theorem}\label{th:inv-unique}
	If ${\rm L}_\sigma>0$ and $\lambda$ is small enough 
	to ensure that the conclusion of Proposition \ref{pr:inv:meas:exists}
	is valid, then there is at most one invariant
	measure $\mu$ for \eqref{eq:RD} that satisfies $\mu\{\mathbb{0}\}=0$.
	That measure is $\mu_+$ of Proposition \ref{pr:inv:meas:exists}.
\end{theorem}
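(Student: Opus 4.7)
Let $\mu_1$ and $\mu_2$ be two invariant probability measures for \eqref{eq:RD} with $\mu_i\{\mathbb{0}\} = 0$; the plan is to show $\mu_1 = \mu_2$ via a coupling argument combining Proposition \ref{pr:support} and Lemma \ref{lem:am coupling}, then iterated using invariance. Fix $\alpha \in (0\,,1/2)$. First I would establish that for every $\eta > 0$ there exist $c_0 \in (0\,,1)$ and $A > c_0$ such that $\mu_i(K_{c_0,A}) \ge 1 - \eta$ for $i = 1, 2$, where
\[
K_{c_0,A} := \left\{\omega \in C_+(\T) :\, c_0 \le \inf_{x\in\T} \omega(x),\ \|\omega\|_{C^\alpha(\T)} \le A\right\}.
\]
The lower-tail bound $\mu_i\{\inf \omega < c_0\} \downarrow 0$ as $c_0 \downarrow 0$ follows from Lemma \ref{lem:non-trivial-inv-meas} and countable additivity. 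For the upper-tail bound, invariance gives $\mu_i\{\|\omega\|_{C^\alpha(\T)} > A\} = \int \P_\omega\{\|\psi(t)\|_{C^\alpha(\T)} > A\}\,\d\mu_i(\omega)$ for any $t > 0$, and Proposition \ref{pr:tight} together with dominated convergence (valid since $\|\omega\|_{C(\T)} < \infty$ for $\mu_i$-a.e.\ $\omega$) drives the right-hand side to zero as $A \to \infty$.

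\textbf{One-window coupling.} Set $A_0 := 2c_0$ and arrange (shrinking $c_0$ if needed) that $2c_0 < A$. Pick $\delta > 0$ so small that $A_0 - \delta > 0$ and $4\delta \le \delta_1/2$, where $\delta_1$ is the coupling threshold from Lemma \ref{lem:am coupling} applied with parameters $c_0' := A_0 - \delta$, $C_0 := A + 1$, and H\"older exponent $\alpha/2$. Let $t_0 = t_0(A\,, A_0\,, \alpha\,, \delta) > 0$ and $\mathfrak{p} = \mathfrak{p}_{A,A_0}(t_0\,, \alpha\,, \delta) > 0$ be provided by Proposition \ref{pr:support}; let $t_1 \in (0\,,1)$ come from Lemma \ref{lem:am coupling}; set $T := t_0 + t_1$. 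Given initial profiles $(\omega_1\,, \omega_2) \in K_{c_0,A} \times K_{c_0,A}$, couple the two solutions of \eqref{eq:RD} on $[0\,,T]$ by running them \emph{independently} on $[0\,,t_0]$ (using two independent noises) and then applying the AM coupling on $[t_0\,,T]$. Since each $\omega_i$ satisfies the hypotheses of Proposition \ref{pr:support}, independence on $[0\,,t_0]$ yields
\[
\P\bigl(\|\psi_i(t_0) - A_0\|_{C(\T)} \le \delta \text{ and } \|\psi_i(t_0)\|_{C^{\alpha/2}(\T)} \le A+1 \text{ for } i = 1, 2\bigr) \ge \mathfrak{p}^2.
\]
On this event $\inf_x \psi_i(t_0\,,x) \ge A_0 - \delta > 0$ and $\|\psi_1(t_0) - \psi_2(t_0)\|_{L^1(\T)} \le 4\delta \le \delta_1/2$, so Lemma \ref{lem:am coupling} yields successful coupling (i.e., $\psi_1(t) = \psi_2(t)$ for all $t \ge T$) with conditional probability $\ge 1 - 2\eta$. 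Uniformly over $(\omega_1\,, \omega_2) \in K_{c_0,A} \times K_{c_0,A}$,
\[
\P\bigl(\psi_1(t) = \psi_2(t) \text{ for all } t \ge T\bigr) \ge q := \mathfrak{p}^2 (1 - 2\eta) > 0.
\]

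\textbf{Iteration.} Build a global coupling by stitching windows $[nT\,,(n+1)T]$: if $\psi_1(nT) = \psi_2(nT)$ use the natural coupling (they remain equal thereafter), else apply the construction above from $(\psi_1(nT)\,, \psi_2(nT))$ using fresh independent noises via the strong Markov property (justified by Lemma \ref{lem:am-feller} and Proposition \ref{pr:feller}). Set $p_n := \P(\psi_1(nT) \ne \psi_2(nT))$. By invariance the marginal law of each $\psi_i(nT)$ is still $\mu_i$, so $\P(\psi_i(nT) \notin K_{c_0,A}) \le \eta$ for every $n$, and a union bound combined with the previous paragraph gives
\[
p_{n+1} \le p_n - q \cdot \P\bigl(\psi_1(nT) \ne \psi_2(nT),\ (\psi_1(nT)\,, \psi_2(nT)) \in K_{c_0,A}^2\bigr) \le p_n - q(p_n - 2\eta)^+,
\]
which forces $\limsup_{n\to\infty} p_n \le 2\eta$. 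Since $(\psi_1(nT)\,, \psi_2(nT))$ is a coupling of $\mu_1$ and $\mu_2$, $\|\mu_1 - \mu_2\|_{\mathrm{TV}} \le p_n$ for every $n$, so $\|\mu_1 - \mu_2\|_{\mathrm{TV}} \le 2\eta$. As $\eta > 0$ was arbitrary, $\mu_1 = \mu_2$; since $\mu_+$ from Proposition \ref{pr:inv:meas:exists} satisfies $\mu_+\{\mathbb{0}\} = 0$, it is the unique such measure.

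\textbf{Main obstacle.} The crucial subtlety is that the one-window success probability $q$ does not deteriorate over iterations: this is possible precisely because invariance pins down the marginal laws of $\psi_1(nT)$ and $\psi_2(nT)$ to be $\mu_1$ and $\mu_2$ at every discrete epoch, so the compact bottleneck set $K_{c_0,A}$ continues to carry mass $\ge 1-\eta$ under each marginal regardless of which earlier coupling attempts failed. The two substantive inputs supplying $q > 0$ are Proposition \ref{pr:support}, which funnels each process into a common $C(\T)$-small neighborhood of the constant $A_0 \mathbb{1}$ starting from anywhere in $K_{c_0,A}$, and Lemma \ref{lem:am coupling}, which then merges the two solutions once they are $L^1(\T)$-close.
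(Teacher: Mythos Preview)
Your proof is correct and takes a genuinely different route from the paper. The paper first proves a stronger statement, Theorem \ref{th:successful-coupling}, establishing a successful coupling from \emph{any} pair of deterministic initial profiles in $C_+(\T)\setminus\{\mathbb{0}\}$; uniqueness of $\mu_+$ then falls out via time-averaging and invariance. The key device in that approach is Lemma \ref{lem:strip}, which guarantees (via the random walk argument of Section~5) that the coupled pair returns to the good compact set $K_{c_0,A}$ at infinitely many stopping times, giving infinitely many chances for Proposition \ref{pr:support} and Lemma \ref{lem:am coupling} to trigger a merge.

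You bypass Lemma \ref{lem:strip} entirely by exploiting invariance directly: since the marginal of $\psi_i(nT)$ is $\mu_i$ for every $n$, the mass on $K_{c_0,A}$ is $\ge 1-\eta$ at \emph{every} epoch, regardless of the coupling history. This gives a uniform one-window success probability $q>0$ and a clean contraction $p_{n+1}\le p_n - q(p_n-2\eta)^+$ without any recurrence input. Your argument is therefore more elementary for the uniqueness question itself, and does not touch the random walk machinery. The trade-off is that the paper's longer route yields Theorem \ref{th:successful-coupling} and the ergodic theorem (Corollary \ref{co:ergodic}) as additional dividends; your argument proves $\mu_1=\mu_2$ directly but does not produce those stronger by-products. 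One small remark: when $(\psi_1(nT),\psi_2(nT))\notin K_{c_0,A}^2$ you should say explicitly that you still run \emph{some} legitimate coupling (e.g., independent) on that window so that the marginals remain correct; your recursion already accounts for no success claim on that event.
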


We shall combine coupling ideas from the previous section in order to prove Theorem
\ref{th:inv-unique}. As first step in that proof, we offer the following technical result,
valid for any coupling method, including those that are possibly not mentioned in this paper.

\begin{lemma}\label{lem:strip}
	Choose and fix two non-random functions $\psi_{1,0},\psi_{2,0}\in C_{>0}(\T)$,
	and let $(\psi_1\,,\psi_2)$ denote any coupling of 
	two solutions to \eqref{eq:RD} with respective
	initial profiles $\psi_{1,0}$ and $\psi_{2,0}$. Choose an arbtirary non-random number
	$q>0$, and let $\mathcal{T}_0$ denote
	any a.s.-finite stopping time with respect to the underlying noises of $(\psi_1\,,\psi_2)$.
	Then, there exist non-random numbers $C_0>c_0>0$ such that the stopping times
	\[
		\mathcal{T}_n := \inf\left\{ s> \mathcal{T}_{n-1} + q:\, 
		\max_{i\in\{1,2\}}\|\psi_i(s)\|_{C^\alpha(\T)} \le C_0
		\text{\ \ or }\min_{i\in\{1,2\}}\inf_{x\in\T}\psi_i(s\,,x) \ge c_0\right\}
	\]
	are a.s.\ finite for every $n\in\N$. The constants $c_0$ and $C_0$ do not depend
	on the particular coupling method used.
\end{lemma}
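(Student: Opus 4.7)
The plan is to proceed by induction on $n$, using the strong Markov property of the joint process $\{(\psi_1(t)\,,\psi_2(t))\}_{t\ge0}$. Its Feller property is available for every coupling method used in the paper: Proposition~\ref{pr:feller} for natural couplings, Proposition~\ref{pr:coupling}(3) for PM couplings, and Lemma~\ref{lem:am-feller} for AM couplings. Since $\mathcal{T}_0<\infty$ a.s., the induction step reduces (via strong Markov at $\mathcal{T}_{n-1}+q$) to showing that for any non-random $\psi_{1,0},\psi_{2,0}\in C_+(\T)$ one can choose non-random $C_0>c_0>0$ such that
\[
	\mathcal{T}:=\inf\left\{s>q : \max_i\|\psi_i(s)\|_{C^\alpha(\T)}\le C_0 \text{ or } \min_i\inf_{x\in\T}\psi_i(s\,,x)\ge c_0\right\}
\]
is a.s.\ finite. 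Because the condition is a disjunction, the upper-bound branch alone already suffices, so $c_0$ is essentially a free parameter and may be taken to be any positive number.

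For the upper-bound branch, I would apply Proposition~\ref{pr:tight} to each marginal $\psi_i$ (which, by the definition of a coupling, solves~\eqref{eq:RD} with initial datum $\psi_{i,0}$): for every $\alpha\in(0\,,1/2)$, $\tau>0$, and $k\ge2$,
\[
	\sup_{t\ge\tau}\max_{i\in\{1,2\}}\E\|\psi_i(t)\|_{C^\alpha(\T)}^k\le L_k^k<\infty,
\]
where, via Lemma~\ref{lem:moment} and \textbf{(F1)}--\textbf{(F3)}, $L_k$ grows only polynomially in $k$ (with rate controlled by $\mathcal{R}(k)=O(k^{2/(m_0-1)})$ in the model case $F(x)\ge cx^{m_0}$). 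On $\{\mathcal{T}=\infty\}$ the indicator $\bm{1}_{\{\max_i\|\psi_i(s)\|_{C^\alpha(\T)}>C_0\}}$ equals $1$ throughout $(q\,,\infty)$, so Fubini, Chebyshev, and a union bound give, for every $T>0$ and $k\ge2$,
\[
	T\cdot\P\{\mathcal{T}=\infty\}\le\E\int_q^{q+T}\bm{1}_{\{\max_i\|\psi_i(s)\|_{C^\alpha(\T)}>C_0\}}\,ds\le \frac{2L_k^k T}{C_0^k},
\]
hence $\P\{\mathcal{T}=\infty\}\le 2(L_k/C_0)^k$. Minimizing over $k$ produces a bound that is sub-exponentially small in $C_0$, so choosing $C_0$ large (in terms of $\|\psi_{1,0}\|_{C(\T)}\vee\|\psi_{2,0}\|_{C(\T)}$) makes it as small as desired.

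I expect the principal technical obstacle to be twofold. First, for a \emph{fixed} $C_0$ the above estimate only gives $\P\{\mathcal{T}=\infty\}\le\eta$ for some small $\eta>0$ rather than $\eta=0$; to close this gap I would supplement the upper-bound branch with the lower-bound branch via an adaptation of the random walk argument of Section~5, using the strict positivity of Theorem~\ref{th:exist-unique} to restart the constant-profile comparison after an initial short time so that the a.s.\ Cesaro density bound~\eqref{eq:lowertail1} applies to arbitrary $\psi_{i,0}\in C_{>0}(\T)$. Second, iterating the induction to $n\ge2$ requires a uniform-in-$n$ moment bound on $\|\psi_i(\mathcal{T}_{n-1}+q)\|_{C(\T)}$ (otherwise $L_k$ degrades at each strong-Markov restart); I would obtain this by combining the moment bounds of Lemma~\ref{lem:moment} with the temporal continuity estimates of Proposition~\ref{pr:temporal:cont}, carefully exploiting that the law at the stopping time inherits moment control from the fixed-time bounds.
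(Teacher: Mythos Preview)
Your Chebyshev/moment route is a detour, and both of the obstacles you flag are self-inflicted by the inductive framing. The paper's proof is much simpler and uses neither induction nor strong Markov nor moment control at stopping times. Here is the comparison.

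The paper proceeds entirely via the Ces\`aro density result you mention only as a ``supplement.'' Specifically, the argument leading to \eqref{claim:DZ-tight} (together with the footnote explaining how to extend it from $\psi_0=\mathbb{1}$ to arbitrary $\psi_0\in C_{>0}(\T)$ by comparison and rescaling) shows that for each $i\in\{1,2\}$ one can find $0<c_i<C_i$ such that
\[
	\liminf_{T\to\infty}\frac1T\int_0^T\bm{1}_{\{c_i\le\inf_x\psi_i(t,x)\le\|\psi_i(t)\|_{C^\alpha(\T)}\le C_i\}}\,\d t\ge\tfrac34\qquad\text{a.s.}
\]
These constants depend only on the marginal law of $\psi_i$, hence not on the coupling. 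Now the trivial inclusion-exclusion bound $m_T(\mathcal{E}_1\cap\mathcal{E}_2)\ge m_T(\mathcal{E}_1)+m_T(\mathcal{E}_2)-1$ gives $\liminf_T m_T(\mathcal{E}_1\cap\mathcal{E}_2)\ge\tfrac12$ a.s., so the random set $\mathcal{E}_1\cap\mathcal{E}_2$ of times at which \emph{both} processes lie in the strip has positive lower density and is therefore almost surely unbounded. An unbounded set meets every half-line $(\mathcal{T}_{n-1}+q,\infty)$, which immediately yields $\mathcal{T}_n<\infty$ for \emph{all} $n$ at once.

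This completely sidesteps your two obstacles: there is no need to push $\P\{\mathcal{T}=\infty\}$ from $\eta$ down to $0$ by hand, and there is no restart at $\mathcal{T}_{n-1}+q$ requiring moment control on the random initial data. Note also that the paper's argument actually establishes the stronger ``and'' version of the strip condition (both bounds simultaneously), which is what is needed downstream in Theorem~\ref{th:successful-coupling}; your reading of the literal ``or'' is lexically correct but would not suffice for the application.
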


\begin{proof}
	Since $\psi_1$ and $\psi_2$ are continuous random fields
	[see Theorem \ref{th:exist-unique}],
	the random mappings $t\mapsto\min_{i\in\{1,2\}}\psi_i(t\,,x)$ and $t\mapsto\max_{i\in\{1,2\}}
	\|\psi_i(t)\|_{C^\alpha(\T)}$ define continuous and adapted processes. This proves that
	every $\mathcal{T}_n$ is indeed a stopping time. We now prove the more
	interesting statement that these $\mathcal{T}_n$'s
	are a.s.\ finite for suitable non-random choices of $C_0\gg1$ and $c_0\ll1$ that do not depend
	on the particular details of the coupling.
	
	Recall that our proof of \eqref{claim:DZ-tight} hinged on proving that if $\psi_0=\mathbb{1}$
	and $\psi$ solves \eqref{eq:RD} starting from $\psi_0$ then 
	\[
		\adjustlimits\lim_{c\downarrow0}\liminf_{T\to\infty}\frac1T
		\int_0^T\bm{1}_{\{ c \le \inf_{x\in\T} \psi(t,x) \le \|\psi(t)\|_{C^\alpha(\T)}
		\le 1/c\}}\,\d t=1\qquad\text{a.s.}
	\]
	See the paragraphs that follow Proposition \ref{pr:inv:meas:exists},
	as well as \eqref{eq:condition-3}.
	A brief inspection of the random walk argument shows that the same fact holds for
	every $\psi_0\in C_{>0}(\T)$ [not just $\psi_0\equiv1$].\footnote{
		Indeed, the fact that Proposition \ref{pr:tight} holds equally well for non-constant
		initial data implies that the same proof that follows Proposition \ref{pr:inv:meas:exists}
		goes to show that, in the present setting,
		\[
			\adjustlimits\lim_{c\downarrow0}\limsup_{T\to\infty}\frac1T
			\int_0^T\bm{1}_{\{\|\psi(t)\|_{C^\alpha(\T)}
			> 1/c\}}\,\d t=0\qquad\text{a.s.}
		\]
		It therefore remains to prove that
		\begin{equation}\label{gee}
			\adjustlimits\lim_{c\downarrow0}\limsup_{T\to\infty}\frac1T
			\int_0^T\bm{1}_{\{\inf_{x\in\T}\psi(t)
			<c\}}\,\d t=0\qquad\text{a.s.}
		\end{equation}
		Let $a := \inf_{x\in\T}\psi_0(x).$ Since $a>0$, we can appeal to Lemma 
		\ref{lem:comparison} and compare $\psi$ to the solution of \eqref{eq:RD}
		in order to reduce the problem to proving the above in the case that
		$\psi_0 = a\mathbb{1}$. Now, $a^{-1}\psi$ solves \eqref{eq:RD}
		but with $\Theta:=(\sigma\,,F)$ replaced by $\Theta_a:=(a^{-1}\sigma(a\cdot)\,,
		a^{-1}F(a\cdot))$, starting from $\mathbb{1}$. 
		This proves \eqref{gee} since $\Theta_a$ has the same analytic properties that
		we required of $\Theta$.
	} 
	In particular,
	we can find non-random numbers $0<c_i<C_i$ such that
	\[
		\liminf_{T\to\infty}\frac1T
		\int_0^T\bm{1}_{\{ c_i \le \inf_{x\in\T} \psi_i(t,x) \le \|\psi_i(t)\|_{C^\alpha(\T)}
		\le C_i\}}\,\d t\ge \frac{3}{4}\qquad\text{a.s.\
		for $i\in\{1\,,2\}$}.
	\]
	Clearly, $(c_1\,,c_2\,,C_1\,,C_2)$ depend only on 
	the marginal laws of $\psi_1$ and $\psi_2$. Therefore, $(c_1\,,c_2\,,C_1\,,C_2)$
	does not depend
	on how $\psi_1$ and $\psi_2$ are coupled.
	
	Define
	\[
		\mathcal{E}_i := \left\{ t\ge0:\, c_i \le \inf_{x\in\T} \psi_i(t,x) \le \|\psi_i(t)\|_{C^\alpha(\T)}
		\le C_i\right\},
	\]
	and let $m_T$ denote  the measure defined by
	\[
		m_T(F) := \frac1T\int_0^T \bm{1}_F(t)\,\d t\qquad\text{for all $T>0$
		and Borel sets $F\subset\R_+$}.
	\]
	Since $m_T([\mathcal{E}_1\cap\mathcal{E}_2]^c)\le m_T(\mathcal{E}_1^c)
	+ m_T(\mathcal{E}_2^c)$, it follows that
	$\liminf_{T\to\infty} m_T(\mathcal{E}_1\cap\mathcal{E}_2) \ge 1/2>0$
	whence it follows that $\mathcal{E}_1\cap\mathcal{E}_2$ is unbounded
	with probability one. The a.s.-finiteness of the $\mathcal{T}_n$'s is
	now immediate.
\end{proof}

We now use Lemma \ref{lem:strip} as a ``regeneration result,'' in order to
prove the main step in the proof of Theorem \ref{th:inv-unique}.
In anticipation of future potential applications, we record that regeneration result as the
following theorem.

\begin{theorem}\label{th:successful-coupling}
	Choose and fix two non-random functions $\psi_{1,0},\psi_{2,0}
	\in C_+(\T)\setminus\{\mathbb{0}\}$. Then there exists a successful coupling
	$(\psi_1\,,\psi_2)$ of solutions to \eqref{eq:RD} with respective
	initial profiles $(\psi_{1,0}\,,\psi_{2,0})$.
\end{theorem}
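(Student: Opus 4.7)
The plan is to construct $(\psi_1,\psi_2)$ iteratively by alternating between the independent, AM, and natural couplings of Section \ref{sec:coupling} at well-chosen stopping times. First I fix constants. By Lemma \ref{lem:strip} there exist $C_0>c_0>0$ (depending only on the marginal laws of $(\psi_1,\psi_2)$) such that, under \emph{any} coupling, the two processes simultaneously visit the good set $G:=\{f\in C^\alpha_+(\T):c_0\le\inf_\T f\le\|f\|_{C^\alpha(\T)}\le C_0\}$ at arbitrarily late stopping times. Set $A_0:=2c_0$, $A:=C_0$, and $\varepsilon:=1/4$, and apply Lemma \ref{lem:am coupling} with parameters $(A_0/2,A+2,\alpha/2,\varepsilon)$ to obtain $t_1,\delta_1\in(0,1)$. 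Finally, apply Proposition \ref{pr:support} with $\delta:=\min\{\delta_1/8,c_0/2\}$ to produce $t_0>0$ and $\mathfrak{p}>0$ such that, for \emph{every} non-random $\omega\in G$, the solution $\psi$ to \eqref{eq:RD} with $\psi(0)=\omega$ satisfies both $\|\psi(t_0)-A_0\mathbb{1}\|_{C(\T)}\le\delta$ and $\|\psi(t_0)\|_{C^{\alpha/2}(\T)}\le A+1$ with probability at least $\mathfrak{p}$.

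Next I would define the coupling recursively. Set $\mathcal{T}_0:=0$ and use independent coupling until the first time $\mathcal{S}_1$ at which both processes lie in $G$; by Lemma \ref{lem:strip} with waiting time $q:=t_0+t_1$, the stopping time $\mathcal{S}_1$ is a.s.\ finite. Given $\mathcal{S}_n$, I continue with independent coupling on $[\mathcal{S}_n,\mathcal{S}_n+t_0]$. The strong Markov property at $\mathcal{S}_n$, together with Proposition \ref{pr:support} applied independently to each process, shows that
\[
E_n:=\bigcap_{i=1}^{2}\Bigl\{\|\psi_i(\mathcal{S}_n+t_0)-A_0\mathbb{1}\|_{C(\T)}\le\delta\ \text{ and }\ \|\psi_i(\mathcal{S}_n+t_0)\|_{C^{\alpha/2}(\T)}\le A+1\Bigr\}
\]
has conditional probability at least $\mathfrak{p}^2$ given $\mathscr{F}_{\mathcal{S}_n}$. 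On $E_n$ we have $\|\psi_1-\psi_2\|_{L^1(\T)}\le 4\delta\le\delta_1/2$, so I switch to AM coupling on $[\mathcal{S}_n+t_0,\mathcal{T}_n]$ with $\mathcal{T}_n:=\mathcal{S}_n+t_0+t_1$; Lemma \ref{lem:am coupling} then guarantees that this AM merge is successful by time $\mathcal{T}_n$ with conditional probability at least $1-2\varepsilon=1/2$ given $E_n$. If the $n$-th attempt fails, I resume independent coupling after $\mathcal{T}_n$ and let $\mathcal{S}_{n+1}$ be the next regeneration time in $G$.

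The net probability that the $n$-th attempt succeeds is at least $\mathfrak{p}^2/2>0$, uniformly in $n$ and in the failure history, by the strong Markov property. A Borel--Cantelli argument therefore forces almost sure success within finitely many attempts. Once an attempt succeeds, the surviving equal profiles are propagated by natural coupling, and pathwise uniqueness of \eqref{eq:RD} for strictly positive initial data (Theorem \ref{th:exist-unique}) keeps $\psi_1\equiv\psi_2$ thereafter. The main obstacle is the bookkeeping needed to verify that this multi-phase construction really yields a 2-D predictable random field whose two marginals solve \eqref{eq:RD} with the prescribed initial profiles, since the coupling scheme changes at the $\mathcal{S}_n$'s and at the $\mathcal{S}_n+t_0$'s. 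This is resolved by invoking the Feller/strong-Markov property of each of the three couplings used (Propositions \ref{pr:feller} and \ref{pr:coupling}, and Lemma \ref{lem:am-feller}), and by noting that the constants $c_0,C_0$ from Lemma \ref{lem:strip} are determined purely by the marginal laws and so do not depend on the coupling being built, sidestepping any circularity.
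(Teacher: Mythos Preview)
Your argument is essentially the paper's: alternate an independent-coupling phase (Proposition \ref{pr:support} brings both processes near $A_0\mathbb{1}$) with an AM-coupling phase (Lemma \ref{lem:am coupling} attempts the merge), using the regeneration times of Lemma \ref{lem:strip} between failed attempts, and conclude by a geometric bound on the failure probability. The paper differs only cosmetically---it uses natural or AM coupling rather than independent coupling in the waiting phases, and continues AM coupling after a merge instead of switching to natural coupling---and these choices are interchangeable here.

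One small oversight: Lemma \ref{lem:strip} is stated for $\psi_{1,0},\psi_{2,0}\in C_{>0}(\T)$, not merely $C_+(\T)\setminus\{\mathbb{0}\}$, and its proof uses $\inf_{x\in\T}\psi_{i,0}(x)>0$. You should first run any coupling for one unit of time and invoke the strict-positivity assertion of Theorem \ref{th:exist-unique} to land in $C_{>0}(\T)$, then apply your construction conditionally on $\mathscr{F}_1$; this is exactly how the paper closes the general case.
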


\begin{proof}
	We first prove the theorem in the case that $\psi_{1,0},\psi_{2,0}
	\in C_{>0}(\T)$, a condition which we assume until further notice.
	Throughout, we choose and fix some $\alpha,\varepsilon\in(0\,,1/2)$;
	to be concrete, 
	\[
		\alpha=\varepsilon=\frac14.
	\]

	We build a hybrid coupling that makes appeals to natural coupling,
	independent coupling, and AM coupling. Our coupling is performed
	inductively, and in stages. 
	
	Throughout, let us choose and fix
	the non-random numbers $0<c_0<C_0$ --
	depending on $(\psi_{1,0}\,,\psi_{2,0})$ --
	whose existence (and properties)
	are guaranteed by Lemma \ref{lem:strip}. It should be clear from Lemma
	\ref{lem:strip} that the assertion of that lemma continues to remain valid 
	if instead of $C_0$ we choose a larger number. Therefore, we increase
	$C_0$ once and for all, if need be, in order to ensure additionally that
	\begin{equation}\label{c0C0}
		C_0 > 3c_0.
	\end{equation}
	
	We begin with the natural coupling of $(\psi_1\,,\psi_2)$ until stopping time,
	\[
		\mathcal{T}_1 := \inf\left\{ s>0:\,
		\max_{1\in\{1,2\}}\|\psi_i(s)\|_{C^\alpha(\T)} \le C_0
		\quad\text{or}\quad\adjustlimits
		\min_{i\in\{1,2\}}\inf_{x\in\T}\psi_i(s\,,x) \ge c_0
		\right\}.
	\]
	Lemma \ref{lem:strip} ensures that $\mathcal{T}_1<\infty$ a.s. This yields a coupling of
	the sort that we want, but only until time $\mathcal{T}_1$. 
	
	To extend our coupling
	beyond time $\mathcal{T}_1$ we first apply Proposition \ref{pr:support}
	with $A := C_0$ and $A_0 := 2c_0$
	in order to obtain a non-random strictly positive
	number  $t_0$. Then, starting 
	from $(\psi_1(\mathcal{T}_1)\,,\psi_2(\mathcal{T}_1))$ we run an independent coupling
	for $t_0$ units of time. By the strong Markov property,
	this yields a coupling of the sort that we want until stopping time $\mathcal{T}_1+t_0$.
	
	In order to continue our construction beyond time $\mathcal{T}_1+t_0$,
	we first let $t_1$ denote the number that was defined in Lemma \ref{lem:pm coupling}.
	Then, conditionally independently from the construction so far, we run an AM coupling
	starting from $(\psi_1(\mathcal{T}_1+t_0)\,,\psi_2(\mathcal{T}_1+t_0))$ for
	$t_1$ units of time. By the strong Markov property, this yields a coupling that we want
	until stopping time $\mathcal{T}_1+ t_0+t_1$. Thanks to Lemma \ref{lem:am coupling}
	if the two processes have merged some time in $(\mathcal{T}_1+t_0\,,\mathcal{T}_1+t_0+t_1)$,
	then from that time until time $\mathcal{T}_1+t_0+t_1$ they are equal. In this case, we
	just continue running our AM coupling to see that we have a successful coupling, as desired. If
	the two processes have not merged by time $\mathcal{T}_1+t_0+t_1$, then we continue
	our AM coupling until time
	\[
		\mathcal{T}_2 := \inf\left\{ s>\mathcal{T}_1+t_0+t_1:\,
		\max_{1\in\{1,2\}}\|\psi_i(s)\|_{C^\alpha(\T)} \le C_0
		\quad\text{or}\quad\adjustlimits
		\min_{i\in\{1,2\}}\inf_{x\in\T}\psi_i(s\,,x) \ge c_0
		\right\}.
	\]
	Then, run an independent coupling for $t_0$ units of time, and then an AM coupling
	for another $t_1$ units of time [all conditionally independently of the past
	in order to maintain the strong Markov property]. This yields a coupling up to time
	$\mathcal{T}_2+t_0+t_1$. If the two processes have merged some time between
	$\mathcal{T}_2+t_0$ and $\mathcal{T}_2+t_0+t_1$ then continue running
	the final AM coupling {\it ad infinitum}. Lemma \ref{lem:am coupling} ensures that
	this is the desired successful coupling. Else, we continue inductively. 
	
	Choose and fix some $n\in\N$, and let $\delta_1$ by the number given by
	Lemma \ref{lem:pm coupling}. Thanks to Lemma \ref{lem:pm coupling},
	we may (and will) assume without loss of generality that 
	\begin{equation}\label{cond:delta_1}
		\delta_1 < 1 \wedge \frac{c_0}{10}.
	\end{equation}
	By the strong Markov property, 
	Proposition \ref{pr:support} ensures that, almost surely on the event
	that the coupling is not successful by time $\mathcal{T}_n$,
	the conditional probability of the event
	\[
		\mathcal{E}_n := \left\{
		\|\psi_1(\mathcal{T}_n+t_0)-\psi_2(\mathcal{T}_n+t_0)\|_{C(\T)} \le 
		\frac{\delta_1}{2}\right\}\cap \bigcap_{i=1}^2
		\left\{ \|\psi_i\|_{C^{\alpha/2}(\T)}\le C_0+1\right\}
	\]
	is at least $[\mathfrak{p}_{C_0,2c_0}(t_0\,,\alpha\,,\delta_1)]^2>0.$ In order for
	this assertion to be true, we need to know additionally that 
	$[2c_0-\delta_1\,,2c_0+\delta_1]\subset[c_0\,,C_0]$; this is so because of \eqref{c0C0}
	and \eqref{cond:delta_1}.
	
	We emphasize that $\delta_1$ is deterministic (as it did not depend on the initial condition in 
	Proposition \ref{pr:support}) as well as independent of $n$. And 
	Lemma \ref{lem:am coupling} [with $\alpha$ replaced by $\alpha/2$]
	ensures that, a.s.\ on $\mathcal{E}_n$
	the conditional probability that the coupling has succeeded by time $\mathcal{T}_n+t_0+t_1$
	given everything by time $\mathcal{T}_n+t_0$ is at least $3/4$. Thus,
	\[
		\P\left(\text{the coupling succeeds by time
		$\mathcal{T}_n+t_0+t_1$}\mid \text{no success by time $\mathcal{T}_n$}\right)
		\ge \frac{3\left[\mathfrak{p}_{C_0,2c_0}(t_0\,,\alpha\,,\delta_1)\right]^2}{4}.
	\]
	Since the right-hand side does not depend on $n$, the tower property of conditional 
	probabilities yield the following for every $n\in\N$:
	\[
		\P\left(\text{the coupling does not succeed by time
		$\mathcal{T}_n+t_0+t_1$}\right) \le\left( 1-
		\frac{3\left[\mathfrak{p}_{C_0,2c_0}(t_0\,,\alpha\,,\delta_1)\right]^2}{4}\right)^{n-1}.
	\]
	Since $\mathcal{T}_{n+1}-\mathcal{T}_n > t_0+t_1$ a.s.\ for every $n\in\N$,
	it follows that
	$\lim_{n\to\infty}\mathcal{T}_n=\infty$ a.s. Thus,
	we let $n\to\infty$ to conclude the proof, from preceding display, in the case that
	$\psi_{1,0},\psi_{2,0}\in C_{>0}(\T)$.
	
	In order to prove the general result, we first run our natural coupling for one unit of time,
	starting from $(\psi_{1,0}\,,\psi_{2,0})$. Theorem \ref{th:exist-unique} assures us
	that with probability one, $\psi_i(1\,,x)>0$ for every $x\in\T$ and $i\in\{1\,,2\}$.
	Condition on everything by time one, and run our hybrid coupling from then on,
	conditionally independently of the first one time unit, starting from
	$(\psi_1(1)\,,\psi_2(1))$. Apply the strong Markov property and the first portion of
	the proof to finish.
\end{proof}

We are in position to prove Theorem \ref{th:inv-unique}.

\begin{proof}[Proof of Theorem \ref{th:inv-unique}]
	Let $\phi\in C_+(\T)\setminus\{\mathbb{0}\}$ be non-random.
	Theorem \ref{th:successful-coupling} ensures that, after we possibly enlarging 
	the underlying probability space, we can construct a successful
	coupling $(\psi_0\,,\psi_1)$ of solutions to \eqref{eq:RD}
	that start respectively from $(\phi\,,\mathbb{1})$. In particular,
	\[
		\lim_{T\to\infty}
		\sup_{\substack{\Gamma\subset C(\T)\\\Gamma\text{ Borel}}}
		\left| \frac 1T\int_0^T \bm{1}_\Gamma(\psi_0(t))\,\d t
		- \frac 1T\int_0^T \bm{1}_\Gamma(\psi_1(t))\,\d t \right| =0\qquad\text{a.s.}
	\]
	Take expectations and appeal to the bounded convergence theorem
	in order to deduce that
	\[
		\lim_{T\to\infty}
		\sup_{\substack{\Gamma\subset C(\T)\\\Gamma\text{ Borel}}}
		\left| 
		\E\left( \frac 1T\int_0^T \bm{1}_\Gamma(\psi_0(t))\,\d t\right) 
		- \E\left( \frac 1T\int_0^T \bm{1}_\Gamma(\psi_1(t))\,\d t\right) 
		\right|=0.
	\]
	Notice that the preceding is a statement about probability laws and does not
	depend on the coupling construction that was devised in order  to prove it. Therefore,
	it is convenient to set $F:=\bm{1}_\Gamma$ and revert to the notation of Markov process theory
	(see \S\ref{subsec:Feller} and especially \S\ref{subsec:KB}), and rewrite the above in terms of the Feller semigroup
	$\{P_t\}_{t\ge0}$ associated to \eqref{eq:RD} as follows:
	\[
		\lim_{T\to\infty} \frac1T\int_0^T (\delta_\phi P_t)\,\d t 
		= \mu_+\qquad\text{in total variation.}
	\]
	Let $\mu$ denote any probability measure 
	on $C_+(\T)$ that satisfies $\mu\{\mathbb{0}\}=0$ and is invariant
	for \eqref{eq:RD}. Proposition \eqref{pr:inv:meas:exists} ensures that 
	there is at least one such measure  $\mu_+$ and that, in fact,
	$\mu_+$ concentrates on $C_{>0}(\T)$. 
	Because $\mu$ is invariant, Tonelli's theorem ensures that
	\[
		\int_{C(\T)}  \left( \frac1T\int_0^T (\delta_\phi P_t)(\Gamma)\,\d t \right)\mu(\d\phi)
		=   \frac1T\int_0^T \left(\int_{C(\T)} (\delta_\phi P_t)(\Gamma) \mu(\d\phi)\right)\d t
		= \mu(\Gamma),
	\]
	for all $T>0$ and Borel sets $\Gamma\subset C(\T)$.
	Therefore, the bounded convergence theorem yields
	\[
		\lim_{T\to\infty} \frac1T\int_0^T (\delta_{\mathbb{1}}P_t)\,\d t
		=\mu\qquad\text{in total variation.}
	\]
	Our proof of Proposition \ref{pr:inv:meas:exists} consisted of proving that
	$\mu_+$ is a subsequential weak limit of the probability measures
	$\{T^{-1}\int_0^T (\delta_{\mathbb{1}}P_t)\,\d t\}_{T>0}$. Therefore,
	the above shows that $\mu=\mu_+$, which completes the uniqueness
	of $\mu_+$.
\end{proof}

Let us conclude this section with an ergodic theorem for the solution to
\eqref{eq:RD}.

\begin{corollary}[Ergodic theorem]\label{co:ergodic}
	Suppose ${\rm L}_\sigma>0$ and $\lambda$ is small enough to ensure
	that the conclusion of Proposition \ref{pr:inv:meas:exists} is valid.
	Then, for every probability measure $\nu$ on $C_+(\T)$
	that satisfies $\nu\{\mathbb{0}\}=0$, 
	\[
		\lim_{T\to\infty}
		\frac1T \int_0^T (\nu P_t) \,\d t=
		\mu_+
		\quad\text{in total variation,}
	\]
	where  $\mu_+$ is the invariant measure produced by Proposition \ref{pr:inv:meas:exists}.
\end{corollary}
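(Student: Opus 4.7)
The plan is to deduce the corollary by disintegrating $\nu$ over the initial profiles and reducing to the pointwise-in-$\phi$ convergence that was already extracted in the proof of Theorem \ref{th:inv-unique}. Concretely, for every non-random $\phi\in C_+(\T)\setminus\{\mathbb{0}\}$, Theorem \ref{th:successful-coupling} produces a successful coupling of the solutions to \eqref{eq:RD} started from $\phi$ and from $\mathbb{1}$; running the same bounded-convergence argument as in the proof of Theorem \ref{th:inv-unique} yields
\[
	\lim_{T\to\infty}d_{TV}\!\left(\frac{1}{T}\int_0^T (\delta_\phi P_t)\,\d t\,,\,\mu_+\right)=0
	\qquad\text{for every }\phi\in C_+(\T)\setminus\{\mathbb{0}\}.
\]
Thus the corollary is already known on point masses outside $\mathbb{0}$.

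Next, I would lift this from $\delta_\phi$ to a general $\nu$ by Fubini. Because $\{P_t\}_{t\ge0}$ is Feller (Proposition \ref{pr:feller}), the map $\phi\mapsto(\delta_\phi P_t)(\Gamma)$ is Borel measurable for every Borel $\Gamma\subset C_+(\T)$, so
\[
	(\nu P_t)(\Gamma)=\int_{C_+(\T)}(\delta_\phi P_t)(\Gamma)\,\nu(\d\phi),
\]
and time-averaging yields, for every Borel $\Gamma$,
\[
	\left|\frac{1}{T}\int_0^T(\nu P_t)(\Gamma)\,\d t-\mu_+(\Gamma)\right|
	\le \int_{C_+(\T)}\left|\frac{1}{T}\int_0^T(\delta_\phi P_t)(\Gamma)\,\d t-\mu_+(\Gamma)\right|\nu(\d\phi).
\]
Taking a supremum over $\Gamma$ and pushing it inside the integral gives
\[
	d_{TV}\!\left(\frac{1}{T}\int_0^T \nu P_t\,\d t\,,\,\mu_+\right)
	\le \int_{C_+(\T)} d_{TV}\!\left(\frac{1}{T}\int_0^T\delta_\phi P_t\,\d t\,,\,\mu_+\right)\nu(\d\phi).
\]

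Finally, the integrand on the right is bounded by $1$ uniformly in $T$ and $\phi$, and by the first step it converges to $0$ for every $\phi\ne\mathbb{0}$, hence $\nu$-almost everywhere since $\nu\{\mathbb{0}\}=0$. The dominated convergence theorem closes the argument. There is really no substantive obstacle; the only thing to check carefully is the joint Borel measurability of $(t,\phi,\Gamma)\mapsto(\delta_\phi P_t)(\Gamma)$, which legitimizes Fubini, and this follows from the Feller property together with the fact that Borel sets in $C_+(\T)$ are generated by a countable algebra of cylinder sets on which measurability in $(t,\phi)$ is straightforward.
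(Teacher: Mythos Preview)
Your argument is correct and follows essentially the same route as the paper: both disintegrate $\nu$ over initial profiles $\phi$, invoke the pointwise-in-$\phi$ total-variation convergence extracted in the proof of Theorem~\ref{th:inv-unique}, and then pass the limit through the $\nu$-integral by dominated (bounded) convergence. The only cosmetic difference is that the paper phrases the pointwise input as ``Ces\`aro averages from any two $\phi_1,\phi_2\neq\mathbb{0}$ merge'' and then integrates $\phi_1$ against $\nu$ and $\phi_2$ against the invariant $\mu_+$, whereas you use directly that each $\delta_\phi$-Ces\`aro average converges to $\mu_+$; these are equivalent once Theorem~\ref{th:inv-unique} is in hand.
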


\begin{proof}
	The proof of Theorem \ref{th:inv-unique} readily implies
	that for every  $\phi\in C_+(\T)\setminus\{\mathbb{0}\}$,
	\[
		\frac1T \int_0^T  (\delta_\phi P_t)\,\d t - 
		\frac1T \int_0^T  (\delta_{\mathbb{1}} P_t)\,\d t 
		\quad\text{in total variation, as $T\to\infty$}.
	\]
	In particular, we use the above twice [once for $\phi_1$, and once for $\phi_2$,
	in place of $\phi$] in order to see that for
	every $\phi_1\,,\phi_2\in C_+(\T)\setminus\{\mathbb{0}\}$,
	\[
		\frac1T \int_0^T  (\delta_{\phi_1} P_t)\,\d t - 
		\frac1T \int_0^T  (\delta_{\phi_2} P_t)\,\d t 
		\quad\text{in total variation, as $T\to\infty$}.
	\]
	Integrate over all such $\phi_1$
	$[\d\nu]$ and all such $\phi_2$ $[\d\mu_+]$ to deduce the corollary from 
	the bounded convergence theorem and the invariance of $\mu_+$.
\end{proof}

\section{Proofs of Theorem \ref{th:RD} and Remark \ref{rem:RD}}\label{sec:pf-RD}

Parts 1(a) and 1(d) of Theorem \ref{th:RD} were proved respectively in Theorem 
\ref{th:inv-unique} and Corollary \ref{co:ergodic}. 

Part 1(b) of Theorem \ref{th:RD} is now easy to prove.
Indeed, any probability measure $\mu$ on $C_+(\T)$ is a linear combination of $\delta_{\mathbb{0}}$
and some other probability measure $\mu_1$ on $C_+(\T)\setminus\{\mathbb{0}\}$. 
Since $\delta_{\mathbb{0}}$ is always
invariant, it follows immediately that so is $\mu_1$.  Part 1(a) now shows that $\mu=\mu_+$.
This proves part 1(b) because the converse is obvious, as every element
of $\mathscr{I\hskip-.5em{M}}$ is manifestly invariant.

Thanks to part 1(d) of Theorem \ref{th:RD} -- which we have already verified --
and a standard approximation theorem,
\[
	\int F\,\d\mu_+ \le \limsup_{T\to\infty}\frac1T\int_0^T F(\psi(t))\,\d t,
\]
for every lower semicontinuous function $F:C_+(\T)\to\R_+$. This is basically
a restatement of Fatou's theorem of classical integration theory.  Since
$F(\omega) := \|\omega\|_{C^\alpha(\T)}^k$ defines a lower semicontinuous
function on $C(\T)$, it follows that
\begin{equation}\label{eq:inv-meas-moments}
	\int \|\omega\|_{C^\alpha(\T)}^k\mu_+(\d\omega)
	\le \sup_{t\ge1}\E\left( \|\psi(t)\|_{C^\alpha(\T)}^k\right).
\end{equation}
This and Proposition \ref{pr:tight} yield 1(c) of Theorem \ref{th:RD}; see 
\eqref{tail:mu+}. 

Next we prove part 2 of Theorem \ref{th:RD} and conclude its proof. That is,
we plan to show that $\delta_{\mathbb{0}}$ is the only invariant
measure for \eqref{eq:RD} when $\lambda$ is large.

Because $V(w)\le w$ for all $w\ge0$,
the comparison theorem for SPDEs
[Lemma \ref{lem:comparison}] shows that $\psi(t\,,x) \le u(t\,,x)$
for all $t\ge0$ and $x\in\T$, where $u$ solves the SPDE,
\[
	\partial_t u(t\,,x) = \partial^2_x u(t\,,x) + u(t\,,x) + \lambda\sigma(u(t\,,x))\dot{W}(t\,,x)\qquad\text{
	for $(t\,,x)\in(0\,,\infty)\times\T$},
\]
subject to $u(0)=\psi_0$. Define $v(t\,,x) := \exp(-t) u(t\,,x)$, and observe
that $v$ is the solution to the SPDE,
\[
	\partial_t v(t\,,x) = \partial^2_x v(t\,,x) + \lambda\sigma(t\,,v(t\,,x))\dot{W}(t\,,x)\qquad\text{
	for $(t\,,x)\in(0\,,\infty)\times\T$},
\]
subject to $v(0)=\psi_0$, where
\[
	\sigma(t\,,w) := \e^{-t}\sigma\left( \e^t w\right)
	\qquad\text{for all $t\ge0$ and $w\in\R$}.
\]
Evidently, $\sigma(t)$ is Lipschitz continuous, uniformly for all $t\ge0$. In fact,
\[
	\sup_{t\ge0}
	|\sigma(t\,,w) - \sigma(t\,,z)| \le \lip_\sigma|w-z|
	\qquad\text{for all $w,z\in\R$}.
\]
Therefore, the proof of Theorem 1.2 of \cite{KKMS20} [with $\sigma$
replaced everywhere by $\sigma(t)$] works verbatim to imply
the existence of a real number $c>0$ -- independent of $\lambda$ --
such that
\[
	\limsup_{t\to\infty} \frac1t \log \|v(t)\|_{C(\T)}\le -c\lambda^2\qquad\text{a.s.}
\]
The above statements together show that, with probability one,
\[
	\limsup_{t\to\infty}\frac1t\log\sup_{x\in\T}\psi(t\,,x)
	\le \limsup_{t\to\infty}\frac1t\log\sup_{x\in\T}u(t\,,x)
	= 1 + \limsup_{t\to\infty}\frac1t\log\sup_{x\in\T}v(t\,,x)
	\le 1-c\lambda^2,
\]
which is $<0$ when $\lambda$ is sufficiently large. Since $\psi$ is positive,
this proves that $\psi(t)\to\mathbb{0}$ as $t\to\infty$ a.s.\
when $1-c\lambda^2<0$, when convergence takes place in $C(\T)$. Therefore, it remains to
prove that $\delta_{\mathbb{0}}$ is the only invariant measure
for \eqref{eq:RD} when $1-c\lambda^2<0$. Thankfully, this is easy to do as all of the harder
work is done by now. Indeed, suppose $\mu$ is invariant for \eqref{eq:RD} and
$1-c\lambda^2<0$ so that $\lim_{t\to\infty}\psi(t)=\mathbb{0}$ a.s.

Choose and fix $\varepsilon\in(0\,,1)$,
and consider the following relatively open subsets of $C(\T)$:
\[
	S_r := \left\{ \omega\in C_+(\T): \, \|\omega\|_{C(\T)} > r\right\},\quad
	L_R :=  \left\{ \omega\in C_+(\T): \, \|\omega\|_{C(\T)} < R\right\}
	\qquad\text{for all $r,R>0$}.
\]
We may select $R>0$ large enough to insure that
$\mu(L_R^c)\le \varepsilon$. 
Next, we use the notation of Markov process theory to write, for every $r,t>0$,
\[
	\mu(S_r)  =\int \P_{\psi_0}\{\psi(t)\in S_r\}\,\mu(\d\psi_0)
	\le \varepsilon + \int_{L_R}\P_{\psi_0}\{\psi(t)\in S_r\}\,\mu(\d\psi_0).
\]
Let $\phi$ be the solution to \eqref{eq:RD} starting from $\psi(0)= R\mathbb{1}$,
using the same noise that was used for $\psi$.
Lemma \ref{lem:comparison} implies that
$\phi(t\,,x) \ge \psi(t\,,x)$ for all $t\ge0$ and $x\in\T$, $\P_{\psi_0}$-a.s.\
for every $\psi_0\in L_R$. In other words, continuing to write using Markov process
theory notation, we have
\[
	\mu(S_r)  \le \varepsilon + \mu(L_R)\P_{R\mathbb{1}}\{\psi(t)\in S_r\}
	\le \varepsilon + \P_{R\mathbb{1}}\{\psi(t)\in S_r\},
\]
for all $r,t>0$.
Since $R\mathbb{1}\in C_{>0}(\T)$, the portion of part 2 that we proved already
tells us that $\lim_{t\to\infty}\psi(t)=\mathbb{0}$, $\P_{R\mathbb{1}}$-a.s.
In particular, $\P_{R\mathbb{1}}\{\psi(t)\in S_r\}\to0$ as $t\to\infty$.
Because $\mu(S_r)$ does not depend on $(t\,,\varepsilon)$, it follows that
$\mu(S_r)=0$ for every $r>0$.
This proves that $\mu=\delta_{\mathbb{0}}$, and completes the proof
of Theorem \ref{th:RD}.\qed\\

We conclude this section with a proof of Remark \ref{rem:RD}.
Thanks to \eqref{eq:inv-meas-moments} and Proposition
\ref{pr:tight},
\[
	\int\|\omega\|_{C^\alpha(\T)}^k\,\mu_+(\d\omega) \le 
	L_1^k\left( \sqrt{k}\, \mathcal{R}(k) + [\mathcal{R}(m_0k)]^{m_0}\right)^k,
\]
for all $k\ge 2$, where $\mathcal{R}$ is the function defined in Lemma \ref{lem:moment},
$L_1$ is described in Proposition \ref{pr:tight},
and $m_0$ comes from hypothesis {\bf (F3)} from the beginning portions
of this paper. Now we appeal to Example \ref{ex:R:1} to see that,
in the context of Remark \ref{rem:RD}, there exists a constant $L>0$ such that
\[
	\int\|\omega\|_{C^\alpha(\T)}^k\,\mu_+(\d\omega) \le 
	L^k k^{2(1+\nu) k/\nu},\quad\text{uniformly for all $k\ge2$}.
\]
This and Stirling's formula together imply Remark \ref{rem:RD}.\qed

\section{On the support of $\mu_+$}\label{sec:support}

For the remainder of the paper we assume that $\lambda\in(0\,,\lambda_0)$,
so that Theorem \ref{th:RD} ensures the existence of a
non-trivial invariant measure $\mu_+$. We have seen already that
$\mu_+$ has finite moments on $C^\alpha(\T)$. In this section we derive a
few additional properties of $\mu_+$. 

Throughout this section, we write
$f\lesssim g$ for two nonnegative, real-valued
functions $f$ and $g$ when there exists a number $c>0$
such that $f(x)\le cg(x)$ uniformly for all $x$ in the common domain of
definition of $f$ and $g$.

We have seen already that $\mu_+$ lives on the strictly positive functions
in $C(\T)$. Our first result is a quantitative bound that complements this fact.

\begin{proposition}\label{pr:lower-tail}
	$\mu_+\{ \omega\in C_+(\T):\,
	\inf_{x\in\T}\omega(x) \le \varepsilon\} \lesssim\varepsilon^{1/4}$
	for all $\varepsilon\in(0\,,1)$.
\end{proposition}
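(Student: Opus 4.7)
The plan is to repackage the random-walk bounds of Section~5 together with the ergodic theorem for $\mu_+$ that was established in Theorem~\ref{th:RD}(1)(d). No genuinely new analytical input is needed; the main task is bookkeeping to relate $\mu_+$ to the Ces\`aro averages under $\P_{\mathbb{1}}$ that we already control almost surely.

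First I would write, for every Borel set $\Gamma\subset C_+(\T)$ and every $T>0$,
\[
	\xi_T^\Gamma := \frac1T\int_0^T \bm{1}_\Gamma(\psi(t))\,\d t\in[0\,,1],
\]
where $\psi$ is the solution of \eqref{eq:RD} started from $\psi_0=\mathbb{1}$. Theorem~\ref{th:RD}(1)(d) (equivalently Corollary~\ref{co:ergodic}) yields, for every such $\Gamma$,
\[
	\mu_+(\Gamma) = \lim_{T\to\infty}\E_{\mathbb{1}}\left[\xi_T^\Gamma\right].
\]

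Next I would specialize to $\Gamma_\varepsilon := \{\omega\in C_+(\T):\, \inf_{x\in\T}\omega(x)<\varepsilon\}$ for $\varepsilon\in(0\,,1/8)$, and combine the two a.s.\ bounds that were proved in the random-walk analysis: inequality~\eqref{eq:lowertail1} yields
\[
	\limsup_{T\to\infty}\xi_T^{\Gamma_\varepsilon}
	\le \frac{2\|\bar{\ell}_1\|_2^2}{\|\underline{\ell}_1\|_1^2}
	\sqrt{\limsup_{m\to\infty}\frac1m\sum_{j=0}^{m-1}\bm{1}_{\{X_{j+1}\le -|\log_2(8\varepsilon)|\}}}
	\qquad\text{$\P_{\mathbb{1}}$-a.s.,}
\]
and the random-walk tail bound~\eqref{I(X)} (applied with $k=|\log_2(8\varepsilon)|$, which is permissible once $\varepsilon$ is small enough) gives
\[
	\limsup_{m\to\infty}\frac1m\sum_{j=0}^{m-1}\bm{1}_{\{X_{j+1}\le -|\log_2(8\varepsilon)|\}}
	\le 17\cdot 2^{M/2}\sqrt{8\varepsilon}\qquad\text{$\P_{\mathbb{1}}$-a.s.}
\]
Taking square roots and using that $M$ is a fixed negative integer, these two bounds together show that there exists a non-random constant $C>0$ such that
\[
	\limsup_{T\to\infty}\xi_T^{\Gamma_\varepsilon}\le C\varepsilon^{1/4}
	\qquad\text{$\P_{\mathbb{1}}$-a.s.\ for all $\varepsilon\in(0\,,1/8)$}.
\]

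Finally, since $\xi_T^{\Gamma_\varepsilon}\in[0\,,1]$, Fatou's lemma applied to $1-\xi_T^{\Gamma_\varepsilon}$ allows the a.s.\ $\limsup$ bound above to be transferred to expectations:
\[
	\limsup_{T\to\infty}\E_{\mathbb{1}}\left[\xi_T^{\Gamma_\varepsilon}\right]\le
	\E_{\mathbb{1}}\left[\limsup_{T\to\infty}\xi_T^{\Gamma_\varepsilon}\right]\le C\varepsilon^{1/4}.
\]
Combining with the ergodic identity of the first step gives $\mu_+(\Gamma_\varepsilon)\le C\varepsilon^{1/4}$. Replacing $\varepsilon$ by $2\varepsilon$ (and absorbing the harmless factor into the implied constant) yields
\[
	\mu_+\left\{\omega\in C_+(\T):\,\inf_{x\in\T}\omega(x)\le\varepsilon\right\}
	\lesssim \varepsilon^{1/4}\qquad\text{for all $\varepsilon\in(0\,,1)$,}
\]
which is the desired bound; the residual range $\varepsilon\in[1/16\,,1)$ is trivial since probabilities are bounded by one. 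I do not anticipate a serious obstacle here: every ingredient is in place, and the only subtlety is the Fatou step, which works precisely because the indicator is bounded.
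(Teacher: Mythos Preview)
Your proof is correct and follows essentially the same route as the paper: both invoke \eqref{eq:lowertail1} and \eqref{I(X)} from the random-walk argument, then pass to $\mu_+$ via the ergodic theorem. You have simply made explicit the Fatou step and the appeal to Theorem~\ref{th:RD}(1)(d) that the paper leaves implicit in the phrase ``Apply \eqref{I(X)} to deduce the result.''
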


It is in fact possible to adapt the proof to see that for every $\theta\in(0\,,1)$
there exists $\lambda_\theta$ such that if $\lambda\in(0\,,\lambda_\theta)$, then
\[
	\mu_+\left\{ \omega\in C_+(\T):\,
	\inf_{x\in\T}\omega(x) \le \varepsilon\right\}\lesssim\varepsilon^\theta
	\qquad\text{for all $\varepsilon\in(0\,,1)$}.
\]
We skip the details and prove only Proposition \ref{pr:lower-tail}.

\begin{proof}
	We plan to prove that $\mu_+(\Gamma_\varepsilon)\lesssim \varepsilon^{1/4}$
	uniformly for all $\varepsilon\in(0\,,1)$, where
	\[
		\Gamma_\varepsilon := \left\{ \omega\in C_+(\T):\,\inf_{x\in\T}\omega(x) \le \varepsilon\right\}.
	\]
	According to the random walk argument [see \eqref{eq:lowertail1} ],
	with probability one,
	\[
		\limsup_{T\to\infty}\frac1T\int_0^T\bm{1}_{\Gamma_\varepsilon}(\psi(t))\,\d t
		\lesssim \sqrt{\limsup_{m\to\infty}
		\frac1m\sum_{j=0}^{m-1}\bm{1}_{\{X_{j+1}\le-|\log_2(8\varepsilon)|\}}}.
	\]
	Apply \eqref{I(X)} to deduce the result.
\end{proof}

From now on, we assume additionally that
\begin{equation}\label{(F4)}
	F'(x) = O(x^{m_0-1})\qquad\text{as $x\to\infty$}.
\end{equation}
This condition implies {\bf (F3)}, and is only a little stronger than
{\bf (F3)} for many examples. Recall that $\mu_+(C^\alpha(\T))=1$
for every $\alpha\in(0\,,1/2)$. The following shows that
$\mu_+$ does not charge the critical case.

\begin{theorem}\label{th:mu_+(C^1/2)}
	For $\mu_+$-almost all $\omega\in C(\T)$,
	\begin{equation}\label{eq:mu_+(C^1/2)}\begin{split}
		\limsup_{r\downarrow0}\frac{1}{\sqrt{r\log(1/r)}}
			\adjustlimits\sup_{|y|<r}\sup_{x\in\T}|\omega(x+y)
			-\omega(x)| &= \lambda \sup_{x\in\T}|\sigma(\omega(x))|,\\
	\liminf_{r\downarrow0} \sqrt{\frac{16\log(1/r)}{\pi^2 r}}
		\adjustlimits\inf_{|y|<r}\sup_{x\in\T}|\omega(x+y)-\omega(x)|
		&= \lambda \sup_{x\in\T}|\sigma(\omega(x))|.
	\end{split}\end{equation}
	In particular, $\mu_+(C^{1/2}(\T))=0$.
\end{theorem}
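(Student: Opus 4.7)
The plan is to exploit the invariance of $\mu_+$ to reduce the question to the spatial regularity of $\psi(t_0,\cdot)$ at a single fixed $t_0>0$, and then to show that the spatial oscillations of $\psi(t_0,\cdot)$ are controlled exclusively by the stochastic-convolution term in the mild formulation. I shall take $t_0=1$ throughout.

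By Theorem \ref{th:RD}(1)(b), $\mu_+$ is invariant for \eqref{eq:RD}. After enlarging the probability space, start \eqref{eq:RD} with initial condition $\psi_0\sim\mu_+$, independent of the driving noise; then $\psi(1,\cdot)\sim\mu_+$. Consequently it suffices to prove the two identities in \eqref{eq:mu_+(C^1/2)} almost surely with $\omega$ replaced by $\psi(1,\cdot)$. Recalling \eqref{eq:mild} and \eqref{I}, write
\[
\psi(1,x) = D(x) + \lambda\,\mathcal{I}(1,x), \qquad D(x):=(\mathcal{P}_1\psi_0)(x) + \int_{(0,1)\times\T}p_{1-s}(x,y)V(\psi(s,y))\,ds\,dy.
\]
Hypothesis \eqref{(F4)}, together with the moment estimates in Proposition \ref{pr:tight} and the temporal-continuity bounds of Proposition \ref{pr:temporal:cont}, ensure that $\sup_{s\in[0,1]}\|V(\psi(s,\cdot))\|_{C(\T)}$ is a.s.\ finite with all finite moments. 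Combining this with Lemma \ref{lem:p-p} yields the almost-sure bound
\[
\sup_{x,z\in\T,\,|x-z|<r}|D(x)-D(z)| = O\bigl(r\log_+(1/r)\bigr),
\]
which is of strictly lower order than both $\sqrt{r\log(1/r)}$ and $\sqrt{r/\log(1/r)}$ as $r\downarrow 0$. Hence $D$ makes no contribution to either side of \eqref{eq:mu_+(C^1/2)}, and everything reduces to identifying the exact modulus of continuity of $\lambda\,\mathcal{I}(1,\cdot)$.

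The heart of the proof is to establish, for any fixed $t_0>0$, the two identities in \eqref{eq:mu_+(C^1/2)} with $\omega$ replaced by $\mathcal{I}(t_0,\cdot)$ and the prefactor $\sup_{x}|\sigma(\psi(t_0,x))|$ on the right. The key analytic fact, visible from \eqref{eq:p2} and a direct Poisson-summation computation, is that
\[
\int_0^{t_0}\!\int_\T \bigl(p_{t_0-s}(x,y)-p_{t_0-s}(z,y)\bigr)^2\,dy\,ds = \tfrac12|x-z| + O\bigl(|x-z|^2\bigr)
\]
as $|x-z|\downarrow 0$, so that the conditional variance of $\mathcal{I}(t_0,x+y)-\mathcal{I}(t_0,x)$ given $\{\psi(s,\cdot)\}_{s\le t_0}$ is approximately $\sigma^2(\psi(t_0,x))\cdot|y|/2$. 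Locally near any anchor $x_0\in\T$, the increment $\mathcal{I}(t_0,x_0+\cdot)-\mathcal{I}(t_0,x_0)$ is therefore expected to behave like $\sigma(\psi(t_0,x_0))$ times the increment of the \emph{Gaussian} stochastic convolution $\widetilde{\mathcal{I}}(t_0,x):=\int_0^{t_0}\!\int_\T p_{t_0-s}(x,y)\,W(ds,dy)$, whose spatial increment at scale $y$ is a centred Gaussian of variance $|y|/2$. For this genuinely Gaussian field the classical L\'evy upper modulus, together with the classical Chung-type small-ball identity
\[
\P\Bigl(\sup_{|y|<r}|\beta(y)|<\epsilon\Bigr)\sim\frac{4}{\pi}\exp\!\Bigl(-\frac{\pi^2 r}{8\epsilon^2}\Bigr)
\]
for Brownian motion $\beta$, coupled with a union/covering argument over a grid of cardinality $\sim 1/r$ on $\T$, produce respectively the numerical normalizations $\sqrt{r\log(1/r)}$ and $\sqrt{16\log(1/r)/(\pi^2 r)}$, with the correct prefactor $\sup_x|\sigma(\psi(t_0,x))|$ appearing via the supremum over anchor points.

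The main obstacle will be the \emph{localization} step that replaces the $\psi$-dependent coefficient $\sigma(\psi(s,y))$ in $\mathcal{I}$ by a frozen value $\sigma(\psi(t_0,x_0))$ at each anchor $x_0$. To preserve the sharp constants the error between $\mathcal{I}(t_0,x_0+\cdot)-\mathcal{I}(t_0,x_0)$ and $\sigma(\psi(t_0,x_0))\bigl(\widetilde{\mathcal{I}}(t_0,x_0+\cdot)-\widetilde{\mathcal{I}}(t_0,x_0)\bigr)$ must be shown, uniformly in $x_0\in\T$, to be $o(\sqrt{r\log(1/r)})$ and $o(\sqrt{r/\log(1/r)})$, respectively. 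This will be handled by exploiting the a.s.\ H\"older continuity of $\sigma\circ\psi(t_0,\cdot)$ (coming from the Lipschitz property of $\sigma$ and Proposition \ref{pr:tight}), the $L^k$-increment bounds of Lemma \ref{lem:cont}, and a conditional sub-Gaussian tail estimate via Burkholder--Davis--Gundy followed by dyadic chaining in $(x,y)$. Once \eqref{eq:mu_+(C^1/2)} is established, the final assertion $\mu_+(C^{1/2}(\T))=0$ is immediate: any $\omega\in C^{1/2}(\T)$ would force the left-hand limsup in \eqref{eq:mu_+(C^1/2)} to vanish, whereas $\mathrm{L}_\sigma>0$ together with $\mu_+\{\mathbb{0}\}=0$ (and in fact $\mu_+(C_{>0}(\T))=1$) guarantees that $\lambda\sup_x|\sigma(\omega(x))|$ is $\mu_+$-a.s.\ strictly positive.
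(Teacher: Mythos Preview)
Your approach is correct and follows essentially the same strategy as the paper: reduce to the stochastic convolution, localize the coefficient $\sigma(\psi(s,y))$ to a frozen value $\sigma(\psi(t_0,z))$, and then invoke the sharp modulus results for the Gaussian field. Two differences are worth noting.

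First, the paper packages your ``main obstacle'' as a standalone estimate (Lemma~\ref{lem:localize}): for $t\ge 2$ the error $\psi(t,x)-\psi(t,z)-\lambda\sigma(\psi(t,z))\{\mathcal{J}(t,x)-\mathcal{J}(t,z)\}$ lies a.s.\ in $C^{3/4-\varepsilon}(\T)$, where $\mathcal{J}$ is your $\widetilde{\mathcal{I}}$. This is obtained by a careful three-region decomposition of the stochastic integral rather than the BDG-plus-chaining scheme you sketch; either route gives an exponent strictly above $1/2$, which is what is needed. More significantly, instead of redoing the L\'evy and Chung moduli for $\mathcal{J}$ via small-ball estimates and covering, the paper observes directly (via Poisson summation) that $\mathcal{J}(t,x)-\mathcal{J}(t,0)=\beta(x)/\sqrt{2}+\text{(a $C^\infty$ Gaussian process)}$ for a two-sided Brownian motion $\beta$, so the classical Brownian results apply verbatim. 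This is shorter and bypasses the delicate union-bound argument for the Chung modulus.

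Second, the transfer to $\mu_+$ is handled differently. You randomize the initial condition $\psi_0\sim\mu_+$ and read off the law of $\psi(1)$; the paper instead works with a deterministic $\psi_0$, proves the modulus identities for $\psi(t)$ at each fixed $t\ge 2$, and then passes to $\mu_+$ via the ergodic theorem (part~1(d) of Theorem~\ref{th:RD}). Your route is more direct but requires verifying that the relevant moment bounds (Propositions~\ref{pr:tight} and~\ref{pr:temporal:cont}, Lemma~\ref{lem:localize}) hold after integrating over $\psi_0\sim\mu_+$; this is fine since $\mu_+$ has all $C^\alpha$-moments, but it is an extra conditioning step. The paper's route avoids this at the cost of invoking the ergodic theorem.
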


Before we prove Theorem  \ref{th:mu_+(C^1/2)}
let us state a result about the fractal nature of the functions in
the support of $\mu_+$. Recall that the \emph{Hausdorff dimension} of
a Borel set $G\subset\T$ is
\[
	\dim_{_{\rm H}} (G) = \sup\left\{ s>0:\,
	I_s(m)<\infty\text{ for some probability measure $m$ on $G$}\right\},
\]
where $I_s(m)$ denotes the $s$-dimensional ``energy integral,''
\[
	I_s(m) := \iint \frac{m(\d x)\, m(\d y)}{|x-y|^s}.
\]
Then, we have the following property.

\begin{theorem}\label{th:dimh}
	Choose and fix an arbitrary Borel set $G\subset\T$. Then,
	\[
		\dim_{_{\rm H}} \omega(G) = 1\wedge 2\dim_{_{\rm H}}(G)
		\qquad\text{for $\mu_+$-almost all $\omega\in C(\T)$}.
	\]
\end{theorem}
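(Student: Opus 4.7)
The upper bound is a direct consequence of Theorem \ref{th:mu_+(C^1/2)}. Its first assertion in \eqref{eq:mu_+(C^1/2)} implies that $\mu_+$-a.s.\ $\omega \in C^\alpha(\T)$ for every $\alpha \in (0\,,1/2)$. Since a function that is H\"older of exponent $\alpha$ distorts Hausdorff dimension by a factor of at most $1/\alpha$, and $\omega(G) \subset \R$ has dimension at most one, this yields $\dim_{_{\rm H}} \omega(G) \le 1 \wedge \dim_{_{\rm H}}(G)/\alpha$ for every Borel $G \subset \T$. Letting $\alpha \uparrow 1/2$ along a countable sequence gives $\dim_{_{\rm H}}\omega(G) \le 1 \wedge 2\dim_{_{\rm H}}(G)$ simultaneously for every Borel $G$, $\mu_+$-a.s.

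For the lower bound, we follow the Frostman-energy scheme of \cite{McKean}. Fix $s \in (0\,,\dim_{_{\rm H}} G)$ together with a probability measure $m$ on $G$ satisfying $I_s(m) < \infty$, and fix any $s' \in (0\,,1 \wedge 2s)$. The central estimate to be proved is the small-ball bound
\begin{equation}\label{eq:small-ball-plan}
	\E_{\mu_+}\bigl[|\omega(x) - \omega(y)|^{-s'}\bigr] \le C_{s'}\, |x-y|^{-s'/2}
	\qquad\text{for all $x \neq y$ in $\T$},
\end{equation}
for some finite constant $C_{s'}$. Granted \eqref{eq:small-ball-plan}, Fubini's theorem gives
\[
	\E_{\mu_+}\bigl[I_{s'}(m \circ \omega^{-1})\bigr] \le C_{s'}\, I_{s'/2}(m) < \infty,
\]
which forces $\dim_{_{\rm H}} \omega(G) \ge s'$, $\mu_+$-a.s. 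Letting $s' \uparrow 1 \wedge 2\dim_{_{\rm H}}(G)$ along a countable sequence and intersecting the resulting null sets conclude the lower bound.

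The proof of \eqref{eq:small-ball-plan} exploits the invariance of $\mu_+$ under \eqref{eq:RD}: if $\psi$ solves \eqref{eq:RD} with a $\mu_+$-distributed initial profile, then $\psi(t) \sim \mu_+$ for every $t \ge 0$. Taking $t := r^2$ with $r := |x-y|$, the mild formulation yields the decomposition
\[
	\psi(r^2\,, x) - \psi(r^2\,, y) = H_r(x\,, y) + \lambda N_r(x\,, y),
\]
where $H_r$ gathers the heat-semigroup action on $\omega$ and the $V$-drift term, while $N_r$ is the stochastic integral
\[
	N_r(x\,, y) := \int_{(0, r^2)\times\T} \bigl[p_{r^2 - s}(x\,, z) - p_{r^2 - s}(y\,, z)\bigr] \sigma(\psi(s\,, z))\, W(\d s\,\d z).
\]
A direct calculation based on the Poisson summation formula (as in \eqref{eq:p2}) shows that
\[
	\int_0^{r^2}\d s \int_\T \d z\, \bigl[p_{r^2 - s}(x\,, z) - p_{r^2 - s}(y\,, z)\bigr]^2 \asymp r \qquad\text{as $r \downarrow 0$},
\]
which provides a matching lower bound of order $r$ complementing Lemma \ref{p-p:2}. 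Combining this with $\sigma^2(w) \ge {\rm L}_\sigma^2 w^2$ and the fact that $\inf_{s \le r^2, z \in \T} \psi(s\,, z)$ is bounded away from zero on a high-probability event (via Proposition \ref{pr:lower-tail} and the estimates of Proposition \ref{pr:tight}), we obtain $\langle N_r \rangle \ge c_0\, r$ on that event.

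The principal obstacle is to convert this lower bound on the quadratic variation of $N_r$ into a one-dimensional density bound of order $r^{-1/2}$ for the full increment $\psi(r^2, x) - \psi(r^2, y)$, despite the coupling between $H_r$ and $N_r$ through their common noise. The plan is to carry out this step by a Malliavin-calculus argument: compute the Malliavin derivative of $\psi(r^2, x) - \psi(r^2, y)$ with respect to $\dot W$, show that its Malliavin covariance is bounded below by $c\,r$ on the good event, and invoke standard density bounds for SPDEs (in the spirit of Bally-Pardoux and Sanz-Sol\'e) to conclude that $\psi(r^2, x) - \psi(r^2, y)$ has a density bounded by $C r^{-1/2}$ on that event. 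Integrating this density bound against $|\cdot|^{-s'}$, and handling the complementary event via the a priori moment estimates of Proposition \ref{pr:tight}, together yield \eqref{eq:small-ball-plan} for every $s' \in (0\,,1)$, completing the proof.
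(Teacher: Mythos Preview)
Your upper bound argument is correct and matches the paper's. Your lower bound correctly follows the Frostman--energy scheme, and the reduction to a small-ball bound of the type \eqref{eq:small-ball-plan} is sound in principle. The genuine gap is the Malliavin step, which you describe only as a ``plan.'' Several difficulties are swept under the rug there. First, the quadratic variation $\langle N_r\rangle$ of the martingale part is \emph{not} the Malliavin covariance of the increment $\psi(r^2,x)-\psi(r^2,y)$: the Malliavin derivative $D_{s,z}\psi$ carries the nonlinear feedback through $V'(\psi)$ and $\sigma'(\psi)$, so the covariance involves the linearized flow, not just the heat kernel. Second, density upper bounds of Bally--Pardoux type require control of \emph{negative moments} of the Malliavin covariance, not merely an almost-sure lower bound on a ``good event''; conditioning on such an event does not commute with the integration-by-parts formula. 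Third, $\sigma$ is only Lipschitz and $V$ is only locally Lipschitz with polynomial growth, so the standard smooth-coefficient Malliavin machinery does not apply off the shelf. None of these is obviously fatal, but together they make your sketch far from a proof.

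The paper avoids Malliavin calculus entirely. Its key device is the second-order localization estimate Lemma~\ref{lem:localize}, which shows that for $t\ge2$,
\[
\psi(t,x)-\psi(t,z)=\lambda\,\sigma(\psi(t,z))\bigl\{\mathcal{J}(t,x)-\mathcal{J}(t,z)\bigr\}+O\bigl(|x-z|^{3/4-\varepsilon}\bigr),
\]
where $\mathcal{J}$ is the \emph{Gaussian} free stochastic heat equation. Since $\mathrm{Var}(\mathcal{J}(t,x)-\mathcal{J}(t,z))\ge\tfrac12|x-z|$ and the remainder has a strictly larger H\"older exponent than $1/2$, the small-ball estimate reduces to the elementary Gaussian identity of Lemma~\ref{lem:Gauss}: for centered Gaussian $X$ and any shift $a\ge0$, $\E[(|X|-a)_+^{-s}]\lesssim(\mathrm{Var}\,X)^{-s/2}$. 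One then bounds the energy of the pushforward $m_\psi$ by $I_{s/2}(m)$ directly, with no density estimate for $\psi$ needed. Finally, rather than starting from $\mu_+$ and using invariance, the paper proves $\dim_{_{\rm H}}\psi(t,G)\ge 1\wedge 2\dim_{_{\rm H}}(G)$ for the process started at $\mathbb{1}$ and every $t\ge2$, and transfers this to $\mu_+$ via the ergodic theorem (part~1(d) of Theorem~\ref{th:RD}). This route is both complete and considerably lighter than the Malliavin program you propose.
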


Theorems \ref{th:mu_+(C^1/2)} and \ref{th:dimh} suggest that
the functions in the support of $\omega$ ``look'' like
Brownian paths. Of course, this cannot be interpretted too strongly
as $\mu_+$ is singular with respect to
Wiener measure $\mathbb{W}$ on $C(\T)$;
in fact, $\mathbb{W}(C_+(\T))=0,$ yet
$\mu_+(C_+(\T))=1$ by Theorem \ref{th:RD}. 

We begin the proofs of Theorems  \ref{th:mu_+(C^1/2)} and \ref{th:dimh}.
As a simple first step we offer the following real-variable consequence of \eqref{(F4)}.

\begin{lemma}\label{lem:V(x)-V(y)}
	There exists $c>0$ such that
	\[
		|V(x) - V(y) | \le c(1\vee x\vee y)^{m_0-1}|x-y|
		\qquad\text{for all $x,y\ge0$}.
	\]
\end{lemma}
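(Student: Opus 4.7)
The plan is to obtain the inequality by a direct mean-value-theorem argument on $V$, since the real content is encoded entirely in the growth bound \eqref{(F4)} on $F'$.

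First I would note that, by $\textbf{(F1)}$, $F\in C^2(\mathbb{R}_+)$, so $F'$ is continuous (hence bounded on each compact subset of $\mathbb{R}_+$), and that $\textbf{(F4)}$ gives us a constant $A>0$ and a threshold $R>0$ such that $F'(x)\le A x^{m_0-1}$ whenever $x\ge R$. Combining the two, I would produce a single constant $C>0$ for which
\[
	0 \le F'(x) \le C(1\vee x)^{m_0-1} \qquad\text{for every }x\ge 0,
\]
using the elementary identity $1\vee x^{m_0-1}=(1\vee x)^{m_0-1}$ (valid since $m_0>1$). This then yields a bound on $V'(x)=1-F'(x)$, namely $|V'(x)|\le (1+C)(1\vee x)^{m_0-1}$, again uniformly in $x\ge 0$.

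Next I would invoke the classical mean value theorem: for any $0\le x\le y$ (say) there exists $\xi\in[x\,,y]$ such that $V(y)-V(x)=V'(\xi)(y-x)$. Since $\xi\le y=x\vee y$, we have $(1\vee\xi)^{m_0-1}\le (1\vee x\vee y)^{m_0-1}$, and therefore
\[
	|V(x)-V(y)|\le (1+C)(1\vee x\vee y)^{m_0-1}|x-y|.
\]
Setting $c:=1+C$ completes the argument.

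There is no real obstacle here; the lemma is essentially a packaging of the two inputs $\textbf{(F1)}$ (local regularity of $F'$) and $\textbf{(F4)}$ (polynomial growth of $F'$ at infinity) through a single application of the mean value theorem. The only point to handle carefully is combining the ``bounded on compacts'' bound near the origin with the asymptotic bound at infinity to produce a single constant valid for all $x\ge 0$, which is why the factor $(1\vee x)^{m_0-1}$ rather than $x^{m_0-1}$ appears on the right-hand side.
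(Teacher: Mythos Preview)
Your proof is correct and follows essentially the same approach as the paper: both arguments bound $|V'|$ (equivalently, $1+|F'|$) by $c(1\vee x)^{m_0-1}$ using continuity of $F'$ on compacts together with \eqref{(F4)} at infinity, and then apply the mean value theorem (the paper writes this as the integral $\int_y^x |F'(w)|\,\d w$). There is no substantive difference.
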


\begin{proof}
	Without loss of generality, $x\ge y\ge0$, in which case,
	\[
		0\le |V(x) - V(y) |\le x-y + \int_y^x |F'(w)| \,\d w 
		\le \left[1+\sup_{w\le x} |F'(w)|\right](x-y).
	\]
	This yields the result.
\end{proof}

From now on we consider \eqref{eq:RD} with initial value $\psi(0)=\mathbb{1}$.
Also, let $\mathcal{J}$ denote the solution to the
following linearized version of \eqref{eq:RD} with $\lambda=1$,
\[
	\partial_t \mathcal{J} = \partial^2_x\mathcal{J} + \dot{W},
\]
subject to $\mathcal{J}(0)=\mathbb{0}$. That is,
\begin{equation}\label{J}
	\mathcal{J}(t\,,x) = \int_{(0,t)\times\T} p_{t-s}(x\,,y)\,W(\d s\,\d y)
\end{equation}
We have the following second-order regularity result. Related results have been 
found by \cite{FoondunKM} and \cite{HairerPardoux}.

\begin{lemma}\label{lem:localize}
	For every $\varepsilon\in(0\,,3/4)$ and $k\ge2$,
	\[
		\sup_{t\ge 2}
		\E\left( \sup_{\substack{x,z\in\T\\
		x\neq z}} \frac{\left| \psi(t\,,x) - \psi(t\,,z)-\lambda\sigma(\psi(t\,,z))\{\mathcal{J}(t\,,x)
		-\mathcal{J}(t\,,z)\}\right|^k }{|x-z|^{\left(\frac34-\varepsilon\right)k}}\right)<\infty.
	\]
\end{lemma}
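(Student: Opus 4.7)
The plan is to establish the pointwise $L^k$-bound
\begin{equation*}
\bigl\| \psi(t,x)-\psi(t,z)-\lambda\sigma(\psi(t,z))[\mathcal{J}(t,x)-\mathcal{J}(t,z)]\bigr\|_k \lesssim |x-z|^{1-\varepsilon'}
\end{equation*}
uniformly in $t\ge 2$, for any small $\varepsilon'>0$ and every $k\ge2$, and then to invoke the quantitative Kolmogorov continuity theorem (as in \citet[Proposition 5.8]{KKMS20}) to upgrade this pointwise estimate to the stated sup-over-pairs bound with H\"older exponent $3/4-\varepsilon$ by choosing $k$ large enough that $1-\varepsilon'-1/k>3/4-\varepsilon$.

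Since $\psi_0=\mathbb{1}$ the semigroup terms $(\mathcal{P}_t\mathbb{1})(x)-(\mathcal{P}_t\mathbb{1})(z)$ cancel, and the mild formulation \eqref{eq:mild} decomposes the residue as $T_V+T_\sigma$, where, using $\int_\T[p_{t-s}(x,y)-p_{t-s}(z,y)]\,\d y=0$ to freely subtract $V(\psi(t,z))$,
\begin{equation*}
T_V:=\int_0^t\int_\T[p_{t-s}(x,y)-p_{t-s}(z,y)]\bigl[V(\psi(s,y))-V(\psi(t,z))\bigr]\,\d s\,\d y,
\end{equation*}
and $T_\sigma$ is the analogous stochastic residue. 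For $T_V$, Lemma \ref{lem:V(x)-V(y)}, H\"older's inequality, the uniform moments from Lemma \ref{lem:moment}, the spatial regularity of Lemma \ref{lem:cont}, the temporal regularity of Proposition \ref{pr:temporal:cont}, and the $L^1$-heat-kernel estimate of Lemma \ref{lem:p-p} combine to yield $\|T_V\|_k\lesssim|x-z|\log_+(1/|x-z|)$.

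Next, split $T_\sigma=T_\sigma'+T_4$, where
\begin{equation*}
T_\sigma':=\lambda\int_0^t\int_\T[p_{t-s}(x,y)-p_{t-s}(z,y)]\bigl[\sigma(\psi(s,y))-\sigma(\psi(s,z))\bigr]\,W(\d s,\d y)
\end{equation*}
has an $\mathcal{F}_s$-adapted integrand, while
\begin{equation*}
T_4:=\lambda\int_0^t\int_\T[p_{t-s}(x,y)-p_{t-s}(z,y)]\sigma(\psi(s,z))\,W(\d s,\d y)-\lambda\sigma(\psi(t,z))[\mathcal{J}(t,x)-\mathcal{J}(t,z)]
\end{equation*}
is the algebraic combination of two well-defined quantities whose formal joint integrand $\sigma(\psi(s,z))-\sigma(\psi(t,z))$ is $\mathcal{F}_t$-measurable but not $\mathcal{F}_s$-adapted. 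For $T_\sigma'$, Burkholder--Davis--Gundy, Minkowski, $\lip_\sigma$, and Lemma \ref{lem:cont}, combined with the weighted heat-kernel estimate
\begin{equation*}
\int_0^\infty\int_\T[p_u(x,y)-p_u(z,y)]^2|y-z|^{1-2\delta}\,\d u\,\d y \lesssim |x-z|^{2-2\delta},
\end{equation*}
obtained from \eqref{eq:p2} via a near-/far-field split in $u$, give $\|T_\sigma'\|_k\lesssim|x-z|^{1-\delta}$.

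The main obstacle is $T_4$: since BDG is unavailable for its non-adapted formal integrand, a crude triangle inequality supplies only the insufficient bound $\|T_4\|_k\lesssim|x-z|^{1/2}$. I split the time integral at $s^*:=t-|x-z|^{1/2}$. On $[0,s^*]$, the gradient bound $|p_u(x,y)-p_u(z,y)|\lesssim|x-z|\,u^{-1/2}\,p_{cu}(x,y)$ (valid for $u\ge|x-z|^2$) sharpens Lemma \ref{p-p:2} to $\int_0^{s^*}\int_\T[p_{t-s}(x,y)-p_{t-s}(z,y)]^2\,\d s\,\d y\lesssim|x-z|^{7/4}$; so BDG on the adapted piece and Cauchy--Schwarz on the product piece (together with the uniform moments of $\sigma(\psi(t,z))$) each deliver $O(|x-z|^{7/8})$ with no cancellation needed. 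On the near-field $[s^*,t]$ the temporal estimate $\|\sigma(\psi(s,z))-\sigma(\psi(t,z))\|_k\lesssim|t-s|^{1/4-\delta}$ from Proposition \ref{pr:temporal:cont} must be converted into genuine cancellation: I would reinterpret the formal Walsh integral through the Skorokhod integral of anticipating calculus and apply a Meyer-type $L^k$-inequality, whose It\^o-isometry contribution $\int_{s^*}^t\int_\T[p_{t-s}(x,y)-p_{t-s}(z,y)]^2\,\mathrm{E}[(\sigma(\psi(s,z))-\sigma(\psi(t,z)))^2]\,\d s\,\d y\lesssim|x-z|^{2-\varepsilon'}$ is controlled directly, and whose Malliavin-trace correction, via the SPDE representation $D_{r,w}\psi(s,z)\approx\lambda p_{s-r}(z,w)\sigma(\psi(r,w))$ for $r<s$, is controlled using the heat-kernel smoothing on the inner $(r,w)$ integral. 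This anticipating control of the near-field of $T_4$ is the main technical hurdle; once it is in place, combining all estimates yields $\|T_V+T_\sigma'+T_4\|_k\lesssim|x-z|^{1-\varepsilon'}$, and the quantitative Kolmogorov step completes the proof.
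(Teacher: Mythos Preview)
Your treatment of the drift term $T_V$ and of the spatially-adapted piece $T_\sigma'$ is correct and essentially matches the paper. The genuine gap is in your handling of $T_4$: you recognize that the formal integrand $\sigma(\psi(s,z))-\sigma(\psi(t,z))$ is not $\mathcal F_s$-adapted, and you propose to address this via the Skorokhod integral and Meyer-type inequalities. That route is not wrong in spirit, but you have only sketched it, and carrying it out rigorously here would require (i) establishing Malliavin differentiability of $\psi$ for an SPDE whose drift $V$ is only locally Lipschitz with polynomial growth, (ii) justifying your heuristic $D_{r,w}\psi(s,z)\approx\lambda p_{s-r}(z,w)\sigma(\psi(r,w))$, which ignores the $V'(\psi)$-correction in the linearized equation for the Malliavin derivative, and (iii) controlling the full trace term in Meyer's inequality, not just its It\^o-isometry part. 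None of this apparatus is available in the paper, and developing it would be a substantial detour.

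The paper sidesteps the anticipation problem entirely with a simple algebraic trick: instead of freezing $\sigma$ at the terminal time $t$, freeze it at the \emph{adapted} time $t-|x-z|$. Concretely, on the near-field $A_3=(t-|x-z|,t)\times\{|y-z|\le 2|x-z|^\gamma\}$ one writes
\[
\sigma(\psi(s,y))-\sigma(\psi(t,z))
=\bigl[\sigma(\psi(s,y))-\sigma(\psi(t-|x-z|,z))\bigr]+\bigl[\sigma(\psi(t-|x-z|,z))-\sigma(\psi(t,z))\bigr].
\]
The first bracket is $\mathcal F_s$-measurable for every $s\ge t-|x-z|$, so BDG applies directly and, combined with the space--time H\"older regularity of $\psi$ (Lemma~\ref{lem:cont} and Proposition~\ref{pr:temporal:cont}), gives $\lesssim |x-z|^{(1+\gamma)/2}$. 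The second bracket has no $(s,y)$-dependence, so it factors \emph{outside} the stochastic integral; the remaining integral $\int_{A_3}[p_{t-s}(x,y)-p_{t-s}(z,y)]\,W(\d s\,\d y)$ has a deterministic integrand and is therefore Gaussian. H\"older's inequality separates the two factors, Proposition~\ref{pr:temporal:cont} bounds the bracket by $|x-z|^{1/4-\delta}$ in $L^{2k}$, and the Gaussian integral is $\lesssim |x-z|^{1/2}$ in $L^{2k}$, yielding $\lesssim |x-z|^{3/4-\delta}$. The far-field pieces (time $<t-|x-z|$, or space $|y-z|>2|x-z|^\gamma$) are handled by direct heat-kernel estimates without any cancellation, exactly as in your far-field argument. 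This gives the pointwise bound $\lesssim|x-z|^{(1+\gamma)/2}$ for any $\gamma\in(0,1/2)$, after which your Kolmogorov step finishes the proof.

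In short: replace your Skorokhod/Meyer program for $T_4$ by the freeze-at-$t-|x-z|$ decomposition, and the argument becomes entirely elementary.
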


\begin{proof}
	 Throughout,  let us choose and fix $t\ge 2$ and $k\ge2$. Thanks to \eqref{eq:mild},
	\begin{equation}\label{psi-psi}\begin{split}
		&\psi(t\,,x) - \psi(t\,,z) \\
		&\hskip.8in= \int_{(0,t)\times\T} \left[ p_{t-s}(x\,,y) - p_{t-s}(z\,,y)
		\right] V(\psi(s\,,y))\,\d s\,\d y + \lambda\left\{
		\mathcal{I}(t\,,x) - \mathcal{I}(t\,,z) \right\},
	\end{split}\end{equation}
	almost surely for every $x,z\in\T$. We estimate the two quantities on the right-hand
	side of \eqref{psi-psi} separately and in order. 
	
	First of all, note that the $L^k(\Omega)$-norm of the first term on the right-hand
	side of \eqref{psi-psi} can be written as
	\begin{align*}
		&\left\|\int_{(0,t)\times\T} \left[ p_{t-s}(x\,,y) - p_{t-s}(z\,,y)
			\right] V(\psi(s\,,y))\,\d s\,\d y - 
			\int_{(0,t)\times\T} \left[ p_{t-s}(x\,,y) - p_{t-s}(z\,,y)
			\right] V(\psi(t\,,x )) \,\d s\,\d y\right\|_k\\
		&\le\int_{(0,t)\times\T}\left| p_{t-s}(x\,,y) - p_{t-s}(z\,,y)\right|
			\|V(\psi(s\,,y)) - V(\psi(t\,,x))\|_k\,\d s\,\d y\\
		&\lesssim\int_{(0,t)\times\T}\left| p_{t-s}(x\,,y) - p_{t-s}(z\,,y)\right|
			\left\|\{ \psi(s\,,y) - \psi(t\,,x)\}
			\left( 1 + |\psi(s\,,y)| + |\psi(t\,,x)|\right)^{m_0-1}\right\|_k\,\d s\,\d y;
	\end{align*}
	see Lemma \ref{lem:V(x)-V(y)} for the last line. We emphasize that the implied
	constant does not depend on the choice of $(t\,,x\,,z)\in[2\,,\infty)\times\T^2$.
	In any case, it follows readily
	from Lemma \ref{lem:cont} and Proposition \ref{pr:temporal:cont} that for any $\alpha \in (0\,, 1/2)$ and $\beta \in (0\,, 1/4)$,
	\begin{equation}\label{eq:long:p-p}\begin{split}
		&\left\|\int_{(0,t)\times\T} \left[ p_{t-s}(x\,,y) - p_{t-s}(z\,,y)
			\right] V(\psi(s\,,y))\,\d s\,\d y\right\|_k\\
		&\hskip1.7in\lesssim\int_{(0,t)\times\T}\left| p_{t-s}(x\,,y) - p_{t-s}(z\,,y)\right|
			\left((t-s)^{\beta} + |x-y|^{\alpha}\right)\,\d s\,\d y,
	\end{split}\end{equation}
	where once again the implied
	constant does not depend on the choice of $(t\,,x\,,z)\in[2\,,\infty)\times\T^2$.
	By \eqref{p-p},
	\[
		|p_{t-s}(x\,,y) - p_{t-s}(z\,,y)| (t-s)^{\beta}\lesssim(t-s)^{\beta}\sum_{k=1}^\infty
		\e^{-\pi^2 k^2(t-s)}\left( |x-z|k\wedge 1\right),
	\]
	for similar constant dependencies as above.
	Therefore,
	\[
		\int_{(0,t)\times\T}
		|p_{t-s}(x\,,y) - p_{t-s}(z\,,y)| (t-s)^{\beta}\,\d s\,\d y
		\lesssim\int_0^ts^{\beta}\,\d s\sum_{k=1}^\infty \left( |x-z|k\wedge 1\right)
		\e^{-\pi^2 k^2s},
	\]
	valid uniformly for all $(t\,,x\,,z)\in[2\,,\infty)\times\T^2$.
	We split the above sum in two parts according to whether or not $k\le 1/|x-z|$.
	First,
	\begin{align*}
		\int_0^t s^{\beta}\,\d s\sum_{\substack{k\in\N:\\
			k \le 1/ |x-z|}} \left( |x-z|k\wedge 1\right)
			\e^{-\pi^2 k^2s}
			&= |x-z|\int_0^t s^{\beta}\,\d s\sum_{k=1}^\infty \e^{-\pi^2 k^2s}\\
		&= \frac{|x-z|}{\pi^{2\beta+2}}
			\sum_{k=1}^\infty  k^{-2-2\beta} \int_0^{\pi^2 k^2t}
			r^{\beta }\e^{-r}\,\d r \lesssim |x-z|.
	\end{align*}
	Next, we observe that
	\begin{align*}
		\int_0^t s^{\beta}\,\d s\sum_{\substack{k\in\N:\\
			k > 1/ |x-z|}} \left( |x-z|k\wedge 1\right)
			\e^{-\pi^2 k^2s} &=
			\int_0^t s^{\beta}\,\d s\sum_{\substack{k\in\N:\\
			k > 1/ |x-z|}}  \e^{-\pi^2 k^2s}
			\le  \int_0^t s^{\beta}\,\d s \int_{1/|x-z|}^\infty\d w\ \e^{-\pi^2 w^2s}\\
		&\lesssim\int_0^t s^{\beta-1/2}\exp\left( -\frac{\pi^2 s}{|x-z|^2}\right)\d s
			\lesssim |x-z|^{1+2\beta} \lesssim|x-z|.
	\end{align*}
	Combine to deduce the uniform estimate
	\[
		\int_{(0,t)\times\T}
		|p_{t-s}(x\,,y) - p_{t-s}(z\,,y)| (t-s)^{\beta}\,\d s\,\d y
		\lesssim |x-z|,
	\]for all $\beta \in (0,1/4)$. 
	Therefore, Lemma \ref{lem:p-p} and \eqref{eq:long:p-p} together yield
	\begin{equation}\label{Term1}
		\left\|\int_{(0,t)\times\T} \left[ p_{t-s}(x\,,y) - p_{t-s}(z\,,y)
		\right] V(\psi(s\,,y))\,\d s\,\d y\right\|_k
		\lesssim |x-z| \log_+(1/|x-z|),
	\end{equation}
	where the implied
	constant does not depend on the choice of $(t\,,x\,,z)\in[2\,,\infty)\times\T^2$.
	
We now define 
\begin{align*}
A_1&:=(0, t-|x-z|)\times \T,\\
A_2&:=(t-|x-z|, t) \times\left[z-2|x-z|^\gamma, z+2|x-z|^\gamma\right]^{c},\\
A_3&:=(t-|x-z|, t)\times \left[z-2|x-z|^\gamma, z+2|x-z|^\gamma\right],
\end{align*}
where $\gamma \in (0\,,1/2)$ is a fixed constant. Here, we assume that $|x-z|$ is small enough so that $(z-2|x-z|^\gamma, z+2|x-z|^\gamma) \subset (-1\,, 1)$. If $|x-z|$ is not small --
i.e., if $|x-z|\geq c$ for some constant $c>0$ -- 
then we may use  Lemma \ref{lem:cont} (more precisely, the proof of of Lemma \ref{lem:cont}) to prove  the lemma. Since  we assume $t\geq 2$, we have $0<t-|x-z|<t$. Therefore, we may define 
\[
\mathcal{I}(t\,,x) - \mathcal{I}(t\,,z) -
			\sigma(\psi(t\,,z))\{\mathcal{J}(t\,,x) - \mathcal{J}(t\,,z)\}:=\sum_{i=1}^5 Q_i,
\]			
where
\begin{align*}
&Q_1:= \int_{A_1}  \left[ p_{t-s}(x\,,y) - p_{t-s}(z\,,y)\right]
			\sigma(\psi(s\,,y))\, W(\d s\,\d y)  \\
&Q_2:=  \int_{A_2} \left[ p_{t-s}(x\,,y) - p_{t-s}(z\,,y)\right]
			\sigma(\psi(s\,,y))\, W(\d s\,\d y) \\ 
&Q_3:= \int_{A_3} \left[ p_{t-s}(x\,,y) - p_{t-s}(z\,,y)\right]
			\left[\sigma(\psi(s\,,y)) - \sigma(\psi(t-|x-z|\,,z)\right]   \, W(\d s\,\d y)  \\
&Q_4:=  \left[\sigma(\psi(t-|x-z|\,,z)- \sigma(\psi(t\,,z))\right]   \int_{A_3} \left[ p_{t-s}(x\,,y) - p_{t-s}(z\,,y)\right]\, W(\d s\,\d y)   \\
&Q_5:=  \sigma(\psi(t\,,z))   \int_{A_1\cup A_2} \left[ p_{t-s}(x\,,y) - p_{t-s}(z\,,y)\right]\, W(\d s\,\d y) . 
\end{align*}
We repeatedly use $|\sigma(a)|\leq \lip_\sigma |a|$ and the boundedness of the moments of $\psi$.   First consider $Q_1$.	We can appeal to the Burkholder-Davis-Gundy inequality (see the proof of Theorem \ref{th:exist-unique}) and \eqref{eq:p2}
	to see that
	\begin{align*}
		\|Q_1\|_k^2 &\lesssim \int_0^{t-|x-z|}\d s\int_{\T}\d y\
			\left[ p_{t-s}(x\,,y) - p_{t-s}(z\,,y)\right]^2\\
		&\propto \sum_{n=1}^\infty\left[ 1-\cos(\pi n|x-z|)\right]\int_{|x-z|}^t
			\e^{-2\pi^2 n^2 s}\,\d s\\
		&\lesssim \sum_{n=1}^\infty\left( |x-z|^2n^2\wedge1\right)\left( 
			\frac{\e^{-2\pi^2 n^2|x-z|}}{n^2}\right)\\
		&= \sum_{n=1}^\infty\left( |x-z|^2  \wedge \frac{1}{n^2}\right)\e^{-2\pi^2 n^2|x-z|}\\
		&= |x-z|^2\sum_{n\le 1/|x-z|}\e^{-2\pi^2 n^2|x-z|}
			+ \sum_{n>1/|x-z|} n^{-2}\e^{-2\pi^2 n^2|x-z|}\\
		&\lesssim |x-z|^{3/2},
	\end{align*}
	uniformly for all $t\geq 2$.
	
	Next, consider $Q_2$. A similar appeal to the BDG inequality yields 
	\begin{align*}
	\|Q_2\|_k^2 & \lesssim \int_{t-|x-z|}^t \d s  \int_{\left[z-2|x-z|^\gamma, z+2|x-z|^\gamma\right]^c}\d y\ \left[ p_{t-s}(x\,,y) - p_{t-s}(z\,,y)\right]^2 \\
	& \lesssim \int_0^{|x-z|} \d s \int_{\left[z-2|x-z|^\gamma, z+2|x-z|^\gamma\right]^c}\d y\ \left(  [ p_{s}(x\,,y) ]^2  +  [p_{s}(z\,,y)]^2 \right) \\
	&\lesssim \int_0^{|x-z|} \d s \int_{\left[z-2|x-z|^\gamma, z+2|x-z|^\gamma\right]^c}\d y\ \left(  \frac{1}{4\pi s}\exp\left\{ - \frac{(y-z)^2}{2s}\right\} +  \frac{1}{4\pi s}\exp\left\{ - \frac{(y-x)^2}{2s}\right\} \right), 
	\end{align*}
	where the last inequality comes from the simple fact that 
	\[
		p_t(x\,,y)\lesssim  \frac{1}{\sqrt{t}}\exp\left\{ - \frac{(y-z)^2}{4t}\right\},
	\]
	for all $x, y \in \T$ and for all $t \leq 1$. Since 
	$\min\{|y-z|, |y-x|\} \geq |x-z|$ when $y\in \left[z-2|x-z|^\gamma, z+2|x-z|^\gamma\right]^c$, 
	it follows that 
	\begin{align*}
	\|Q_2\|_k^2 & \lesssim  \int_0^{|x-z|} \d s\  s^{-1/2} \int_{|y-z|\geq |x-z|}\d y\ \left(  \frac{1}{\sqrt{s}}\exp\left\{ - \frac{(y-z)^2}{2s}\right\} \right)  \\
	&\lesssim |x-z|^{1/2} \exp\left( -|x-z|^{2\gamma -1}  \right)\\
	&\lesssim |x-z|^{1+\gamma},
	\end{align*}
uniformly for all $t\geq 2$. We
used the facts that $\gamma<1/2$ and $|x-z|\ll1$ in
order to guarantee the last inequality above. 
	
	Now we consider $Q_3$. Apply the BDG inequality once more  to see that 
	
	\begin{align*}
		\|Q_3\|_k^2&:=\int_{t-|x-z|}^t  \d s \int_{|y-z| \leq |x-z|^\gamma} \d y  \left[ p_{t-s}(x\,,y) - p_{t-s}(z\,,y)\right]^2  \left\|\sigma(\psi(s\,,y)) - \sigma(\psi(t-|x-z|\,,z)\right\|_k^2 \\
		&\lesssim \int_{t-|x-z|}^t  \d s \int_{|y-z| \leq |x-z|^\gamma} \d y  \left[ p_{t-s}(x\,,y) - p_{t-s}(z\,,y)\right]^2 \left\{  (s-t+|x-z|)^{1/2} + |y-z|\right\} \\
		&\lesssim \left\{ |x-z|^{1/2}+|x-z|^\gamma\right\} \int_0^{|x-z|} \d s \int_{\T} \d y     \left[ p_{s}(x\,,y) - p_{s}(z\,,y)\right]^2  \\
		&\lesssim \left\{ |x-z|^{1/2}+|x-z|^\gamma\right\} \left\{  \sum_{n=1}^\infty\left( \frac{1-\e^{-2\pi^2n^2|x-z|}}{n^2}\right)  \left[ 1-\cos(\pi|x-z|n)\right] \right\}\quad (\text{see \eqref{eq:p2}}) \\
		&\lesssim \left\{ |x-z|^{1/2}+|x-z|^\gamma\right\} \left\{  \sum_{n=1}^\infty\left( \frac{1-\e^{-2\pi^2n^2|x-z|}}{n^2}\right)  \left[ 1\wedge (|x-z|n)^2 \right] \right\}    \\
		&\lesssim |x-z|^{3/2}+|x-z|^{1+\gamma},
	\end{align*} uniformly for all $t\geq 2$. 
	
	Next, we consider $Q_4$. 
	H\"older's inequality,  the BDG inequality, and the fact that $\sigma$ is Lipchitz
	together imply that 
	\begin{align*}
	\|Q_4\|_k^2 &\lesssim \left\| \psi(t- |x-z|\,, z)- \psi(t\,, z) \right\|_{2k}^2
	\int_{t-|x-z|}^t  \d s \int_{|y-z| \leq |x-z|^\gamma} \d y  \left[ p_{t-s}(x\,,y) - p_{t-s}(z\,,y)\right]^2.
	\end{align*}
	We now apply Proposition \ref{pr:temporal:cont} and use a similar calculation as for $Q_3$ to get that 
	\begin{align*}
	\|Q_4\|_k^2 &\lesssim |x-z|^{3/2},
	\end{align*}
	uniformly for all $t\geq 2$. 
	
	Lastly, we consider $Q_5$. Since the moments of $\psi(t\,, x)$ are uniformly bounded, 
	we may use the fact that $|\sigma(a)|\leq |a|$ and follow the  calculations for $Q_1$ and $Q_2$
	to see that 
	\[ 
	\|Q_5\|_k^2 \lesssim |x-z|^{3/2} + |x-z|^{1+\gamma},
	\]uniformly for all $t\geq 2$. 
	Combine the preceding estimates to find that for every $\gamma \in (0\,,1/2)$ 
	\[
		\left\| \mathcal{I}(t\,,x) - \mathcal{I}(t\,,z) - \sigma(\psi(t\,,z))
		\{ \mathcal{J}(t\,,x) - \mathcal{J}(t\,,z) \} \right\|_k \lesssim |x-z|^{(1+\gamma)/2},
	\]
	uniformly for all $t\geq 2 $. This, \eqref{Term1}, and \eqref{psi-psi} together imply that
	 for every $\gamma \in (0\,,1/2)$ 
	\[
		\left\| \psi(t\,,x) - \psi(t\,,z) - \lambda\sigma(\psi(t\,,z))
		\{ \mathcal{J}(t\,,x) - \mathcal{J}(t\,,z) \} \right\|_k \lesssim |x-z|^{(1+\gamma)/2},
	\]
	uniformly for all $t\geq 2 $. 
	The remainder
	of the result follows from the above and a standard chaining argument
	that uses Lemma \ref{lem:cont} and Proposition \ref{pr:temporal:cont};
	we skip the details as they are routine.
\end{proof}

\begin{proof}[Proof of Theorem \ref{th:mu_+(C^1/2)}]
	We plan to prove only \eqref{eq:mu_+(C^1/2)}.
	The first assertion of \eqref{eq:mu_+(C^1/2)} immediately also implies
	that $\mu_+(C^{1/2}(\T))=0$.
	
	Recall \eqref{J}.
	We study the Gaussian process $\mathcal{J}$ by studying its incremental
	variance using \eqref{eq:p2} %the Poisson summation formula [see \eqref{p=exp}]
	as follows: For all $t>0$ and $x,z\in\T$,
	\begin{equation}\label{J-J}\begin{split}
		\E\left( |\mathcal{J}(t\,,x) - \mathcal{J}(t\,,z)|^2\right)
			&= \int_0^t\d s\int_{\T}\d y\ \left[ p_s(x\,,y)-p_s(z\,,y)\right]^2\\
		&=2\int_0^t\d s\sum_{n=1}^\infty \e^{-2\pi^2n^2s}\left[ 1-\cos(\pi|x-z|n)\right]\\
		&= \frac{1}{\pi^2}\sum_{n=1}^\infty\left( \frac{1-\e^{-2\pi^2n^2t}}{n^2}\right)
			\left[ 1-\cos(\pi|x-z|n)\right]
	\end{split}\end{equation}
	Let $\hat{W}$ denote an independent space-time Brownian sheet and define
	a new, independent, Gaussian process $\mathcal{K}$ as follows:
	\[
		\mathcal{K}(x) := \int_{(0,\infty)\times\T} \left[ p_{t+s}(x\,,y) - p_{t+s}(0\,,y)\right]
		\hat{W}(\d s\,\d y).
	\]
	The above is well defined, as
	\begin{align*}
		\E\left(|\mathcal{K}(x)|^2\right) &= \int_t^\infty\d s\int_{\T}\d y\
			\left[ p_s(x\,,y)-p_s(z\,,y)\right]^2\\
		&\leq 2\int_t^\infty\d s\sum_{n=1}^\infty \e^{-2\pi^2n^2s}
			= \frac{1}{\pi^2}\sum_{n=1}^\infty\frac{\e^{-2\pi^2n^2t}}{n^2};
	\end{align*}
	see \eqref{eq:p2}. Moreover, we apply \eqref{eq:p2} yet again to see that
	\begin{equation}\label{K-K}\begin{split}
		\E\left(|\mathcal{K}(x) - \mathcal{K}(z)|^2\right) &= \int_t^\infty\d s\int_{\T}\d y\
			\left[ p_s(x\,,y)-p_s(z\,,y)\right]^2\\
		&= 2\int_t^\infty\d s\sum_{n=1}^\infty \e^{-2\pi^2n^2s}\left[ 1-\cos(\pi|x-z|n)\right]\\
		&= \frac{1}{\pi^2}\sum_{n=1}^\infty\left( \frac{\e^{-2\pi^2n^2t}}{n^2}\right)
			\left[ 1-\cos(\pi|x-z|n)\right].
	\end{split}\end{equation}
	Now, define
	\begin{equation}\label{beta:J:K}
		\beta(x) :=  \sqrt{2}\left[ \mathcal{J}(t\,,x) - \mathcal{J}(t\,,0) + 
		\mathcal{K}(x) -\mathcal{K}(0) \right] +\frac{x}{\sqrt{2}}
		\qquad\text{for all $x\in\T$}.
	\end{equation}
	Then, \eqref{J-J} and \eqref{K-K} together yield
	\[
		\E\left( |\beta(x) - \beta(z)|^2\right) %= \frac{2}{\pi^2}\sum_{n=1}^\infty \frac{1-\cos(\pi|x-z|n)}{n^2}  + \frac{|x-z|^2}{2}
		= |x-z|
		\qquad\text{for all $x,z\in\T$}.
	\]
	Since $\beta(0)=0$, it follows that $\beta$ is a two-sided Brownian motion
	indexed by $\T\simeq[-1\,,1]$. Also, we apply \eqref{p=exp}
	yet another time to find that
	\begin{align*}
		\Cov\left(\mathcal{K}(x) \,, \mathcal{K}(z)\right) &= \int_t^\infty\d s\int_{\T}\d y\
			\left[ p_s(x\,,y)-p_s(0\,,y)\right] \left[ p_s(z\,,y)-p_s(0\,,y)\right] \\
		&= \frac{1}{4\pi^2}
			\sum_{n=1}^\infty \left( \frac{1-\e^{-i\pi xn}}{n} \right)
			\left( \frac{1-\e^{i\pi zn}}{n}\right) \e^{-2\pi^2 n^2t},
	\end{align*}
	for every $x,z\in\T$. Since $t\ge16\pi^2>0$, the preceding is a $C^\infty$ function of $x$
	and $z$. Therefore, a standard fact about Gaussian random fields
	implies that $\mathcal{K}$ is a.s.\ $C^\infty$. 
	In light of \eqref{beta:J:K}, we have
	proved the following version of an observation of \citet[Exercise 3.10, page 326]{wal86}:
	\[
		\mathcal{J}(t\,,x) -\mathcal{J}(t\,,0) = \frac{\beta(x)}{\sqrt{2}} + \text{ a $C^\infty$\!
		Gaussian process}.
	\] 
	Because $\beta$ has the following ``modulus of non-differentiability''
	\citep[see][Theorem 1.6.1]{CR81},
	\[
		\liminf_{r\downarrow0} \sqrt{\frac{8\log(1/r)}{\pi^2 r}}
		\adjustlimits\inf_{|y|<r}\sup_{x\in\T}|\beta(x+y)-\beta(x)|
		=1
		\qquad\text{a.s.,}
	\]
	it follows from \eqref{beta:J:K} that a.s.,
	\[
		\liminf_{r\downarrow0} \sqrt{\frac{16\log(1/r)}{\pi^2 r}}
		\adjustlimits\inf_{|y|<r}\sup_{x\in\T}|\mathcal{J}(t\,,x+y)-\mathcal{J}(t\,,x)|
		=1.
	\]
	Therefore, we can deduce from Lemma \ref{lem:localize} that
	\[
		\liminf_{r\downarrow0} \sqrt{\frac{16\log(1/r)}{\pi^2 r}}
		\adjustlimits\inf_{|y|<r}\sup_{x\in\T}|\psi(t\,,x+y)-\psi(t\,,x)|
		=\lambda\sup_{x\in\T}|\sigma(\psi(t\,,x))|.
	\]
	In other words, if $t\ge 2$, then
	\begin{align*}
		&\P\{\psi(t)\in\Lambda\}=1, \quad\text{where}\\
		&\Lambda := \left\{ \omega\in C(\T):\,
			\liminf_{r\downarrow0} \sqrt{\frac{\log(1/r)}{r}}
			\adjustlimits\inf_{|y|<r}\sup_{x\in\T}|\omega(x+y)-\omega(x)|
			=\frac{\pi\lambda}{4}\sup_{x\in\T}|\sigma(\omega(x))|\right\}.
	\end{align*}
	According to the ergodic theorem [1(d) of Theorem \ref{th:RD}],
	\[
		\mu_+(\Lambda) = \lim_{T\to\infty}\frac1T\int_0^T\P\{\psi(t)\in\Lambda\}\,\d t
		=1.
	\]
	This proves the second assertion of \eqref{eq:mu_+(C^1/2)}. The
	first assertion is proved using the same kind of arguments, except
	we appeal to the following
	\[
		\limsup_{r\downarrow0}\frac{1}{\sqrt{2r\log(1/r)}}
		\adjustlimits\sup_{|y|<r}\sup_{x\in\T}|\beta(x+y)-\beta(x)|=1
		\qquad\text{a.s.,}
	\]
	which is the celebrated modulus of continuity of Brownian motion; see \cite{Levy1937}.
\end{proof}

\begin{proof}[Proof of Theorem \ref{th:dimh}]
	If $\omega\in C^\alpha(\T)$ for some $\alpha\in(0\,,1]$, then a 
	standard covering argument 
	shows that
	\begin{equation}\label{ps:UB}
		\dim_{_{\rm H}}\omega(G) \le 1\wedge\alpha^{-1}
		\dim_{_{\rm H}}(G);
	\end{equation}
	see for example \cite{McKean}.
	Since $\mu_+(C^\alpha(\T))=1$ for all $\alpha\in(0\,,1/2)$ 
	[1(c) of Theorem \ref{th:RD}], it follows that \eqref{ps:UB}
	holds for  $\mu_+$-almost all $\omega\in C(\T)$. Let $\alpha\uparrow 1/2$
	to deduce from \eqref{ps:UB} that
	\begin{equation}\label{omega:UB}
		\dim_{_{\rm H}} \omega(G) \le 1\wedge 2\dim_{_{\rm H}}(G)
		\qquad\text{for  $\mu_+$-almost all $\omega\in C(\T)$}.
	\end{equation}
	Next we derive a matching lower bound.
	
	Choose and fix an arbitrary non-random number $s\in(0\,,1\wedge 2\dim_{_{\rm H}}(G))$.
	Frostman's theorem
	ensures that there exists a probability measure $m$ on $G$
	such that 
	\begin{equation}\label{I<infty}
		I_{s/2}(m)<\infty.
	\end{equation}
	See, for example, Theorem 2 of \citet[page 133]{Kahane}.
	Choose and fix not only the $s$, but also the probability measure $m$.
	
	Choose and fix an arbitrary non-random number
	\begin{equation}\label{t>16pi^2}
		t \ge 2.
	\end{equation}
	According to Lemma \ref{lem:localize},
	\[
		X := \sup_{\substack{x,z\in\T\\
		x\neq z}} \frac{\left| \psi(t\,,x) - \psi(t\,,z)-\lambda\sigma(\psi(t\,,z))\{\mathcal{J}(t\,,x)
		-\mathcal{J}(t\,,x)\}\right| }{|x-z|^{3/5}} \in \bigcap_{k\ge2} L^k(\Omega).
	\]
	Let 
	\[
		Y := \lambda \inf_{x\in\T}\sigma(\psi(t\,,x)).
	\]
	It is immediately clear from \eqref{th:RD} that
	$Y\ge {\rm L}_\sigma\inf_{x\in\T}\psi(t\,,x)>0$ a.s. Thus, we can write
	\begin{equation*}\begin{split}
		|\psi(t\,,x) - \psi(t\,,z)| &\ge \left[ Y|\mathcal{J}(t\,,x)-\mathcal{J}(t\,,z)| - X
			|x-z|^{3/5}\right]_+\\
		&\ge Y\left[ 
			|\mathcal{J}(t\,,x)-\mathcal{J}(t\,,z)|- \left(\frac{X}{Y}\right)|x-z|^{3/5}\right]_+,
	\end{split}\end{equation*}
	where $\mathcal{J}$ was defined in \eqref{J}. 
	
	Define  $m_\psi$ to be the push forward of $m$ by $\psi(t)$.
	More precisely,
	\[
		\int g\,\d m_\psi := \int g(\psi(t\,,x))\, m(\d x)
		\qquad\text{for all $g\in C_+(\R)$}.
	\]
	Clearly $m_\psi$ is a random probability measure on
	the random set
	\[
		\psi(t\,,G) := \{\psi(t\,,x):\, x\in G\}.
	\]
	Now,
	\[
		I_s(m_\psi) = \iint\frac{m(\d x)\,m(\d z)}{|\psi(t\,,x) - \psi(t\,,z)|^s}\\
		\le Y^{-s}\tilde{I}_s(m\,; X/Y),
	\]
	where
	\[
		\tilde{I}_s(m_\psi\,;N) :=
		\iint\frac{m(\d x)\, m(\d z)}{\left[ 
		|\mathcal{J}(t\,,x)-\mathcal{J}(t\,,z)|- N|x-z|^{3/5}\right]_+^s}
		\qquad\text{for all $N>0$}.
	\]
	The first portion of the proof is concerned with proving that
	\begin{equation}\label{tilde:I}
		\tilde{I}_s(m_\psi\,;N)<\infty\quad\text{almost surely for every $N>0$}.
	\end{equation}
	In this way we will see that
	$\tilde{I}_s(m_\psi\,;X/Y)<\infty$ almost surely on $\{X\le NY\}$, and in particular,
	\[
		\P\left\{ \tilde{I}_s(m_\psi\,;X/Y)<\infty\right\} \ge\lim_{N\to\infty}\P\{X \le NY\}=1.
	\]
	With this aim in mind, we first recall the incremental variance of
	$\mathcal{K}$ from \eqref{K-K}: For all $x,z\in\T$,
	\begin{align*}
		\E\left( |\mathcal{K}(x)-\mathcal{K}(z)|^2\right)
			&=\frac{1}{\pi^2}\sum_{n=1}^\infty\left( \frac{\e^{-2\pi^2n^2t}}{n^2}\right)
			\left[ 1-\cos(\pi|x-z|n)\right]\\
		&\le |x-z|^2\sum_{n=1}^\infty\e^{-2\pi^2n^2t} 
			\le|x-z|^2\int_0^\infty \e^{-2\pi^2 w^2t}\,\d w\\
		&= \frac{|x-z|^2}{2\sqrt{2\pi t}}.
	\end{align*}
	Since $t\ge2$ [see \eqref{t>16pi^2}],  the above quantity is $\le \frac14|x-z|^2$, and hence
	\begin{equation}\label{Var:LB}
		\E\left( |\mathcal{J}(t\,,x)-\mathcal{J}(t\,,z)|^2\right) =
		\frac{\E\left(|\beta(x)-\beta(z)|^2\right)}{2} + \frac{(x-z)^2}{4} -
		\E\left( |\mathcal{K}(x)-\mathcal{K}(z)|^2\right)
		\ge \frac{|x-z|}{2}.
	\end{equation}
	Thus, we find that
	\begin{align*}
		\sup_{N>0}
			\E\left[\tilde{I}_s(m_\psi\,;N)\right] &\le\frac{\Gamma\left(\frac{1-s}{2}\right)}{
			2^{s/2}\sqrt\pi}\iint\frac{m_\psi(\d x)\, m_\psi(\d z)}{
			\left[{\rm Var}\left(\mathcal{J}(t\,,x)-\mathcal{J}(t\,,z)\right) \right]^{s/2}}
			&[\text{see Lemma \ref{lem:Gauss}}]\\
		&\le\frac{\Gamma\left(\frac{1-s}{2}\right)}{
			\sqrt\pi} I_{s/2}(m_\psi)&[\text{by \eqref{Var:LB}}].
	\end{align*}
	Therefore, the above and \eqref{I<infty} together imply \eqref{tilde:I}.
	In turn, this and Frostman's theorem together imply that,
	$\dim_{_{\rm H}}\psi(t\,,G) \ge s$ a.s. This is valid for
	every $s\in(0\,,1\wedge2\dim_{_{\rm H}}(G))$. Therefore,
	we let $s$ converge upward to $1\wedge2\dim_{_{\rm H}}(G)$
	in order to deduce the following: For every $t\ge 2$,
	$\dim_{_{\rm H}}\psi(t\,,G) \ge 1\wedge 2\dim_{_{\rm H}}(G)$ a.s.
	In other words, if we define 
	\[
		\Theta := \left\{ \omega\in C(\T):\, \dim_{_{\rm H}}\omega(G)
		\ge 1\wedge 2\dim_{_{\rm H}}(G)\right\},
	\]
	then $\P\{\psi(t)\in\Theta\}=1$ for all $t\ge 2$,
	whence
	\[
		\lim_{T\to\infty}\frac1T\int_0^T\P\{\psi(t)\in\Theta\}\,\d t=1.
	\]
	Apply part 1(d) of Theorem \ref{th:RD} to see that
	$\mu_+(\Theta)=1$. This and \eqref{omega:UB} together 
	imply the theorem.
\end{proof}

%%%%%%%%%%%%%%%%%%%%%%%%%%%%%%%%%%%%%%%%%%%%%%%%%%

%%%%%%%%%%%%%%%%%%%%%%%%%%%%%%%%%%%%%%%%%%%%%%%%%%
%%%%%%%%%%%%%%%%%%%%%%%%%%%%%%%%%%%%%%%%%%%%%%%%%%
%%%%% Appendix %%%%%%%%%%%%%%%%%%%%%%%%%%%%%%%%%%%
%%%%%%%%%%%%%%%%%%%%%%%%%%%%%%%%%%%%%%%%%%%%%%%%%%
%%%%%%%%%%%%%%%%%%%%%%%%%%%%%%%%%%%%%%%%%%%%%%%%%%

%%%%%%%%%%%%%%%%%%%%%%%%%%%%%%%%%%%%%%%%%%%%%%%%%%

\appendix\section{Appendix: Some technical results}

\subsection{A Gaussian integral}

\begin{lemma}\label{lem:Gauss}
	Let $X$ have a non-degenerate
	centered normal distribution. Then,
	\[
		\sup_{a\ge0}
		\E\left[ \frac{1}{\left( |X|-a\right)_+^s}\right] =
		\frac{\Gamma((1-s)/2)}{[2{\rm Var}(X)]^{s/2}\sqrt\pi}
		\qquad\text{for all $s\in(0\,,1)$}.
	\]
\end{lemma}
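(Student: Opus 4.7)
The plan is to evaluate the expectation directly using the density of $X$ and show that the supremum in $a$ is attained at $a=0$. Write $\sigma^2={\rm Var}(X)$ and interpret $(|X|-a)_+^{-s}$ as $(|X|-a)^{-s}\bm{1}_{\{|X|>a\}}$, which is the only reading under which the statement is nontrivial and which matches the use of the lemma in the proof of Theorem~\ref{th:dimh}. By symmetry of the centered normal density,
\[
\E\left[(|X|-a)_+^{-s}\right] = \frac{2}{\sigma\sqrt{2\pi}}\int_a^\infty \frac{e^{-x^2/(2\sigma^2)}}{(x-a)^s}\,\d x.
\]

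First I would treat the case $a=0$. The substitution $u=x^2/(2\sigma^2)$ converts the integral into a Gamma integral:
\[
\int_0^\infty x^{-s}\e^{-x^2/(2\sigma^2)}\,\d x = \frac{(2\sigma^2)^{(1-s)/2}}{2}\int_0^\infty u^{-(s+1)/2}\e^{-u}\,\d u = \frac{(2\sigma^2)^{(1-s)/2}}{2}\,\Gamma\!\left(\frac{1-s}{2}\right),
\]
which is finite precisely because $s\in(0\,,1)$. Multiplying by $2/(\sigma\sqrt{2\pi})$ and simplifying the powers of $2$ and $\sigma$ gives exactly $\Gamma((1-s)/2)\big/\bigl([2\sigma^2]^{s/2}\sqrt{\pi}\bigr)$, which is the claimed right-hand side.

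Next, for arbitrary $a\ge0$, I would perform the translation $y=x-a$ to obtain
\[
\E\left[(|X|-a)_+^{-s}\right] = \frac{2}{\sigma\sqrt{2\pi}}\int_0^\infty y^{-s}\e^{-(y+a)^2/(2\sigma^2)}\,\d y.
\]
The elementary inequality $(y+a)^2\ge y^2$ valid for all $y,a\ge0$ implies that $\e^{-(y+a)^2/(2\sigma^2)}\le\e^{-y^2/(2\sigma^2)}$ pointwise, so the integrand is dominated by the $a=0$ integrand. It follows that $a\mapsto\E[(|X|-a)_+^{-s}]$ is nonincreasing on $[0\,,\infty)$, and in particular
\[
\sup_{a\ge0}\E\left[(|X|-a)_+^{-s}\right]=\E\left[|X|^{-s}\right],
\]
which completes the proof by the first step. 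The computation is elementary and presents no real obstacle; the only delicate point is fixing the convention for the integrand on $\{|X|\le a\}$, which is forced by the finiteness of the answer.
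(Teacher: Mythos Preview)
Your proof is correct and follows essentially the same route as the paper's: write the expectation as an integral against the Gaussian density, translate $y=x-a$, use $(y+a)^2\ge y^2$ to see that the expression is maximized at $a=0$, and then evaluate the resulting Gamma integral. The paper's proof is simply a more compressed version of exactly these steps.
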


\begin{proof}
	Let $v^2 := {\rm Var}(X)$. Clearly,
	\begin{align*}
		\E\left[ \frac{1}{\left( |X|-a\right)_+^s}\right]  & =
			\frac{1}{v\sqrt{\pi/2}}\int_a^\infty
			\frac{\e^{-x^2/(2v^2)}}{(x-a)^s}\,\d x
			=\frac{1}{v\sqrt{\pi/2}}\int_0^\infty
			\frac{\e^{-(x+a)^2/(2v^2)}}{x^s}\,\d x\\
		&\le\frac{1}{v\sqrt{\pi/2}}\int_0^\infty
			\frac{\e^{-x^2/(2v^2)}}{x^s}\,\d x,
	\end{align*}
	with identity in place of ``$\le$'' when $a=0$. Now compute.
\end{proof}

\subsection{An elementary coupling}

Let us document the following very well known elementary fact about
couplings of random variables.

\begin{lemma}\label{lem:coupling}
	Let $X$ be a random variable such that $G(x):=\P\{X\le x\}\ge H(x)$ for all $x\in\R$,
	where $H:\R\to[0\,,1]$ is a cumulative distribution function on $\R$. Then, 
	one can construct a random
	variable $Y$ whose cumulative distribution function is $H$, and satisfies $\P\{X\le Y\}=1$.
\end{lemma}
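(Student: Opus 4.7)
The plan is to use the classical quantile-coupling construction, exploiting the fact that the hypothesis $G(x)\ge H(x)$ for all $x\in\R$ is precisely the statement that $X$ is stochastically dominated by a random variable with CDF $H$. So after possibly enlarging the probability space, I want to build $Y$ as a deterministic monotone function of $X$ (and an auxiliary randomizer) so that $X\le Y$ almost surely and $Y$ has CDF $H$.

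First I would recall the right-continuous inverses
\[
    G^{-1}(u):=\inf\{x\in\R:\,G(x)\ge u\},\qquad
    H^{-1}(u):=\inf\{x\in\R:\,H(x)\ge u\},\qquad u\in(0\,,1).
\]
A standard (and short) observation is that if $G\ge H$ pointwise on $\R$, then $\{x:G(x)\ge u\}\supseteq\{x:H(x)\ge u\}$, whence $G^{-1}(u)\le H^{-1}(u)$ for every $u\in(0\,,1)$. This pointwise inequality between the quantile functions is the whole reason the coupling works, and I would state it up front as the key deterministic ingredient.

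Next I would perform the uniformization step: enlarging the probability space if necessary to support a $V$ uniformly distributed on $(0\,,1)$ and independent of $X$, define
\[
    U:=G(X-)+V\bigl(G(X)-G(X-)\bigr).
\]
A routine check shows that $U$ is uniformly distributed on $(0\,,1)$ and that $G^{-1}(U)=X$ almost surely. (On the event where $G$ is continuous at $X$ one has $U=G(X)$; at atoms, $V$ fills in the jump.) I would then set
\[
    Y:=H^{-1}(U).
\]
Since $U$ is uniform on $(0\,,1)$, the random variable $Y$ has the claimed CDF $H$, by the usual inverse-CDF argument $\P\{Y\le y\}=\P\{H^{-1}(U)\le y\}=\P\{U\le H(y)\}=H(y)$.

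Finally, combining the two previous steps, $X=G^{-1}(U)\le H^{-1}(U)=Y$ almost surely, which is the desired inequality. There is essentially no obstacle; the only point requiring any care is the uniformization in the case that $G$ has atoms, which is why the auxiliary variable $V$ is introduced and why the statement allows us to enlarge the underlying probability space. Everything else is bookkeeping about right-continuous inverses.
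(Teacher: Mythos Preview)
Your argument is correct and follows the same quantile-coupling idea as the paper, but it is actually more complete. The paper simply sets $Y:=H^{-1}(G(X))$ and asserts that this has distribution function $H$; that claim, however, requires $G(X)$ to be uniform on $(0,1)$, which fails whenever $G$ has atoms (for instance, if $X$ is uniform on $\{0,2\}$ and $H$ is the CDF of the uniform law on $[0,2]$, then $H^{-1}(G(X))$ is uniform on $\{1,2\}$, not on $[0,2]$). Your uniformization step via the auxiliary variable $V$ is exactly the standard fix for this gap, and your observation that $G\ge H$ forces $G^{-1}\le H^{-1}$, together with $G^{-1}(U)=X$ a.s., cleanly yields $X\le Y$. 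So your version is a genuine improvement in rigor over the paper's sketch, at essentially no extra cost.
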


We include a proof for the sake of completeness, also as it is so short.

\begin{proof}
	Let $H^{-1}$ denote the right-continuous inverse of $H$
	and recall that the distribution function of $Y:=H^{-1}(G(X))$ is $H$.
	The lemma follows from the facts that $G\ge H$ and $H^{-1}$ is monotone.
\end{proof}

\subsection{A random walk inequality}

We mention the following simple inequality about the expected number of large negative
excursions of a simple random walk on $\Z$ with positive upward drift.

\begin{lemma}\label{lem:E-num}
	Let $\{Z_n\}_{n=1}^\infty$ be i.i.d.\ with $p := \P\{Z_1=1\}>1/2$ and $q:=\P\{Z_1=-1\}=1-p$. 
	Then, 
	\[
		\E\left[ \sum_{n=1}^\infty \bm{1}_{\{Z_1+\cdots+Z_n\le -k\}}\right] \le 
		\frac{\sqrt{4pq}}{1- \sqrt{4pq}}\left( \frac qp\right)^{k/2}
		\qquad\text{for all $k\in\R_+$}.
	\]
\end{lemma}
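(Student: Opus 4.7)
My plan is to bound the summand $\P\{S_n \le -k\}$ for $S_n := Z_1+\cdots+Z_n$ via an exponential Markov (Chernoff-type) estimate, optimized in the exponential parameter, and then sum the resulting geometric series in $n$.

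First I would fix $\theta>0$ and write, via Markov's inequality applied to $\e^{-\theta S_n}$,
\[
\P\{S_n\le -k\} = \P\left\{\e^{-\theta S_n}\ge\e^{\theta k}\right\}
\le \e^{-\theta k}\left(\E\,\e^{-\theta Z_1}\right)^n
= \e^{-\theta k}\left(p\e^{-\theta}+q\e^{\theta}\right)^n,
\]
using the independence of the $Z_i$'s in the equality of the moment generating function. Next I would minimize the base $p\e^{-\theta}+q\e^{\theta}$ in $\theta>0$. An elementary calculus exercise shows the minimizer is $\e^\theta=\sqrt{p/q}$ (which is $>1$ because $p>q$, so $\theta>0$ as required), and the minimum value is $2\sqrt{pq}$. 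At this optimal $\theta$ we also have $\e^{-\theta k}=(q/p)^{k/2}$. Plugging in yields
\[
\P\{S_n\le -k\}\le \left(\frac{q}{p}\right)^{k/2}\bigl(2\sqrt{pq}\bigr)^n
= \left(\frac{q}{p}\right)^{k/2}\bigl(\sqrt{4pq}\bigr)^n.
\]

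Finally I would sum over $n\ge 1$. The crucial observation is that $4pq<1$ since $p\neq 1/2$ (equivalently $p-q\neq 0$, which holds because $p>1/2$); indeed, $1-4pq=(p-q)^2>0$. Hence $\sqrt{4pq}\in(0\,,1)$ and the geometric series converges to $\sqrt{4pq}/(1-\sqrt{4pq})$. Linearity of expectation (or monotone convergence) then gives
\[
\E\left[\sum_{n=1}^\infty \mathbf{1}_{\{S_n\le -k\}}\right]
= \sum_{n=1}^\infty\P\{S_n\le -k\}
\le \left(\frac{q}{p}\right)^{k/2}\cdot\frac{\sqrt{4pq}}{1-\sqrt{4pq}},
\]
which is the claimed bound. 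I do not anticipate a main obstacle: the only conceptually non-trivial point is verifying that the optimal $\theta$ lies in $(0\,,\infty)$ and that the resulting geometric ratio is strictly less than $1$, both of which follow directly from the drift condition $p>1/2$.
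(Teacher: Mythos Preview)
Your proof is correct and follows essentially the same approach as the paper: Chernoff's inequality applied to $\e^{-\theta S_n}$, optimized at $\theta=\tfrac12\log(p/q)$, followed by summation of the resulting geometric series using $4pq<1$. If anything, you supply slightly more justification (e.g., verifying $\theta>0$ and $4pq<1$ explicitly) than the paper does.
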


One usually verifies results of this type by appealing to excursion theory.
We include a simpler proof instead.
	
\begin{proof}
	We use Markov's [Chernoff's] inequality to see
	that for all $n\in\N$, $\theta>0$, and $k\in\R_+$,
	\[
		\P\{ Z_1+\cdots + Z_n\le -k\} =
		\P\left\{ \e^{-\theta (Z_1+\cdots+Z_n)}\ge \e^{\theta k}\right\}
		\le \e^{-\theta k}
		\left[ p \e^{-\theta} + q\e^\theta \right]^n.
	\]
	Therefore, we set $\theta:=\tfrac12\log(p/q)$ to minimize the quantity in
	square brackets and see that
	\[
		\P\{ Z_1+\cdots + Z_n\le -k\} \le\left( \frac qp\right)^{k/2}
		\left[4pq\right]^{n/2}
		\quad\text{for all $n\in\N$ and $k\in\R_+$}.
	\]
	Because $4pq<1$, we may sum over $n\in\N$ to deduce the announced result.
\end{proof}

\subsection{On a class of stochastic differential inequalities}

The primary purpose of this portion of the appendix is to prove a bound
on the hitting probability of a small number by a certain non-negative It\^o process.

\begin{proposition}\label{pr:SDI}
	Suppose $X=\{X_t\}_{t\ge0}$ is a non-negative, continuous $L^2(\P)$-martingale 
	that starts from $X_0=a^2$ for a non-random number $a>0$, and solves
	the stochastic differential inequality,
	\[
		\d X_t \le X_t\,\d t  + \d M_t
		\qquad\text{for all $t>0$},
	\]
	where $\{M_t\}_{t\ge0}$ is a mean-zero, continuous 
	$L^2(\P)$-martingale. More precisely
	$\d X_t = C_t\,\d t + \d M_t$ where $C_t\le X_t$ and $\{C_t\}_{t\ge0}$
	is a.s.\ of bounded variation. Then,
	\[
		\P\left\{ \inf_{s\in(0,t)}X_s>\varepsilon^2\,, 
		\int_0^t\e^{-s}\,\frac{\d\<X\>_s}{X_s}\ge b^2\right\} 
		\le \sqrt{\frac{2}{\pi}}
		\int_{|x| < 2(a-\varepsilon \e^{-t/2})/b} \e^{-x^2/2}\,\d x,
	\]
	uniformly for all $b,t>0$ and $\varepsilon\in(0\,,a\e^{t/2})$.
\end{proposition}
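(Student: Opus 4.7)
\medskip

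\noindent\textbf{Proof proposal.}
The plan is to compare $X$ to a controlled exponential and then reduce to a reflection-principle computation for a time-changed Brownian motion. First I would set $Y_s := \e^{-s}X_s$ and observe, via the product rule and the hypothesis $\d X_s = C_s\,\d s + \d M_s$ with $C_s\le X_s$, that
\[
	\d Y_s = \e^{-s}(C_s - X_s)\,\d s + \e^{-s}\,\d M_s,
\]
so the drift of $Y$ is nonpositive. In particular $Y$ is a nonnegative supermartingale with $Y_0 = a^2$, and $\d\<Y\>_s = \e^{-2s}\,\d\<M\>_s = \e^{-2s}\,\d\<X\>_s$.

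To remove the possible blow-up of $1/\sqrt{Y_s}$ near zero, I would localize at the stopping time $\tau := \inf\{s\ge 0:\, X_s\le\varepsilon^2\}$, on whose complement up to time $t$ we have $Y_s\ge\varepsilon^2\e^{-s}>0$. On $[0\,,t\wedge\tau]$, It\^o's formula applied to $\sqrt{Y_s}$ gives
\[
	\d\sqrt{Y_s} = \frac{\e^{-s}(C_s-X_s)}{2\sqrt{Y_s}}\,\d s
	+ \frac{\e^{-s}}{2\sqrt{Y_s}}\,\d M_s - \frac{\e^{-2s}}{8\, Y_s^{3/2}}\,\d\<X\>_s.
\]
Both the first and the third terms on the right are $\le 0$, so a.s.\ for every $s\in[0\,,t\wedge\tau]$,
\[
	\sqrt{Y_s} \le a + N_s,\qquad\text{where}\qquad N_s := \int_0^s \frac{\e^{-r}}{2\sqrt{Y_r}}\,\d M_r.
\]
A direct computation gives $\<N\>_s = \tfrac14\int_0^s \e^{-r} (X_r)^{-1}\,\d\<X\>_r$, so the quadratic variation of $N$ is precisely one quarter of the integral appearing in the statement of the proposition.

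Now restrict to the event $\mathcal{A} := \{\inf_{s\in(0,t)}X_s>\varepsilon^2\,,\, \int_0^t\e^{-s}(X_s)^{-1}\,\d\<X\>_s\ge b^2\}$. On $\mathcal{A}$ we have $\tau>t$, so the bound $\sqrt{Y_s}\le a+N_s$ is valid on all of $[0\,,t]$; combined with $\sqrt{Y_s}>\varepsilon\e^{-s/2}\ge \varepsilon\e^{-t/2}$, it yields
\[
	\inf_{s\in[0,t]} N_s \ge \varepsilon\e^{-t/2}-a = -c,\qquad c := a-\varepsilon\e^{-t/2}>0,
\]
together with $\<N\>_t\ge b^2/4$. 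By the Dambis--Dubins--Schwarz theorem (extending the underlying filtration if necessary so that the clock reaches $b^2/4$), we may write $N_s = \beta(\<N\>_s)$ for a standard Brownian motion $\beta$; hence on $\mathcal{A}$,
\[
	\inf_{u\in[0,b^2/4]}\beta(u) \ge \inf_{u\in[0,\<N\>_t]}\beta(u) = \inf_{s\in[0,t]} N_s \ge -c.
\]
Applying the classical reflection-principle identity $\P\{\inf_{u\le T}\beta(u)\ge -c\} = \P\{|\beta(T)|\le c\}$ with $T=b^2/4$, and rescaling $\beta(b^2/4)=(b/2)Z$ for $Z\sim N(0\,,1)$, I obtain
\[
	\P(\mathcal{A})\le \P\left\{|Z|\le \frac{2c}{b}\right\} = \frac{1}{\sqrt{2\pi}}\int_{|x|<2c/b}\e^{-x^2/2}\,\d x,
\]
which is half of the claimed bound, hence \emph{a fortiori} gives the inequality in the statement. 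The main technical point is the legitimacy of the It\^o expansion of $\sqrt{Y_s}$, handled by the localization at $\tau$; the only other subtlety is ensuring that the DDS Brownian motion is defined on $[0\,,b^2/4]$ regardless of whether $\<N\>_\infty$ is large, which is standard (enlarge the probability space with an independent Brownian motion and concatenate).
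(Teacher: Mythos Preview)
Your proof is correct and is essentially the paper's argument with the order of the two transformations reversed: the paper first takes $\sqrt{X_t}$ and then multiplies by $\e^{-t/2}$, whereas you first form $\e^{-t}X_t$ and then take the square root, but the resulting controlling martingale $N_s=\int_0^s \tfrac{\e^{-r/2}}{2\sqrt{X_r}}\,\d M_r$ coincides with the paper's $\mathcal{M}$, and your DDS/reflection step is exactly the paper's Lemma~\ref{lem:small-ball}. Your localization at $\tau$ is a cleaner justification of the It\^o expansion than the paper gives, and your observation that the reflection principle actually yields $\tfrac{1}{\sqrt{2\pi}}$ rather than $\sqrt{2/\pi}$ (so the stated bound carries a harmless extra factor of $2$) is correct.
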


The proof of Proposition \ref{pr:SDI}
hinges on a simple small-ball estimate for continuous $L^2(\P)$-martingales,
which we include next.

\begin{lemma}\label{lem:small-ball}
	Let $\{M_s\}_{s\ge0}$ be a mean-zero, continuous $L^2(\P)$-martingale. Then, 
	\begin{equation}\label{eq:small-ball}
		\P\left\{ \inf_{0<s<t} M_s  \ge -\varepsilon ~,~ \<M\>_t\ge A\right\}
		\le \sqrt{\frac{2}{\pi}}\int_{|x| < \varepsilon/\sqrt A} \e^{-x^2/2} \,\d x
		\qquad\text{for all $\varepsilon,A,t>0$},
	\end{equation}
\end{lemma}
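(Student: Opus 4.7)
The plan is to invoke the Dambis--Dubins--Schwarz (DDS) representation of continuous martingales. Since $\{M_s\}_{s\ge 0}$ is a continuous $L^2(\P)$-martingale -- which we may and do assume starts at $M_0=0$ -- after possibly enlarging the underlying probability space there exists a standard Brownian motion $\{B_u\}_{u\ge 0}$, started at $0$, such that $M_s = B_{\langle M\rangle_s}$ for every $s\ge 0$, almost surely. The enlargement is necessary only when $\P\{\langle M\rangle_\infty<\infty\}>0$, in which case we extend $B$ past time $\langle M\rangle_\infty$ by splicing in an independent Brownian motion; see, e.g., Theorem V.1.6 of Revuz--Yor.

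Next, on the event $\{\langle M\rangle_t\ge A\}$ the continuity and monotonicity of the process $s\mapsto\langle M\rangle_s$, together with $\langle M\rangle_0=0$, ensure via the intermediate value theorem that $\{\langle M\rangle_s:0<s<t\}$ contains the entire open interval $(0\,,A)$. By continuity of $B$, this gives $\inf_{0<s<t}M_s=\inf_{0<s<t}B_{\langle M\rangle_s}\le \inf_{0\le u\le A}B_u$, so that
$\left\{\inf_{0<s<t}M_s\ge -\varepsilon\,,\ \langle M\rangle_t\ge A\right\}\subseteq\left\{\inf_{0\le u\le A}B_u\ge -\varepsilon\right\}$.

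Finally, the reflection principle yields $\P\{\inf_{0\le u\le A}B_u<-\varepsilon\}=2\P\{B_A<-\varepsilon\}=\P\{|B_A|>\varepsilon\}$, whence
$\P\{\inf_{0\le u\le A}B_u\ge -\varepsilon\}\le \P\{|B_A|\le\varepsilon\}=\tfrac{1}{\sqrt{2\pi}}\int_{|x|<\varepsilon/\sqrt{A}}\e^{-x^2/2}\,\d x$,
using the Gaussian scaling $B_A\stackrel{\rm d}{=}\sqrt{A}\cdot B_1$ in the last equality. The announced bound \eqref{eq:small-ball} then follows from the elementary inequality $1/\sqrt{2\pi}\le \sqrt{2/\pi}$ (in fact this yields a constant twice as good as needed). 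The only genuine subtlety is handling the DDS representation when $\langle M\rangle_\infty$ may be finite; the rest of the argument is an entirely standard time change followed by an application of reflection.
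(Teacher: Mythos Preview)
Your proof is correct and follows essentially the same route as the paper's: invoke the Dambis--Dubins--Schwarz representation $M_s=B(\langle M\rangle_s)$, use that on $\{\langle M\rangle_t\ge A\}$ the time-changed infimum is no larger than $\inf_{0\le u\le A}B_u$, and finish with the reflection principle and Brownian scaling. Your write-up is in fact more careful than the paper's (you make the enlargement of the probability space explicit and spell out the intermediate-value step), and as you observe your constant $1/\sqrt{2\pi}$ is half the stated $\sqrt{2/\pi}$, so the bound you prove is sharper than what is claimed.
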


The right-hand side of \eqref{eq:small-ball}
can be expressed more compactly in terms of $\text{\rm erf}(\varepsilon\sqrt{2/A})$, but we prefer the
above ``probabilistic'' description.

\begin{proof}
	By the Dubins, Dambis-Schwarz Brownian representation  
	of continuous martingales \citep[see][Theorem 1.6]{Revuz-Yor}
	there exists a linear Brownian motion $\{B(s)\}_{s\ge0}$
	such that $B(0)=0$ and $M_s = B(\< M\>_s)$ for all $s\ge0$. Therefore,
	\[
		\P\left\{ \inf_{0<s<t}M_s \ge -\varepsilon~,~\<M\>_t\ge A\right\}
		= \P\left\{ \inf_{0<s<\< M\>_t}B(s) \ge -\varepsilon~,~
		\<M\>_t\ge A\right\}
		\le \P\left\{ \inf_{0<s< A} B(s) \ge -\varepsilon\right\},
	\]
	which yields the result, thanks to Brownian scaling and the reflection principle.
\end{proof}

\begin{proof}[Proof of Proposition \ref{pr:SDI}]
	Apply It\^o's formula to see that
	$Y_t := X_t^{1/2}$ solves $Y_0=a$ subject to
	\begin{equation}\label{SDI}
		\d Y_t \le \frac12 Y_t \,\d t + \d N_t,
	\end{equation}
	where $\{N_t\}_{t\ge0}$ is a mean-zero continuous $L^2(\P)$-martingale
	with quadratic variation,
	\[
		\< N\>_t = \frac14\int_0^t\frac{\d\<X\>_s}{X_s}
	\]
	We remove the drift in \eqref{SDI}
	in a standard way by setting $Z_t := \e^{-t/2} Y_t$ for all $t\ge0$.
	Then,  $Z_0=a$ and $Z$ satisfies the stochastic differential inequality,
	$\d Z_t = \e^{-t/2}\,\d Y_t - \frac12 \e^{-t/2}Y_t\,\d t
	\le \e^{-t/2}\,\d N_t.$
	In other words,
	\[
		Z_t \le a + \int_0^t \e^{-s/2}\,\d N_s := a + \mathcal{M}_t
		\qquad\text{for all $t>0$},
	\]
	where $\{\mathcal{M}_t\}_{t\ge0}$ is a mean-zero continuous
	$L^2(\P)$ martingale whose quadratic variation is given by
	\[
		\d\<\mathcal{M}\>_t = \e^{-t}\,\d\<N\>_t = \frac{\e^{-t}}{4}\frac{\d\< X\>_t}{X_t}.
	\]
	Since
	\[
		\left\{ \inf_{s\in(0,t)}X_s>\varepsilon^2
		\,,\, \int_0^t \e^{-s}\,\frac{\d\<X\>_s}{X_s} \ge b^2\right\}
		\subset \left\{ \inf_{s\in(0,t)}\mathcal{M}_s>\varepsilon \e^{-t/2} - a
		\,,\, \<\mathcal{M}\>_t \ge \frac{b^2}{4}\right\},
	\]
	Proposition \ref{pr:SDI} follows from the above and Lemma
	\ref{lem:small-ball}.
\end{proof}

\subsection{On martingale measures}

Consider a continuous-in-time $L^2(\P)$-martingale 
measure $M:=\{M_t(A)\}_{t\ge0,A\in\mathcal{B}(\T)}$,
in the sense of \cite{wal86}, where
$\mathcal{B}(\T)$ denotes the collection of Borel subsets of $\T$.
The following is a ready consequence of It\^o calculus.

\begin{lemma}\label{lem:mart:meas}
	If $\< M(A) \,, M(B)\>_t = t|A\cap B|$
	for all $t\ge0$ and $A,B\in\mathcal{B}(\T)$, then
	we can realize $M$ as
	\[
		M_t(A) = \int_{(0,t)\times A} w(\d s\,\d y)\qquad
		\text{for all $t\ge0$ and $A\in\mathcal{B}(\T)$},
	\]
	where $\dot{w}=\{\dot{w}(t\,,x)\}_{t\ge0,x\in\T}$ is a
	space-time white noise.
\end{lemma}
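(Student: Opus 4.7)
The plan is to show that $M$ is automatically a Gaussian orthogonal martingale measure, which (in Walsh's framework) is precisely what it means to be a space-time white noise. Once that is established, the white noise $\dot{w}$ can simply be identified with $M$ itself, via the tautological assignment $w((s,t]\times A) := M_t(A)-M_s(A)$ on rectangles, extended by $L^2$-linearity.

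First I would, for each deterministic $h\in L^2(\R_+\times\T)$, define the Walsh stochastic integral $N_t(h) := \int_{(0,t)\times\T} h(s,y)\,M(\d s\,\d y)$ by the standard simple-function approximation. The orthogonality hypothesis $\<M(A)\,,M(B)\>_t = t|A\cap B|$ ensures the Walsh isometry, so $\{N_t(h)\}_{t\ge0}$ is a well-defined continuous $L^2(\P)$-martingale with quadratic variation
\[
	\<N(h)\>_t = \int_0^t\!\d s\int_{\T}h^2(s\,,y)\,\d y,
\]
which is \emph{non-random}.

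The main step is a Gaussianity check via L\'evy's criterion. Fix $h$ and $\xi\in\R$, and apply It\^o's formula to $t\mapsto\exp(i\xi N_t(h))$. Because $\<N(h)\>_t$ is deterministic, taking expectations eliminates the martingale term and yields the scalar integral equation
\[
	\phi_t := \E\!\left[\e^{i\xi N_t(h)}\right] = 1 -\frac{\xi^2}{2}\int_0^t \phi_s\,\d\<N(h)\>_s,
\]
whose unique solution is $\phi_t = \exp\{-\tfrac12\xi^2 \<N(h)\>_t\}$. The same computation applied to finite linear combinations $h = \sum_j c_j h_j$ identifies the family $\{N_t(h)\}_{t\ge0,h\in L^2}$ as a jointly centered Gaussian process with covariance $\E[N_t(h_1)N_t(h_2)] = \int_0^t\!\int_\T h_1 h_2\,\d s\,\d y$.

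Finally, I would define $w(\Lambda)$ for Borel $\Lambda\subset\R_+\times\T$ of finite Lebesgue measure by $w(\Lambda):=N_\infty(\bm{1}_\Lambda)$ (interpreted as an $L^2$-limit, which exists because $\bm{1}_\Lambda\in L^2(\R_+\times\T)$ whenever $|\Lambda|<\infty$). The Gaussianity established above, together with $\E[w(\Lambda_1)w(\Lambda_2)] = |\Lambda_1\cap \Lambda_2|$, shows that $\dot w$ is a space-time white noise in the sense of the introduction. The representation $M_t(A) = \int_{(0,t)\times A} w(\d s\,\d y)$ is then immediate by taking $h=\bm{1}_{(0,t)\times A}$, since by construction $N_t(\bm{1}_{(0,t)\times A}) = M_t(A)$. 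The main obstacle is the Gaussianity step via It\^o's formula; the remainder is bookkeeping about extending an orthogonal set function from rectangles to Borel sets via the $L^2$ isometry.
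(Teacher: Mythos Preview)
Your proof is correct and follows essentially the same route as the paper: both exploit the fact that the quadratic variation $\langle N(h)\rangle_t$ (equivalently $\langle M(A)\rangle_t$) is \emph{deterministic}, which forces Gaussianity. The paper quotes L\'evy's characterization directly on disjoint sets $A_1,A_2\subset\T$ of unit measure to obtain a multidimensional Brownian motion and then invokes induction and linearity, whereas you unpack the proof of L\'evy's theorem via the It\^o computation on $\exp(i\xi N_t(h))$ for general $h\in L^2$; your version is slightly more self-contained and avoids the normalization/decomposition bookkeeping implicit in the paper's induction, but the substance is the same.
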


\begin{proof}
	We prove that $M:=\{M_t(A)\}_{t\ge0,A\in\mathcal{B}(\T)}$
	is a Gaussian process; the remainder of the lemma
	follows from this and a simple covariance computation which we skip.
	
	If $A_1$ and $A_2$ are two Borel subsets of $\T$ such that the Haar 
	measures of $A_1$ and $A_2$ are 1 and $A_1\cap A_2$ has zero
	Haar measure, then $\< M(A_i) \,, M(A_j)\>_t = \delta_{i,j}t$ for all $t\geq 0$.
	Thus, L\'evy's characterization theorem \citep[see][Theorem 3.6]{Revuz-Yor}
	asserts that $\{M_t(A_1), M_t(A_2)\}$ is a 2-dimensional Brownian motion, 
	whence $M$ is a Gaussian process by induction.This completes the proof.
\end{proof}

Lemma \ref{lem:mart:meas} immediately implies the following result, which will play
a key role in our coupling construction in Section \ref{sec:coupling}.

\begin{corollary}\label{cor:WN}
	Let $\dot{V}_1$ and $\dot{V}_2$ denote two independent space-time
	white noises, and suppose $\{\phi_1(t\,,x)\}_{t\ge0,x\in\T}$ and
	$\{\phi_2(t\,,x)\}_{t\ge0,x\in\T}$ are continuous
	predictable random fields, in the 
	filtration defined by $\dot{V}_1$ and $\dot{V}_2$, that takes values in $[0\,,1]$
	and satisfy
	\[
		\phi_1^2(t\,,x) + \phi_2^2(t\,,x)=1\quad\text{for all $t>0$ and $x\in\T$ a.s.}
	\]
	Define $\dot{M}(t\,,x) = \phi_1(t\,,x)\dot{V}_1(t\,,x) + \phi_2(t\,,x)\dot{V}_2(t\,,x)$;
	more formally,
	\[
		M_t(A) := \int_{(0,t)\times A}\phi_1(s\,,y)\, V_1(\d s\,\d y) + \int_{(0,t)\times A}
		\phi_2(s\,,y)\, V_2(\d s\,\d y),
	\]
	for $t>0$ and $A\in\mathcal{B}(\T)$. Then, $\dot{M}$ is space-time
	white noise; more formally, there exists a space-time white
	noise $\dot{w}$ such that
	$M_t(A)=\int_{(0,t)\times A}w(\d s\,\d y)$ a.s.\ for every $t>0$ and $A\in\mathcal{B}(\T)$.
\end{corollary}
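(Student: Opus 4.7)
The plan is to reduce Corollary \ref{cor:WN} directly to Lemma \ref{lem:mart:meas} by verifying its hypotheses for the martingale measure $M$ defined in the statement.

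First, I would argue that $M = \{M_t(A)\}_{t\geq0, A\in\mathcal{B}(\T)}$ is a continuous-in-time $L^2(\P)$-martingale measure in the sense of Walsh. This is routine: since $\phi_1,\phi_2$ take values in $[0,1]$ and are predictable with respect to the filtration generated by $(\dot V_1, \dot V_2)$, for every Borel set $A\subseteq\T$ the integrands $\phi_i\bm{1}_A$ are predictable and bounded, hence lie in the appropriate $L^2$ space for Walsh stochastic integration against the independent white noises $V_1$ and $V_2$. Therefore $t\mapsto M_t(A)$ is a continuous $L^2(\P)$-martingale for each $A$, and finite additivity in $A$ is inherited from the integrals.

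The core step is the covariation computation. For every $t\geq0$ and every pair of Borel sets $A,B\subseteq\T$, the independence of $V_1$ and $V_2$ forces the cross brackets $\langle M^{(1)}(A), M^{(2)}(B)\rangle_t$ to vanish, where $M^{(i)}_t(A) := \int_{(0,t)\times A}\phi_i(s,y)\,V_i(\d s\,\d y)$. Therefore, using the standard quadratic-variation formula for Walsh integrals together with the hypothesis $\phi_1^2+\phi_2^2\equiv 1$,
\[
\langle M(A), M(B)\rangle_t = \sum_{i=1}^2 \int_0^t\!\!\int_{A\cap B}\phi_i^2(s,y)\,\d s\,\d y = \int_0^t\!\!\int_{A\cap B} 1\,\d s\,\d y = t\,|A\cap B|,
\]
which is exactly the hypothesis of Lemma \ref{lem:mart:meas}. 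Applying that lemma yields the existence of a space-time white noise $\dot w$ with $M_t(A) = \int_{(0,t)\times A} w(\d s\,\d y)$ a.s.\ for every $t>0$ and $A\in\mathcal{B}(\T)$, which is the desired conclusion.

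There is no serious obstacle: everything reduces to the orthogonality of the two independent white noises and the a.s.\ identity $\phi_1^2+\phi_2^2=1$. The only mildly delicate point is to make sure the cross-bracket vanishes in the predictable setting — this follows because $V_1$ and $V_2$ are independent white noises on the same filtration (after joining them), so their martingale-measure cross bracket is identically zero on any pair of predictable sets, and by a monotone-class/approximation argument the same holds against bounded predictable integrands $\phi_1\bm{1}_A$ and $\phi_2\bm{1}_B$. Once that is in hand, the computation above is immediate and Lemma \ref{lem:mart:meas} closes the proof.
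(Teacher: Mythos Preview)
Your proof is correct and follows essentially the same route as the paper: verify that $M$ is a Walsh martingale measure, compute $\langle M(A),M(B)\rangle_t = t|A\cap B|$ using independence of $V_1,V_2$ and the identity $\phi_1^2+\phi_2^2=1$, then invoke Lemma \ref{lem:mart:meas}. The paper's version is terser (it simply asserts the covariation formula as ``an immediate consequence of the construction of Walsh integrals''), but the argument is the same.
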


\begin{proof}
	Since $\phi_1$ and $\phi_2$ are bounded, the stochastic integrals are Walsh integrals
	that define a martingale measure. It remains to verify that
	$\< M(A)\,, M(B)\>_t = t|A\cap B|$
	for all $t>0$ and $A,B\in\mathcal{B}(\T)$; see Lemma \ref{lem:mart:meas}. But this
	mutual variation formula is an immediate consequence
	of the construction of Walsh integrals.
\end{proof}

\section{Appendix: Sketch of proof of Lemma \ref{lem:large-dev}}

There have been many results similar to Lemma \ref{lem:large-dev}, 
as we mention below. We will give an outline of the argument.

By \eqref{eq:sup-sigma} and the definition of $\sigma_n$ 
in \eqref{eq:def-sigma-n}, we have that 
\begin{equation*}
	|\sigma_n(t\,,x)|\le 4\text{Lip}_\sigma L_{\tau_n}(v_n).
\end{equation*}
Consulting definition \eqref{eq:def-I-tr}, we see that 
$\mathcal{I}_n^{\text{tr}}(t\,,x)$ is a white noise integral of 
$\lambda p_{t-s}(x-y)\sigma_n(s\,,y)$ against a time-shifted 
space-time white noise.  

Now Lemma \eqref{lem:large-dev} would follow from \citet[Theorem 4.2, page 126]{spde-utah},
except in that reference $p_t(x\,,y)$ is
replaced by the heat kernel on $\R$, not $\T$.  However, we can easily modify the proof in
\cite{spde-utah} to cover our case.  Using the expansion 
\eqref{eq:heat-kernel-expansion}, we find that Lemma 4.3 on  page
126 of \cite{spde-utah} still holds, except that $|x-y|$ is replaced
by the distance from $x$ to $y$ on $\T$.  The rest of the argument goes through as
before, giving us Lemma \ref{lem:large-dev}.\qed

%\bibliography{AC}

\begin{thebibliography}{}\small

\bibitem[Allouba, 1998]{Allouba}
Allouba, H.  (1998).
\newblock Different types of SPDEs in the eyes of Girsanov's theorem. 
\newblock {\it Stochastic Anal.\ Appl.}, 16(5):787--810.

\bibitem[Burkholder et~al., 1972]{BDG}
Burkholder, D. L., Davis, B. J., and Gundy, R. F. (1972).
\newblock Integral inequalities for convex functions of operators on martingales.
\newblock In: {\it Proceedings of the Sixth Berkeley Symposium on Mathematical
	Statistics and Probability} {\bf II}, 223--240, University of California Press,
	Berkeley, California.
	
\bibitem[Carlen and Kree, 1991]{CK91}
Carlen, E. and Kree, P. (1991).
\newblock $L^p$ estimates on iterated stochastic integrals.
\newblock {\em Ann. Probab.}\ 19(1):354--368.

\bibitem[Cerrai, 2003]{Cerrai03}
Cerrai, S. (2003).
\newblock Stochastic reaction-diffusion systems with multiplicative noise and
  non-{L}ipschitz reaction term.
\newblock {\em Probab. Theory Related Fields}, 125(2):271--304.

\bibitem[Cerrai, 2005]{Cerrai05}
Cerrai, S. (2005).
\newblock Stabilization by noise for a class of stochastic reaction-diffusion
  equations.
\newblock {\em Probab. Theory Related Fields}, 133(2):190--214.

\bibitem[Conus et~al., 2012]{CJK12}
Conus, D. and Joseph, M. and Khoshnevisan, D. (2012).
\newblock Correlation-length bounds and estimates for intermittent islands in
	parabolic SPDEs.
\newblock {\it Electr.\ J. Probab.}\ 102:1--15.

\bibitem[Cs\"org\H{o} and R\'ev\'esz, 1981]{CR81}
Cs\"org\H{o}, M. and R\'ev\'esz, P. (1981).
\newblock {\em Strong Approximations in Probability and Statistics.}
\newblock Academic Press, Inc., New York-London, 1981.

\bibitem[Da~Prato and Zabczyk, 1996]{DZ96}
Da~Prato, G. and Zabczyk, J. (1996).
\newblock {\em Ergodicity for infinite-dimensional systems}, volume 229 of {\em
  London Mathematical Society Lecture Note Series}.
\newblock Cambridge University Press, Cambridge.

\bibitem[Da~Prato and Zabczyk, 2014]{DZ14}
Da~Prato, G. and Zabczyk, J. (2014).
\newblock {\em Stochastic equations in infinite dimensions}, volume 152 of {\em
  Encyclopedia of Mathematics and its Applications}.
\newblock Cambridge University Press, Cambridge, second edition.

\bibitem[Dalang, 1999]{Dalang1999}
Dalang, R. C. (1999).
\newblock Extending the martingale measure stochastic integral with
	applications to spatially homogeneous s.p.d.e.'s.
\newblock {\em Electron.\ J. Probab.}\ 4(6):29 pp.

\bibitem[Dalang et al., 2009]{spde-utah}
Dalang, R., Khoshnevisan, D., Mueller, C., Nualart, D., and Xiao, Y. (2009).
\newblock {\em A Minicourse on Stochastic Partial Differential Equations},
  volume 1962 of {\em Lecture Notes in Mathematics}.
\newblock Springer-Verlag, Berlin.
\newblock Held at the University of Utah, Salt Lake City, UT, May 8--19, 2006,
  Edited by Khoshnevisan and Firas Rassoul-Agha.

%\bibitem[Doob, 1937]{Doob37}
%Doob, J.L. (1937).
%\newblock Stochastic processes depending on a continuous parameter.
%\newblock {\em Trans. Amer. Math. Soc.}, 42:107--140.

\bibitem[Feller, 1971]{FellerVol2}
Feller, W. (1971).
\newblock {\it An Introduction to Probability and Its Applications, Vol.\ II}.
\newblock (2nd ed.) John Wiley \&\ Sons, Inc.
\newblock New York, NY.

\bibitem[Foondun et al, 2015]{FoondunKM}
Foondun, M., Khoshnevisan, D., and Mahboubi, P. (2015)
\newblock Analysis of the gradient of the solution to a stochastic heat equation via fractional Brownian motion.
\newblock {\em Stoch. Partial Differ. Equ. Anal. Comput.}, 3(2):133--158.
 
\bibitem[Hairer and Pardoux, 2015]{HairerPardoux}
Hairer, M. and Pardoux, \`E (2015).
\newblock A Wong-Zakai theorem for stochastic PDEs.
\newblock {\em J. Math. Soc. Japan}, 67(4):1551--1604.

\bibitem[Hunt, 1957]{Hunt}
Hunt, G. A. (1957).
\newblock{Markoff processes and potentials. I, II}.
\newblock {\em Illinois J. Math.}, 1:44--93, 316--369. 

\bibitem[Kahane, 1985]{Kahane}
Kahane, Jean-Pierre (1985).
\newblock {\em Some Random Series of Functions.}
\newblock Second edition. 
\newblock  Cambridge University Press, Cambridge, 1985.

\bibitem[Khoshnevisan et~al., 2020]{KKMS20}
Khoshnevisan, D., Kim, K., Mueller, C., and Shiu, S.-Y. (2020).
\newblock Dissipation in {P}arabolic {SPDE}s.
\newblock {\em J. Stat. Phys.}, 179(2):502--534.

\bibitem[L\'evy, 1938]{Levy1937}
L\'evy, P. (1937).
\newblock {\em Th\'eorie de l'Addition des Variables Al\`eatoires.}
\newblock Gauthier-Villars, Paris, 1937.

\bibitem[McKean, 1955]{McKean}
McKean, Henry P., Jr. (1955).
\newblock Hausdorff-Besicovitch dimension of Brownian motion paths. 
\newblock {\em Duke Math. J.} 22:229--234.

\bibitem[Mueller, 1991]{Mueller-Support}
Mueller, C. (1991).
\newblock On the support of solutions to the heat equation with noise.
\newblock {\em Stochastics Stochastics Rep.}, 37(4):225--245.

\bibitem[Mueller, 1993]{Mueller-Coupling}
Mueller, C. (1993).
\newblock Coupling and invariant measures for the heat equation with noise.
\newblock {\em Ann. Probab.}, 21(4):2189--2199.

\bibitem[Mueller and Nualart, 2008]{Mueller-Nualart}
Mueller, C. and Nualart, D. (2008).
\newblock Regularity of the density for the stochastic heat equation.
\newblock {\em Electron. J. Probab.}, 13:no. 74, 2248--2258.

\bibitem[Nualart and Pardoux, 1999]{NualartPardoux99}
Nualart, D. and Pardoux, E. (1999).
\newblock Markov field properties of solutions of white noise driven quasi-linear parabolic PDEs. 
\newblock {\em Stochastics Stochastics Rep.}, 48(1-2):17--44. 

\bibitem[Revuz and Yor, 1999]{Revuz-Yor}
Revuz, D. and Yor, M. (1999). 
\newblock {Continuous Martingales and Brownian Motion.}
\newblock (3rd ed.) Springer-Verlag
\newblock Berlin. 

\bibitem[Shiga, 1994]{Shiga94}
Shiga, T. (1994).
\newblock Two contrasting properties of solutions for one-dimensional
  stochastic partial differential equations.
\newblock {\em Canad. J. Math.}, 46(2):415--437.

\bibitem[Walsh, 1986]{wal86}
Walsh, J.~B. (1986).
\newblock An Introduction to Stochastic Partial Differential Equations.
\newblock In: {\em \'{E}cole d'\'{e}t\'{e} de Probabilit\'{e}s de
  {S}aint-{F}lour, {XIV}---1984}, volume 1180 of {\em Lecture Notes in Math.},
  pages 265--439. Springer, Berlin.

\bibitem[Zimmerman et~al., 2000]{ZTPS}
Zimmerman, M.~G., Toral, R., Prio, O., and San~Miguel, M. (2000).
\newblock Stochastic spatiotemporal intermittency and noise-induced transition
  to an absoring phase.
\newblock {\em Phys. Rev. Lett.}, 85(17):3612--2615.

\end{thebibliography}
%\bibliographystyle{apalike}

\end{document}